
\documentclass[11pt,a4paper]{article}

\usepackage[utf8]{inputenc}
\usepackage{subfiles} 
\usepackage{cite} 

\usepackage{multicol}
\usepackage{graphicx, wrapfig}
\usepackage{amsthm} 
\usepackage{amssymb, amsfonts} 
\usepackage[margin=1in]{geometry} 
\usepackage{tikz} 
\usepackage{tikz-cd} 
\usepackage{imakeidx} 
\usepackage{diagbox} 
\usepackage{caption} 
\usepackage{subcaption} 
\usepackage{afterpage} 
\usepackage{mathtools} 
\usepackage{titlesec} 
\usepackage{fancyhdr} 
\usepackage{siunitx} 
\usepackage{appendix}
\usepackage{enumitem}
\usepackage{tensor} 
\usepackage{url}
\usepackage{dutchcal} 
\usepackage{faktor} 
\usepackage{slashed}
\usepackage{yhmath} 
\usepackage{cases} 
\usepackage{hyperref} 

\theoremstyle{plain} 
\newtheorem{theorem}{Theorem}[section]

\newtheorem{corollary}[theorem]{Corollary}
\newtheorem{lemma}[theorem]{Lemma}
\newtheorem{proposition}[theorem]{Proposition}
\theoremstyle{definition} 
\newtheorem{definition}[theorem]{Definition}
\theoremstyle{remark}
\newtheorem{remark}[theorem]{Remark}

\numberwithin{equation}{section}
\numberwithin{table}{section}
\numberwithin{figure}{section}

\renewcommand{\Re}{\operatorname{Re}}
\renewcommand{\Im}{\operatorname{Im}}

\newcommand\norm[1]{\left\lVert#1\right\rVert}

\newcommand{\wedg}[1]{\mathbin{\ooalign{
			\rotatebox[origin=c]{0}{\scalebox{1}{$\wedge$}}\cr
			\hidewidth\kern0pt\raise-2pt\hbox{\rotatebox[origin=c]{0}{\scalebox{0.5}{$#1$}}}\hidewidth\cr
}}}

\def\C{\mathbb{C}}
\def\R{\mathbb{R}}

\def\Z{\mathbb{Z}}
\def\N{\mathbb{N}}

\def\U{\operatorname{U}}
\def\SU{\operatorname{SU}}
\def\SO{\operatorname{SO}}
\def\Sp{\operatorname{Sp}}
\def\SL{\operatorname{SL}}
\def\GL{\operatorname{GL}}

\def\Aut{\operatorname{Aut}}
\def\End{\operatorname{End}}

\def\ad{\operatorname{ad}}
\def\Lie{\operatorname{Lie}}

\def\Spin{\operatorname{Spin}}
\def\Hol{\operatorname{Hol}}
\def\Dirac{\slashed{D}}

\def\dual{\vee}
\def\iso{\cong}
\def\comp{\circ}

\def\interior{\lrcorner}
\def\invomega{\rotatebox[origin=c]{180}{$\omega$}}

\author{Nicolò Cavalleri}
\title{Complete non-compact $\Spin(7)$-manifolds from $T^2$-bundles over AC Calabi Yau manifolds}
\date{}

\makeindex

\begin{document}
\maketitle

\begin{abstract}
	We develop a new construction of complete non-compact 8-manifolds with Riemannian holonomy equal to $\Spin(7)$.
	As a consequence of the holonomy reduction, these manifolds are Ricci-flat.
	These metrics are built on the total spaces of principal $T^2$-bundles over asymptotically conical Calabi Yau manifolds, and the result is generalized to orbifolds.
	The resulting metrics have a new geometry at infinity that we call asymptotically $T^2$-fibred conical ($AT^2C$) and which generalizes to higher dimensions the ALG metrics of 4-dimensional hyperkähler geometry, analogously to how ALC metrics generalize ALF metrics.
	
	As an application of this construction, we produce infinitely many diffeomorphism types of $AT^2C$ $\Spin(7)$-manifolds and the first known examples of complete toric $\Spin(7)$-manifold.
\end{abstract}

\pagenumbering{roman}
\setcounter{tocdepth}{1}
\tableofcontents
\newpage
\pagenumbering{arabic}

\section{Introduction}\label{sec:intro}

In \cite{Foscolo2021}, Foscolo, Haskins and Nordström provide a construction of complete non-compact 7-manifolds with holonomy $G_2$ and non-maximal volume growth.
The input data for their construction is an asymptotically conical (AC) Calabi-Yau (CY) 6-manifold with a principal circle bundle satisfying a suitable topological condition.
In this paper we extend their construction to produce complete, non-compact 8-manifolds with holonomy $\Spin(7)$ and non-maximal volume growth, starting with an asymptotically conical Calabi-Yau 6-manifold and a principal $T^2$-bundle satisfying some topological condition.
The asymptotic volume growth of balls of radius $r$ in these new manifolds has leading term proportional to $r^6$ (in which case we say that the volume growth has \textit{rate} 6), whereas up to now we only had examples with volume growth proportional to $r^8$ (AC), $r^7$ (ALC) or $r$ (asymptotically cylindrical).
The idea of building $\Spin(7)$ structures on the total spaces of torus bundles over CY manifolds, suggested by Simon Donaldson, was first carried out in \cite{Fowdar2023}, where Fowdar constructed incomplete examples of $\Spin(7)$-manifolds with full holonomy.
In this paper we construct complete examples by adapting the general analytic approach of \cite{Foscolo2021} to solve the equations expressing the torsion-freeness of a $T^2$ invariant $\Spin(7)$-structure on a $T^2$-bundle over an asymptotically conical Calabi Yau manifold.
Unlike in \cite{Foscolo2021} however, we make use of the implicit function theorem as our main analytical tool and we explain how it can be adapted to the $G_2$ construction in \cite{Foscolo2021} to give a more concise proof.
Moreover, similarly to how the $G_2$ construction in \cite{Foscolo2021} was generalized to the orbifold setting in \cite{Foscolo2019}, so the result in this paper is generalized to the orbifold setting.
Finally, we use our main result to prove the existence of
\begin{enumerate}
	\item infinitely many diffeomorphism types of complete non-compact simply connected $\Spin(7)$-manifolds with volume growth rate $6$,
	\item the first known examples of complete toric $\Spin(7)$ manifolds in the sense of Madsen and Swann, first conjectured in \cite{BruunMadsen2019}.
	More specifically, we build two examples by making use of the smooth version of the construction, and we explain how to build infinitely many examples by making use of the orbifold version, should the results of \cite{Futaki2009} and \cite{Goto2009} extend to orbifolds.
\end{enumerate}
Infinitely many diffeomorphism types of complete non-compact simply connected $AT^1C$ $\Spin(7)$-manifolds were already built in \cite{Foscolo2019}.
Note moreover that the property $AT^1C$ is more commonly known as \textit{asymptotically locally conical} (ALC).
As for point 2, we believe the results of \cite{Futaki2009} and \cite{Goto2009} do extend to the orbifold case, but given that we do not have a complete proof of this, we prefer not to state the existence of infinitely many examples as a theorem.

Suppose we are given a principal $T^2$-bundle over an asymptotically conical Calabi Yau manifold $B$.
Analogously to the $G_2$ case, our construction gives, more precisely, a 1-parameter family of $T^2$-invariant $\Spin(7)$-metrics $\{g_\varepsilon\}_\varepsilon$ that collapses back to the CY metric on the manifold $B$ as $\varepsilon \to 0$.
The metric $g_\varepsilon$ has a geometry at infinity which is the higher dimensional analogue of the ALG metrics appearing in 4-dimensional hyperkähler geometry.
To put an end to the zoo of letters for ALX manifolds, we propose to call them \textit{asymptotically $T^2$-fibred conical} manifolds, abbreviated as $AT^2C$ manifolds.
More generally, we define $AT^kC$ manifolds to be manifolds that are asymptotically isometric (i.e.\ isometric up to decaying terms) to the total space $P$ of a $T^k$ principal bundle on a Riemannian cone with a metric that can be decomposed in two summands: the metric on the cone and a $T^k$-invariant flat metric on the fibers which is in some sense constant and which is lifted to the total space through a radially invariant principal connection on $P$.
By definition, for $k < n$, $AT^kC$ $n$-manifolds have volume growth rate $n - k$.

More precisely, our main result is the following.
\begin{theorem}\label{thm:main_theorem}
	Let $\left(B, g_B, \omega_0, \Omega_0\right)$ be an asymptotically conical Calabi Yau 3-fold asymptotic with rate $\nu \in \R^-$ to the Calabi-Yau cone $\left(\mathrm{C}(\Sigma), g_{\mathrm{C}}, \omega_{\mathrm{C}}, \Omega_{\mathrm{C}}\right)$ over a smooth Sasaki-Einstein 5-manifold $\Sigma$.
	Let $M \rightarrow B$ be a principal $T^2$ bundle such that $c_1(M) \in H^2\left(B; \Z^2\right)$ spans a 2-dimensional subspace in $H^2\left(B; \R\right)$ and
	\begin{equation}\label{eq:topological_condition}
		c_1(M) \smile \left[\omega_0\right] = 0 \in H^4\left(B; \R^2\right)
		.
	\end{equation}
	
	Then the 8-manifold $M$ carries an analytic curve of $T^2$-invariant torsion-free $\Spin(7)$-structures $\Phi : (0, \varepsilon_0) \to \Omega^4(M)$ such that, by denoting with $g_\varepsilon$ the Riemannian metric on $M$ induced by $\Phi_\varepsilon$,
	\begin{enumerate}
		\item $\Hol(M, g_\varepsilon) = \Spin(7)$.
		\item $(M, g_\varepsilon)$ is $AT^2C$ with rate $\max(-1, \nu)$.
		\item There exists a flat metric $g_P$ on the fibers of $M \to B$ such that
		$$
			g_\varepsilon - g_B - \varepsilon^2 g_P \to 0
		$$
		in $C^{k, \alpha}_{\text{loc}}$ for every $k \in \N$, $\alpha \in (0, 1)$.
		In particular, $\left(M, g_\varepsilon\right)$ collapses with bounded curvature to $\left(B, g_B\right)$ as $\varepsilon \rightarrow 0$.
	\end{enumerate}
\end{theorem}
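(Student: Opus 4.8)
The plan is to realise the curve as a small, gauge-fixed perturbation of an explicit $T^2$-invariant \emph{model} $\Spin(7)$-structure, solving the torsion-free condition $d\Phi=0$ — which for a $\Spin(7)$-structure is equivalent to $\nabla\Phi=0$, since the intrinsic torsion is an isomorphic image of $d\Phi$ under a $\Spin(7)$-module map — by a quantitative, analytic implicit function theorem in weighted Hölder spaces adapted to the $AT^2C$ asymptotics. Concretely, I would fix a principal connection on $M\to B$ with connection $1$-forms $\theta_1,\theta_2$ and curvatures $F_1,F_2$ representing $c_1(M)$, and take as model
\[
  \Phi^0_\varepsilon \;=\; \tfrac12\,\omega_0\wedge\omega_0 \;+\; \varepsilon^2\,\omega_0\wedge\theta_1\wedge\theta_2 \;+\; \varepsilon\,\big(\Re\Omega_0\wedge\theta_1-\Im\Omega_0\wedge\theta_2\big),
\]
which is pointwise a $\Spin(7)$-form modelled on $\C^3\times\R^2$ and induces the metric $g_B+\varepsilon^2(\theta_1^2+\theta_2^2)=:g_B+\varepsilon^2 g_P$, with $g_P$ flat along the fibres. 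Since $d\omega_0=d\Omega_0=0$, the torsion $d\Phi^0_\varepsilon$ is built from the closed $4$-forms $\omega_0\wedge F_i$, $\Omega_0\wedge F_i$ and from the discrepancy between $g_B$ and the exact cone metric. Here \eqref{eq:topological_condition} enters: via the Kähler (Lefschetz) identities it makes $\omega_0\wedge F_i$ cohomologically trivial, and after solving the corresponding linear equation on the AC manifold $B$ and absorbing the decaying primitive into a refinement of the ansatz, one obtains a weighted estimate $\norm{d\Phi^0_\varepsilon}\le C\varepsilon^{\kappa}$ in the Hölder norm compatible with the rate $\max(-1,\nu)$, for some $\kappa>0$.

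Next I would use the $T^2$-invariance to descend everything to the $6$-manifold $B$: a $T^2$-invariant $4$-form on $M$ is equivalent to a tuple of forms on $B$ (a symplectic-type $2$-form, complex-volume-type $3$-forms, together with the connections and the shape of the $T^2$), the $\Spin(7)$ condition becoming a pointwise algebraic constraint, so the ``Fourier analysis on the fibres'' collapses to the zero mode and one is left with an elliptic system over $B$. Writing a nearby $\Spin(7)$-structure as $\Phi^0_\varepsilon+\chi$ with $\chi$ a section of the rank-$43$ subbundle $\Lambda^4_1\oplus\Lambda^4_7\oplus\Lambda^4_{35}$ cut out by the $\Spin(7)$-decomposition of $4$-forms, and imposing a Coulomb-type gauge fixing to remove the ($T^2$-equivariant) diffeomorphism freedom of $d\Phi=0$, the torsion-free equation becomes
\[
  \mathcal L_\varepsilon\,\chi \;=\; -\,d\Phi^0_\varepsilon \;+\; Q_\varepsilon(\chi),
\]
where $\mathcal L_\varepsilon$ is the (now elliptic on the slice) linearisation and $Q_\varepsilon$ is a smooth, quadratically small remainder with $\varepsilon$-uniform estimates on the relevant ball in the weighted space.

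The heart of the argument — and the step I expect to be the main obstacle — is the \emph{uniform} invertibility of $\mathcal L_\varepsilon$ as $\varepsilon\to0$, which must reconcile three competing features. (i) The \emph{collapse}: $\mathcal L_\varepsilon$ degenerates to an operator on the base $B$ together with $\varepsilon$-dependent blocks in the fibre directions, and one must show these blocks stay invertible with inverse bounded by a fixed negative power of $\varepsilon$. (ii) The \emph{non-compactness of $B$}: $\mathcal L_\varepsilon$ is only Fredholm on weighted spaces, so the weight must be chosen to avoid the indicial roots of the conical model on $\mathrm C(\Sigma)$ (which is what pins down the admissible rate $\max(-1,\nu)$). (iii) The \emph{cokernel}: it must be shown to vanish on the chosen weighted space, and this is where the hypotheses that $M$ defines a genuine $\Spin(7)$-deformation and that $c_1(M)$ spans a $2$-dimensional subspace of $H^2(B;\R)$ are used, to exclude the harmonic forms that would otherwise obstruct. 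Granting a right inverse of $\mathcal L_\varepsilon$ bounded by $C\varepsilon^{-\sigma}$ with $\sigma<\kappa$, the analytic implicit function theorem (a contraction-mapping Newton iteration, with joint real-analytic dependence of the nonlinear operator on $(\varepsilon,\chi)$) yields, for all small $\varepsilon>0$, a unique small solution $\chi_\varepsilon$ depending analytically on $\varepsilon$; then $\Phi_\varepsilon:=\Phi^0_\varepsilon+\chi_\varepsilon$ is the desired analytic curve of $T^2$-invariant torsion-free $\Spin(7)$-structures.

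Finally, properties (1)--(3) are read off from the construction. The bound $\norm{\chi_\varepsilon}\le C\varepsilon^{\kappa-\sigma}$ in the weighted space, together with the fact that $\Phi^0_\varepsilon$ is $AT^2C$ with rate $\max(-1,\nu)$ by design, gives (2); elliptic regularity upgrades this to $g_\varepsilon-g_B-\varepsilon^2 g_P\to0$ in every $C^{k,\alpha}_{\mathrm{loc}}$, and boundedness of the curvatures $F_i$ then yields, via the Riemannian-submersion formulas with shrinking flat fibres, a uniform curvature bound along the collapse, proving (3). For (1), torsion-freeness already gives $\Hol(M,g_\varepsilon)\subseteq\Spin(7)$ and Ricci-flatness; equality is obtained by ruling out each maximal proper subgroup ($\SU(4)$, $\Sp(2)$, $G_2$, $\SU(2)\times\SU(2)$) — equivalently, by showing $(M,g_\varepsilon)$ carries no parallel form besides $\Phi_\varepsilon$ and its exterior powers, hence no extra bounded harmonic form — which is forced by the topology of $M$ (in particular the spanning condition on $c_1(M)$, which prevents $M$ from splitting off a line or carrying a parallel complex structure) and by the asymptotic model being irreducibly $AT^2C$ rather than a product or a lower-dimensional cone.
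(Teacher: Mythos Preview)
Your overall architecture---implicit function theorem in weighted H\"older spaces, reduction to $B$ via $T^2$-invariance, model structure $\Phi^0_\varepsilon$---matches the paper. But the step you flag as ``the main obstacle'' is where your sketch and the paper genuinely diverge, and as written your proposal does not overcome it.

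You frame the linear problem as proving \emph{uniform invertibility of an $\varepsilon$-dependent operator $\mathcal L_\varepsilon$ as $\varepsilon\to 0$}, with the collapse producing $\varepsilon$-dependent blocks in the fibre directions whose inverses you hope to bound by $C\varepsilon^{-\sigma}$. The paper avoids this degeneration entirely. Instead of perturbing $\Phi$ by a section $\chi$ of $\Lambda^4_1\oplus\Lambda^4_7\oplus\Lambda^4_{35}$ on $M$, it parametrises $T^2$-invariant $\Spin(7)$-structures by data living on $B$: an $\SU(3)$-structure $(\omega,\Re\Omega)$, a principal connection $(\eta,\theta)$, and three scalar functions $p,q,r$ encoding the flat metric on the fibres. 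The parameter $\varepsilon$ enters only through $\eta\mapsto\varepsilon\eta$, $\theta\mapsto\varepsilon\theta$. The torsion-free equation $d\Phi=0$ becomes a system on $B$, and the implicit function theorem is applied at the point $\varepsilon=0$, where the system forces $(\omega_0,\Omega_0)$ to be Calabi--Yau and $p_0,q_0,r_0$ constant. The linearisation \emph{at this point} is a fixed, $\varepsilon$-independent operator on $B$; there is no collapse to fight. Two ingredients you do not mention are essential here: (i) the functions $p,q,r$ must be treated as unknowns---fixing the fibre metric would kill injectivity; (ii) the paper adds auxiliary unknowns $s,u,v$ and a vector field $X$ to the system (and proves a posteriori they vanish) precisely to absorb the redundancy in the $\SU(3)$-torsion equations and make the linearisation surjective. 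With these in place, invertibility reduces to the Dirac operator $\slashed D:C^{k+1,\alpha}_{\nu+1}\to C^{k,\alpha}_\nu$ on the Calabi--Yau base being an isomorphism for $\nu\in(-6,-1)$, together with a decomposition result for exact $4$-forms on $B$.

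A second, smaller point: you use the spanning condition on $c_1(M)$ to kill the cokernel. In the paper that condition plays no role in existence or invertibility---it is used only in the full-holonomy argument, to rule out parallel $1$-forms and nondegenerate $2$-forms via a cohomological argument on the asymptotic model. The obstruction relevant to existence is handled instead by the cup-product condition~\eqref{eq:topological_condition}, which guarantees (via $L^2$-orthogonality of the harmonic representatives of $c_1$ to $L^2\mathcal H^2(B)$) that the first-order equation $(d+d^*)\rho=\tau$ is solvable.
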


The topological condition \eqref{eq:topological_condition} is both sufficient and necessary, and it is analogous to the condition that appeared in \cite{Foscolo2021}.
The condition that requires $c_1(M)$ span a 2-dimensional space is necessary to guarantee the holonomy group to be fully $\Spin(7)$ (otherwise it would be a subgroup).

$\Spin(7)$ manifolds are interesting firstly because, together with $G_2$ manifolds, they are an exceptional case in Berger's list of possible holonomies for simply connected, non-locally symmetric and irreducible Riemannian manifolds (see \cite{Berger1955}).
Moreover, they admit a parallel spinor, which is a property of great interest for supersymmetry in physics.
As a consequence of the parallel spinor, they are Ricci flat, which is a feature eagerly sought both in Riemannian geometry and in mathematical physics.
Moreover, $\Spin(7)$ geometry has deep connections with both $G_2$ and Calabi Yau geometry, and this paper highlights one of these connections.
Finally, $\Spin(7)$ manifolds have applications to F-theory in physics.
We expand more on this at the end of this section.

There is not much variety of examples of complete non-compact fully $\Spin(7)$ metrics around: the known examples can be found in \cite{Bryant1989, Cvetic2001, Cvetic2001a, Gukov2001, Bazaikin2007, Bazaikin2008, Kovalev2013, Foscolo2019, Lehmann2022}.
More specifically, in \cite{Bryant1989} Bryant and Salamon constructed the first example of complete non-compact fully $\Spin(7)$ metric as an explicit AC metric on the spinor bundle $\slashed{S}(S^4)$ of the 4-sphere.
Their metric is invariant under the natural cohomogeneity one action of $\Sp(2) \iso \Spin(5)$.
In \cite{Cvetic2001} and \cite{Cvetic2001a} Cvetič, Gibbons, Lü and Pope built a new example of a cohomogeneity one $\Sp(2)$-invariant $\Spin(7)$ metric, by solving the cohomogeneity one ODE.
In \cite{Bazaikin2008} it was rigorously proved that their example belonged to a 1-parameter family as claimed in \cite{Cvetic2001a}.
The examples of this 1-parameter family, labelled $\mathbb{B}_8$, are ALC (or $AT^1C$ according to our notation above).
The parameter of the $\mathbb{B}_8$ family is the asymptotic length $\varepsilon$ of the circle fibers.
As $\varepsilon \to 0$, the family collapses to the Bryant-Salamon $G_2$-metric on $\Lambda^-T^*S^4$.

In \cite{Lehmann2022}, Lehmann built yet two new other examples of one parameter families of complete non-compact ALC $\Spin(7)$-manifolds, that are both built through cohomogeneity-one methods with generic orbit isomorphic to the Aloff–Wallach space $N(1, -1)$, which differs from any other generic orbit of the previously built cohomogeneity one examples.
Both families of manifolds degenerate to AC manifolds at specific values of the family parameter.

In \cite{Kovalev2013}, Kovalev modified Joyce's construction of compact $\Spin(7)$ manifolds outlined in \cite{Joyce1999}, to produce the first known examples of asymptotically cylindrical $\Spin(7)$ manifolds.
These have a different type of volume growth (in fact linear, the lowest possible) than the maximal and sub-maximal growth of AC and ALC manifolds.
In this paper, we will exhibit yet another type of volume growth, distinguished from AC, ALC and asymptotically cylindrical.

Finally, in \cite{Foscolo2019}, Foscolo adapted the ideas of \cite{Foscolo2021} to construct ALC $\Spin(7)$ manifolds on total spaces of circle bundles on AC $G_2$-orbifolds.
In his construction, Foscolo uses the implicit function theorem in the same way we use it in our construction, i.e.\ to exploit the simplification of the equations in the adiabatic limit of the fibers shrinking to points (in his case circles, in our case tori).

The main idea of our proof is fairly simple.
Given an AC CY 3-fold $(B, \omega, \Omega)$ and a torus bundle $M \to B$ on it, with a principal connection specified by a pair $(\eta, \theta)$ of 1-forms on $M$, we write a candidate $\Spin(7)$ structure $\Phi$ in terms of $\omega, \Omega, \eta, \theta$ and three smooth functions $p, q, r$ (holding the data of a flat metric on the fibers), as $\Phi = F(\omega, \Omega, \eta, \theta, p, q, r)$.
The reason why $(\eta, \theta)$ specifies a principal connection is that we can always split a $T^2$-bundle $M$ as the product of two circle bundles $P_1 \times_B P_2$, and $\eta$ and $\theta$ specify principal connections on $P_1$ and $P_2$.
Now, since the holonomy of $M$ is contained in $\Spin(7)$ if and only if there is a parallel $\Spin(7)$-structure $\Phi$, and, by \cite{Fernandez1986} this is equivalent to $d\Phi =0$, by imposing that $\Phi$ be closed we are able to get a system of equations that are the object of our study.
Then, we introduce the 1-dimensional parameter $\varepsilon$, that is meant to implement the idea of the shrinking fibers.
To do so we make the substitutions $\eta \mapsto \varepsilon \eta$ and $\theta \mapsto \varepsilon \theta$ in our system.
Thus, together with the equations that impose $(\omega, \Omega)$ to be an $\SU(3)$-structure and some gauge fixing equations, we write the system succinctly as $\Psi(\varepsilon, \omega, \Omega, \eta, \theta, p, q, r) = 0$.
We then apply the implicit function theorem to the system $\Psi = 0$, where we split the domain of $\Psi$ as $\R \times Y$.
$\R$ is where $\varepsilon$ lives and $Y$ is where the other data $y = (\omega, \Omega, \eta, \theta, p, q, r)$ lives.
Thus, we treat the parameter $\varepsilon$ as the independent variable and we wish to find a curve $(\omega_\varepsilon, \Omega_\varepsilon, \eta_\varepsilon, \theta_\varepsilon, p_\varepsilon, q_\varepsilon, r_\varepsilon)$ of solutions of $\Psi = 0$.
The main steps of the proof via the implicit function theorem are to first study the equation $\Psi(0, y) = 0$ and to find a solution $y_0$ of such system.
Then one wants to study the linearization $D\Psi$ of $\Psi$ in the point $(0, y_0)$.
If one proves that such linearization restricted to $Y$ (i.e.\ the map $y \mapsto D \Psi_{\left(x_0, y_0\right)}(0, y)$) is invertible, the theorem yields the wanted curve of solutions.
We will find that the condition $\Psi(0, y) = 0$ imposes $(\omega_0, \Omega_0)$ to be Calabi Yau and $p_0, q_0, r_0$ to be constant.

As anticipated, in the final part of the paper we use our result to build new examples of complete non-compact $\Spin(7)$ manifolds.
Similarly to the $G_2$ case, this is possible thanks to the recent interest in AC Calabi-Yau manifolds, which brought many authors to build new examples and eventually to classify all AC CY manifolds (see \cite{Conlon2024}).
The topological condition needed for the $\Spin(7)$ construction, i.e.\ the existence of two linearly independent integral cohomology classes whose cup product with a fixed  Kähler class is zero, is more difficult to realize than the condition involving only one cohomology class needed in the $G_2$ case.
However, the zoo of AC Calabi-Yau 3-folds is enough populated already that we are still able to find manifolds (and orbifolds) that satisfy the needed condition.
In this paper, we show the realization of the topological condition and thus apply the new construction only in two specific cases that we considered of particular interest, but more examples can be build out of theorem \ref{thm:main_theorem}.

In order to build an infinite number of new diffeomorphism types of $\Spin(7)$ manifolds, we apply the theorem to the small resolutions of compound Du Val singularities.
This example was also used in \cite{Foscolo2021} to build an infinite number of new diffeomorphism types of complete non-compact $G_2$ manifolds.

After that, we build the first known examples of toric $\Spin(7)$ manifold.
Let us give the reader a brief explanation of what this means.
In symplectic geometry, one says that the action of a Lie group $G$ on a symplectic manifold $(M, \omega)$ is Hamiltonian if it is symplectic and it admits a moment map.
Let $\Lie(G) = \mathfrak{g}$.
A map $\mu : M \to \mathfrak{g}^*$ is called a moment map for the action of $G$ on $M$ if, for any $\xi \in \mathfrak{g}$,
$$
	d\langle\mu, \xi\rangle=\rho(\xi) \interior \omega
$$
where $\rho$ is the standard Lie algebra morphism $\mathfrak{g} \to \mathfrak{X}(M)$ induced by the action of $G$, and $\langle, \rangle$ denote the evaluation pairing.
In \cite{Madsen2011}, Madsen and Swann extend the notion of moment map to closed forms of arbitrarily high degree.
In the case of a $\Spin(7)$ 4-form $\Phi$, the action of an abelian Lie group $G$ is said to be \textit{multi-Hamiltonian} if it preserves $\Phi$ and it admits a \textit{multi-moment map}, which is defined to be a $G$-invariant map $\nu : M \to \Lambda^3 \mathfrak{g}^*$ such that, for every $\xi \in \Lambda^3\mathfrak{g}$,
$$
	d\langle\nu, \xi\rangle = \rho(\xi)\lrcorner \Phi
$$
where $\rho$ is the morphism $\Lambda^3 \mathfrak{g} \to \Gamma( \Lambda^3 TM)$ induced by $\rho : \mathfrak{g} \to \mathfrak{X}(M)$.
Their definition does not actually need the group to be abelian, but removing such hypothesis makes the definition more difficult, and we are only going to consider $G = T^k$ anyway.
Analogously to the symplectic case the maximal dimension of a torus that has an effective hamiltonian action on a $\Spin(7)$ manifold is 4.
This is because the action being multi-hamiltonian implies $\Phi|_{\Lambda^4W} = 0$, where $W$ is the subbundle given by $W_p = \Im \rho_p$.
Thus, we give the following
\begin{definition}
	A \textit{toric} Spin(7)-manifold is a torsion-free Spin(7)-manifold $(M, \Phi)$ with an effective multi-Hamiltonian action of $T^4$.
\end{definition}

Note that toric $\Spin(7)$ manifolds provide an example of non-singular Cayley fibration.

In \cite{BruunMadsen2019}, Madsen and Swann were able to produce an example of toric $\Spin(7)$-manifold with full holonomy, which is not, however, complete.
Until now no example of complete toric $\Spin(7)$-manifold with full holonomy was known, and this was one of the main motivations that lead to the research in this paper.
Indeed, the fact that our construction happens on the total space of a $T^2$ bundle suggests that we are only missing a $T^2$ action on the base manifold $B$ in order to reach the whole $T^4$-symmetry, and things turn out to be precisely so.
Note that compact $\Spin(7)$ manifolds with full holonomy do not admit continuous symmetries, so a toric $\Spin(7)$ manifold with full holonomy needs to be non-compact, and thus adding the hypothesis of completeness is the most we can ask for.
Indeed, it is a fact from Riemannian geometry that Killing fields on a compact Ricci flat manifold are parallel, but a manifold with full holonomy $\Spin(7)$ does non admit nonzero parallel vector fields, as the fundamental representation of $\Spin(7)$ is irreducible.
Note that the same is true for manifolds with full holonomy $G_2$.

\paragraph{Generalizations of theorem \ref{thm:main_theorem}}
In this paper we prove the main result for the construction on smooth asymptotically conical Calabi Yau manifolds, and then we explain a possible way to generalize the result to the orbifold setting.
This generalization is natural both mathematically and physically (as we see below), but at least on the mathematical side, the construction in the case of a smooth base already provides interesting new examples.
%

Another possible generalization that much is less straightforward and that may be the object of future work is not assuming the $T^2$ bundle $M$ to be principal and introducing singularities in the fibration.

\paragraph{Connections to physics}
As mentioned above, $\Spin(7)$ manifolds have applications in string theory, more specifically in M-theories with 3 macroscopic dimensions of spacetime ($d = 3$) and F-theories with 4 macroscopic dimensions of spacetime ($d = 4$).
There are five supersymmetric string theories (type I, IIA, IIB, HE, HO) that happen in 10 dimensions, whereas M-theory happens in 11 dimensions and formulations of F-theory have been proposed in both 11 and 12 dimensions.
Any of these theories predicts the vacua of the universe to be the product of a compact manifold and a non-compact Lorentzian spacetime.

There are many dualities between these theories, where by \textit{duality} we mean a somehow canonical bijective correspondence.
A notable duality, namely T-duality (supposed by the SYZ conjecture to be one of the main ingredients of mirror symmetry), exists between string theories of type IIA and string theories of type IIB.
Both of these theories are often studied on the product of $\R^{1, 3}$ with a Calabi Yau 3-fold.
In the case of a Calabi-Yau manifold which fibered by $S^1$, T-duality can be more concretely expressed by producing another Calabi-Yau manifold whose circle fibers have radius $\frac{1}{r}$ where $r$ is the radius of the fiber in the original manifold.

M-theory was first proposed as a theory unifying the five supersymmetric string theories, by reducing to each of them in some specific limit.
One limit of particular interest in the context of Calabi Yau and $G_2$ geometry is the weak coupling limit of M-theory to type IIA string theory.
M-theory is usually studied on the product of some Lorentzian spacetime and a compact $G_2$ manifold.
In case the $G_2$ manifold is the total space of a circle bundle over a Calabi Yau 3-fold and the $G_2$ structure is $S^1$-invariant, the weak coupling limit consists in collapsing the fibers to points (in the sense of Gromov-Hausdorff convergence), which is precisely the idea underlying the proof of the analogous statement of theorem \ref{thm:main_theorem} in the $G_2$ case proved in \cite{Foscolo2021}.
The $G_2$ manifolds built in \cite{Foscolo2021} are non-compact, which usually means that the field theory induced on the macroscopic dimensions does not contain gravity, but it does still make mathematical sense (besides providing physically interesting field theories) to consider M-theory on these manifolds and the work of Foscolo, Haskins and Nordström has provided a mathematically rigorous setting where to observe a well-defined occurrence of the weak coupling limit.

F-theory on $\Spin(7)$ manifolds was firstly proposed by Vafa in \cite{Vafa1996}.
The original hope was to have a 12 dimensional theory living on a product of $\R^{1, 3}$ and an 8-dimensional manifold with special holonomy.
However, this approach lead immediately to difficulties and another approach was taken.
This other approach relies on a specific duality, that we are about to describe, between certain $d=3$ M-theories and certain $d = 4$ F-theories.
The idea is to work on the M-theory side of the duality, which is a way more well-known setting, and then translate on the other side of the duality.

Indeed, M-theory can also happen on a $\Spin(7)$ manifold, but in this case the Minkowski space under consideration is $\R^{1,2}$, so that the total dimensions add up to $11 = 3 + 8$.
If the $\Spin(7)$ manifold is the total space of a circle bundle over a $G_2$ manifold and the $\Spin(7)$-structure is $S^1$-invariant, there is a weak coupling limit analogous to the one explained above that produces a type IIA theory happening on the product of $\R^{1,2}$ and a $G_2$ manifold.
If the $G_2$ manifold is in turn invariant under a circle action whose quotient is a Calabi Yau 3-fold (which implies that the original $\Spin(7)$ manifold is the total space of a $T^2$ bundle over a CY 3-fold, like in the case considered in this paper), we can use $T$-duality on the remaining circle to produce a type IIB string theory which, in the limit of the radius of the circle going to infinity, gives rise to an F-theory setup.
In this case, after the decompactification of the remaining circle, we are left with one extra non-compact space dimension, giving us a theory whose macroscopic dimensions are indeed 1 time dimension and 3 space dimensions, i.e.\ what we observe in our universe.
Note however, that the vacua of this theory is not in general $\R^{1,3}\times B_6$ where $B_6$ is a manifold with an $\SU(3)$ structure, because in general the $T^2$ bundle we start with will not be trivial in neither of the circles, and thus after this limit is performed, one of the macroscopic space dimensions will come from a possibly non-trivial line bundle over $B_6$.

This approach was taken in \cite{Bonetti2014a} and \cite{Bonetti2014}, where the $\Spin(7)$ manifolds under consideration are compact and they admit a non-free $\Z_2$ action on the fibers such that the quotient of the circle whose radius is meant to go at infinity is identified with a close interval, which allows carrying out calculations more in detail.

Since they admit a natural isometric $T^2$ action, the manifolds built in this paper could also provide a possible interesting setting where to study F-theory via the above described duality although, as we mentioned, their non-compactness prevents the induced field theories to contain gravity.
Yet there has been extensive research in the literature on superstring theories and M-theories on non-compact manifolds (see e.g.\ \cite{Morrison1997} and \cite{Intriligator1997}), which have led to interesting field theories, and more recently also F theory has been studied successfully in a non-compact setting in \cite{Heckman2014}, yielding a conjectural classification of superconformal field theories in 6D.
Usually, in the non-compact case, a necessary condition to yield non-trivial theories (i.e.\ where the fields have interactions), is the presence of $D$-branes.
In the case of $\Spin(7)$-manifolds  with a $T^2$ isometric action, the presence of $D7$-branes is equivalent to the $T^2$ action not being free, which is not the case considered in this paper, but as discussed above it is one of the natural generalizations of theorem \ref{thm:main_theorem}.

One of the reasons why considering F-theory on the newly found $\Spin(7)$ (or more precisely on their version with singular base), is that it could shed light on the tricky question of how supersymmetry behaves under the duality between M-theory and F-theory in 11 dimensions explained above.
Indeed, this question has been a puzzle since F-theory on $\Spin(7)$ manifolds was proposed by Vafa in his original paper \cite{Vafa1996}, where it was hypothesized that this had to do with Witten's proposed resolution of the cosmological constant problem (see \cite{Witten1995a}).
The problem is that $\Spin(7)$ manifolds have 1 parallel spinor which would imply the existence of 2 supercharges on the M-theory side of the picture and supercharges are conserved under the duality.
However, on the F-theory side, which has $d = 4$, the number of supercharges has to be a multiple of 4.
In \cite{Bonetti2014}, the proposed explanation was that this has to do with the mentioned involution, which brings down the theory on a type IIB theory where one of the macroscopic dimension is a closed interval, and the discrepancy outlined above has to do with the boundary conditions of the equations.
This problem has been extensively discussed in \cite{Heckman2019}, where in addition 12 dimensional F-theory compactified on $\Spin(7)$ manifolds was studied.
It is not clear what happens in the case of manifolds like the ones found in this paper, where such involution is not there, and the question remains open to further investigation.

\paragraph{Organization} The rest of this paper is organized as follows.
In section \ref{sec:preliminaries} we recall some preliminary material on $SU(3), G_2$ and $\Spin(7)$ structures, and on their respective torsion-free versions.
We also get into the details about expressing connections on $T^2$-bundles efficiently.
As often happens, this section is meant more as a reference rather than to be read.

In section \ref{sec:asymptotical} we recall some analytic results on AC Calabi-Yau manifolds, that we will use in the following sections.
The analysis in this section, as much as in the rest of the paper, heavily relies on the work in \cite{Foscolo2021}.

In section \ref{sec:equations} we write down rigorously the equations that a $\Spin(7)$-structure expressed in terms of the data $(\omega, \Omega, \eta, \theta, p, q, r)$ needs to satisfy in order to be torsion free.
We also rewrite the equations more abstractly in a way that could be useful for other applications.


Section \ref{sec:analytic_curve} is the heart of this paper and is dedicated to the setup of the implicit function theorem in our setting and to the solution of the equations.

In section \ref{sec:properties}, we prove the key properties of the newly-found $\Spin(7)$ manifolds that make them interesting, such as the fact that they have full holonomy and that they are $AT^2C$.

In section \ref{sec:orbifolds} we explain how to generalize the result to the orbifold setting.

In section \ref{sec:examples}, we use the existence theorem proved in section \ref{sec:analytic_curve} to build the mentioned new examples of new $\Spin(7)$-manifolds, including the first known complete toric $\Spin(7)$-manifolds.

Finally, in section \ref{sec:G2_case} we explain how to use the implicit function theorem to give a more concise proof of the analogous result for $G_2$ manifolds proved in \cite{Foscolo2021}.

\paragraph{Acknowledgments} This work was carried out during the author's PhD and was supported by the Engineering and Physical Sciences Research Council and the EPSRC Centre for Doctoral Training in Geometry and Number Theory (The London School of Geometry and Number Theory), and thus the author would like to express extreme gratitude to both.
Moreover, the author would like to thank his PhD colleagues in the special holonomy community for helpful discussions, with a mention to Jakob Stein and Viktor Majewski for carefully reviewing the paper, and a special thanks goes to Federico Bonetti for his help with the physics section in the introduction.

Finally, the author would like to give the biggest thanks his supervisor, Lorenzo Foscolo, for suggesting the problem\footnote{The original idea of building $\Spin(7)$-structures on $T^2$-bundles over CY 3-folds was firstly suggested by Simon Donaldson during a conference as an analogue of \cite{Foscolo2021}.} and for his constant support.


\section{Preliminaries}\label{sec:preliminaries}



%

The holonomy reduction of a Riemannian manifold to any of the groups in Berger's list ($\U(n)$, $\SU(n)$, $\Sp(n)$, $\Sp(n)\cdot\Sp(1)$, $G_2$, $\Spin(7)$) can be expressed by the existence of some parallel tensors that are pointwise reductible to some model tensors on $\R^n$ whose stabilizers are precisely the groups listed above.
Since $\SU(n)$ is the stabilizer of $(\omega_0, \Re \Omega_0)$, where $\omega_0$ is the standard symplectic form of $\R^n$ and $\Omega_0$ is the standard complex volume form, an $\SU(3)$-structure can be specified by two differential forms $\omega$ and $\Re \Omega$, that satisfy the following conditions:
\begin{enumerate}
	\item $\omega$ is non-degenerate and $\Re \Omega$ is stable in the sense of \cite{Hitchin2000}.
	\item $\omega \wedge \Re \Omega = 0$.
	\item The complex \textit{Monge-Ampère} equation
	\begin{equation}\label{eq:Monge_Ampere}
		\frac{1}{6} \omega^3=\frac{1}{4} \operatorname{Re} \Omega \wedge \operatorname{Im} \Omega
		.
	\end{equation}
\end{enumerate}
Note that the reason why we only need to stabilize $\Re \Omega$ and we can forget about $\Im \Omega$ is explained below in theorem \ref{thm:Hitchin}.
A $\Spin(7)$-structure on an 8-manifold can be specified by a 4-form $\Phi$ which is pointwise reductible to $\Phi_0$, the standard $\Spin(7)$-structure on $\R^8$.
One of the many possible ways to express $\Phi_0$, which highlights an important connection of $\SU(3)$ and $\Spin(7)$ geometries, is the following:
\begin{equation}\label{eq:standard_Cayley}
\Phi_0 = e^7 \wedge e^8 \wedge \omega_0 + e^7 \wedge \Re \Omega_0 - e^8 \wedge \Im \Omega_0 + \frac{1}{2} \omega_0^2
\end{equation}
where $\{e^j\}_{j=1}^8$ is the standard dual basis of $\R^8$.

In order for these structures to induce a holonomy reduction, they need to be torsion free.
This condition is equivalent to asking that the differential forms introduced above be parallel, which is in turn equivalent to asking that
\begin{enumerate}
	\item $d\omega = 0$ and $d \Re \Omega = d \Im \Omega = 0$ for $\SU(3)$-structures.
	\item $d \Phi = 0$ for $\Spin(7)$ structures (which implies also $d{\star} \Phi = 0$, since $\Phi = {\star} \Phi$).
	See \cite{Fernandez1986}.
\end{enumerate}

\subsection{$\SU(3)$-structures}

We collect here some results on $\SU(3)$-structures.
Recall that since $\SU(3) \leq \GL_3(\C)$, an $\SU(3)$-structure $(\omega, \Re \Omega)$ induces an almost complex structure that we denote by $J$.

\begin{lemma}\label{thm:SU3_identities}
	Let $V$ be a vector space, let $X \in V$, and let $(\omega, \Re \Omega)$ be an $\SU(3)$-structure on $V$.
	Then
	\begin{multicols}{2}
	\begin{enumerate}
		\item $X\lrcorner \omega=-J X^{\flat}$
		\item $X\lrcorner \operatorname{Im} \Omega=-(J X)\lrcorner \operatorname{Re} \Omega$
		\item $(X\lrcorner \operatorname{Re} \Omega) \wedge \omega=J X^{\flat} \wedge \operatorname{Re} \Omega=X^{\flat} \wedge \operatorname{Im} \Omega$
		\item $(X\lrcorner \operatorname{Re} \Omega) \wedge \operatorname{Re} \Omega=X^{\flat} \wedge \omega^{2}$
		\item $\left(X\lrcorner \operatorname{Re} \Omega\right) \wedge \operatorname{Im} \Omega=-J X^{\flat} \wedge \omega^{2}$
		.
		\item[\vspace{\fill}]
	\end{enumerate}
	\end{multicols}
	It follows that the above equalities hold by interpreting $X$ as a vector field on a manifold $M$ with an $\SU(3)$ structure $(\Re \Omega, \omega)$.
\end{lemma}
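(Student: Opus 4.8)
The plan is to treat all five formulas as pointwise identities of linear algebra on $V$, after which the final assertion about vector fields on $M$ is automatic: an identity of tensors valid at every point of $M$ is an identity of the corresponding sections. Two routes are available. Since both sides of each identity are $\R$-linear (indeed homogeneous of degree one) in $X$ and are built $\SU(3)$-equivariantly out of $(\omega,\Re\Omega)$ and $X$, and since $\SU(3)$ acts transitively on the unit sphere of $V\cong\R^6$, it suffices to verify each identity at the single vector $X=e_1$ in the flat model $(\R^6,\omega_0,\Omega_0,J_0)$; this reduces the whole lemma to elementary manipulations of the standard basis $p$-forms and always works. I would rather present the identities as consequences of the structure equations, which is shorter, and fall back on the explicit model computation only where convenient.

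For the structural route I would use: the compatibility of $g$, $J$ and $\omega$ built into an $\SU(3)$-structure (which is exactly identity 1 rephrased via contraction); the fact — part of the content of Theorem \ref{thm:Hitchin} — that $\Omega=\Re\Omega+i\,\Im\Omega$ is of type $(3,0)$ with respect to $J$; the relation $\omega\wedge\Omega=0$, equivalently $\omega\wedge\Re\Omega=0$ (condition (2) of an $\SU(3)$-structure) together with $\omega\wedge\Im\Omega=0$; the Monge--Ampère normalization \eqref{eq:Monge_Ampere}; and, repeatedly, the interior-product Leibniz rule $X\lrcorner(\alpha\wedge\beta)=(X\lrcorner\alpha)\wedge\beta+(-1)^{\deg\alpha}\,\alpha\wedge(X\lrcorner\beta)$. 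Identity 2 is the infinitesimal shadow of ``$\Omega$ is $(3,0)$'': since $X^{0,1}\lrcorner\Omega=0$ and $(JX)^{1,0}=iX^{1,0}$, one gets $JX\lrcorner\Omega=i\,(X\lrcorner\Omega)$, and taking real and imaginary parts yields identity 2 along with its companion $JX\lrcorner\Im\Omega=X\lrcorner\Re\Omega$. Identity 3 then comes from contracting the two relations $\omega\wedge\Re\Omega=0$ and $\omega\wedge\Im\Omega=0$ with $X$: the first, using identity 1 and the fact that $\omega$ has even degree, gives $(X\lrcorner\Re\Omega)\wedge\omega=JX^\flat\wedge\Re\Omega$, and the second, after feeding in identities 1 and 2 and the equality just obtained applied at the vector $JX$, yields $JX^\flat\wedge\Re\Omega=X^\flat\wedge\Im\Omega$.

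For identities 4 and 5 I would contract the Monge--Ampère identity \eqref{eq:Monge_Ampere} with $X$: the left-hand side becomes $\tfrac12(X\lrcorner\omega)\wedge\omega^2=-\tfrac12\,JX^\flat\wedge\omega^2$ by identity 1, while the right-hand side, after commuting factors and using identity 2 to rewrite $\Re\Omega\wedge(X\lrcorner\Im\Omega)$ as $-(JX\lrcorner\Re\Omega)\wedge\Re\Omega$, becomes $\tfrac14\big[(X\lrcorner\Re\Omega)\wedge\Im\Omega+(JX\lrcorner\Re\Omega)\wedge\Re\Omega\big]$. Identity 4 — which, notably, involves no $J$ — I would establish directly, most cheaply by the one-line evaluation at $X=e_1$ in the model above, where both sides reduce to a single coordinate $5$-form up to a universal constant; substituting identity 4 at the vector $JX$ into the contracted Monge--Ampère relation then isolates identity 5. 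I do not expect a genuine obstacle here: the only thing requiring attention throughout is threading the sign and orientation conventions fixed earlier in this section — in particular the precise sense in which $\Omega$ is of type $(3,0)$, and whether $JX^\flat$ denotes $(JX)^\flat$ or $J$ acting on the covector $X^\flat$ — consistently through the contractions, and in case of any doubt every sign can simply be read off from the single explicit evaluation at $X=e_1$.
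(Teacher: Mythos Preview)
The paper states this lemma without proof, treating the identities as standard linear algebra facts about $\SU(3)$-structures. Your approach --- reduce to a single vector by $\SU(3)$-equivariance, then derive the identities from the compatibility of $(g,J,\omega)$, the $(3,0)$-type of $\Omega$, the relation $\omega\wedge\Omega=0$, and the Monge--Amp\`ere normalization, with an explicit evaluation at $X=e_1$ as a fallback --- is the standard and correct way to do it, and essentially what any reader would supply.

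One caution on the route you propose for identity~5. Contracting \eqref{eq:Monge_Ampere} with $X$ and using identities~1 and~2 as stated gives
\[
-\tfrac12\,JX^\flat\wedge\omega^2 \;=\; \tfrac14\Big[(X\lrcorner\Re\Omega)\wedge\Im\Omega \;+\; (JX\lrcorner\Re\Omega)\wedge\Re\Omega\Big],
\]
and substituting identity~4 at $JX$ then yields $(X\lrcorner\Re\Omega)\wedge\Im\Omega=-3\,JX^\flat\wedge\omega^2$, not $-JX^\flat\wedge\omega^2$. So the Monge--Amp\`ere contraction alone does not isolate identity~5 with the right coefficient once identity~2 is used with the sign written here; this is exactly the sort of convention sensitivity you flag at the end. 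Your fallback of verifying identity~5 directly at $X=e_1$ in the model resolves it immediately, so there is no genuine gap --- but if you want a purely structural derivation, you should instead pair the contracted Monge--Amp\`ere identity with its companion obtained by replacing $X$ with $JX$, which gives two independent linear relations in $(X\lrcorner\Re\Omega)\wedge\Im\Omega$ and $(X\lrcorner\Re\Omega)\wedge\Re\Omega$ and recovers both identities~4 and~5 simultaneously.
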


In the next proposition we make use of the decomposition of the real $\SU(3)$ representation $\Lambda^*\R^6$ into irreducible real  representations.

\begin{lemma}\label{thm:irreducible_decomposition_SU3}
	We have the following orthogonal decompositions into irreducible $\SU(3)$ representations:
	$$
	\Lambda^{2} \left(\mathbb{R}^{6}\right)^*=\Lambda_{1}^{2} \oplus \Lambda_{6}^{2} \oplus \Lambda_{8}^{2}
	$$
	where
	$$
	\Lambda_{1}^{2}=\mathbb{R} \omega_0 \quad \Lambda_{6}^{2}=\left\{X\lrcorner \operatorname{Re} \Omega_0 \mid X \in \mathbb{R}^{6}\right\} \quad
	\Lambda_{8}^{2} = \left\{ \eta \in \Lambda^2 \mid \eta \wedge \omega_0^2 = 0, \eta \wedge \Re \Omega_0 = 0 \right\}
	.
	$$
	Moreover,
	$$
	\Lambda^{3} \left(\mathbb{R}^{6}\right)^*=\Lambda_{6}^{3} \oplus \Lambda_{1 \oplus 1}^{3} \oplus \Lambda_{12}^{3}
	,
	$$
	where
	$$
	\Lambda_{6}^{3}=\left\{X \wedge \omega_0 \mid X \in \mathbb{R}^{6}\right\} \qquad
	\Lambda_{1 \oplus 1}^{3}=\mathbb{R} \operatorname{Re} \Omega_0 \oplus \mathbb{R} \operatorname{Im} \Omega_0
	$$
	$$
	\Lambda_{12}^{3}=\left\{\gamma \in \Lambda^3 \mid \gamma \wedge \omega_0 = 0, \gamma \wedge \Re \Omega_0 = 0, \gamma \wedge \Im \Omega_0 = 0\right\}
	.
	$$
	The subscript $i$ in the notation $\Lambda^k_i$ is the dimension of the irreducible component $\Lambda^k_i$.
\end{lemma}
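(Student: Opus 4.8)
The plan is to establish each of the two decompositions by a representation-theoretic argument combined with explicit identification of the summands via the wedge-product maps. First I would recall that as a real $\SU(3)$-representation, $\Lambda^2(\R^6)^*$ is $15$-dimensional and $\Lambda^3(\R^6)^*$ is $20$-dimensional, and that the complexifications decompose into weight spaces (using that $\mathfrak{su}(3)$ has complexification $\mathfrak{sl}(3,\C)$ acting on $\C^3 \oplus \overline{\C^3}$). One computes $\Lambda^{1,1}\C^3 \iso \C \oplus \mathfrak{su}(3)_\C$ and $\Lambda^{2,0} \oplus \Lambda^{0,2} \iso \C^3 \oplus \overline{\C^3}$, so $\Lambda^2(\R^6)^* \otimes \C \iso \C \oplus (\C^3 \oplus \overline{\C^3}) \oplus \mathfrak{su}(3)_\C$, which descends to the real decomposition $\Lambda^2 = \Lambda^2_1 \oplus \Lambda^2_6 \oplus \Lambda^2_8$ of dimensions $1+6+8=15$. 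Similarly $\Lambda^3 \C^3 \otimes \C$ gives $\Lambda^3 = \Lambda^3_6 \oplus \Lambda^3_{1\oplus 1} \oplus \Lambda^3_{12}$ of dimensions $6 + 2 + 12 = 20$. The orthogonality of the summands is automatic once we know the summands are pairwise non-isomorphic irreducibles, since the metric on $\Lambda^*$ is $\SU(3)$-invariant and the relevant irreducibles appear with multiplicity one.

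Next I would identify the summands concretely. The copy $\mathbb{R}\omega_0$ is obviously a trivial subrepresentation, so it is $\Lambda^2_1$. The map $X \mapsto X \lrcorner \Re\Omega_0$ is $\SU(3)$-equivariant from $\R^6$ into $\Lambda^2$ and is injective (since $\Re\Omega_0$ is a stable form with trivial annihilator), so its image is a copy of the standard $6$-dimensional representation, giving $\Lambda^2_6$; one should check this image is distinct from $\mathbb{R}\omega_0$ and from the $8$-dimensional piece, which follows from the representations being non-isomorphic. The remaining complement $\Lambda^2_8$ is then forced, and I would verify the stated characterization $\{\eta : \eta \wedge \omega_0^2 = 0,\ \eta \wedge \Re\Omega_0 = 0\}$ by noting that $\eta \mapsto \eta \wedge \omega_0^2 \in \Lambda^6 \iso \R$ is the projection onto $\Lambda^2_1$ (up to scale, using the Monge–Ampère normalization and Lemma \ref{thm:SU3_identities}), while $\eta \mapsto \eta \wedge \Re\Omega_0 \in \Lambda^5 \iso (\R^6)^*$ is, up to an isomorphism and scalar, the projection onto $\Lambda^2_6$ — this last point uses identity (3) or (4) of Lemma \ref{thm:SU3_identities} to see that $(X \lrcorner \Re\Omega_0) \wedge \Re\Omega_0 = X^\flat \wedge \omega_0^2 \neq 0$ for $X \neq 0$, so the pairing is non-degenerate on $\Lambda^2_6$ and vanishes on the other two summands by type considerations. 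Hence the common kernel of the two maps is exactly the complement $\Lambda^2_8$.

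For $\Lambda^3$ the strategy is identical: $X \mapsto X^\flat \wedge \omega_0$ is an equivariant injection of $\R^6$, giving $\Lambda^3_6$; the span of $\Re\Omega_0$ and $\Im\Omega_0$ is a sum of two trivial subrepresentations (they are linearly independent since $J$ acts as rotation by $\pi/2$ on this plane), giving $\Lambda^3_{1\oplus 1}$; and $\Lambda^3_{12}$ is the remaining $12$-dimensional complement. To match the stated description I would check that the three maps $\gamma \mapsto \gamma \wedge \omega_0 \in \Lambda^5$, $\gamma \mapsto \gamma \wedge \Re\Omega_0 \in \Lambda^6$, $\gamma \mapsto \gamma \wedge \Im\Omega_0 \in \Lambda^6$ together cut out precisely $\Lambda^3_{12}$: the first map restricted to $\Lambda^3_6$ is, up to isomorphism, an equivariant map $\R^6 \to (\R^6)^*$, hence an isomorphism onto its image or zero — it is nonzero since $(X^\flat \wedge \omega_0) \wedge \omega_0 = X^\flat \wedge \omega_0^2 \neq 0$ — and it kills $\Lambda^3_{1 \oplus 1}$ because $\Re\Omega_0 \wedge \omega_0 = \Im\Omega_0 \wedge \omega_0 = 0$ (condition 2 of an $\SU(3)$-structure, plus the analogous identity for $\Im\Omega$). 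The two maps into $\Lambda^6 \iso \R$ detect the $\Re\Omega_0$ and $\Im\Omega_0$ components respectively (using the Monge–Ampère equation to see $\Re\Omega_0 \wedge \Im\Omega_0 \neq 0$ and $\Re\Omega_0 \wedge \Re\Omega_0 = \Im\Omega_0\wedge\Im\Omega_0 = 0$) and annihilate $\Lambda^3_6$ by the same degree/type argument. Therefore the common kernel of all three is exactly $\Lambda^3_{12}$.

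The main obstacle I anticipate is not any single computation but the bookkeeping needed to be sure the wedge-product maps are genuinely surjective onto the correct target irreducible and genuinely vanish on the others — i.e.\ pinning down the scalars and isomorphisms well enough that ``common kernel'' really equals the complementary summand rather than something a priori larger. This is handled cleanly by invoking Schur's lemma (each target $\Lambda^k$ or $\Lambda^{k-2}$ contains each relevant irreducible with multiplicity at most one, so an equivariant map is determined up to scalar and is either zero or an isomorphism onto that isotypic component), reducing everything to checking non-vanishing on a single test vector, which is exactly what the identities in Lemma \ref{thm:SU3_identities} supply.
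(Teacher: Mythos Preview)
Your proof proposal is correct and provides a complete, standard argument for this decomposition. Note however that the paper does not actually supply a proof for this lemma: it is stated as a known preliminary result (the paper immediately moves on to define $\Lambda^4_i := \star \Lambda^2_i$ and to exploit the lemma for computing the Hodge star). So there is no paper proof to compare against; your write-up simply fills in what the paper takes for granted, and does so via the expected route---representation-theoretic decomposition via complexification, concrete identification of summands through equivariant wedge/contraction maps, and verification of the kernel descriptions using Schur's lemma together with the identities of Lemma~\ref{thm:SU3_identities}.
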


We only went through the 2-forms and 3-forms because 0, 1, 5, and 6-forms are irreducible representations and 4-forms are determined by 2-forms by Hodge duality.
Thus, we define:
$$
\Lambda^4_{1} := \star \Lambda^2_{1} \quad \Lambda^4_{6} := \star \Lambda^2_{6}  \quad \Lambda^4_{8} := \star \Lambda^2_{8} 
$$

We can exploit lemma \ref{thm:irreducible_decomposition_SU3} to calculate explicitly the Hodge star operator on manifolds with an $\SU(3)$-structure.

\begin{lemma}\label{thm:Hodge_star_SU3}
	The Hodge star operator satisfies the following identities:
	\begin{multicols}{2}
	\begin{enumerate}
		\item ${\star} \eta =-\frac{1}{2} J \eta \wedge \omega^{2}$
		\item ${\star} \omega=\frac{1}{2} \omega^{2}$
		\item ${\star}(X\lrcorner \operatorname{Re} \Omega)=-J X^{\flat} \wedge \operatorname{Re} \Omega=X^{\flat} \wedge \operatorname{Im} \Omega$
		\item ${\star}\left(\tau_{8} \wedge \omega\right)=-\tau_{8}$ for all $\tau_{8} \in \Omega_{8}^{2}$
		\item ${\star} \sigma_{12} = - J \sigma_{12}$ for all $\sigma_{12} \in \Omega^3_{12}$
		\item ${\star} \left(\eta \wedge \omega\right) = -(J \eta) \wedge \omega = -J (\eta \wedge \omega)$
		\item ${\star} \operatorname{Re} \Omega=\operatorname{Im} \Omega$ and ${\star} \operatorname{Im} \Omega=-\operatorname{Re} \Omega$
		\item[\vspace{\fill}]
	\end{enumerate}
	\end{multicols}
\end{lemma}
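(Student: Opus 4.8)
The plan is to verify each identity by reducing to the standard $\SU(3)$-structure $(\omega_0, \Re\Omega_0)$ on $\R^6$ and exploiting the irreducible decomposition of Lemma \ref{thm:irreducible_decomposition_SU3}, since the Hodge star is an $\SU(3)$-equivariant map and every form in sight lies in a $\SU(3)$-invariant subspace. Concretely, I would fix the standard basis $\{e^1,\dots,e^6\}$ with $\omega_0 = e^{12}+e^{34}+e^{56}$, $\Re\Omega_0 = e^{135}-e^{146}-e^{236}-e^{245}$, $\Im\Omega_0 = e^{136}+e^{145}+e^{235}-e^{246}$, and $J$ acting by $Je^1=e^2$, etc. Items (2) and (7) are then immediate one-line computations: $\star(e^{12}) = e^{3456}$ and cyclic, giving $\star\omega_0 = \tfrac12\omega_0^2$; and $\star(e^{135}) = e^{246}$ together with the sign bookkeeping gives $\star\Re\Omega = \Im\Omega$, $\star\Im\Omega = -\Re\Omega$ (the asymmetry in signs comes from $\star^2 = (-1)^{k(n-k)} = -1$ on $3$-forms in dimension $6$).

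Next I would treat the $2$-form identities. For (1) I use Lemma \ref{thm:irreducible_decomposition_SU3}: an arbitrary $2$-form $\eta$ decomposes as $\eta = \lambda\omega + (X\lrcorner\Re\Omega) + \tau_8$, and I check $\star\eta = -\tfrac12 (J\eta)\wedge\omega^2$ on each summand. Note $J$ extended to forms satisfies $J\omega = \omega$, $J(X\lrcorner\Re\Omega) = (JX)\lrcorner\Re\Omega$, and $J\tau_8 = -\tau_8$ for $\tau_8\in\Lambda_8^2$ (this last fact I would record from the representation theory: $\Lambda_8^2$ is the $(1,1)$-primitive part, on which $J$ acts as $+1$ on forms of type… — more carefully, $\Lambda_8^2$ consists of primitive $(1,1)$-forms, and the operator "$J$ on forms" acts as the identity there; but the wedge-with-$\omega$ operator flips sign, which is exactly item (4)). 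Items (3), (4), (6) then follow by combining (1) with Lemma \ref{thm:SU3_identities}: (3) is the $\Lambda_6^2$ case of (1) rewritten using identity (3) of Lemma \ref{thm:SU3_identities}; (4) is the $\Lambda_8^2$ case; and (6) follows from (1) applied to $\eta\wedge\omega$ once one knows $\star(\eta\wedge\omega)$ in terms of $\star\eta$, or more directly by decomposing $\eta$ and using $\star(\omega^2) = \omega$, $\star(X\lrcorner\Re\Omega\wedge\omega) = $ [via Lemma \ref{thm:SU3_identities}(3)] $\pm\star(JX^\flat\wedge\Re\Omega)$, and $\star(\tau_8\wedge\omega) = -\tau_8$ from (4).

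Finally, for (5), the identity $\star\sigma_{12} = -J\sigma_{12}$ on the $12$-dimensional component $\Lambda_{12}^3$: here $\Lambda_{12}^3$ is the sum of the primitive $(2,1)$ and $(1,2)$ parts, and I would argue that $\star$ restricted to $\Lambda^3$ commutes with the $\SU(3)$-action and hence preserves the decomposition $\Lambda_6^3\oplus\Lambda_{1\oplus1}^3\oplus\Lambda_{12}^3$; on $\Lambda_{12}^3$ Schur's lemma forces $\star$ to act as $\pm J$ (the two real irreducibles inside $\Lambda_{12}^3\otimes\C$ are complex conjugates interchanged by $\star$ up to a scalar), and a single test form — e.g. $e^{134}+e^{125}$ or a cleaner primitive $(2,1)$-form — pins the sign to $-1$. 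The main obstacle, and the only place genuine care is needed, is precisely this bookkeeping of how "$J$ acting on forms" interacts with $\star$ and with wedging by $\omega$ on the various summands $\Lambda_6^2$, $\Lambda_8^2$, $\Lambda_{12}^3$: the signs are representation-theoretically rigid but easy to get wrong by hand, so I would anchor everything to one or two explicit basis computations in $\R^6$ and let equivariance propagate the result, rather than computing $\star$ term-by-term on a general form.
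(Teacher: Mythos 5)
The paper states this lemma without proof (it is collected as reference material in Section~\ref{sec:preliminaries}), so there is no proof of record to compare against. Your general strategy --- reduce to the model $\SU(3)$-structure on $\R^6$, use $\SU(3)$-equivariance of $\star$ together with the decomposition of Lemma~\ref{thm:irreducible_decomposition_SU3}, and pin down the action on each irreducible summand by a single test form (Schur's lemma plus $\star^2=-1$ on $\Lambda^3$ for item~5) --- is the standard and correct way to establish these identities.

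There is, however, one concrete flaw in the execution. Item (1) is a statement about \emph{1-forms}, not 2-forms: for a 2-form $\eta$ the left-hand side $\star\eta$ would be a 4-form while $-\tfrac12 J\eta\wedge\omega^2$ would be a 6-form, so the identity cannot be read on $\Lambda^2$. (Compare how the paper uses item (1) in Lemma~\ref{thm:exterior_differential_SU3_decomposition}.3, where $\gamma$ is a 1-form.) Consequently your plan to check (1) ``on each summand of the 2-form decomposition,'' and to obtain (3) and (4) as the $\Lambda^2_6$ and $\Lambda^2_8$ cases of (1), does not parse. Items (3) and (4) must be verified independently --- which your method handles perfectly well: e.g.\ take $X=e_1$, so $e_1\lrcorner\Re\Omega_0 = e^{35}-e^{46}$, compute its Hodge dual directly and compare with $e^1\wedge\Im\Omega_0$; for (4) test a primitive $(1,1)$-form such as $e^{12}-e^{34}$. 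Item (1) itself is then a one-line check on $\eta=e^1$. Two further small slips, neither fatal: with the orientation $\mathrm{vol}=e^{123456}$ one has $\star e^{135}=-e^{246}$ (not $+e^{246}$), and on $\Lambda^2_8$ the operator $J$ acts as $+1$ (these are primitive $(1,1)$-forms), as your own parenthetical correction notes. With item (1) re-read in the correct degree and (3), (4) verified directly, the argument goes through.
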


\noindent The following lemma comes in useful in calculations.

\begin{lemma}\label{thm:d_star_J_d}
	Let $B$ a $2n$-manifold with a $U(n)$-structure on it $(\omega, J)$ such that $d \omega = 0$.
	Then, for any $h \in C^\infty(B)$,
	$$
	d^*Jdh = 0
	.
	$$
	Note that this expression is clearly equivalent to $d{\star} Jdh=0$.
\end{lemma}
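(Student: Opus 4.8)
The statement is $d^* J dh = 0$ for a function $h$ on a $2n$-manifold with a $U(n)$-structure $(\omega, J)$ satisfying $d\omega = 0$. Since $d^*$ is (up to sign) $\star d \star$, the cleanest route is to compute $\star J dh$ explicitly and show it is closed. The key observation is that on a $U(n)$-manifold, for a $1$-form $\beta$ one has the identity $\star(J\beta) = \frac{1}{(n-1)!}\,\beta \wedge \omega^{n-1}$ (up to a universal sign), which is the general-dimension analogue of item (1) in Lemma \ref{thm:Hodge_star_SU3}; I would first record this as a pointwise linear-algebra fact about the model $(\omega_0, J_0)$ on $\R^{2n}$, then transport it to the manifold. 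Alternatively — and perhaps more in the spirit of the paper — one notes $Jdh = d^c h$ in the usual notation and $\star d^c h$ can be read off directly.

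The plan is then:
\begin{enumerate}
\item Establish the pointwise identity $\star(J\beta) = c\,\beta\wedge\omega^{n-1}$ for $1$-forms $\beta$, with $c = \frac{1}{(n-1)!}$ a fixed constant, by checking it on a unitary coframe (it suffices to test on $\beta = e^1$, say, using $\omega = \sum e^{2i-1}\wedge e^{2i}$ and $Je^1 = -e^2$ or the appropriate sign convention).
\item Apply this with $\beta = dh$ to get $\star J dh = c\, dh \wedge \omega^{n-1}$.
\item Differentiate: $d(\star J dh) = c\, d\!\left(dh \wedge \omega^{n-1}\right) = c\left(d(dh)\wedge\omega^{n-1} - dh \wedge d(\omega^{n-1})\right)$. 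The first term vanishes since $d^2 = 0$, and the second vanishes because $d(\omega^{n-1}) = (n-1)(d\omega)\wedge\omega^{n-2} = 0$ by the hypothesis $d\omega = 0$.
\item Conclude $d\star J dh = 0$, hence $d^* J dh = 0$ (the sign and the factor from $\star^2$ being irrelevant since the form is zero).
\end{enumerate}

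The argument is genuinely short and the only thing requiring care is step (1): getting the constant and sign conventions right for the $\star(J\cdot)$ identity on $\Lambda^1$ in arbitrary even dimension, matching whatever orientation and metric normalization the paper uses (consistency with Lemma \ref{thm:Hodge_star_SU3}(1) in the case $n=3$ is a useful check). I would phrase it as: it is enough to verify the identity at a point in a unitary coframe, where both sides are manifestly computed from $\omega_0$ and $J_0$ alone, and this is a one-line check. No analysis, completeness, or global hypotheses are needed — only $d\omega = 0$ enters, exactly as used in step (3). The remark that $d^*Jdh = 0 \iff d\star Jdh = 0$ is immediate from $d^* = \pm\star d\star$ and the fact that $\star$ is an isomorphism.
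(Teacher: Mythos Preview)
Your proposal is correct and follows exactly the paper's approach: the paper's proof is the one-liner $d\star Jdh = \frac{1}{2}\,d(dh\wedge\omega^2) = 0$, which is precisely your steps (2)--(3) specialized to $n=3$ using the identity of Lemma~\ref{thm:Hodge_star_SU3}(1). Your version is simply the general-$n$ formulation of the same argument, with the pointwise identity in step~(1) made explicit rather than quoted.
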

\begin{proof}
	This is because $d{\star} Jdh = \frac{1}{2} d (dh \wedge \omega^2) = 0$.
\end{proof}

Another consequence of lemma \ref{thm:irreducible_decomposition_SU3} is the following explicit expression of the torsion of an $\SU(3)$-structure.

\begin{proposition}\label{thm:SU3_torsion}
	Let $(\omega, \Omega)$ be an $\SU(3)$-structure on $B$. Then there exist functions $w_1, \hat{w}_1$, primitive $(1,1)$-forms $w_2, \hat{w}_2$, a 3 -form $w_3 \in \Omega_{12}^3(B)$ and 1-forms $w_4, w_5$ on $B$ such that
	$$
	\begin{aligned}
		&d \omega=3 w_1 \operatorname{Re} \Omega+3 \hat{w}_1 \operatorname{Im} \Omega+w_3+w_4 \wedge \omega \\
		&d \operatorname{Re} \Omega=2 \hat{w}_1 \omega^2+w_5 \wedge \operatorname{Re} \Omega+w_2 \wedge \omega \\
		&d \operatorname{Im} \Omega=-2 w_1 \omega^2+w_5 \wedge \operatorname{Im} \Omega+\hat{w}_2 \wedge \omega
	\end{aligned}
	$$
\end{proposition}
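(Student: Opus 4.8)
The plan is to prove this via representation theory, exactly in the spirit of Lemma~\ref{thm:irreducible_decomposition_SU3}. Since $(\omega, \Omega)$ is an $\SU(3)$-structure, at every point the spaces of $k$-forms decompose into irreducible $\SU(3)$-representations, and the same decomposition holds for the bundle-valued forms on $B$. The idea is: $d\omega$ is a $3$-form and $d\Re\Omega$, $d\Im\Omega$ are $4$-forms; I expand each in its isotypic components, then translate the abstract decomposition into the explicit ansatz using the algebraic identities of Lemma~\ref{thm:SU3_identities} and Lemma~\ref{thm:Hodge_star_SU3}.

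First I would treat $d\omega \in \Omega^3(B)$. By Lemma~\ref{thm:irreducible_decomposition_SU3}, $\Lambda^3 = \Lambda^3_6 \oplus \Lambda^3_{1\oplus 1} \oplus \Lambda^3_{12}$, where $\Lambda^3_6 = \{X \wedge \omega\}$, $\Lambda^3_{1\oplus1} = \R\Re\Omega \oplus \R\Im\Omega$, and $\Lambda^3_{12}$ is the primitive part. Hence there are functions $w_1, \hat w_1$ (the coefficients in the $1\oplus1$ part, with the factors of $3$ chosen for later convenience), a $1$-form $w_4$ (so that $w_4 \wedge \omega$ lives in $\Lambda^3_6$), and a primitive $3$-form $w_3 \in \Omega^3_{12}(B)$, giving
$$
d\omega = 3w_1 \Re\Omega + 3\hat w_1 \Im\Omega + w_3 + w_4 \wedge \omega.
$$
Next I would treat $d\Re\Omega \in \Omega^4(B)$. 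Since $4$-forms are Hodge-dual to $2$-forms and $\Lambda^2 = \Lambda^2_1 \oplus \Lambda^2_6 \oplus \Lambda^2_8$, I get $\Lambda^4 = \R\,\omega^2 \oplus \{X \wedge \Re\Omega : X\} \oplus (\Lambda^2_8 \wedge \omega)$, using Lemma~\ref{thm:SU3_identities}(3) to identify the middle summand (a $1$-form wedge $\Re\Omega$) and Lemma~\ref{thm:Hodge_star_SU3}(4) for the last. But $d\Re\Omega$ is closed-image of $d$, so $d\Re\Omega \wedge \omega = d(\Re\Omega \wedge \omega) = 0$ by the second $\SU(3)$ condition $\omega \wedge \Re\Omega = 0$; this forces the $\Lambda^4_6$-type component of $d\Re\Omega$ to vanish (one checks $X\wedge\Re\Omega \wedge \omega = X^\flat \wedge \Im\Omega \wedge\omega \neq 0$ for $X \neq 0$ by Lemma~\ref{thm:SU3_identities}, so the middle summand cannot contribute). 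Thus $d\Re\Omega = 2\hat w_1 \omega^2 + \tau$ with $\tau$ in $\Lambda^2_8 \wedge \omega$, i.e.\ $\tau = w_2 \wedge \omega$ with $w_2$ a primitive $(1,1)$-form; the constant of proportionality $2\hat w_1$ is then pinned down by wedging with a suitable form and comparing with the coefficient in $d\omega$ (this is where the Bianchi-type identity $d^2 = 0$ enters and produces the same $\hat w_1$). I would handle $d\Im\Omega$ symmetrically, replacing $\Re\Omega \leftrightarrow \Im\Omega$, using $\star\Re\Omega = \Im\Omega$, which exchanges $w_1 \leftrightarrow -\hat w_1$ and introduces $\hat w_2$; the term $w_5 \wedge \Re\Omega$ versus $w_5 \wedge \Im\Omega$ is forced to carry the \emph{same} $1$-form $w_5$ because $\Im\Omega = -J\Re\Omega$ under the pointwise identification and the $\Lambda^3_6$-type component of $d\Re\Omega$ written as $w_5\wedge\Re\Omega$ matches, via Lemma~\ref{thm:SU3_identities}(2)--(3), the corresponding component of $d\Im\Omega$.

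I expect the main obstacle to be the bookkeeping that fixes the numerical constants ($3$, $2$, and the signs) and, more substantially, the verification that the \emph{same} scalar $\hat w_1$ appears both in the $\Im\Omega$-coefficient of $d\omega$ and in the $\omega^2$-coefficient of $d\Re\Omega$ (and likewise $w_1$ in $d\omega$ and $d\Im\Omega$, and $w_5$ in both $d\Re\Omega$ and $d\Im\Omega$). This is not automatic from pointwise representation theory alone: it is a consequence of the closedness relations following from $d^2 = 0$ applied to $\omega$, $\Re\Omega$, $\Im\Omega$, combined with the algebraic identities relating wedge products of $\omega$, $\Re\Omega$, $\Im\Omega$. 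Concretely, computing $0 = d(\omega\wedge\Re\Omega)$, $0 = d(\omega\wedge\Im\Omega)$, $0 = d(\omega^3) \propto d(\Re\Omega\wedge\Im\Omega)$ and expanding each using the provisional expressions yields linear relations among the coefficient functions that collapse the a priori independent scalars onto the single triple $(w_1, \hat w_1, w_5)$ shared across the three equations. The rest — that $w_2, \hat w_2$ are primitive $(1,1)$ and $w_3 \in \Omega^3_{12}$ — is immediate from the definitions of the irreducible pieces in Lemma~\ref{thm:irreducible_decomposition_SU3}. I would not grind through these wedge-product computations in full; I would present the decomposition, state which component lands in which isotypic summand citing the relevant parts of Lemmas~\ref{thm:SU3_identities}--\ref{thm:Hodge_star_SU3}, and indicate that matching coefficients via $d^2 = 0$ identifies the shared scalars, referring to the standard reference for the torsion classes of $\SU(3)$-structures (Chiossi--Salamon) for the precise constants.
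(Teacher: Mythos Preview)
The paper states this proposition without proof --- it is the standard Chiossi--Salamon decomposition of the intrinsic torsion, treated as known background --- so there is no proof to compare against. Your overall strategy of decomposing each derivative into $\SU(3)$-isotypic components is the right one, but your execution contains a concrete error. You claim the $\Lambda^4_6$ component of $d\Re\Omega$ vanishes, arguing $d\Re\Omega \wedge \omega = d(\Re\Omega \wedge \omega) = 0$. Both steps fail: the Leibniz rule gives $d(\Re\Omega \wedge \omega) = d\Re\Omega \wedge \omega - \Re\Omega \wedge d\omega$, and the last term is nonzero in general (it picks up the $\hat w_1$ contribution); and even if $d\Re\Omega \wedge \omega$ were zero, this would tell you nothing about the $\Lambda^4_6$ piece, since $X^\flat \wedge \Re\Omega \wedge \omega = X^\flat \wedge (\Re\Omega \wedge \omega) = 0$ already holds identically --- your parenthetical check that this is nonzero is simply false. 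In fact the $\Lambda^4_6$ component of $d\Re\Omega$ does \emph{not} vanish: it is exactly the $w_5 \wedge \Re\Omega$ term in the statement. You seem to realise this a few lines later when you reintroduce $w_5$, which makes the argument internally inconsistent.

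A secondary issue: the mechanism forcing the \emph{same} $w_1$, $\hat w_1$, $w_5$ to appear across the three equations is not $d^2 = 0$ (you are only differentiating each form once) but rather differentiation of the algebraic $\SU(3)$ constraints $\omega \wedge \Re\Omega = 0$, $\omega \wedge \Im\Omega = 0$, and the Monge--Amp\`ere relation. For instance, expanding $0 = d(\omega \wedge \Re\Omega) = d\omega \wedge \Re\Omega + \omega \wedge d\Re\Omega$ and reading off the top-degree coefficient is what identifies the $2\hat w_1$ in $d\Re\Omega$ with the $\hat w_1$ in $d\omega$. The matching of $w_5$ between $d\Re\Omega$ and $d\Im\Omega$ is more delicate and is most transparently seen by working directly with the intrinsic torsion in $\Lambda^1 \otimes \mathfrak{su}(3)^\perp$, as in the Chiossi--Salamon reference you cite.
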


%
%

The reason why we can forget about specifying $\Im \Omega$ when talking about an $\SU(3)$-structure, is that $\Im \Omega$ is uniquely determined by $\Re \Omega$.
This is because of the following theorem, due to Hitchin in \cite{Hitchin2000}.

\begin{theorem}\label{thm:Hitchin}
	Let $V$ be an oriented 6-dimensional real vector space and let $U \subseteq \Lambda^3 V^*$ be defined by
	$$
		U = \{\Re \Omega \mid \Omega \text{ is a complex volume form on } V \text{ for some complex structure }J \}
		.
	$$
	Then $U$ is open and there exists two unique $\GL^+(V)$-equivariant analytic maps
	$$
		\begin{aligned}
			U & \to \End(V) \\
			\psi & \mapsto J_\psi
		\end{aligned}
		\qquad
		\begin{aligned}
			U & \to U \\
			\psi & \mapsto \hat{\psi}
		\end{aligned}
	$$
	such that $J_\psi$ is a complex structure on $V$ and $\psi + i \hat{\psi}$ is a $(3, 0)$-form for $J_\psi$.
	The latter map is called the \textit{Hitchin map}.
\end{theorem}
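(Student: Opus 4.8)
The plan is to follow Hitchin's original linear-algebraic argument \cite{Hitchin2000}, reducing every verification to a single computation on the flat model $(\R^6,J_0)$ by exploiting naturality under $\GL^+(V)$. The engine is a canonical way of manufacturing an endomorphism of $V$ out of a $3$-form. Given $\psi\in\Lambda^3V^*$, let $A\colon\Lambda^5V^*\to V\otimes\Lambda^6V^*$ be the canonical isomorphism and define $K_\psi\in\End(V)\otimes\Lambda^6V^*$ by $K_\psi(v)=A\big((\iota_v\psi)\wedge\psi\big)$. Since $\iota$, $\wedge$ and $A$ are $\GL(V)$-natural, $\psi\mapsto K_\psi$ is $\GL(V)$-equivariant and quadratic in $\psi$. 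The key pointwise identity, which by equivariance it suffices to check for $\psi_0=\Re(dz^1\wedge dz^2\wedge dz^3)$ and then spread around the $\GL(V)$-orbit, is $K_\psi^2=\lambda(\psi)\,\mathrm{id}_V$ for a quartic invariant $\lambda(\psi)\in(\Lambda^6V^*)^{\otimes 2}$. One then shows that $U$ is exactly the locus where $\lambda(\psi)$ lies in the negative ray of $(\Lambda^6V^*)^{\otimes 2}$ (the complement of the locus of ``product structures'', where $\lambda$ is positive). This is manifestly open; moreover $\GL^+(V)$ acts transitively on each connected component of $U$, as one sees from the dimension count $\dim\GL^+(V)-\dim_\R\SL(3,\C)=36-16=20=\dim\Lambda^3V^*$, which forces the $\GL^+(V)$-orbit of $\psi_0$ to be open.

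Next I would write down the two maps. On $U$ set $J_\psi:=\big(-\lambda(\psi)\big)^{-1/2}K_\psi$, where the square root is taken in the orientation-positive ray of $\Lambda^6V^*$ --- this is the only place the orientation of $V$ enters, and analyticity of $J_\psi$ follows because $\lambda$ is a nowhere-vanishing polynomial on $U$. The identity $K_\psi^2=\lambda(\psi)\,\mathrm{id}$ gives $J_\psi^2=-\mathrm{id}$, and $\GL^+(V)$-equivariance of $J_\bullet$ is inherited from that of $K_\bullet$ and $\lambda$ (it is only $\GL^+$- and not $\GL$-equivariance because of the square root). For the second map, a check on the model shows that $\psi$ has type $(3,0)+(0,3)$ with respect to $J_\psi$; one then lets $\hat\psi$ be the imaginary part of twice the $(3,0)$-component of $\psi$, so that $\Omega:=\psi+i\hat\psi$ is a $(3,0)$-form for $J_\psi$. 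The same model computation confirms that $J_\psi$ is genuinely a complex structure and that $\Omega$ is a complex volume form; in particular $\hat\psi=\Re(-i\Omega)\in U$, so $\psi\mapsto\hat\psi$ is a well-defined analytic self-map of $U$, and equivariance is automatic since the construction is built from $\GL^+(V)$-natural operations.

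For uniqueness, suppose $J'$ is any complex structure on $V$ and $\Omega'=\psi+i\chi$ is a complex volume form for $J'$ with $\Re\Omega'=\psi$. The subspace $T^{0,1}_{J'}=\{v\in V\otimes\C:\iota_v\Omega'=0\}$ can be recovered from $\Omega'$ alone, so $J'$ is determined by $\Omega'$; and $\Omega'$ in turn equals $2$ times the $(3,0)$-component of $\psi$ for $J'$, hence is determined once $J'$ is known. So it is enough to show $J'=J_\psi$. Using equivariance of $K_\bullet$ and transitivity of $\GL^+(V)$ on the component of $U$ containing $\psi$, this reduces to the model: if $g\in\GL^+(V)$ fixes $\psi_0$, then by equivariance $g$ fixes $J_{\psi_0}$, so $g\in\GL(3,\C)$ and in fact $g\in\SL(3,\C)$; but any candidate complex structure for $\psi_0$ must be invariant under all such $g$, which pins it down to $J_0=J_{\psi_0}$. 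The same argument forces $\chi=\hat\psi$, yielding uniqueness of both maps.

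The main obstacle is the content of the first step: establishing the squaring identity $K_\psi^2=\lambda(\psi)\,\mathrm{id}_V$ and correctly identifying $U$ with the open set where $\lambda(\psi)$ lies in the negative ray --- equivalently, checking that $\psi_0$ lies in this set and that the set is a union of $\GL^+(V)$-orbits. Once this explicit linear algebra on $\R^6$ is in hand, the remainder is bookkeeping: transporting the single model computation around by equivariance, verifying analyticity of one square root, and the short uniqueness argument above.
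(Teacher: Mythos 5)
Your proposal is correct and is precisely Hitchin's original argument from \cite{Hitchin2000}, which is all the paper itself offers (the theorem is stated with a citation and no proof): the quadratic endomorphism $K_\psi$, the quartic invariant $\lambda$, the open-orbit/dimension count $36-16=20$, and the normalization $J_\psi=(-\lambda)^{-1/2}K_\psi$ are exactly the standard route.

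One caveat in the uniqueness step: invariance under the stabilizer $\SL(3,\C)$ of $\psi_0$ pins the candidate complex structure down only to $\pm J_0$ (the commutant of $\SL(3,\C)$ on $\R^6$ is $\C$, whose square roots of $-1$ are $\pm J_0$), and the pair $(-J_\psi,-\hat\psi)$ also satisfies every condition literally stated in the theorem, since $\psi-i\hat\psi=\overline{\Omega}$ is a $(3,0)$-form for $-J_\psi$ and these maps are still $\GL^+(V)$-equivariant and analytic. Uniqueness therefore requires the additional (standard, and here implicit) convention that $J_\psi$ induce the given orientation of $V$ — equivalently that $\psi\wedge\hat\psi$ be positively oriented — which your construction via the orientation-positive square root does select, but which your uniqueness argument should invoke explicitly rather than asserting that $\SL(3,\C)$-invariance alone "pins it down to $J_0$".
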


\noindent In other words, $\Re \Omega$ uniquely determines $J$ and $\Im \Omega$.

Finally, lemma \ref{thm:irreducible_decomposition_SU3} also allows us to give an explicit expression for the derivative of the Hitchin map.

\begin{proposition}\label{thm:Hitchin_map_derivative}
	Given an $\mathrm{SU}(3)$-structure $(\omega, \Omega)$ on $B$, let $\rho \in \Omega^3(B)$ be a form with small enough $C^0$-norm so that $\operatorname{Re} \Omega+\rho$ is still a stable form. Decomposing into types we write $\rho=$ $\rho_6+\rho_{1 \oplus 1}+\rho_{12}$. Then the image $\hat{\rho}$ of $\rho$ under the linearization of Hitchin's duality map at $\operatorname{Re} \Omega$ is
	$$
	\hat{\rho}=\star\left(\rho_6+\rho_{1 \oplus 1}\right)-\star \rho_{12}
	$$
\end{proposition}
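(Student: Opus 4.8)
The plan is to combine Hitchin's variational description of the duality map with the $\SU(3)$-decomposition of $\Lambda^3$ in lemma \ref{thm:irreducible_decomposition_SU3}. Recall that for a stable $3$-form $\psi$ with negative quartic invariant one has $J_\psi=\phi(\psi)^{-1}K_\psi$, where $K_\psi(\xi)=(\xi\lrcorner\psi)\wedge\psi$ is reinterpreted as an endomorphism via $\Lambda^5V^*\cong V\otimes\Lambda^6V^*$, where $\phi(\psi)^2=-\tfrac16\operatorname{tr}(K_\psi^2)$, and where $\hat\psi=\psi(J_\psi\,\cdot\,,J_\psi\,\cdot\,,J_\psi\,\cdot\,)$; moreover $\phi$ is $\GL^+(V)$-equivariant, homogeneous of degree $2$, and satisfies $D\phi_\psi(\dot\psi)=\dot\psi\wedge\hat\psi$ as an identity of $6$-forms. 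Trivialising $\Lambda^6V^*$ by the Riemannian volume form, the map $\psi\mapsto\hat\psi$ becomes the gradient of $\phi$, so the linearisation $L:=D(\psi\mapsto\hat\psi)|_{\Re\Omega}$ is the Hessian $D^2\phi|_{\Re\Omega}$, hence a symmetric bilinear form; and $L$ is equivariant under $\Stab_{\GL^+(V)}(\Re\Omega)=\SL(3,\C)\supseteq\SU(3)$.

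First I would use equivariance to make $L$ block-diagonal. Since $\star$ and the almost complex structure $J$ are $\SU(3)$-equivariant, each of $\Lambda^3_6$, $\Lambda^3_{1\oplus1}$, $\Lambda^3_{12}$ is preserved by $\star$, by $J$, and by $L$. On $\Lambda^3_{1\oplus1}=\R\Re\Omega\oplus\R\Im\Omega$ the $\SU(3)$-action is trivial; $\Lambda^3_6$ and $\Lambda^3_{12}$ are real-irreducible of complex type, so $\operatorname{End}_{\SU(3)}$ of each is spanned by $\{\id,J\}$. Hence $L|_{\Lambda^3_6}=a_6\id+b_6J$, $L|_{\Lambda^3_{12}}=a_{12}\id+b_{12}J$, while $L|_{\Lambda^3_{1\oplus1}}$ is a priori an arbitrary endomorphism of a plane. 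Unwinding symmetry of the Hessian against the (skew) pairing $(\alpha,\beta)\mapsto\alpha\wedge\beta$ on $\Lambda^3$ gives that $\star L$ is self-adjoint for the Riemannian inner product; since both $\star$ and $J$ are isometries squaring to $-\id$ on $\Lambda^3$ they are skew-adjoint there, and since $\star=\pm J$ on each of $\Lambda^3_6,\Lambda^3_{12}$ this forces $a_6=a_{12}=0$, i.e. $L|_{\Lambda^3_6}=\lambda_6\star$, $L|_{\Lambda^3_{12}}=\lambda_{12}\star$, and $L|_{\Lambda^3_{1\oplus1}}$ trace-free.

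Next I would pin down the remaining constants. On $\Lambda^3_{1\oplus1}$, the family $\psi_\theta=\Re(e^{-i\theta}\Omega)$ has Hitchin dual $\widehat{\psi_\theta}=\Im(e^{-i\theta}\Omega)$ by theorem \ref{thm:Hitchin}; differentiating at $\theta=0$, and moving the base point by $\GL^+(V)$-equivariance, yields $L(\Re\Omega)=\Im\Omega$ and $L(\Im\Omega)=-\Re\Omega$, that is $L|_{\Lambda^3_{1\oplus1}}=\star$, consistent with lemma \ref{thm:Hodge_star_SU3}(7). For the other two pieces, differentiating $\widehat{\widehat\psi}=-\psi$ by the chain rule, and relating the linearisation at $\Im\Omega=g^*\Re\Omega$ to the one at $\Re\Omega$ via equivariance under a $g\in\U(3)$ (which is $\C$-linear, so commutes with $\id$ and $J$, hence with $L|_{\Lambda^3_6}$ and $L|_{\Lambda^3_{12}}$), gives $L|_{\Lambda^3_6}^2=L|_{\Lambda^3_{12}}^2=-\id$, so $\lambda_6,\lambda_{12}\in\{\pm1\}$. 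The two signs are then fixed by one explicit computation each: evaluate $\phi$, $K_\psi$, $J_\psi$ and their first variations for the flat model on $\R^6$ along a single deformation in $\Lambda^3_6$ (e.g. $\alpha\wedge\omega_0$ for a coordinate $1$-form $\alpha$) and one in $\Lambda^3_{12}$ (the real part of a primitive $(2,1)$-form); these give $\lambda_6=+1$, $\lambda_{12}=-1$ (equivalently, the known indefinite signature of the Hitchin Hessian). Assembling the three blocks and using lemma \ref{thm:Hodge_star_SU3} yields $\hat\rho=\star(\rho_6+\rho_{1\oplus1})-\star\rho_{12}$.

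The main obstacle is this last step. Everything before it is representation theory of $\SU(3)$ on $\Lambda^3\R^6$ together with the recorded action of $\star$; but determining the two signs $\lambda_6,\lambda_{12}$ requires genuinely computing first variations of Hitchin's quartic construction, and — more delicate than the arithmetic itself — keeping the sign conventions coherent (action on forms versus pullback, $\star^{-1}=-\star$ on $\Lambda^3$, the skewness of $\alpha\wedge\beta$), so that the self-adjointness argument is run against the correct pairing and the $J$-components really do drop out.
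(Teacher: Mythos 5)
Your proposal takes a genuinely different route from the paper: the paper disposes of this proposition by citing Lemma~3.3 of \cite{Moroianu2008}, whereas you reconstruct the result from Hitchin's variational description of the duality map. The soft part of your argument is sound and well organized: $D\phi_\psi(\dot\psi)=\dot\psi\wedge\hat\psi$ identifies the linearization $L$ with a Hessian, the symmetry of that Hessian against the wedge pairing does show that $\star L$ is self-adjoint, and $\SU(3)$-equivariance together with the fact that $\Lambda^3_6$ and $\Lambda^3_{12}$ are irreducible of complex type correctly reduces $L$ on each of those blocks to a multiple of $\star$. The $\Lambda^3_{1\oplus1}$ block is handled cleanly (though the cleanest way to get $L(\Re\Omega)=\Im\Omega$ is degree-one homogeneity of $\psi\mapsto\hat\psi$ rather than the vaguer "moving the base point by equivariance"), and the argument via $\widehat{\widehat\psi}=-\psi$ that $\lambda_6,\lambda_{12}\in\{\pm1\}$ is plausible, modulo checking that the $\U(3)$ element carrying $\Re\Omega$ to $\pm\Im\Omega$ really intertwines the two linearizations on each block.

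The genuine gap is exactly where you flag it: the two signs $\lambda_6=+1$ and $\lambda_{12}=-1$ are asserted, not computed. Everything preceding that step only establishes $\hat\rho=\pm\star\rho_6+\star\rho_{1\oplus1}\pm\star\rho_{12}$, so the entire nontrivial content of the proposition — the relative minus sign on $\Lambda^3_{12}$, which is what the paper actually uses (e.g.\ in theorem \ref{thm:linearization}) — rests on the unperformed flat-model computations. To close the gap you would either carry out the two first-variation computations you describe, or, more economically, use that the $\GL^+(6,\R)$-orbit of $\Re\Omega_0$ is open, so $L(A\cdot\Re\Omega_0)=A\cdot\Im\Omega_0$ for every $A\in\mathfrak{gl}(6,\R)$: choosing $A$ in the pieces of $\mathfrak{gl}(6,\R)/\mathfrak{sl}(3,\C)$ that generate the $\Lambda^3_6$ and $\Lambda^3_{12}$ directions turns the sign determination into linear algebra on the infinitesimal action, with no need to differentiate $\phi$, $K_\psi$ or $J_\psi$ at all. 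As written, the proposal is a correct and informative reduction, but not yet a proof.
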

\begin{proof}
	This follows from lemma 3.3 in \cite{Moroianu2008}.
\end{proof}

\subsection{Calabi-Yau manifolds}

In the following lemma we collect some identities that hold on a manifold with a torsion-free $\SU(3)$-structure, i.e.\ a Calabi-Yau (CY) 3-fold.

\begin{definition}
	On a CY 3-fold, we define, for $\gamma \in \Omega^1(M)$,
	$$
		\operatorname{curl} \gamma=\star(d \gamma \wedge \operatorname{Re} \Omega)
	$$
\end{definition}

\begin{lemma}\label{thm:exterior_differential_SU3_decomposition}
	Let $M$ be a smooth dimensional manifold with a Calabi-Yau structure $(\Re \Omega, \omega)$.
	Then, for every $f \in C^{\infty}(M), \gamma \in \Omega^{1}(M)$ and vector field $X$ on $M$ we have
	\begin{enumerate}
		\item $d(f \omega)=d f \wedge \omega$ and $d^{*}(f \omega)=-\star d\left(\frac{1}{2} f \omega^{2}\right)=J d f$
		\item $d\left(f \omega^{2}\right)=d f \wedge \omega^{2}$ and $d^{*}\left(f \omega^{2}\right)=2 J d f \wedge \omega$
		\item $d^{*} \gamma=\star\left(d J \gamma \wedge \frac{1}{2} \omega^{2}\right)$
		\item $d \gamma=-\frac{1}{3} d^{*}(J \gamma) \omega+\frac{1}{2}(J \operatorname{curl} \gamma)^{\sharp} \lrcorner \operatorname{Re} \Omega+\pi_{8}(d \gamma)$
		\item $d \left(X\lrcorner \operatorname{Re} \Omega\right)=\frac{1}{2} \operatorname{curl} X^{\flat} \wedge \omega-\frac{1}{2}\left(d^{*} X^{\flat}\right) \operatorname{Re} \Omega+\frac{1}{2} d^{*}\left(J X^{\flat}\right) \operatorname{Im} \Omega+\pi_{12}\left(d(X\lrcorner \operatorname{Re} \Omega)\right)$
		\item $d^{*}(X\lrcorner \operatorname{Re} \Omega)=J \operatorname{curl} X^{\flat}$
	\end{enumerate}
	where $\operatorname{curl} \gamma=\star(d \gamma \wedge \operatorname{Re} \Omega)$.
\end{lemma}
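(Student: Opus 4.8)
The plan is to reduce all six identities to the pointwise linear algebra of Lemmas~\ref{thm:SU3_identities}, \ref{thm:irreducible_decomposition_SU3} and \ref{thm:Hodge_star_SU3}, the only genuinely differential inputs being that on a Calabi--Yau $3$-fold the forms $\omega$, $\Re\Omega$, $\Im\Omega$ (hence also $\omega^2$) are closed, the Leibniz rule, and the fact that in real dimension $6$ one has $d^* = -{\star}\,d\,{\star}$ on forms of every degree. With this, the ``$d$''-halves of (1) and (2) are immediate. For the ``$d^*$''-halves I would rewrite ${\star}$ of the relevant form using Lemma~\ref{thm:Hodge_star_SU3} (${\star}\omega = \tfrac12\omega^2$, ${\star}\omega^2 = 2\omega$, ${\star}(X\lrcorner\Re\Omega) = X^\flat\wedge\Im\Omega$, and ${\star}\gamma = -\tfrac12(J\gamma)\wedge\omega^2$ for $1$-forms), commute $d$ past the closed factors, and re-express the outcome with ${\star}$ again, using the two pointwise facts ${\star}(\gamma\wedge\omega^2) = -2J\gamma$ and ${\star}(\gamma\wedge\omega) = -(J\gamma)\wedge\omega$ for $1$-forms $\gamma$ (a short computation in the flat model, or a consequence of Lemma~\ref{thm:SU3_identities}(1) together with $\omega^3 = 6\,\vol$). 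This produces (1), (2) and (3) directly; for instance (3) is $d^*\gamma = -{\star}\,d\,{\star}\gamma = {\star}(\tfrac12\,dJ\gamma\wedge\omega^2)$ because $d\omega^2 = 0$.

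Identity (6) is where the type decomposition first enters. From ${\star}(X\lrcorner\Re\Omega) = X^\flat\wedge\Im\Omega$ and $d\Im\Omega = 0$ one gets $d^*(X\lrcorner\Re\Omega) = -{\star}(dX^\flat\wedge\Im\Omega)$, and I would then split $dX^\flat \in \Lambda^2_1\oplus\Lambda^2_6\oplus\Lambda^2_8$ via Lemma~\ref{thm:irreducible_decomposition_SU3}: the $\Lambda^2_1$- and $\Lambda^2_8$-parts wedge to zero against $\Im\Omega$, while for the $\Lambda^2_6$-part $Y\lrcorner\Re\Omega$ the identities $(Y\lrcorner\Re\Omega)\wedge\Re\Omega = Y^\flat\wedge\omega^2$ and $(Y\lrcorner\Re\Omega)\wedge\Im\Omega = -(JY^\flat)\wedge\omega^2$ of Lemma~\ref{thm:SU3_identities}, combined with $\curl X^\flat = {\star}(dX^\flat\wedge\Re\Omega)$, pin down $Y^\flat = \tfrac12 J\curl X^\flat$ and hence the claimed value $J\curl X^\flat$ of $d^*(X\lrcorner\Re\Omega)$.

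Identities (4) and (5) are, respectively, the decompositions of the $2$-form $d\gamma$ and the $3$-form $d(X\lrcorner\Re\Omega)$ into the irreducible summands of Lemma~\ref{thm:irreducible_decomposition_SU3}, the last summand ($\Lambda^2_8$, resp.\ $\Lambda^3_{12}$) being $\pi_8$, resp.\ $\pi_{12}$, by definition. The mechanism is that in each degree the three ``defining'' forms $\omega$, $\Re\Omega$, $\Im\Omega$ pair nontrivially under $\wedge$ with exactly one of the remaining pieces and annihilate the others (using $\omega\wedge\Re\Omega = \omega\wedge\Im\Omega = 0$ and the defining relations of $\Lambda^2_8$ and $\Lambda^3_{12}$), so each coefficient is isolated by a single wedge product. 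For (4) the $\Lambda^2_1$-coefficient is $\langle d\gamma,\omega\rangle/\lvert\omega\rvert^2$ with $\lvert\omega\rvert^2 = 3$, which matches $-\tfrac13 d^*(J\gamma)$ by (3), and the $\Lambda^2_6$-coefficient is extracted exactly as in (6). For (5) I would wedge $d(X\lrcorner\Re\Omega)$ with $\Im\Omega$, $\Re\Omega$ and $\omega$ in turn; each time $d$ can be integrated out because $\omega$, $\Re\Omega$, $\Im\Omega$ are closed, and rewriting with Lemma~\ref{thm:SU3_identities}(3)--(5) converts the results into multiples of $d^*X^\flat$, $d^*(JX^\flat)$ and $\curl X^\flat$; the normalising constants $\lvert\Re\Omega\rvert^2 = \lvert\Im\Omega\rvert^2 = 4$ here come from the Monge--Amp\`ere equation \eqref{eq:Monge_Ampere} together with Lemma~\ref{thm:Hodge_star_SU3}(7).

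The only real difficulty I anticipate is bookkeeping: keeping straight which projection is picked out by wedging with which of $\omega$, $\Re\Omega$, $\Im\Omega$, and tracking the rational factors and the appearances and cancellations of $J$ (via $J^2 = -1$ on $1$-forms and $J\omega = \omega$). Nothing here is conceptually deep, so I would simply organise the computation degree by degree as above.
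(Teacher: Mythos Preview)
Your proposal is correct and follows essentially the same approach as the paper: reduce everything to the pointwise identities of Lemmas~\ref{thm:SU3_identities} and~\ref{thm:Hodge_star_SU3} together with closedness of $\omega$, $\Re\Omega$, $\Im\Omega$, using $d^* = -{\star}d{\star}$ for the codifferentials and extracting the irreducible components in (4) and (5) by wedging against $\omega$, $\Re\Omega$, $\Im\Omega$. The paper's proof of (6) is phrased as a one-liner $d^*(X\lrcorner\Re\Omega) = -{\star}(dX^\flat\wedge\Im\Omega) = J\curl X^\flat$, but unpacking that last equality is exactly the type-decomposition argument you spell out, so there is no substantive difference.
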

\begin{proof}
	1 and 2 follow from lemma \ref{thm:Hodge_star_SU3}.2 and lemma \ref{thm:SU3_identities}.1 and that $d \omega = 0$.
	3 follows from \ref{thm:Hodge_star_SU3}.1 and that $d \omega^2 = 0$.
	4 follows from 3, the definition of $\operatorname{curl}$ and a couple calculations.
	As for 5, we write $d \left(X\lrcorner \operatorname{Re} \Omega\right)$ in components:
	$d \left(X\lrcorner \operatorname{Re} \Omega\right) = \lambda \Re \Omega + \mu \Im \Omega + \eta \wedge \omega + \pi_{12}\left(d(X\lrcorner \operatorname{Re} \Omega)\right)$.
	To find $\lambda$ we take the wedge product with $\Im \Omega$ on both sides, we use that $d \Im \Omega = 0$, lemma \ref{thm:SU3_identities}.5 and part 3 from this lemma.
	Similarly for $\mu$.
	As for $\eta$, we take the wedge product with $\omega$ on both sides, we use that $d \omega = 0$, lemma \ref{thm:SU3_identities}.3 and the definition of $\operatorname{curl}$.
	Finally, for 6,
	$$
	d^{*}(X\lrcorner \operatorname{Re} \Omega)=-* d\left(X^{b} \wedge \operatorname{Im} \Omega\right)=-*\left(d X^{b} \wedge \operatorname{Im} \Omega\right)=J \operatorname{curl} X^{b}
	.
	$$
\end{proof}

\begin{lemma}\label{thm:pi1_rho}
	For any $\rho \in \Omega^3_{12}$, $\pi_1(d \rho) = 0$.
\end{lemma}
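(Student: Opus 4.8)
The statement to prove is that $\pi_1(d\rho)=0$ for every $\rho\in\Omega^3_{12}$, where $\pi_1$ is the projection of a $4$-form onto its $\Lambda^4_1$-component. By definition $\Lambda^4_1=\star\Lambda^2_1=\R\,\star\omega=\R\,\tfrac12\omega^2$, so my plan is to detect this component algebraically by wedging with $\omega$. Since $\star\omega=\tfrac12\omega^2$ (Lemma \ref{thm:Hodge_star_SU3}.2) and $\star^2=\mathrm{id}$ on $2$-forms in dimension $6$, we have $\star\!\left(\tfrac12\omega^2\right)=\omega$, hence for any $4$-form $\alpha$
$$
\alpha\wedge\omega=\alpha\wedge\star\!\left(\tfrac12\omega^2\right)=\left\langle\alpha,\tfrac12\omega^2\right\rangle\vol .
$$
Because the decomposition $\Lambda^4=\Lambda^4_1\oplus\Lambda^4_6\oplus\Lambda^4_8$ is orthogonal and $\Lambda^4_1=\R\,\tfrac12\omega^2$ is one-dimensional (with $\tfrac12\omega^2$ nowhere zero), the right-hand side vanishes precisely when $\pi_1(\alpha)=0$. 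So it suffices to prove that $d\rho\wedge\omega=0$.

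This last identity is immediate. Since $\rho\in\Omega^3_{12}$, the characterization of $\Lambda^3_{12}$ in Lemma \ref{thm:irreducible_decomposition_SU3} gives $\rho\wedge\omega=0$ pointwise, and on a Calabi-Yau $3$-fold $d\omega=0$; therefore
$$
d\rho\wedge\omega=d(\rho\wedge\omega)+\rho\wedge d\omega=0 .
$$
Combined with the previous paragraph this yields $\pi_1(d\rho)=0$.

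The only point requiring any care — and it is routine — is the identification $\Lambda^4_1=\R\,\tfrac12\omega^2$ together with the fact that $\wedge\,\omega$ annihilates $\Lambda^4_6\oplus\Lambda^4_8$ and is injective on $\Lambda^4_1$, all of which follow from Lemma \ref{thm:Hodge_star_SU3} and the orthogonality of the $\SU(3)$-decomposition; there is no genuine obstacle. I would remark that the argument in fact shows slightly more, namely that $\pi_1(d\gamma)$ depends only on $\gamma\wedge\omega$ for any $3$-form $\gamma$, which is the same mechanism underlying the appearance of "curl", divergence and primitive-$(1,1)$ terms in the $\SU(3)$-irreducible splitting of $d$ used elsewhere in the paper (cf.\ Lemma \ref{thm:exterior_differential_SU3_decomposition}).
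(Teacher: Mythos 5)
Your proof is correct and follows the same route as the paper, whose entire argument is the identity $d\rho\wedge\omega=d(\rho\wedge\omega)=0$. The extra detail you supply — that wedging with $\omega$ detects exactly the $\Lambda^4_1$-component of a $4$-form — is the standard justification left implicit in the paper and is stated correctly.
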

\begin{proof}
	This is because $d\rho \wedge \omega = d (\rho \wedge \omega) = 0$.
\end{proof}

\begin{lemma}\label{thm:Omega_3_12}
	For every vector field $X$ on $B$ we have
	$$
	\star d(X\lrcorner \operatorname{Re} \Omega)-d(X\lrcorner \operatorname{Im} \Omega) \in \Omega_{12}^3, \quad \star d\left(X\lrcorner \operatorname{Re} \Omega\right)+d\left(X\lrcorner \operatorname{Im} \Omega\right) \in \Omega_{1 \oplus 1}^3 \oplus \Omega_6^3 .
	$$
\end{lemma}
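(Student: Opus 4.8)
The plan is to read the statement off the refinement of the $\mathrm{SU}(3)$-decomposition of Lemma~\ref{thm:irreducible_decomposition_SU3} into Dolbeault types, using that a Calabi--Yau $3$-fold is in particular Kähler, hence a complex manifold. Since $\Omega$ is a $(3,0)$-form, writing a real vector field as $X = Z + \bar Z$ with $Z$ of type $(1,0)$ gives $\beta := X\lrcorner\Omega = Z\lrcorner\Omega\in\Omega^{2,0}(B)$, with $X\lrcorner\Re\Omega = \Re\beta$ and $X\lrcorner\Im\Omega = \Im\beta$. Because $d = \partial + \bar\partial$ on $B$, the $3$-form $d\beta = \partial\beta + \bar\partial\beta$ has components only of type $(3,0)$ and $(2,1)$; I would write $\partial\beta = c\,\Omega$ for a function $c$ and split $\bar\partial\beta = \gamma + \omega\wedge\mu$ into its primitive part $\gamma\in\Omega^{2,1}_{\mathrm{prim}}$ and its non-primitive part, $\mu$ of type $(1,0)$. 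Then
\[
	d(X\lrcorner\Re\Omega) = \Re(c\,\Omega) + \omega\wedge\Re\mu + \Re\gamma, \qquad d(X\lrcorner\Im\Omega) = \Im(c\,\Omega) + \omega\wedge\Im\mu + \Im\gamma,
\]
and the three summands in each identity lie, respectively, in $\Omega^3_{1\oplus1}$, $\Omega^3_6$ and $\Omega^3_{12}$: indeed the $\mathrm{SU}(3)$-splitting of Lemma~\ref{thm:irreducible_decomposition_SU3} refines to the Dolbeault splitting, $\Omega^3_{1\oplus1}$ being the real forms inside $\Omega^{3,0}$, $\Omega^3_6$ those inside the non-primitive $(2,1)$-forms $\omega\wedge\Omega^{1,0}$, and $\Omega^3_{12}$ those inside the primitive $(2,1)$-forms.

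The next step is to compute how $\star$ acts on these three summands. Since $\star$ is $\mathrm{SU}(3)$-equivariant it preserves each of $\Omega^3_{1\oplus1}$, $\Omega^3_6$, $\Omega^3_{12}$, and a short computation with Lemma~\ref{thm:Hodge_star_SU3} (parts $5$, $6$, $7$) — equivalently, the fact that the Hodge star of a Kähler $3$-fold acts by $+i$ on primitive $(2,1)$-forms but by $-i$ on $(3,0)$-forms and on non-primitive $(2,1)$-forms — shows that on the first two summands $\star$ interchanges real and imaginary parts,
\[
	\star\,\Re(c\,\Omega) = \Im(c\,\Omega), \qquad \star(\omega\wedge\Re\mu) = \omega\wedge\Im\mu,
\]
while on the $\Omega^3_{12}$-summand it interchanges them \emph{with an extra sign}, $\star\,\Re\gamma = -\Im\gamma$. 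Consequently $\star\,d(X\lrcorner\Re\Omega) = \Im(c\,\Omega) + \omega\wedge\Im\mu - \Im\gamma$, and comparing with the expression for $d(X\lrcorner\Im\Omega)$ above yields at once
\[
	\star\,d(X\lrcorner\Re\Omega) - d(X\lrcorner\Im\Omega) = -2\,\Im\gamma \in \Omega^3_{12}, \qquad \star\,d(X\lrcorner\Re\Omega) + d(X\lrcorner\Im\Omega) = 2\,\Im(c\,\Omega) + 2\,\omega\wedge\Im\mu \in \Omega^3_{1\oplus1}\oplus\Omega^3_6,
\]
which is the claim.

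An alternative, entirely real, route avoids Dolbeault bookkeeping: apply Lemma~\ref{thm:exterior_differential_SU3_decomposition}.5 to $X$ to decompose $d(X\lrcorner\Re\Omega)$, apply the same lemma to $JX$ and combine it with $X\lrcorner\Im\Omega = -(JX)\lrcorner\Re\Omega$ (Lemma~\ref{thm:SU3_identities}.2) and $J^2 = -\id$ to decompose $d(X\lrcorner\Im\Omega)$, and then apply $\star$ via Lemma~\ref{thm:Hodge_star_SU3} as above; one finds that the $\Omega^3_{1\oplus1}$- and $\Omega^3_6$-components of $\star\,d(X\lrcorner\Re\Omega)$ coincide with those of $d(X\lrcorner\Im\Omega)$ while the $\Omega^3_{12}$-components are opposite, again giving both memberships.

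I expect the only real obstacle to be sign bookkeeping. The entire content of the lemma is that $\star$ acts on the $\Omega^3_{12}$-component of $d(X\lrcorner\Omega)$ with the opposite sign to the way it acts on the $\Omega^3_{1\oplus1}$- and $\Omega^3_6$-components; once the normalisations of $\star$, of $J$ acting on forms, and of the Lefschetz decomposition $\Omega^{2,1} = \Omega^{2,1}_{\mathrm{prim}}\oplus\omega\wedge\Omega^{1,0}$ are kept mutually consistent, both identities drop out of the type decomposition with essentially no further work.
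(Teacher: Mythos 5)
Your proposal is correct. Note that the paper itself gives no argument here — it simply cites Lemma~2.16 of \cite{Foscolo2021} — so any self-contained proof is "new" relative to this text. Your primary (Dolbeault) route is sound: writing $X\lrcorner\Omega=\beta\in\Omega^{2,0}$, splitting $d\beta$ into its $(3,0)$, $\omega\wedge(1,0)$ and primitive $(2,1)$ pieces, and using that $\star$ acts by $-i$ on the first two and by $+i$ on the third (Weil's formula; I checked the crucial sign $\star\,\Re\gamma=-\Im\gamma$ for primitive $(2,1)$-forms directly in coordinates, and it is right) gives exactly the two memberships. The proof in \cite{Foscolo2021} is instead your "alternative, entirely real, route": it decomposes $d(X\lrcorner\Re\Omega)$ via the curl formulas (Lemma~\ref{thm:exterior_differential_SU3_decomposition}.5 here), applies the same to $JX$ together with $X\lrcorner\Im\Omega=-(JX)\lrcorner\Re\Omega$, and compares components using the Hodge star identities; so your sketched alternative is essentially the cited argument, while your main argument is a clean complex-geometric repackaging of it. The one point deserving care — which you correctly flag — is the sign bookkeeping: reconciling $\star\,\Re\gamma=-\Im\gamma$ with the paper's statement $\star\sigma_{12}=-J\sigma_{12}$ in Lemma~\ref{thm:Hodge_star_SU3}.5 forces a particular convention for how $J$ is extended to $3$-forms ($J$ must act by $i^{q-p}$ rather than $i^{p-q}$ on $(p,q)$-forms there); since your Weil-formula computation is independent of that convention and yields the correct relative sign between the $\Omega^3_{1\oplus1}\oplus\Omega^3_6$ and $\Omega^3_{12}$ blocks, the argument stands.
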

\begin{proof}
	See lemma 2.16 in \cite{Foscolo2021}.
\end{proof}

\begin{corollary}\label{thm:pi6_d_rho}
	There exists an endomorphism $\sigma$ of $\Omega^3_{12}$ such that for every $\rho \in \Omega^3_{12}$, $\pi_6(d \star \rho) = \sigma (\pi_6(d\rho))$.
	In particular, if $\pi_6(d\rho)$ vanishes, $\pi_6(d \star \rho)$ also does.
\end{corollary}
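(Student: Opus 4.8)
The plan is to trade the real $\SU(3)$-type decomposition of $\Lambda^{\bullet}T^*B$ for the Dolbeault decomposition determined by the Calabi--Yau complex structure $J$, and then to observe that $\pi_6(d\rho)$ and $\pi_6(d{\star}\rho)$ are both read off from the single form $\partial\rho^{2,1}$.

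First I would record the two translations I need between Lemma~\ref{thm:irreducible_decomposition_SU3} and the Dolbeault picture. Any $\gamma\in\Omega^3_{12}$ satisfies $\gamma\wedge\Re\Omega=\gamma\wedge\Im\Omega=0$, equivalently $\gamma\wedge\Omega=\gamma\wedge\bar\Omega=0$, which forces the $(3,0)$- and $(0,3)$-components of $\gamma$ to vanish; hence $\Omega^3_{12}\subseteq\Omega^{2,1}\oplus\Omega^{1,2}$. On the other side, $\Omega^4_6={\star}\,\Omega^2_6={\star}\{X\lrcorner\Re\Omega:X\}$, and for a real vector $X=X^{1,0}+X^{0,1}$ one has $X\lrcorner\Re\Omega=\tfrac12\bigl(X^{1,0}\lrcorner\Omega+X^{0,1}\lrcorner\bar\Omega\bigr)\in\Lambda^{2,0}\oplus\Lambda^{0,2}$; since ${\star}$ sends $\Lambda^{p,q}$ to $\Lambda^{3-q,3-p}$, a dimension count gives $\Omega^4_6=\Omega^{3,1}\oplus\Omega^{1,3}$ (its real forms).

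Then, writing $\rho=\rho^{2,1}+\rho^{1,2}$ with $\rho^{1,2}=\overline{\rho^{2,1}}$ and expanding $d=\partial+\bar\partial$, among the four pieces of $d\rho$ only $\partial\rho^{2,1}$ (type $(3,1)$) and $\bar\partial\rho^{1,2}$ (type $(1,3)$) lie in $\Omega^4_6$, so $\pi_6(d\rho)=\partial\rho^{2,1}+\bar\partial\rho^{1,2}$. By Lemma~\ref{thm:Hodge_star_SU3}.5 we have ${\star}\rho=-J\rho$, and since $J$ acts as $i^{p-q}$ on $(p,q)$-forms this equals $-i\,\rho^{2,1}+i\,\rho^{1,2}$; the identical computation then gives $\pi_6(d{\star}\rho)=-i\,\partial\rho^{2,1}+i\,\bar\partial\rho^{1,2}$. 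Consequently, if $\sigma$ denotes the fibrewise, $\SU(3)$-equivariant endomorphism of $\Omega^4_6=\Omega^{3,1}\oplus\Omega^{1,3}$ that multiplies the $(3,1)$-part by $-i$ and the $(1,3)$-part by $+i$ --- a genuine real bundle endomorphism, since the two scalars are complex-conjugate, depending only on the $\SU(3)$-structure and with $\sigma^2=-\mathrm{Id}$ --- we obtain $\pi_6(d{\star}\rho)=\sigma\bigl(\pi_6(d\rho)\bigr)$ for every $\rho\in\Omega^3_{12}$. In particular, if $\pi_6(d\rho)=0$ then $\partial\rho^{2,1}=0$ (the two summands have distinct types), whence $\pi_6(d{\star}\rho)=0$.

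The step needing the most care is the dictionary between the real $\SU(3)$-type decompositions used throughout the paper and the Dolbeault decomposition --- precisely, that $\Omega^3_{12}$ lands inside $\Omega^{2,1}\oplus\Omega^{1,2}$ and that $\Omega^4_6$ is exactly $\Omega^{3,1}\oplus\Omega^{1,3}$ --- together with keeping the sign convention of Lemma~\ref{thm:Hodge_star_SU3}.5 consistent (it only flips the sign of $\sigma$). Once those are fixed the argument is purely formal: no torsion identities (Proposition~\ref{thm:SU3_torsion}, Lemma~\ref{thm:exterior_differential_SU3_decomposition}) and no analysis enter, which is exactly why it is cleaner to argue in Dolbeault terms than to stay within the real representations.
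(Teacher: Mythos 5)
Your proof is correct, but it takes a genuinely different route from the paper's. The paper stays entirely within the real $\SU(3)$-representation framework: it pairs $\rho$ in $L^2$ against the forms ${\star}\,d(X\lrcorner\Re\Omega)+d(X\lrcorner\Im\Omega)$ for compactly supported $X$, invokes Lemma~\ref{thm:Omega_3_12} (cited from \cite{Foscolo2021}) to say these lie in $\Omega^3_{1\oplus1}\oplus\Omega^3_6$ and hence pair to zero with $\Omega^3_{12}$, and integrates by parts to convert that vanishing into the identity $\langle d\rho, JX^\flat\wedge\Re\Omega\rangle_{L^2}=\langle d{\star}\rho, X^\flat\wedge\Re\Omega\rangle_{L^2}$, which identifies $\sigma$ as $X^\flat\wedge\Re\Omega\mapsto -JX^\flat\wedge\Re\Omega$. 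You instead pass to the Dolbeault bigrading: your dictionary $\Omega^3_{12}\subseteq\Omega^{2,1}\oplus\Omega^{1,2}$ and $\Omega^4_6=\Omega^{3,1}\oplus\Omega^{1,3}$ is correct (and the dimension counts confirm that the real $(2,2)$-forms exhaust $\Lambda^4_1\oplus\Lambda^4_8$, so $\pi_6$ really is extraction of the $(3,1)+(1,3)$ part), and then the identity becomes a pointwise algebraic consequence of $d=\partial+\bar\partial$ and ${\star}\rho=-J\rho$. What your argument buys is locality and explicitness — no integration by parts, no compactly supported test fields, no appeal to Lemma~\ref{thm:Omega_3_12}, and an explicit description of $\sigma$ as $\mp i$ on the $(3,1)$/$(1,3)$ parts with $\sigma^2=-\id$ (which one checks agrees with the paper's $X^\flat\wedge\Re\Omega\mapsto -JX^\flat\wedge\Re\Omega$ up to the orientation convention for $J$). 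What it costs is the integrability of $J$, needed for $d=\partial+\bar\partial$ — harmless here since the corollary is stated on a Calabi--Yau 3-fold, but worth flagging, since for a general $\SU(3)$-structure the Nijenhuis components $\mu,\bar\mu$ of $d$ would contribute extra $(3,1)$ and $(1,3)$ terms and the pointwise identity would fail. (Both you and the paper implicitly correct the statement's typo: $\sigma$ is an endomorphism of $\Omega^4_6$, not of $\Omega^3_{12}$.)
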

\begin{proof}
	Since the decomposition of lemma \ref{thm:irreducible_decomposition_SU3} is orthogonal, by lemma \ref{thm:Omega_3_12}, for any compactly supported $X\in \mathfrak{X}(B)$ and for any $\rho \in \Omega^3_{12}(B)$, $\langle \rho,  {\star} d\left(X\lrcorner \operatorname{Re} \Omega\right)+d\left(X\lrcorner \operatorname{Im} \Omega\right) \rangle_{L^2} = 0$.
	A calculation shows that this is equivalent to $\langle d \rho, JX^\flat \wedge \Re \Omega \rangle_{L^2} = \langle d {\star} \rho, X^\flat \wedge \Re \Omega \rangle_{L^2}$.
	Hence, by defining $\sigma$ pointwise to be the morphism $X^\flat \wedge \Re \Omega \mapsto -J X^\flat \wedge \Re \Omega$ (which is an isometry), we see that $\pi_6(d \star \rho) = \sigma \left(\pi_6(d \rho)\right)$.
\end{proof}

\begin{lemma}\label{thm:pi1_6}
	Let $(B, \omega, \Omega)$ be a Calabi-Yau 3-fold. Then for every function $g$
	$$
	\pi_1 \left(d^* d\left(g \omega\right)\right)=\frac{2}{3}(\triangle g) \omega
	$$
	and the operator $(f, \gamma) \mapsto \pi_{1 \oplus 6} d^* d\left(f \omega+\gamma^{\sharp}\interior \operatorname{Re} \Omega\right)$ can be identified with
	$$
	(f, \gamma) \longmapsto\left(\frac{2}{3} \triangle f, d d^* \gamma+\frac{2}{3} d^* d \gamma\right)
	.
	$$
\end{lemma}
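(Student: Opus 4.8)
The plan is to compute $d^*d$ applied to the two building blocks $f\omega$ and $\gamma^\sharp \lrcorner \operatorname{Re}\Omega$ separately, using the Calabi--Yau identities of Lemma \ref{thm:exterior_differential_SU3_decomposition}, and then project onto the components $\Lambda^2_1$ and $\Lambda^2_6$. Throughout I will freely use that $d\omega = d\operatorname{Re}\Omega = d\operatorname{Im}\Omega = 0$ and that the decomposition of Lemma \ref{thm:irreducible_decomposition_SU3} is orthogonal and parallel (so $\pi_1, \pi_6, \pi_8$ commute with nothing in particular, but the Laplacian $d^*d + dd^*$ on $2$-forms does preserve the splitting into $1\oplus 6 \oplus 8$ since the structure is torsion-free — this is the conceptual reason the statement is clean). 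First I would treat $f\omega$: by Lemma \ref{thm:exterior_differential_SU3_decomposition}.1, $d(f\omega) = df\wedge\omega$ and $d^*(f\omega) = Jdf$, so $dd^*(f\omega) = d(Jdf)$, which by Lemma \ref{thm:exterior_differential_SU3_decomposition}.4 applied to the $1$-form $\gamma = Jdf$ has $\pi_1$-part $-\tfrac13 d^*(J\cdot Jdf)\,\omega = +\tfrac13 d^*df\,\omega = \tfrac13(\triangle f)\omega$ (using $J^2 = -1$ and the sign convention $\triangle = d^*d$ on functions). Meanwhile $d^*d(f\omega) = d^*(df\wedge\omega)$; I would compute this with Lemma \ref{thm:exterior_differential_SU3_decomposition}.3 or directly via $d^*d = \triangle - dd^*$ on $\Lambda^2$, and extract that its $\pi_1$-part is $\tfrac13(\triangle f)\omega$ as well, so that together $\pi_1 d^*d(f\omega) = \tfrac23(\triangle f)\omega$, which is the first displayed formula. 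The cross-terms vanish because $Jdf$ is a $1$-form and $dd^*(\gamma^\sharp\lrcorner\operatorname{Re}\Omega)$, $d^*d(\gamma^\sharp\lrcorner\operatorname{Re}\Omega)$ have no $\Lambda^2_1$ component paired against $f\omega$ — more precisely one checks $\pi_1 d^*d(\gamma^\sharp\lrcorner\operatorname{Re}\Omega) = 0$ and $\pi_6 d^*d(f\omega) = 0$ using the identities below.

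For the $\Lambda^2_6$ part I would set $\psi := \gamma^\sharp\lrcorner\operatorname{Re}\Omega \in \Omega^2_6$, so $X^\flat = \gamma$ in the notation of Lemma \ref{thm:exterior_differential_SU3_decomposition}. By part 6 of that lemma $d^*\psi = J\operatorname{curl}\gamma$, hence $dd^*\psi = d(J\operatorname{curl}\gamma)$, and again by Lemma \ref{thm:exterior_differential_SU3_decomposition}.4 the $\Lambda^2_6$-content of $d$ of a $1$-form $\beta$ is $\tfrac12(J\operatorname{curl}\beta)^\sharp\lrcorner\operatorname{Re}\Omega$; applying this with $\beta = J\operatorname{curl}\gamma$ gives $\pi_6 dd^*\psi = \tfrac12\big(J\operatorname{curl}(J\operatorname{curl}\gamma)\big)^\sharp\lrcorner\operatorname{Re}\Omega$. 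The key auxiliary computation is to identify $\tfrac12 J\operatorname{curl}(J\operatorname{curl}\gamma)$ with $dd^*\gamma$ (up to the stated factor), which I expect to follow from unwinding $\operatorname{curl}\gamma = \star(d\gamma\wedge\operatorname{Re}\Omega)$ together with Lemma \ref{thm:Hodge_star_SU3}.3 and Lemma \ref{thm:SU3_identities}; essentially $\operatorname{curl}$ is, on the $\Lambda^2_6$-identification $\Omega^1 \supseteq (\text{closed part}) \to \Omega^3_6$, a first-order operator whose square recovers the Hodge Laplacian pieces. Similarly, for $d^*d\psi = \triangle\psi - dd^*\psi$, I would use that on a torsion-free $\SU(3)$-structure the rough/Hodge Laplacian on $\Omega^2_6 \cong \Omega^1$ intertwines with the Hodge Laplacian $dd^* + d^*d$ on $1$-forms via the map $\gamma \mapsto \gamma^\sharp\lrcorner\operatorname{Re}\Omega$ (which is parallel, hence commutes with $\nabla^*\nabla$, and the curvature correction from the Weitzenböck formula is the same on both sides since $\operatorname{Re}\Omega$ is parallel). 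Combining, $\pi_6 d^*d\psi$ corresponds to $(dd^* + d^*d)\gamma - \tfrac12 J\operatorname{curl}(J\operatorname{curl}\gamma) \leftrightarrow dd^*\gamma + d^*d\gamma - dd^*\gamma = $ something I must finally reconcile with the claimed $dd^*\gamma + \tfrac23 d^*d\gamma$; getting the coefficient $\tfrac23$ right on $d^*d\gamma$ is the delicate bookkeeping step, and I would pin it down by testing on an explicit example (e.g. an eigenform of the Laplacian on a flat $T^6$, or by comparing with the scalar case $\gamma = df$ where $\operatorname{curl} df = 0$ and the formula must reduce to consistency with the first displayed equation).

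The main obstacle, as indicated, is not any single identity but the accounting of numerical coefficients: several of the Calabi--Yau identities carry factors of $\tfrac12, \tfrac13, 2$, and the operators $\operatorname{curl}$, $J$, $\star$ interact with the irreducible projections in a way that is easy to get off by a constant. My strategy to control this is twofold — first, derive the abstract statement that $d^*d$ preserves the $\Lambda^2_{1\oplus 6}$ summand and acts there through a second-order operator on $(f,\gamma)$ that is necessarily of the form $(a\triangle f,\, b\,dd^*\gamma + c\,d^*d\gamma)$ by Schur's lemma / representation-theoretic constraints (the only natural operators $\Omega^1\to\Omega^1$ are $dd^*$ and $d^*d$, and the only natural operator on functions is $\triangle$, plus a potential curvature term which vanishes since CY $3$-folds are Ricci-flat so no zeroth-order scalar term survives); second, determine the three constants $a,b,c$ by evaluating on convenient test inputs: $\gamma = df$ forces consistency with the $\pi_1$ computation and yields one relation, a coclosed eigen-$1$-form yields another, and a closed eigen-$1$-form yields the third. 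This reduces the problem to the already-completed $\pi_1$ calculation plus two one-line checks, sidestepping the worst of the tensor calculus. If one prefers a direct route, the honest grind through Lemmas \ref{thm:SU3_identities}, \ref{thm:Hodge_star_SU3} and \ref{thm:exterior_differential_SU3_decomposition} also works but is where all the risk of sign and factor errors concentrates.
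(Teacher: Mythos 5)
The paper does not actually prove this lemma: its ``proof'' is a one-line citation to Lemma 2.19 of \cite{Foscolo2021}, together with the remark that the $\pi_1$ statement is the Hodge dual of the one there. So any from-scratch argument is a different route, and yours has gaps that prevent it from closing. The first is minor but real: the assembly of the $\pi_1$ computation is internally inconsistent. Since $\omega$ is parallel and the Weitzenb\"ock curvature term annihilates it, $\triangle(f\omega)=(\triangle f)\,\omega$; combined with $\pi_1\,dd^*(f\omega)=\tfrac13(\triangle f)\,\omega$ (which you correctly extract from Lemma \ref{thm:exterior_differential_SU3_decomposition}.4 applied to $Jdf$), the conclusion follows by \emph{subtraction}, $\pi_1\,d^*d(f\omega)=(1-\tfrac13)(\triangle f)\,\omega$. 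Your text instead asserts that $\pi_1\,d^*d(f\omega)$ equals ``$\tfrac13(\triangle f)\omega$ as well'' and that the two thirds combine ``together'' to give $\tfrac23$; that would be computing $\pi_1\triangle(f\omega)$, and it contradicts $\triangle(f\omega)=(\triangle f)\omega$. This is repairable, but as written it is wrong.

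The serious gap is in the $\Lambda^2_6$ part, and it sits exactly where you defer the work. The ``key auxiliary computation'' you propose --- identifying $\tfrac12 J\operatorname{curl}(J\operatorname{curl}\gamma)$ with $dd^*\gamma$ up to a constant --- is false: $\operatorname{curl}(df)=\star(d^2f\wedge\operatorname{Re}\Omega)=0$, so $J\operatorname{curl}(J\operatorname{curl}\,\cdot\,)$ annihilates exact $1$-forms, whereas $dd^*(df)=d\triangle f\neq 0$ in general. The fallback strategy fails for a related reason: on a manifold with a $\U(3)$-structure it is \emph{not} true that $dd^*$ and $d^*d$ are the only natural second-order operators on $1$-forms --- $J\,dd^*J$, $J\,d^*dJ$ and $\operatorname{curl}\circ\operatorname{curl}$ are equally natural and are not linear combinations of $dd^*$ and $d^*d$ (for instance, on flat $\C^3$ the two coclosed forms $\gamma=h(x^2)\,dx^1$ and $\gamma=h(y^1)\,dx^1$ are distinguished by such operators even though $dd^*\gamma=0$ and $d^*d\gamma=-h''\,dx^1$ for both). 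So Schur's lemma does not force the ansatz $\bigl(a\triangle f,\ b\,dd^*\gamma+c\,d^*d\gamma\bigr)$, and your three test inputs under-determine the operator: they cannot detect whether a term such as $J\,dd^*J\gamma$ or $\operatorname{curl}\operatorname{curl}\gamma$ survives in the final answer. Establishing that the $\pi_{1\oplus 6}$-part really does reduce to the stated combination --- including tracking the $\pi_6$-contribution of $d^*$ applied to the $\Omega^3_{12}$-component of $d(\gamma^\sharp\lrcorner\operatorname{Re}\Omega)$, which your sketch ignores entirely --- is precisely the content of Lemma 2.19 of \cite{Foscolo2021}, and there is no representation-theoretic shortcut around carrying out that type decomposition with Lemmas \ref{thm:SU3_identities}, \ref{thm:Hodge_star_SU3} and \ref{thm:exterior_differential_SU3_decomposition}.
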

\begin{proof}
	See lemma 2.19 of \cite{Foscolo2021}.
	The first statement is the Hodge dual of the one in \cite{Foscolo2021}.
\end{proof}

\subsection{$\Spin(7)$ structures}

We will make use of the following characterization for manifolds with full holonomy $\Spin(7)$.

\begin{lemma}\label{thm:Bryant}
	Let $M$ be connected and simply connected.
	Let $g$ be a metric on $M$ with $\Hol(M, g) \leq \operatorname{Spin}(7)$.
	Suppose that there are no non-zero $g$-parallel 1-forms or parallel non-degenerate 2-forms on $M$ compatible with $g$ (in the sense that the almost complex structure induced through $g$ by the non-degenerate 2-form is a pointwise isometry for $g$).
	
	Then $\Hol(M, g) = \operatorname{Spin}(7)$.
\end{lemma}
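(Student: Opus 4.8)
The plan is to derive the statement from the classification of Riemannian holonomy groups, exploiting that a metric with $\Hol \le \Spin(7)$ is Ricci-flat to cut the list of candidate holonomy groups down to a handful.

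Since $M$ is simply connected, $H := \Hol(M,g)$ coincides with its identity component, hence is a connected closed subgroup of $\Spin(7) \subseteq \SO(8)$ acting on $\R^8 \cong T_pM$ through the spin representation. Suppose, for contradiction, that $H \subsetneq \Spin(7)$. Because $\Hol(M,g) \le \Spin(7)$, the manifold carries a $g$-parallel spinor, so $g$ is Ricci-flat; in particular, were $g$ locally symmetric it would have to be flat (an irreducible Riemannian symmetric space has non-zero Einstein constant), hence have trivial holonomy and a nonzero parallel $1$-form, contradicting the hypotheses. So $g$ is not locally symmetric, and Berger's theorem applies.

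Now split into cases according to whether the holonomy representation on $\R^8$ is irreducible. If it is irreducible, Berger's theorem combined with Ricci-flatness --- which rules out $\SO(8)$, $\U(4)$ and $\Sp(2)\cdot\Sp(1)$, the last being Einstein with non-zero scalar curvature --- forces $H \in \{\,\SU(4),\ \Sp(2),\ \Spin(7)\,\}$; since $H \ne \Spin(7)$ by assumption, $H$ is $\SU(4)=\Spin(6)$ or $\Sp(2)=\Spin(5)$, and each of these lies in $\U(4)\subset\SO(8)$ and hence preserves a complex structure on $\R^8$ compatible with the inner product. Parallel transport of this complex structure gives a $g$-parallel orthogonal complex structure on $M$, i.e.\ a $g$-parallel non-degenerate $2$-form compatible with $g$, a contradiction. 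If instead the holonomy representation is reducible, the local de Rham theorem realizes $(M,g)$ locally as a Riemannian product and splits $H$ as a product of holonomy groups acting irreducibly on mutually orthogonal summands of $\R^8$ of total dimension $8$, each carrying a Ricci-flat metric. A summand of dimension at most $3$ is flat; a summand of dimension $5$, $6$ or $7$ has a complement of dimension at most $3$, again flat; in all of these cases a flat local factor supplies a nonzero $g$-parallel $1$-form on $M$, a contradiction. The only surviving possibility is the splitting $\R^8 = \R^4\oplus\R^4$ with neither factor flat: then each factor is a non-flat Ricci-flat $4$-manifold whose holonomy, by compatibility with $\Spin(7)$, must be $\SU(2)$ rather than $\SO(4)$, so each factor is hyperkähler and carries a parallel compatible complex structure, and the direct sum of these is a $g$-parallel compatible complex structure on $M$ --- again a contradiction. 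Hence $H = \Spin(7)$.

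This result is classical, essentially due to Bryant, and could be cited rather than proved. In a self-contained argument the delicate point is the reducible case in the absence of a completeness hypothesis (only the local de Rham splitting is available), and specifically the $4+4$ sub-case: a Ricci-flat $4$-manifold need not be hyperkähler in general, and one must show that a local product of two such $4$-manifolds has holonomy inside $\Spin(7)$ only if each factor is hyperkähler, equivalently that $\Spin(7)$ has no subgroup acting as $\SO(4)$ on a $4$-dimensional orthogonal summand of the spin representation. That verification is the main obstacle; everything else is a direct reading of Berger's list together with the identifications $\Spin(5)=\Sp(2)$, $\Spin(6)=\SU(4)$ and the standard restrictions of the spin representation.
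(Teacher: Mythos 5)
Your proposal takes a genuinely different route from the paper: the paper disposes of this lemma in one line by citing Lemma~2 of \cite{Bryant1987} (adding only the observation that the $2$-form preserved in the $\SU(4)$ case is a Kähler form, hence non-degenerate and compatible with $g$), whereas you are in effect reproving Bryant's lemma from Berger's classification plus the local de~Rham splitting. The skeleton of your argument is the correct one and is essentially how Bryant's lemma is proved: simple connectedness reduces to the restricted holonomy, the parallel spinor gives Ricci-flatness, Ricci-flatness plus local symmetry forces flatness, the irreducible case leaves only $\SU(4)$ and $\Sp(2)$ (each preserving a compatible complex structure), and in the reducible case every dimension pattern except $4+4$ produces a flat local factor and hence a parallel $1$-form. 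One small inaccuracy: Ricci-flatness does not rule out $\SO(8)$; what rules it out is simply $\SO(8)\not\leq\Spin(7)$.

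The one genuine gap is the step you yourself flag as ``the main obstacle'': in the $4+4$ case you must show that a connected subgroup of $\Spin(7)$ preserving an orthogonal splitting $\R^8=V_1\oplus V_2$ into $4$-planes cannot act as $\SO(4)$ on either summand, and you assert this without proof. Since this is precisely the point where a reducible Ricci-flat factor could fail to be hyperkähler, the argument is incomplete as written. The verification is short: if the splitting is preserved, $V_1$ is either a Cayley $4$-plane or not; the stabilizer in $\Spin(7)$ of a Cayley $4$-plane is $\left(\Sp(1)\times\Sp(1)\times\Sp(1)\right)/\Z_2$ acting on $\H\oplus\H$ by $(q_1,q_2,q_3)\cdot(x,y)=(q_1x\bar q_3,\,q_2y\bar q_3)$, so its image in $\SO(V_1)\times\SO(V_2)$ is the fibre product of two copies of $\Sp(1)\cdot\Sp(1)$ over the shared right-hand factor $q_3$; any subgroup acting as all of $\SO(4)$ on $V_1$ while preserving the product structure would have to contain $\SO(4)\times\{1\}$, which forces $q_3=\pm1$ and hence only left multiplications on $V_1$ --- a contradiction. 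For a non-Cayley $4$-plane the stabilizer is smaller still. With that supplied (and the observation that the two resulting $\Sp(1)$-invariant complex structures $J_1\oplus J_2$ assemble into a holonomy-invariant compatible complex structure on $\R^8$), your proof closes. Given that the fact is classical, citing Bryant as the paper does is the economical choice; your version buys self-containedness at the cost of this representation-theoretic verification.
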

\begin{proof}
	This is just lemma 2 from \cite{Bryant1987} with the additional remark that the 2-form preserved by $\SU(4)$ is Kähler, thus it is non-degenerate and it is compatible with $g$.
\end{proof}

\subsection{Spin geometry}

Since $\SU(3) \leq \SU(4) \iso \Spin(6)$ is the stabilizer of a nonzero vector in $\C^4$, an $\SU(3)$-structure is equivalently expressed as a spin structure together with a non-vanishing spinor.
In particular every manifold with an $\SU(3)$ structure is spin.
By analyzing $\SU(3)$ irreducible representations, we see that the real spinor bundle $\slashed{S}(B)$ must be isomorphic to $\underline{\R}\oplus\underline{\R} \oplus T^* B$.
The isomorphism is given by
$$
(f, g, \gamma) \longmapsto f \psi+g \mathrm{Vol} \cdot \psi+\gamma \cdot \psi
$$
where $\cdot$ is Clifford multiplication.

The following lemma describes the Dirac operator $\Dirac$ in terms of this isomorphism.

\begin{lemma}\label{thm:Dirac_operator_expression}
	Let $(M, \omega, \Omega)$ be a Calabi-Yau 3-fold. Then for every $f, g \in C^{\infty}(M)$ and $\gamma \in \Omega^1(M)$
	$$
	\Dirac(f, g, \gamma)=\left(d^* \gamma,-d^* J \gamma, \operatorname{curl} \gamma+d f-J d g\right) \text {. }
	$$
	In particular if $f=0=g$ then $\gamma$ is in the kernel of $\Dirac$ if and only if $d^* \gamma=0$ and $d \gamma \in \Omega^2_8(M)$.
\end{lemma}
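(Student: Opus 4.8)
The plan is to compute $\Dirac$ component by component using the explicit isomorphism $\slashed{S}(M) \iso \underline{\R} \oplus \underline{\R} \oplus T^*M$ given by $(f,g,\gamma) \mapsto f\psi + g\,\vol\cdot\psi + \gamma\cdot\psi$, where $\psi$ is the parallel unit spinor associated with the Calabi-Yau structure. Since $\psi$ is parallel and $\vol\cdot\psi$ is parallel (the volume element is $\nabla$-parallel), the Levi-Civita connection acts only through the $\gamma$-component: $\nabla_X(f\psi + g\,\vol\cdot\psi + \gamma\cdot\psi) = (df)(X)\,\psi + (dg)(X)\,\vol\cdot\psi + (\nabla_X\gamma)\cdot\psi$. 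Applying Clifford multiplication by $e_i$ and summing, we get $\Dirac(f,g,\gamma) = \sum_i e_i \cdot \big( (\partial_i f)\psi + (\partial_i g)\,\vol\cdot\psi + (\nabla_i\gamma)\cdot\psi\big)$. The term $\sum_i e_i\cdot(\partial_i f)\psi = df\cdot\psi$ contributes to the $\gamma$-slot a term $df$; similarly $\sum_i e_i\cdot(\partial_i g)\,\vol\cdot\psi = (dg)\cdot\vol\cdot\psi$, and one must commute $\vol$ past the 1-form — in six dimensions $e_i\cdot\vol = -\vol\cdot e_i$ up to the relevant sign, which turns $dg\cdot\vol\cdot\psi$ into $\pm \vol\cdot dg\cdot\psi$, i.e.\ into the image of $\mp Jdg$ in the $\gamma$-slot (using that Clifford multiplication by $\vol$ on 1-form-type spinors corresponds to applying $\pm J$). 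This accounts for the $df - Jdg$ appearing in the third component.

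Next I would handle the main term $\sum_i e_i\cdot(\nabla_i\gamma)\cdot\psi$, which is the Dirac operator applied to the "1-form part" of the spinor bundle. The standard fact is that, under the identification of 1-forms with a subspace of spinors via $\gamma\mapsto\gamma\cdot\psi$, the Dirac operator corresponds to a first-order operator built from $d\gamma$ and $d^*\gamma$; concretely $\gamma\cdot\psi$ lies in a sum of the $\underline\R$, $\underline\R$, and $\Lambda^1$ pieces once we decompose $d\gamma$ into its $\SU(3)$-irreducible components $\pi_1(d\gamma)\,\omega + \pi_6(d\gamma) + \pi_8(d\gamma)$. Using Lemma \ref{thm:exterior_differential_SU3_decomposition}.4, $\pi_1(d\gamma)$ is $-\tfrac13 d^*(J\gamma)$ and the $\pi_6$-part is $\tfrac12(J\curl\gamma)^\sharp\interior\Re\Omega$, while $\pi_8$ is the trace-free Kähler part; together with the Hodge pairing $\langle d\gamma, \omega\rangle \sim d^*\gamma$ type identities this is exactly what produces the three outputs $d^*\gamma$, $-d^*J\gamma$, and $\curl\gamma$. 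The cleanest way to pin down the constants is to Clifford-multiply $\gamma\cdot\psi$ and project onto the three summands, matching against the known Clifford relations $\omega\cdot\psi$, $\Re\Omega\cdot\psi$ being proportional to $\psi$, $\vol\cdot\psi$ respectively — this is a finite representation-theoretic computation of the Clifford action of $\SU(3)$-forms on $\psi$.

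The main obstacle is bookkeeping the signs and normalization constants: there are several conventions in play (the sign in $e_i\cdot e_j + e_j\cdot e_i = -2\delta_{ij}$, the orientation convention fixing $\vol\cdot\psi$, and the $\SU(3)$ Clifford identities $\omega\cdot\psi = 3i\,\psi$-type relations) and it is easy to get $\curl$ versus $-\curl$ or a spurious factor of $2$ or $3$. I would fix conventions once at the outset (matching those implicit in $\Lambda^2 = \Lambda^2_1\oplus\Lambda^2_6\oplus\Lambda^2_8$ and in Lemma \ref{thm:exterior_differential_SU3_decomposition}), then verify consistency on simple test cases — e.g.\ $\gamma = df$ should give $d^*df$ in the first slot and $\curl(df) = 0$, matching harmonic theory — and on the flat model $\C^3$. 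The final sentence of the lemma is then immediate: setting $f = g = 0$, the formula reads $\Dirac(0,0,\gamma) = (d^*\gamma, -d^*J\gamma, \curl\gamma)$, and recalling $d^*J\gamma = \star(d\gamma\wedge\tfrac12\omega^2)$ (Lemma \ref{thm:exterior_differential_SU3_decomposition}.3) together with Lemma \ref{thm:Hodge_star_SU3}.1, all three components vanish precisely when $d^*\gamma = 0$ and $d\gamma$ is of type $\Lambda^2_8$, i.e.\ $d\gamma\in\Omega^2_8(M)$, since $\pi_1(d\gamma)$ and $\pi_6(d\gamma)$ are exactly detected by $d^*J\gamma$ and $\curl\gamma$.
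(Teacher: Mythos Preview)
Your plan is correct and is the natural direct argument: use parallelism of $\psi$ and $\vol\cdot\psi$ to reduce $\Dirac$ to Clifford multiplication by $df$, $dg\cdot\vol$, and $(d\gamma - d^*\gamma)$ acting on $\psi$, then decompose $d\gamma$ into $\SU(3)$-types via Lemma~\ref{thm:exterior_differential_SU3_decomposition}.4 and use that $\Lambda^2_8\cdot\psi = 0$ (being the $\mathfrak{su}(3)$-part) while $\omega\cdot\psi$ and $(X\lrcorner\Re\Omega)\cdot\psi$ land in the scalar and $1$-form summands respectively. The paper does not actually carry this out; it simply cites Lemma~2.22 of \cite{Foscolo2021}, where precisely this computation is performed, so your sketch is doing the work that the paper outsources. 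Your caveat about sign and constant bookkeeping is well placed --- that is genuinely the only delicate point --- and your deduction of the ``in particular'' clause from the vanishing of $\pi_1(d\gamma)$ and $\pi_6(d\gamma)$ is exactly right.
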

\begin{proof}
	See lemma 2.22 in \cite{Foscolo2021}.
\end{proof}

\subsection{Connections on $T^2$-bundles}\label{sec:T2_connections}

We begin with the following standard result.
\begin{proposition}\label{thm:product_bundle}
	Suppose $P \to M$ is a principal $G \times H$ bundle.
	Then there are a principal $G$ bundle $P_G \rightarrow M$ and a principal $H$-bundle $P_H \rightarrow M$ with the property that $P$ is isomorphic to the product bundle $P_G \times_M P_H$.
\end{proposition}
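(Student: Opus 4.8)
The plan is to produce the bundles $P_G$ and $P_H$ as associated bundles obtained by quotienting $P$ by the two normal subgroups $H$ and $G$ of $G \times H$. Concretely, since $H \trianglelefteq G \times H$ acts freely on $P$ (it sits inside the free $G\times H$ action), the quotient $P_G := P/H$ is a smooth manifold, and the residual action of $(G\times H)/H \cong G$ makes $\pi_G : P_G \to M$ into a principal $G$-bundle; symmetrically $P_H := P/G$ is a principal $H$-bundle over $M$. The projections $P \to P_G$ and $P \to P_H$ are $G\times H$-equivariant (with $G\times H$ acting on $P_G$ through $G$ and on $P_H$ through $H$), so they assemble into a single map
$$
\varphi : P \longrightarrow P_G \times_M P_H, \qquad \varphi(p) = \bigl([p]_H, [p]_G\bigr),
$$
which is $G\times H$-equivariant for the diagonal-type action on the fibre product. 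The claim is then that $\varphi$ is an isomorphism of principal $G\times H$-bundles over $M$.

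The key steps, in order, are: (1) check that $H$ and $G$ act freely and properly on $P$ so that $P_G$ and $P_H$ are genuine manifolds and the quotient maps are submersions — this is immediate from the freeness and properness of the $G\times H$ action; (2) verify local triviality of $\pi_G$ and $\pi_H$, which one gets by pushing forward a local trivialization $P|_U \cong U \times G \times H$ and quotienting the obvious way; (3) check $\varphi$ is well-defined, smooth, and $G\times H$-equivariant, all of which are formal once the quotient maps are understood; (4) conclude $\varphi$ is an isomorphism. For (4), the cleanest argument is that any $G\times H$-equivariant map between principal $G\times H$-bundles over the same base (covering the identity on $M$) is automatically an isomorphism: it is fibrewise a $G\times H$-equivariant map between $G\times H$-torsors, hence a bijection, and being a smooth map of the same constant rank it is a diffeomorphism. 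Alternatively one can check bijectivity directly in a local trivialization, where $\varphi$ becomes $(m,g,h) \mapsto \bigl((m,g),(m,h)\bigr)$ up to the identifications of $P_G|_U$ and $P_H|_U$ with $U\times G$ and $U\times H$.

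The one point that needs a little care — and the closest thing to an obstacle — is matching the trivializations of $P_G$ and $P_H$ so that the fibre product $P_G \times_M P_H$ is locally $U \times G \times H$ in a way compatible with $\varphi$; that is, one must make sure the cocycle of $P$ with values in $G\times H$ splits into its $G$- and $H$-components as the cocycles of $P_G$ and $P_H$. This is straightforward because $G\times H$ is a direct product, so a $(G\times H)$-valued transition function $g_{\alpha\beta} = (g_{\alpha\beta}^{(1)}, g_{\alpha\beta}^{(2)})$ literally has independent components, and these are exactly the transition functions of $P_G$ and $P_H$; hence $P_G \times_M P_H$ has cocycle $g_{\alpha\beta}$ again, and is isomorphic to $P$. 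Everything else is routine differential-topological bookkeeping, and no genuinely hard analysis is involved.
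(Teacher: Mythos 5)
Your proof is correct: quotienting $P$ by the two normal factors $H$ and $G$ to get $P_G=P/H$ and $P_H=P/G$, mapping $P$ into the fibre product via the two quotient maps, and invoking the fact that an equivariant bundle map over the identity between principal bundles with the same structure group is automatically an isomorphism is exactly the standard argument (equivalently, the cocycle-splitting argument you give as an alternative). The paper states this proposition as a standard result and omits the proof entirely, so there is nothing to compare against; your write-up fills that gap correctly.
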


\noindent Let us describe explicitly connections on a 2-torus bundle.

\begin{proposition}\label{thm:T2_connections}
	Let $\pi: M \to B$ be a $T^2$-bundle over a smooth manifold $B$.
	
	Every identification between the Lie algebra of $T^2$ and $\R^2$ induces a bijection between principal connections for $\pi$ and couples of $T^2$-invariant 1-forms $(\eta, \theta)$ on $M$ such that
	$$
		\eta(X) = 1, \eta(Y) = 0
	\qquad
		\theta(X) = 0, \theta(Y) = 1
	$$
	where $X, Y$ are the vector fields associated to the elements of the Lie algebra of $T^2$ that correspond to the elements of the standard basis of $\R^2$.
	
	The identification is non-canonical in the sense that it does depend on the identification of the Lie algebra of $T^2$ with $\R^2$.
\end{proposition}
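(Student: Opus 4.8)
The plan is to reduce the statement to the classical correspondence between principal connections on a circle bundle and suitable invariant $1$-forms, applied to each factor of the splitting $M \iso P_1 \times_B P_2$ furnished by Proposition \ref{thm:product_bundle} with $G = H = T^1$. First I would fix the identification $\Lie(T^2) \iso \R^2$; this picks out the standard basis $e_1, e_2$ of $\R^2$ and hence generators of the two $S^1$ factors, which in turn produce the fundamental vector fields $X, Y$ on $M$ (the infinitesimal generators of the two circle actions). A principal connection on $\pi : M \to B$ is, by definition, a $\Lie(T^2)$-valued $1$-form $A \in \Omega^1(M; \R^2)$ that is $T^2$-invariant and restricts to the Maurer--Cartan form on the fibers; writing $A = \eta\, e_1 + \theta\, e_2$ splits $A$ into its two real components $\eta, \theta \in \Omega^1(M)$, and the defining properties of $A$ translate verbatim into: $\eta, \theta$ are $T^2$-invariant, and $\eta(X) = 1$, $\eta(Y) = 0$, $\theta(X) = 0$, $\theta(Y) = 1$ (the normalization on the fibers, since $X, Y$ correspond to $e_1, e_2$). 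This gives one direction and shows the map $A \mapsto (\eta, \theta)$ is well-defined into the claimed set.

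For the reverse direction, given a pair $(\eta, \theta)$ of $T^2$-invariant $1$-forms satisfying the normalization conditions, I would set $A := \eta\, e_1 + \theta\, e_2$ and check it is a principal connection. Invariance of $A$ under $T^2$ is immediate from invariance of $\eta$ and $\theta$. The condition $A(\xi^\sharp) = \xi$ for every $\xi \in \Lie(T^2)$, where $\xi^\sharp$ is the fundamental vector field, follows by linearity once it is checked on $e_1, e_2$: $A(X) = \eta(X) e_1 + \theta(X) e_2 = e_1$ and $A(Y) = \eta(Y) e_1 + \theta(Y) e_2 = e_2$, which are exactly the four scalar conditions in the statement. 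So the two assignments are mutually inverse, establishing the bijection. The final sentence about non-canonicity is simply the observation that a different identification $\Lie(T^2) \iso \R^2$ changes the basis $e_1, e_2$, hence the generating vector fields $X, Y$, hence the normalization conditions and the resulting pair $(\eta, \theta)$ — so the bijection is not intrinsic to $\pi$ alone.

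Honestly, there is no real obstacle here: the statement is a routine unwinding of the definition of a principal connection on an abelian structure group, combined with the elementary fact that an $\R^2$-valued form is the same as an ordered pair of $\R$-valued forms. The only point that deserves a sentence of care is why the role of $P_1 \times_B P_2$ is not strictly needed for the bijection itself — it is, rather, the conceptual reason the pair $(\eta, \theta)$ "wants" to be thought of as a pair of $S^1$-connections, namely that under the isomorphism $M \iso P_1 \times_B P_2$ the form $\eta$ (resp. $\theta$) is the pullback of a connection $1$-form on $P_1$ (resp. $P_2$), as asserted in the introduction. I would mention this as a remark but carry out the proof directly on $M$ via the $\Lie(T^2) \iso \R^2$ decomposition, since that is the shortest route and makes the dependence on the identification transparent.
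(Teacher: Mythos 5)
Your proof is correct and follows essentially the same route as the paper, whose entire proof is the one-line observation that the identification $\Lie(T^2)\iso\R^2$ induces $\Omega^1(M;\Lie(T^2))\iso\Omega^1(M)\oplus\Omega^1(M)$, under which the defining properties of a principal connection translate component-wise into exactly the stated invariance and normalization conditions. You have simply spelled out that unwinding in full detail.
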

\begin{proof}
	Under the given identification
	$\Omega^1(M; \operatorname{Lie}(T_2)) \iso \Omega^1(M; \R^2) \iso \Omega^1(M) \oplus \Omega^1(M)$.
\end{proof}

\begin{remark}\label{rmk:XY_commute}
	Note that the vector fields $X$ and $Y$ commute since $T^2$ is commutative.
\end{remark}

We identify $\Lie(T^2)$ and $\R^2$ in such a way that $\eta$ and $\theta$ can be interpreted as the pullback of two connections on two circle bundles whose product is the given $T^2$ bundle.
In other words, we want to choose as a basis of $\Lie(T^2)$ two elements that exponentiate (via the exponential map from $\Lie(T^2)$ to $T^2$) to give circle actions.
Not all elements of $\Lie(T^2)$ have this property, since it is well known that there are also elements that exponentiate to give faithful $\R$-action (and all elements of $\Lie(T^2)$ are of one of the two kinds).
Identifications of $\Lie(T^2)$ with $\R^2$ of the type described are in correspondence with identifications of  $T^2$ with $S^1\times S^1$ provided that we also fix an identification of $\Lie(S^1)$ and $\R$.
%
We choose a preferred (standard) such identification, which is the one that exponentiates $2\pi$ to the identity on $S^1$. 
Thus, there are $\GL_2(\Z)$ many identifications of $\Lie(T^2)$ and $\R^2$ that respect our choices, giving us $\GL_2(\Z)$ many ways to split a given $T^2$ bundle into the product of two circle bundles.
For any of this way any connection on the $T^2$ bundle will split into two connections on the two circle bundles, which will be different depending on the splitting.
%
%

For the rest of this paper, we fix an identification $\Lie(T^2) \iso \R^2$ satisfying the above described properties.
Hence, given a $T^2$-bundle, we will have two preferred fundamental vector fields $X$ and $Y$, which correspond to the two elements of the standard basis of $\R^2$.


\section{Asymptotic geometries}\label{sec:asymptotical}

In this section we state some preliminary results on asymptotically conical Calabi Yau 3-folds and we introduce the concept of asymptotically $T^k$ fibered conical manifold.
Before this though, let us quickly review the theory of Calabi-Yau cones.
A more in-depth discussion can be found in chapter 4 of \cite{Foscolo2021}.

\subsection{Calabi Yau cones}

Let $(\Sigma, g_\Sigma)$ be a Riemannian manifold.
A \textit{Calabi Yau cone} is a Riemannian cone $(C = C(\Sigma), g_C = dr^2 + r^2 g_\Sigma)$ which is also a Calabi Yau manifold.
Let us now restrict to the case $\dim_\R C = 6$ and thus $\dim_\R \Sigma = 5$.
It turns out that the condition that $C$ possesses a Calabi Yau structure is equivalent to the condition that $\Sigma$ possesses an $\Sp(1) = \SU(2)$-structure with prescribed torsion.
Such a manifold $\Sigma$ is called a Sasaki-Einstein manifold.
The $\SU(2)$-structure is equivalently given by a 4-tuple $(\eta, \omega_1, \omega_2, \omega_3) \in \Omega^1(\Sigma) \times \Omega^2(\Sigma)^3$ that satisfies the following conditions: $\eta$ is nowhere vanishing and $\omega_1, \omega_2, \omega_3$ restricted to $\ker \eta$ span a 3-dimensional subspace for $\Lambda^2(\ker \eta)^*$ at each point and they are an oriented basis for the space they span (where the orientation is the one induced by the natural orientation on the space of self-dual 2-forms of a 4-dimensional vector space).

The $\SU(2)$-structure $(\eta, \omega_1, \omega_2, \omega_3) \in \Omega^1(\Sigma) \times \Omega^2(\Sigma)^3$ will be called \textit{Sasaki-Einstein} if the $\SU(3)$-structure $(\eta, \omega_1, \omega_2, \omega_3)$ satisfies
$$
d \eta=2 \omega_1, \quad d \omega_2=-3 \eta \wedge \omega_3, \quad d \omega_3=3 \eta \wedge \omega_2
$$
which is equivalent to ask that the $\SU(3)$ structure defined on $C(\Sigma)$ by
$$
\omega_{\mathrm{C}}=r d r \wedge \eta+r^2 \omega_1, \quad \Omega_{\mathrm{C}}=r^2(d r+i r \eta) \wedge\left(\omega_2+i \omega_3\right)
$$
be Calabi Yau.
Note that since $\omega_1$ is exact, it is in particular closed.

\subsection{Asymptotically Conical Calabi Yau 3-folds}

In this section we collect some result on asymptotically conical CY 3-folds that we will use in the rest of the paper.
All these results are taken from \cite{Foscolo2021}, \cite{Conlon2012} or \cite{Conlon2024}, which we refer to for the proofs.
We begin with the definitions.

\begin{definition}
	Let $(C(\Sigma), g_C = dr^2 + r^2 g_\Sigma)$ be a Riemannian cone and let $P$ be a principal $T^k$ bundle on it, which is the pullback of a principal $T^k$-bundle $\tilde{P}$ on $\Sigma$.
	Suppose further $P$ has a metric $g_P$ which is the pullback of a $T^k$-invariant metric $g_{\tilde{P}}$ on $\tilde{P}$.
	We call $r$ also the pullback of the radial function $r$ on $P$.
	
	If $E$ is a metric vector bundle on $P$ with metric connection $\nabla$ splitting as $\nabla = \nabla^C + \nabla^T$ (where $\nabla^C$ and $\nabla^T$ are respectively the base and fiber components of $\nabla$), $\nu \in \R$ and $m \in \N$, $C^m_{\nu}(P, E)$ is defined to be the subset of $C^m(P, E)$ of those sections $u$ that converge under the norm
	$$
	\norm{u}_{C^m_\nu} = \sum_{j=0}^m r^{-\nu} \left(\norm{r^j\left(\nabla^C\right)^ju}_{C^0} + \norm{\left(\nabla^T\right)^ju}_{C^0}\right)
	.
	$$
	$C^\infty_\nu$ is defined as $\bigcap_m C^m_\nu$.
	
	Moreover, let $\alpha \in (0,1)$, and define the $C^{m, \alpha}_\nu$ norm to be
	$$
		\norm{u}_{C^{m, \alpha}_\nu} = \norm{u}_{C^m_\nu} + \left[r^{-\nu+m+ \alpha} \nabla^k u\right]_\alpha
		,
	$$
	where $[\cdot]_\alpha$ is the Hölder seminorm defined using parallel transport of $\nabla$ to identify fibers of $E$ along minimizing geodesics in a strictly convex neighborhood at each point.
\end{definition}
Note that the above definition reduces to the standard definition on the cone in the case $k = 0$, i.e.\ when the fibers are just points.

\begin{definition}\label{def:AC}
	Let $(B, g)$ be a complete Riemannian manifold and let $(C(\Sigma), g_C = dr^2 + r^2 g_\Sigma)$ be a Riemannian cone.
	
	We say that $B$ is \textit{asymptotically conical} (AC) of rate $\mu < 0$ with tangent cone $C(\Sigma)$ if, for some $R \in \R^+$ and $K \subseteq M$ compact, there exists a diffeomorphism $\Psi : C_R(\Sigma) \to B \setminus K$, such that
	$$
		\Psi^*g - g_C \in C^\infty_\mu(C(\Sigma), g_C)
		.
	$$
\end{definition}

In our definition, we are assuming that $B$ only has one topological end because we are interested in the Calabi Yau case and via the Cheeger-Gromoll splitting theorem we know that asymptotically conical manifolds with non-negative Ricci curvature only have one end.

For an asymptotically conical manifold $B$ we define $C^k_\nu(B)$ to be the set of those functions $f$ such that $\Psi_*f \in C^k_\nu(C(\Sigma))$, and we define $C^\infty_\nu$ analogously.

The following lemma allows us to reduce to simply connected manifolds in many proofs.

\begin{lemma}\label{thm:finite_fundamental_group}
	Let $B$ be an $A C$ Calabi-Yau 3-fold.
	Then $B$ has finite fundamental group.
	
	Moreover, its fundamental cover $\tilde{B}$ is also AC Calabi-Yau.
\end{lemma}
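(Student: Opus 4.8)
The statement has two parts: first, that an AC Calabi–Yau 3-fold $B$ has finite fundamental group; second, that the universal cover $\tilde B$ is again AC Calabi–Yau. I would attack the first part via the Cheeger–Gromoll / Bishop–Gromov circle of ideas. An AC Calabi–Yau 3-fold is Ricci-flat, hence has nonnegative Ricci curvature, and since it is asymptotically conical it has Euclidean volume growth rate $6$, in particular positive asymptotic volume ratio. The key point is that the universal cover $\tilde B$ is also a complete Ricci-flat manifold, and the covering $\tilde B \to B$ is $|\pi_1(B)|$-to-one, so $\tilde B$ would have asymptotic volume ratio equal to $|\pi_1(B)|$ times that of $B$. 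But by Bishop–Gromov the asymptotic volume ratio of any complete manifold with nonnegative Ricci curvature is at most that of Euclidean space, i.e. bounded above by a fixed finite constant. Combining these two facts forces $|\pi_1(B)| < \infty$. (This is exactly the argument that AC manifolds with nonnegative Ricci curvature have finite fundamental group; one should cite it or Conlon–Hein, where it appears in the context of AC Calabi–Yau manifolds.)

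**Second part: $\tilde B$ is AC Calabi–Yau.** Since $\pi_1(B)$ is finite, the universal cover $p : \tilde B \to B$ is a finite Riemannian covering. The Calabi–Yau structure on $B$ — the metric, the complex structure, $\omega_0$, and $\Omega_0$ — pulls back under $p$ to parallel tensors on $\tilde B$ (parallelism is a local condition, preserved by local isometries), so $\tilde B$ is a complete Ricci-flat Kähler 3-fold with a holomorphic volume form, i.e. Calabi–Yau. It remains to check the asymptotic-conical structure of the pullback. Here I would use that $B \setminus K$ is diffeomorphic to $C_R(\Sigma) = (R,\infty) \times \Sigma$ via $\Psi$, so the end of $\tilde B$ is the cover of $(R,\infty)\times\Sigma$ corresponding to $p_*(\pi_1 \tilde B) = 0$ inside $\pi_1((R,\infty)\times\Sigma) = \pi_1(\Sigma)$ — i.e. $p^{-1}(B\setminus K)$ is $(R,\infty) \times \hat\Sigma$ where $\hat\Sigma \to \Sigma$ is the connected covering of $\Sigma$ classified by $\ker\big(\pi_1(\Sigma) \to \pi_1(B)\big)$ (and $\tilde B$ has a single end because it too is Ricci-flat with Euclidean volume growth, so $\hat\Sigma$ is connected). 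The pulled-back metric then differs from the pullback cone metric $dr^2 + r^2 g_{\hat\Sigma}$ by $p^*(\Psi^* g - g_C)$; since $p$ restricted to the end is a local isometry with respect to the cone metrics and $\Psi^* g - g_C \in C^\infty_\mu$, the same decay estimate passes to the pullback — the radial derivatives and the norms are computed locally and are invariant under the covering. Finally one observes $(\hat\Sigma, g_{\hat\Sigma})$ is again Sasaki–Einstein because the $\SU(2)$-structure defining equations are local and pull back, so the pullback cone is genuinely a Calabi–Yau cone and $\tilde B$ is AC of the same rate $\mu$.

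**The main obstacle.** The finiteness of $\pi_1$ is the substantive step and depends on a volume-comparison argument that I would want to state carefully (or simply cite, as this is known for AC Ricci-flat manifolds); everything after that is a matter of checking that each piece of the AC Calabi–Yau data — the Ricci-flat Kähler structure, the single end, the Sasaki–Einstein link, and the weighted-Hölder decay — is of a local or covering-invariant nature and therefore survives pullback along a finite Riemannian covering. The only mild subtlety in the second part is verifying that $\tilde B$ has exactly one end (so that the link $\hat\Sigma$ is connected and the cone structure makes sense), which I would handle by the same Cheeger–Gromoll splitting remark already used in the paper for $B$ itself. Since all of these are standard, I would expect the author's proof to be short and to consist largely of such citations; I would structure my own proof identically, emphasizing the volume-ratio inequality for part one.
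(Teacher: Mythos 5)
Your proposal is correct, but note that the paper does not actually prove this lemma: it is a one-line citation to Proposition 1.6 of \cite{Conlon2024}, whereas you supply the underlying argument. What you describe is the standard route (essentially Anderson/Li): nonnegative Ricci curvature plus Euclidean volume growth forces $\pi_1$ finite by volume comparison, and then the AC Calabi--Yau data is local and covering-invariant, so it lifts to the finite universal cover. One point to tighten in part one: saying that $\tilde B$ has asymptotic volume ratio $|\pi_1(B)|$ times that of $B$ presupposes the finiteness you are trying to prove; the correct form of the argument fixes a finite subset $\Gamma_0\subseteq\pi_1(B)$, packs $|\Gamma_0|$ disjoint lifts of $B(x,r-D)$ into $B(\tilde x,r)$ (with $D$ the diameter of the orbit $\Gamma_0\tilde x$), and deduces $|\Gamma_0|\,\Theta(B)\le \omega_6$ from Bishop--Gromov, whence $|\pi_1(B)|\le \omega_6/\Theta(B)<\infty$. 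You flag this yourself, so it is a presentational rather than substantive gap. In part two, your handling of the single-end issue via Cheeger--Gromoll is the right move (two ends would force an isometric splitting $\R\times N$ with $N$ compact, contradicting volume growth of rate $6$), and the observation that the weighted H\"older decay and the Sasaki--Einstein equations on the link are preserved under the local isometry completes the argument. The trade-off is simply that your version is self-contained while the paper's defers everything to the classification literature.
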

\begin{proof}
	See proposition 1.6 of \cite{Conlon2024}.
\end{proof}

\begin{definition}\label{def:Wknu}
	We denote $\mathcal{H}^k_\nu (B) = \ker (d + d^*) \subseteq C^\infty_\nu(\Lambda^k(B))$.
\end{definition}

Another important property of AC CY manifolds is that many decaying harmonic differential forms vanish, as explained in the following lemma.
For a vector bundle $E$ with a metric, denote by $C^\infty_\nu(E)$ the sections of $E$ vanishing with rate $\nu \in \R$.

\begin{lemma}\label{thm:harmonic_1_6_forms}
	Let $B$ be an irreducible AC CY 3-fold whose Sasaki-Einstein link $\Sigma$ has nonconstant sectional curvature.
	Let $\Lambda^k_l(B)$ be an irreducible $\SU(3)$ component of $\Lambda^k(B)$ of dimension $l$, if it exists.
	For $\nu < 0$, there are no harmonic forms in $C^\infty_\nu(\Lambda^k_l(B))$ for $l=1, 6$, where $\Lambda^k_l(B)$ is defined in lemma \ref{thm:irreducible_decomposition_SU3}.
	
	In particular decaying harmonic functions and 1-forms vanish.
\end{lemma}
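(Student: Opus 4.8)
The plan is to prove Lemma \ref{thm:harmonic_1_6_forms} by combining two ingredients: a separation-of-variables argument on the cone to understand the possible asymptotics of harmonic forms, and a vanishing theorem (Bochner-type, or uniqueness of decaying harmonic forms) to upgrade ``slowly growing'' to ``zero.'' By Lemma \ref{thm:finite_fundamental_group} and the fact that the irreducible $\SU(3)$ decomposition is preserved under the finite covering, it suffices to treat the simply connected (fundamental) cover, so I would first reduce to that case.

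First I would recall that on the asymptotically conical end $B \setminus K \cong C_R(\Sigma)$, the Laplacian (Hodge Laplacian on $k$-forms) differs from the model cone Laplacian by terms decaying at rate $\mu < 0$. On the exact cone $C(\Sigma) = \R^+ \times \Sigma$, one separates variables: a harmonic $k$-form decomposes into modes built from eigenforms of the link Laplacian, and each mode behaves like $r^{\lambda}$ for $\lambda$ in a discrete set of ``critical exponents'' determined by the eigenvalues of $\Delta_\Sigma$ on coclosed forms. The key classical input is the Cheeger--Yau / Conlon--Hein type result: for an AC CY 3-fold, a harmonic form in $C^\infty_\nu$ with $\nu < 0$ actually lies in $L^2$ on the end (since $\nu < 0 < $ the volume-growth-adjusted threshold in dimension $6$), hence is in the $L^2$ kernel of $\Delta = (d+d^*)^2$, hence is closed and coclosed. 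So such a form $\xi$ is both $d$-closed and $d^*$-closed and decaying.

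Next I would use the $\SU(3)$-structure: since $B$ is Calabi--Yau, parallel transport respects the $\SU(3)$ decomposition, and the Hodge Laplacian commutes with the projections $\pi_l$ onto irreducible components $\Lambda^k_l$ (this is where irreducibility of $B$ and the genericity of $\Sigma$ — nonconstant sectional curvature — are used, to rule out the degenerate Sasaki structures where extra Killing fields or parallel forms appear). A harmonic form in $C^\infty_\nu(\Lambda^k_l(B))$ is then closed and coclosed. For $l = 1$: $\Lambda^0_1$ gives decaying harmonic functions, which vanish by the maximum principle (or because a decaying harmonic function on a complete manifold with nonnegative Ricci curvature is constant, hence zero); $\Lambda^2_1 = \R\omega$ and $\Lambda^4_1$, $\Lambda^3_{1\oplus1} = \R\Re\Omega \oplus \R\Im\Omega$ etc.\ are spanned by the parallel defining forms, and a decaying multiple $f\omega$ closed-and-coclosed forces $f$ harmonic (by Lemma \ref{thm:exterior_differential_SU3_decomposition}) and decaying, hence $f = 0$. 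For $l = 6$: $\Lambda^1_6 = T^*B$, so harmonic decaying $1$-forms correspond to decaying harmonic $1$-forms $\gamma$ with $d\gamma \in \Omega^2_8$ and $d^*\gamma = 0$ (cf.\ Lemma \ref{thm:Dirac_operator_expression}); such $\gamma$ is in the kernel of the Dirac operator $\Dirac$, and a decaying harmonic spinor on a complete Ricci-flat (noncompact, one end) manifold vanishes by the Lichnerowicz--Weitzenböck formula together with the AC asymptotics (the $L^2$ harmonic spinor is parallel, but a decaying parallel spinor is zero). The forms $\Lambda^2_6, \Lambda^3_6, \Lambda^4_6, \Lambda^5_6$ are all related to $\Lambda^1_6$ by wedging with the parallel forms $\omega, \Re\Omega$ and by Hodge star, so their decaying harmonic sections are governed by the same $1$-form statement.

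The main obstacle I expect is the ``critical exponent'' bookkeeping: one must verify that the relevant refined Laplacians on $\Lambda^k_l$ have no harmonic sections decaying at \emph{any} negative rate — equivalently that there is no indicial root of the model operator in the open half-line $(-\infty, 0)$ that could support a nontrivial decaying solution — and then use the AC perturbation to conclude the solution on $B$ inherits a cone asymptotic and is genuinely $L^2$. Making this clean requires either citing the Fredholm/regularity theory for the Laplacian on AC manifolds (as in \cite{Conlon2012} or Lockhart--McOwen) to know that a harmonic form decaying at a noncritical rate decays polynomially faster until it either vanishes or hits an indicial root, or invoking directly the classification of $L^2$ harmonic forms on AC CY manifolds. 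Once that framework is in place, the $\SU(3)$-equivariance of the Laplacian reduces everything to the scalar Laplacian (on functions for $l=1$) and the Dirac operator (for $l=6$), where vanishing of decaying solutions on a complete Ricci-flat manifold is standard. I would structure the writeup as: (1) reduce to the simply connected cover; (2) state the $\SU(3)$-equivariant Hodge-theoretic setup and the relevant citation from \cite{Foscolo2021}/\cite{Conlon2012}; (3) handle $l=1$ via harmonic functions; (4) handle $l=6$ via harmonic spinors; (5) note the remaining components reduce to these by the parallel forms and Hodge duality, concluding in particular the vanishing of decaying harmonic functions and $1$-forms.
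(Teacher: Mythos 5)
Your overall architecture matches the proof the paper actually relies on (the paper simply cites Lemma~5.6 of \cite{Foscolo2021}): reduce to the simply connected cover, use that the type projections commute with the Hodge Laplacian because the defining forms are parallel, so that $\Lambda^k_1$ and $\Lambda^k_6$ components correspond to harmonic functions and to $1$-forms in the kernel of $\Dirac$, and kill the former by the maximum principle and the latter by a Bochner/Lichnerowicz argument once they are known to be $L^2$, closed and coclosed.

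However, one load-bearing step fails as written. You claim that a harmonic form in $C^\infty_\nu$ with $\nu<0$ ``actually lies in $L^2$ on the end (since $\nu<0<$ the volume-growth-adjusted threshold in dimension $6$).'' On a $6$-dimensional AC end the volume form grows like $r^5\,dr$, so $L^2$ requires decay strictly faster than $r^{-3}$; a harmonic form decaying at rate $\nu=-1$, say, is not $L^2$, and the interval $(-3,0)$ is exactly where the lemma has content. Bridging this gap is the substance of the proof: one shows via the separation of variables on the cone that the indicial roots of the Laplacian on the relevant components avoid the window between $0$ and the $L^2$ threshold, so any solution decaying at some negative rate automatically decays past $-3$ (after which your integration-by-parts and Bochner steps apply). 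This is also where the hypothesis that $\Sigma$ has nonconstant sectional curvature enters --- not, as you suggest, to make the Laplacian commute with the projections $\pi_l$ (that holds for any torsion-free $\SU(3)$-structure), but to invoke the strict Lichnerowicz--Obata bound $\lambda_1(\Sigma)>5$ (equality forcing $\Sigma$ to be a quotient of the round $S^5$), which is what excludes an indicial root at the edge of the forbidden window; compare the discussion in Section~\ref{sec:orbifolds}, where the author identifies Stokes' theorem and Lichnerowicz--Obata as the two inputs the proof ultimately rests on. Your closing paragraph does flag the ``critical exponent bookkeeping'' as the main obstacle, but it contradicts the earlier claim that $\nu<0$ already gives $L^2$; as a proof the plan is incomplete until that improvement of decay is actually carried out (or correctly cited).
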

\begin{proof}
	See lemma 5.6 in \cite{Foscolo2021}.
\end{proof}

Another useful result about harmonic forms is the following.

\begin{theorem}\label{thm:harmonic_2_forms_isomorphism}
	Let $B$ be an AC CY 3-fold whose Sasaki-Einstein link $\Sigma$ has nonconstant sectional curvature and let $\nu \in (-2, 0)$.
	Then, the natural map
	$$
	\begin{aligned}
		\mathcal{H}_\nu^2(B) & \rightarrow H^2(B) \\
		\alpha & \mapsto [\alpha]
	\end{aligned}
	$$
	is an isomorphism.
\end{theorem}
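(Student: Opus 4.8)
The statement is an instance of the weighted Hodge theory for the Hodge--de Rham Laplacian $\Delta$ on the AC manifold $B$; the underlying machinery is the Lockhart--McOwen-type Fredholm theory used throughout \cite{Foscolo2021}, and the only non-routine inputs are Lemma \ref{thm:harmonic_1_6_forms} together with the location of the indicial roots of $\Delta$ on $2$-forms over $C(\Sigma)$. First, the map is well defined: $d\alpha$ is a $3$-form and $d^{*}\alpha$ a $1$-form, so $(d+d^{*})\alpha=0$ forces $d\alpha=0$ and $d^{*}\alpha=0$ separately; hence every $\alpha\in\mathcal{H}^{2}_{\nu}(B)$ is genuinely harmonic (closed and coclosed) and $[\alpha]\in H^{2}(B)$ makes sense. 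I would then record the key structural fact: the open interval $(-2,0)$ contains no exceptional weight of $\Delta$ on $2$-forms over $C(\Sigma)$. Separating variables in $r$, the indicial roots are governed by the spectrum of the Hodge Laplacian of $\Sigma$; the roots $0$ (parallel $2$-forms on the cone) and $-2$ (pullbacks of closed $2$-forms from $\Sigma$, of $g_{C}$-norm $\sim r^{-2}$) are always present, the indicial kernels there consist of pullbacks of $g_{\Sigma}$-harmonic $2$-forms, and the slot $(-2,0)$ is empty precisely because $\Sigma$ has nonconstant sectional curvature — this excludes the small eigenvalues on $\Sigma$ responsible for extra roots (as for the round $S^{5}$) — while the first root strictly below $-2$ is $\leq -3$. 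Consequently $\Delta\colon C^{k+2,\alpha}_{\nu}(\Lambda^{2}(B))\to C^{k,\alpha}_{\nu-2}(\Lambda^{2}(B))$ is Fredholm and $\mathcal{H}^{2}_{\nu}(B)$ is the same space for all $\nu\in(-2,0)$. I also note, from Lemma \ref{thm:harmonic_1_6_forms}, that there are no nonzero decaying harmonic functions or $1$-forms on $B$, so $\mathcal{H}^{0}_{L^{2}}(B)=\mathcal{H}^{1}_{L^{2}}(B)=0$ and the Laplacian on $0$- and $1$-forms is $L^{2}$-solvable for compactly supported right-hand sides.

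\emph{Injectivity.} Let $\alpha\in\mathcal{H}^{2}_{\nu}(B)$ with $[\alpha]=0$. Its leading asymptotic term on the end sits at some indicial root $\lambda_{0}<0$; since $[\alpha]=0$ the restricted class $[\alpha|_{\{r=R\}}]\in H^{2}(\Sigma)$ vanishes, and as the indicial kernels at $0$ and $-2$ are $g_{\Sigma}$-harmonic $2$-forms — detected injectively by their cohomology class on the compact manifold $\Sigma$ — those leading terms must vanish; since $(-2,0)$ is free of roots, $\lambda_{0}\leq -3$, so $\alpha\in L^{2}(\Lambda^{2}(B))$. Writing $\alpha=d\beta$ and correcting $\beta$ by a closed form via the weighted Hodge decomposition (whose only obstruction, a decaying harmonic $1$-form, vanishes by Lemma \ref{thm:harmonic_1_6_forms}) we may take $\beta$ coclosed and decaying; then $\Delta\beta=d^{*}d\beta=d^{*}\alpha=0$, so $\beta$ is a decaying harmonic $1$-form, hence $\beta\equiv 0$ by Lemma \ref{thm:harmonic_1_6_forms}, whence $\alpha=0$. (The decay rates just obtained also make the direct integration by parts $\|\alpha\|_{L^{2}}^{2}=\langle d\beta,\alpha\rangle=\langle\beta,d^{*}\alpha\rangle=0$ legitimate.)

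\emph{Surjectivity.} Fix $c\in H^{2}(B)$. Since the end of $B$ deformation retracts onto $\Sigma$, choose a smooth closed representative $\alpha_{0}$ of $c$ that on $C_{R}(\Sigma)$ equals the pullback of the $g_{\Sigma}$-harmonic representative of $c|_{\Sigma}$ (this is arranged by subtracting $d$ of a cutoff primitive on the end); as that pullback is closed and coclosed on the cone, $\alpha_{0}$ is harmonic on the end, so $d^{*}\alpha_{0}$ is compactly supported. Solve $\Delta\xi=-d^{*}\alpha_{0}$ in $L^{2}$ (possible as $\mathcal{H}^{1}_{L^{2}}(B)=0$); elliptic regularity and the asymptotic expansion give $\xi$ smooth, harmonic on the end, and rapidly decaying, and applying $d^{*}$ shows $d^{*}\xi$ is a decaying harmonic function, hence $d^{*}\xi=0$ by Lemma \ref{thm:harmonic_1_6_forms}. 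Then $\alpha:=\alpha_{0}+d\xi$ is closed with $[\alpha]=c$, satisfies $d^{*}\alpha=d^{*}\alpha_{0}+\Delta\xi=0$, and lies in $C^{\infty}_{-2}(\Lambda^{2}(B))\subseteq C^{\infty}_{\nu}(\Lambda^{2}(B))$; thus $\alpha\in\mathcal{H}^{2}_{\nu}(B)$ maps to $c$.

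The main obstacle is the indicial-root bookkeeping underlying the first paragraph: verifying that $(-2,0)$ carries no exceptional weight of $\Delta$ on $2$-forms over $C(\Sigma)$, that the relevant indicial kernels (at $0$, at $-2$, and at the first root below $-2$) are the expected pullbacks of harmonic forms on $\Sigma$, and that the first root below $-2$ is $\leq -3$. This is exactly where the nonconstant-curvature hypothesis on $\Sigma$ is used; everything else is a packaging of standard weighted elliptic theory together with the vanishing Lemma \ref{thm:harmonic_1_6_forms}.
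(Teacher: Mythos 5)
The paper does not prove this statement itself — it simply cites Theorem 5.12(ii) of \cite{Foscolo2021} — and your argument is essentially a reconstruction of that cited proof: weighted elliptic theory, the indicial-root bookkeeping for harmonic $2$-forms on $C(\Sigma)$ (which is indeed where the nonconstant-curvature hypothesis enters, via the Obata-type rigidity excluding the extremal eigenvalues of the round sphere), and the vanishing of decaying harmonic functions and $1$-forms from Lemma \ref{thm:harmonic_1_6_forms}. One small imprecision: in the injectivity step you do not actually need $\alpha\in L^2$ (and ``the first root below $-2$ is $\leq -3$'' would not give $L^2$ on a $6$-manifold if that root were exactly $-3$); it suffices that, once the rate $-2$ leading term is killed because $[\alpha|_\Sigma]=0$ and harmonic forms on the compact link inject into cohomology, $\alpha$ decays at some rate strictly below $-1$, after which the coclosed-primitive statement (Lemma \ref{thm:exact_2_forms_AC_CY}) and the vanishing of decaying harmonic $1$-forms finish the argument exactly as you describe.
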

\begin{proof}
	See theorem 5.12.ii in \cite{Foscolo2021}.
\end{proof}

\begin{lemma}\label{thm:exact_star_H4}
	Let $(B, \omega, \Omega)$ be an AC CY 3-fold.
	Leth $\kappa \in \mathcal{H}^2_{-2}$ be such that $[\star_B \kappa]_{H^4} = 0$.
	Then $\kappa$ is $L^2$-orthogonal to $L^2\mathcal{H}^2(B)$.

	Note that $[\star_B \kappa]_{H^4} = 0$ is equivalent to $[\kappa] \smile [\omega] = 0$.
\end{lemma}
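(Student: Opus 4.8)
The plan is to reduce to showing $\langle \kappa, \alpha \rangle_{L^2} = 0$ for every $\alpha \in L^2\mathcal{H}^2(B)$, to split this pairing into a ``topological'' part handled by Poincaré--Lefschetz duality and an ``analytic'' part handled by the asymptotics of harmonic forms on AC Calabi--Yau manifolds from \cite{Foscolo2021}. The equivalence in the note is quick: since $B$ is Calabi--Yau its holonomy lies in $\SU(3)$, so the Levi--Civita connection preserves the $\SU(3)$-type splitting of $\Lambda^2(B)$ and, by Weitzenböck, the Hodge Laplacian commutes with the projections $\pi_1,\pi_6,\pi_8$; hence $\pi_1\kappa,\pi_6\kappa,\pi_8\kappa$ are separately harmonic, and as $\kappa$ decays Lemma \ref{thm:harmonic_1_6_forms} forces $\pi_1\kappa=\pi_6\kappa=0$, i.e.\ $\kappa\in\Omega^2_8(B)$ (assuming, as there, that $\Sigma$ has nonconstant sectional curvature). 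Applying $\star_B$ to Lemma \ref{thm:Hodge_star_SU3}.4 gives $\star_B\kappa=-\kappa\wedge\omega$, so $[\star_B\kappa]_{H^4}=-[\kappa]\smile[\omega]$ and the two conditions agree.

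So fix $\alpha\in L^2\mathcal{H}^2(B)$. By Hodge theory on AC manifolds in degree $2<\tfrac12\dim B$ (see \cite{Foscolo2021}), $[\alpha]$ lies in the image of $H^2_c(B)\to H^2(B)$ and $\alpha$ is exact on the AC end, so one can write $\alpha=\alpha_c-d\gamma$ with $\alpha_c$ a closed compactly supported $2$-form and $\gamma$ a $1$-form decaying one order faster than $\alpha$. Since $\kappa$ is coclosed, $\star_B\kappa$ is closed, and by hypothesis exact, say $\star_B\kappa=d\beta$; Green's formula on $B_R$ with $d^*\kappa=0$ then gives
$$
\langle\kappa,\alpha\rangle_{L^2} \;=\; \int_B\alpha_c\wedge\star_B\kappa \;-\; \lim_{R\to\infty}\int_{\partial B_R}\gamma\wedge\star_B\kappa ,
$$
where the pairing is read, if need be, as $\lim_{R\to\infty}\int_{B_R}\langle\kappa,\alpha\rangle$. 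The first term is $\int_B d(\alpha_c\wedge\beta)=0$ by Stokes, since $\alpha_c\wedge\beta$ is compactly supported --- equivalently, it is the Poincaré--Lefschetz pairing of $[\alpha_c]\in H^2_c(B)$ with $[\star_B\kappa]=0\in H^4(B)$.

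The crux is the vanishing of the boundary integral at infinity; the size estimate alone is borderline, since $\kappa=O(r^{-2})$ and $\gamma=O(r^{-3})$ make the integrand $O(r^{-5})$ and the sphere integral only $O(1)$. What saves it is that, by the structure of homogeneous harmonic $2$-forms on the Calabi--Yau cone, the rate-$(-2)$ leading term of $\kappa$ on the end is a horizontal pulled-back form $\pi^*\kappa_\infty$ with no $dr$-component (a $dr$-contribution at that rate is obstructed by harmonicity), so the leading term of $\star_B\kappa$ has the form $dr\wedge(\cdots)$ and restricts to the geodesic spheres $\partial B_R$ as a form of size $o(r^{-2})$; together with $|\gamma|=O(r^{-3})$ this forces $\int_{\partial B_R}\gamma\wedge\star_B\kappa\to 0$, and hence $\langle\kappa,\alpha\rangle_{L^2}=0$. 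This last step --- which is where the hypothesis $\kappa\in\mathcal{H}^2_{-2}$ genuinely enters --- is the main obstacle, and it rests on the asymptotic expansions of harmonic forms on AC Calabi--Yau manifolds in \cite{Foscolo2021}.
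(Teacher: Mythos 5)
Your proof is correct and follows essentially the same route as the argument the paper defers to (the proof of Lemma 6.3 in \cite{Foscolo2021}): Stokes/integration by parts exploiting the exactness of $\star_B\kappa$, with the borderline boundary contribution at infinity killed by the refined asymptotics of rate $-2$ harmonic $2$-forms on the Calabi--Yau cone (leading term pulled back from the link, hence no $dr$-component). The only cosmetic difference is that you anti-differentiate on the $\alpha$ side, via a compactly supported representative of $[\alpha]\in\Im(H^2_c(B)\to H^2(B))$, rather than working directly with a decaying primitive of $\star_B\kappa$.
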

\begin{proof}
	See the proof of 6.3 in \cite{Foscolo2021}.
\end{proof}

\begin{lemma}\label{thm:exact_2_forms_AC_CY}
	Let $\left(B, \omega_0, \Omega_0\right)$ be an AC CY 3-fold whose Sasaki-Einstein link $\Sigma$ has nonconstant sectional curvature.
	Let $\sigma \in C_\nu^{k, \alpha}(\Lambda^2(B))$ for some $k \geq 1, \alpha \in(0,1)$ and $\nu \in(-5,-1)$, and suppose $\sigma$ exact.
	
	Then there exists a unique coclosed $\gamma \in C_{\nu+1}^{k+1, \alpha}(\Lambda^1(B))$ such that $\sigma=d \gamma$.
\end{lemma}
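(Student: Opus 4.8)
The plan is to produce $\gamma$ by solving the Hodge--Laplace equation $\Delta\gamma=d^{*}\sigma$ on $1$-forms at the weight $\nu+1$, and then to upgrade the solution to a coclosed primitive of $\sigma$ using exactness of $\sigma$ together with the two vanishing results for decaying harmonic forms recorded above (Lemmas \ref{thm:harmonic_1_6_forms} and \ref{thm:harmonic_2_forms_isomorphism}).

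First I would set up the weighted analysis. Since $\sigma$ is exact it is in particular closed, so $d^{*}\sigma\in C^{k-1,\alpha}_{\nu-1}(\Lambda^{1}(B))=C^{k-1,\alpha}_{(\nu+1)-2}(\Lambda^{1}(B))$, and the Hodge Laplacian $\Delta\colon C^{k+1,\alpha}_{\nu+1}(\Lambda^{1}(B))\to C^{k-1,\alpha}_{\nu-1}(\Lambda^{1}(B))$ has the correct target. By the standard weighted elliptic theory on AC manifolds (Lockhart--McOwen), at any weight outside a discrete set of exceptional rates $\Delta$ is Fredholm, its kernel is the space of harmonic $1$-forms in the corresponding weighted space (which are smooth and lie in $C^{\infty}_{\bullet}$ by weighted elliptic estimates), and its cokernel is isomorphic by $L^{2}$-duality to the space of harmonic $1$-forms decaying at the dual rate $2-6-(\nu+1)=-5-\nu$. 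Since $\nu\in(-5,-1)$, both $\nu+1$ and $-5-\nu$ lie in $(-4,0)$, hence are negative, so both of these spaces vanish by Lemma \ref{thm:harmonic_1_6_forms} (its irreducibility hypothesis holds here because $\Sigma$ has nonconstant sectional curvature, and $\Lambda^{1}(B)=\Lambda^{1}_{6}(B)$). Thus at every weight in $(-4,0)$ at which $\Delta$ is Fredholm it has index zero; a standard index-jump argument then forces the absence of exceptional rates in $(-4,0)$, so $\Delta$ is an isomorphism at weight $\nu+1$, and there is a unique $\gamma\in C^{k+1,\alpha}_{\nu+1}(\Lambda^{1}(B))$ with $\Delta\gamma=d^{*}\sigma$.

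Next I would check that this $\gamma$ has the desired properties. Because $\Delta$ commutes with $d$ and $d^{*}$, the function $d^{*}\gamma$ satisfies $\Delta(d^{*}\gamma)=d^{*}d^{*}\sigma=0$, so it is a harmonic function in $C^{k,\alpha}_{\nu}$ with $\nu<0$, hence vanishes by Lemma \ref{thm:harmonic_1_6_forms}; thus $\gamma$ is coclosed. Similarly $\Delta(d\gamma-\sigma)=d\,d^{*}\sigma-\Delta\sigma=-d^{*}d\sigma=0$ since $d\sigma=0$, so $h:=d\gamma-\sigma$ is a harmonic $2$-form in $C^{k,\alpha}_{\nu}(\Lambda^{2}(B))$, and it is exact because both $d\gamma$ and $\sigma$ are. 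Choosing $\mu\in(-2,0)$ with $\mu\ge\nu$ (take $\mu=\nu$ when $\nu\ge-2$, and e.g.\ $\mu=-1$ otherwise), we get $h\in\mathcal{H}^{2}_{\mu}(B)$, and Theorem \ref{thm:harmonic_2_forms_isomorphism} says the de Rham class map is injective on $\mathcal{H}^{2}_{\mu}(B)$; since $[h]=0$ we conclude $h=0$, i.e.\ $d\gamma=\sigma$. Uniqueness is immediate: if $\gamma'$ is another coclosed element of $C^{k+1,\alpha}_{\nu+1}(\Lambda^{1}(B))$ with $d\gamma'=\sigma$, then $\gamma-\gamma'$ is closed, coclosed and decaying (as $\nu+1<0$), hence a decaying harmonic $1$-form, and vanishes by Lemma \ref{thm:harmonic_1_6_forms}.

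The only genuinely delicate step is the weighted Fredholm input: one must know that $\Delta$ on $1$-forms is Fredholm at weight $\nu+1$ and identify its cokernel via $L^{2}$-duality; everything after that is bookkeeping with the two vanishing lemmas. I expect the part needing most care to be the location of the exceptional rates, i.e.\ the index-jump argument ruling them out in $(-4,0)$, although it is entirely standard. If one prefers to sidestep it, an alternative is to solve $\Delta\gamma=d^{*}\sigma$ first at a nearby non-exceptional weight in $(-4,0)$ and then invoke the regularity-jump statement together with the vanishing of decaying harmonic $1$-forms to push the solution down to weight $\nu+1$.
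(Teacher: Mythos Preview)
Your argument is correct. The paper does not supply its own proof here but defers to Lemma~7.1 of \cite{Foscolo2021}, so a line-by-line comparison is not possible; your route via the Hodge Laplacian on $1$-forms is a standard and entirely self-contained way to obtain the result.

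Two small remarks. First, when you invoke Lemma~\ref{thm:harmonic_1_6_forms} you inherit its irreducibility hypothesis on $B$, which you wave away with the parenthetical about nonconstant sectional curvature. For harmonic $1$-forms the vanishing actually needs neither hypothesis: Bochner's formula on a Ricci-flat manifold makes $|\alpha|^{2}$ subharmonic, and the maximum principle on exhausting balls together with the decay of $\alpha$ forces $\alpha\equiv 0$. This also covers the decaying harmonic function $d^{*}\gamma$ without further input. Second, your index-jump argument excluding indicial roots of $\Delta$ on $1$-forms in the interval $(-4,0)$ is correct as written (were there such a root the index would change strictly across it, contradicting that kernel and cokernel both vanish, hence index zero, at all non-exceptional weights on either side); the alternative you sketch---solve at a nearby non-exceptional weight and then improve the decay using the vanishing of decaying harmonic $1$-forms---works equally well and sidesteps the index discussion entirely.
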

\begin{proof}
	See lemma 7.1 in \cite{Foscolo2021}.
\end{proof}

A crucial step in the study of the linearization of the equations that we will examine, is to identify the presence of the Dirac operator.
We will need the following theorem to apply the implicit function theorem to our equations \eqref{eq:Spin7_torsion_v1}.

\begin{theorem}\label{thm:Dirac_iso}
	Let $\left(B, \omega_0, \Omega_0\right)$ be an irreducible AC Calabi-Yau 3-fold whose Sasaki-Einstein link $\Sigma$ has nonconstant sectional curvature.
	The Dirac operator $\Dirac: C_{\nu+1}^{k+1, \alpha} \rightarrow C_\nu^{k, \alpha}$ is an isomorphism for all $\nu \in(-6,-1)$.
\end{theorem}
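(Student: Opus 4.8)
The plan is to treat $\Dirac$ as an asymptotically conical elliptic operator, locate its critical rates via the Dirac spectrum of the Sasaki--Einstein link, and then rule out kernel and cokernel on the whole interval with a Weitzenb\"ock argument. Recall first that, under the isomorphism $\slashed{S}(B)\iso\underline{\R}\oplus\underline{\R}\oplus T^*B$, the operator $\Dirac$ is given explicitly by lemma \ref{thm:Dirac_operator_expression}; in particular it is a first-order, formally self-adjoint, elliptic operator, and since $B$ is Calabi--Yau, hence Ricci-flat, the Lichnerowicz formula reads $\Dirac^2=\nabla^*\nabla$. Because $(B,g)$ is asymptotically conical to the Calabi--Yau cone $C(\Sigma)$, the operator $\Dirac$ differs from the model cone operator $\Dirac_{C(\Sigma)}$ by terms decaying at the AC rate, so the Lockhart--McOwen-type theory for conifolds underlying \cite{Foscolo2021} applies (the hypotheses on $B$ and $\Sigma$ in the statement are exactly those under which that framework is set up): $\Dirac\colon C^{k+1,\alpha}_{\delta}(\slashed{S})\to C^{k,\alpha}_{\delta-1}(\slashed{S})$ is Fredholm for every $\delta$ outside the discrete set $\mathcal{D}\subseteq\R$ of critical rates --- those $\delta$ for which $\Dirac_{C(\Sigma)}$ admits a homogeneous solution $r^{\delta}\phi$ --- its index is constant on each component of $\R\setminus\mathcal{D}$, and formal self-adjointness together with the $L^2$-pairing of weighted spaces identifies $\operatorname{coker}\bigl(\Dirac\colon C^{k+1,\alpha}_{\delta}\to C^{k,\alpha}_{\delta-1}\bigr)$ with $\ker\bigl(\Dirac\colon C^{k+1,\alpha}_{-5-\delta}\to C^{k,\alpha}_{-6-\delta}\bigr)$, the conjugate weight being $1-\dim B-\delta=-5-\delta$.

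The next step is to show $\mathcal{D}\cap(-5,0)=\varnothing$. Separating variables on the cone, $r^{\delta}\phi$ is $\Dirac_{C(\Sigma)}$-harmonic exactly when $\phi$ is an eigenspinor of the link Dirac operator $\Dirac_\Sigma$ with eigenvalue $\pm(\delta+\tfrac52)$ (for $\delta=0$ this recovers the correspondence between the parallel spinors of $C(\Sigma)$ and the Killing spinors of $\Sigma$); thus $\delta\in\mathcal{D}$ iff $\delta+\tfrac52\in\operatorname{Spec}(\Dirac_\Sigma)\cup\bigl(-\operatorname{Spec}(\Dirac_\Sigma)\bigr)$. Since $\Sigma$ is a Sasaki--Einstein $5$-manifold with the normalisation making $C(\Sigma)$ Ricci-flat, $\operatorname{Ric}_\Sigma=4g_\Sigma$ and $\operatorname{scal}_\Sigma=20$, and Friedrich's eigenvalue estimate then gives $\lambda^2\ge\tfrac{5}{16}\cdot 20=\tfrac{25}{4}$, i.e.\ $|\lambda|\ge\tfrac52$, for every $\lambda\in\operatorname{Spec}(\Dirac_\Sigma)$. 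Hence every critical rate obeys $\delta\le-5$ or $\delta\ge0$, so $\mathcal{D}\cap(-5,0)=\varnothing$; in particular, writing $\delta=\nu+1$, the operator $\Dirac\colon C^{k+1,\alpha}_{\nu+1}\to C^{k,\alpha}_{\nu}$ is Fredholm of index independent of $\nu\in(-6,-1)$, and the conjugate weight $-5-\delta$ lies in $(-5,0)$ as well.

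It remains to see that the kernel vanishes throughout $(-5,0)$. The function $\delta\mapsto\dim\ker\bigl(\Dirac\colon C^{k+1,\alpha}_{\delta}\to C^{k,\alpha}_{\delta-1}\bigr)$ is non-decreasing, while $\operatorname{ind}(\delta)=\dim\ker(\delta)-\dim\ker(-5-\delta)$ is constant on $(-5,0)$, and these two facts force $\dim\ker(\cdot)$ to be constant on $(-5,0)$. On the other hand, for $\delta\in(-5,-3)$ any $u$ with $\Dirac u=0$ and $u\in C^{k+1,\alpha}_{\delta}(\slashed{S})$ decays faster than $r^{-3}$ and is therefore square-integrable together with $\nabla u$; integrating $\Dirac^2=\nabla^*\nabla$ against $u$ --- the boundary terms over large geodesic spheres vanishing because $\delta<-2$ --- gives $\norm{\nabla u}_{L^2}^2=\norm{\Dirac u}_{L^2}^2=0$, so $u$ is $\nabla$-parallel, $|u|$ is constant, and hence $u\equiv 0$ because $u$ decays. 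Combining the two statements, $\ker\Dirac=0$ for every $\delta\in(-5,0)$, so by the duality of the first step $\operatorname{coker}\Dirac=0$ as well, and $\Dirac$ is an isomorphism for all $\nu\in(-6,-1)$.

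I expect the only genuinely delicate point to be the second step: setting up the conifold Fredholm package for $\Dirac$ --- essentially already available from \cite{Foscolo2021} --- and, above all, getting the constants right in the correspondence between critical rates and link Dirac eigenvalues, including the parity subtleties due to $\dim\Sigma$ being odd. Once that correspondence is pinned down, Friedrich's estimate, the Weitzenb\"ock vanishing, and the index/duality bookkeeping are all routine.
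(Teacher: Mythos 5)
Your argument is correct, and its skeleton — AC Fredholm theory for $\Dirac$ away from the critical rates of the model cone operator, identification of those rates with the spectrum of the link Dirac operator, a Weitzenböck/$L^2$ vanishing of the kernel at strongly decaying weights propagated to the whole interval by monotonicity of $\dim\ker$ plus constancy of the index, and self-adjoint duality for the cokernel — is exactly the one behind the result in \cite{Foscolo2021} (Proposition 5.8 there), to which the paper simply defers without reproducing a proof. The one place where you genuinely depart from that source is the middle step: rather than transferring the problem to functions and $1$-forms via the isomorphism $\slashed{S}(B)\iso\underline{\R}\oplus\underline{\R}\oplus T^*B$ of lemma \ref{thm:Dirac_operator_expression} and controlling the indicial roots through eigenvalues of the link Laplacian (where the Lichnerowicz--Obata rigidity, hence the nonconstant-sectional-curvature hypothesis, enters), you apply Friedrich's estimate directly to $\Dirac_\Sigma$ with $\operatorname{scal}_\Sigma=20$, getting $|\lambda|\ge \tfrac52$ and hence $\mathcal{D}\subseteq(-\infty,-5]\cup[0,\infty)$. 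This is cleaner and, notably, does not use the nonconstant-curvature hypothesis at all: the borderline eigenvalues $\pm\tfrac52$ always occur (they are the Killing spinors of the Sasaki--Einstein link) but land precisely at the excluded endpoints $\delta\in\{0,-5\}$ of the open weight interval. Your numerology checks out throughout ($\delta=\nu+1\in(-5,0)$, conjugate weight $-5-\delta$ again in $(-5,0)$, square-integrability of $u$ and vanishing of the boundary term $\sim R^{2\delta+4}$ for $\delta\in(-5,-3)$). The only point to flag is the cokernel identification in weighted Hölder spaces, whose duals are not Hölder spaces; this is handled, as you implicitly assume, by establishing the statement in weighted Sobolev spaces and passing back via Schauder estimates, which is part of the package in appendix B of \cite{Foscolo2021}.
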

\begin{proof}
	See proposition 5.8 in \cite{Foscolo2021}.
\end{proof}

The following proposition and its corollary are needed for the manipulation of 4-forms on AC Calabi Yau manifolds.
They are very technical and are mainly used in the proof that the linearization of the equations is an isomorphism.
To state them, we first need to define a subspace of $C^{0, \alpha}_\nu$ that is handy to get uniqueness in the statements.
To this purpose, we remark that, because of proposition B.12 of \cite{Foscolo2021}, $\mathcal{H}^k_\nu(B) \subseteq L^2_{\nu - \delta}$ for some small $\delta \in \R^+$.
Since by the AC Sobolev embedding theorem, $C^{0, \alpha}_\nu(B) \subseteq L^2_{\nu + \delta}$, it makes sense to consider the $L^2_\nu$ inner product between elements of $\mathcal{H}^k_\nu(B)$ and $C^{0, \alpha}_\nu(B)$.

\begin{definition}
	Let $\alpha \in (0,1)$ and $\nu \in \R$ outside the set of indicial roots of $d + d^*$.
	
	We define $\mathcal{W}^k_\nu(B)$ to be the intersection of the kernels of the linear maps given by the inner product with elements in $\mathcal{H}^k_\nu(B)$.
\end{definition}

\begin{proposition}\label{thm:exact_4_forms}
	Let $\left(B, \omega_0, \Omega_0\right)$ be a non-trivial $A C$ Calabi-Yau 3-fold asymptotic to the Calabi-Yau cone $\mathrm{C}(\Sigma)$ whose Sasaki-Einstein link $\Sigma$ has nonconstant sectional curvature.
	Fix $k \geq 1, \alpha \in(0,1), \delta>0$ and $\nu \in(-3-\delta,-1)$ away from a discrete set of indicial roots.
	Then every exact $4$-form $\sigma=d \rho^{\prime}$ with $\rho^{\prime} \in C_\nu^{k, \alpha}$ can be written uniquely as
	$$
	\sigma=d * d\left(f \omega_0+\gamma^{\sharp}\lrcorner \operatorname{Re} \Omega_0\right)+d \rho_0
	$$
	where $f, \gamma \in C_{\nu+1}^{k+1, \alpha}, \rho_0 \in C_\nu^{k, \alpha} \cap \mathcal{W}_\nu^3 \cap \Omega_{12}^3$ with $d^* \rho_0=0$ and
	$$
	\|(f, \gamma)\|_{C_{\nu+1}^{k+1, \alpha}}+\left\|\rho_0\right\|_{C_\nu^{k, \alpha}} \leq C\left\|\rho^{\prime}\right\|_{C_\nu^{k, \alpha}}
	$$
	for a constant $C>0$ independent of $\rho^{\prime}$.
	Moreover, $f=0=\gamma$ if $\sigma \in \Omega_8^4$.
\end{proposition}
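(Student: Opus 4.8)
The plan is to decompose the exact 4-form $\sigma = d\rho'$ into pieces governed by the $\SU(3)$-equivariant splitting $\Lambda^4 = \Lambda^4_1 \oplus \Lambda^4_6 \oplus \Lambda^4_8$, and to recognize the first two components as coming from the image of the operator $(f,\gamma) \mapsto \pi_{1\oplus 6}\, d^*d(f\omega_0 + \gamma^\sharp\lrcorner\Re\Omega_0)$, which by Lemma~\ref{thm:pi1_6} is identified with $(f,\gamma)\mapsto(\tfrac23\triangle f,\, dd^*\gamma + \tfrac23 d^*d\gamma)$. First I would take Hodge duals throughout and work with the exact 2-form $\star\sigma$; by Lemma~\ref{thm:exact_2_forms_AC_CY} (after checking the rate lies in $(-5,-1)$, which it does since $\nu\in(-3-\delta,-1)$) there is a unique coclosed primitive potential, but more to the point I want to write $\star\sigma$ in terms of the $\SU(3)$ decomposition of 2-forms and solve component by component. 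The $\Lambda^2_1$-component is a multiple $h\,\omega_0$ of $\omega_0$; using the AC analysis of the Laplacian (and Lemma~\ref{thm:harmonic_1_6_forms}, so that there is no decaying harmonic obstruction in the scalar or 1-form sectors), one solves $\tfrac23\triangle f = h$ for $f\in C^{k+1,\alpha}_{\nu+1}$ with the estimate; similarly the $\Lambda^2_6$-component, which is of the form $\gamma_1^\sharp\lrcorner\Re\Omega_0$, is hit by the first-order-plus-Laplace-type operator $dd^* + \tfrac23 d^*d$ on 1-forms, invertible on the relevant weighted spaces again by the vanishing results. This produces $f$ and $\gamma$ and leaves a remainder $\rho_0$ whose $\Lambda^4_{1\oplus 6}$-part vanishes, i.e. $\rho_0\in\Omega^4_8$; dualizing back, $\star\rho_0 \in \Omega^3_{12}$.

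Next I would arrange that $\rho_0$ is closed, coclosed, and lies in $\mathcal{W}^3_\nu$. Closedness: since $\sigma$ and $d*d(f\omega_0+\gamma^\sharp\lrcorner\Re\Omega_0)$ are both exact (the latter manifestly), $d\rho_0=0$ automatically. For coclosedness I would use Corollary~\ref{thm:pi6_d_rho}: an element of $\Omega^3_{12}$ with $\pi_6$ of its differential controlled has the corresponding property for $d\star$, which combined with Lemma~\ref{thm:pi1_rho} ($\pi_1 d\rho_0 = 0$ for the $\Omega^3_{12}$ part) and the fact that $d\rho_0$ already vanishes lets me conclude $d^*\rho_0 = 0$ up to adding a harmonic piece; the harmonic piece is then removed by projecting onto $\mathcal{W}^3_\nu$, which is exactly the subspace designed to kill $\mathcal{H}^3_\nu(B)$, and one checks that subtracting a decaying harmonic 3-form does not spoil exactness of $\sigma$ (it changes $\rho_0$ by something whose differential is zero anyway). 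The uniqueness of the whole decomposition follows from Lemma~\ref{thm:harmonic_1_6_forms} (no decaying harmonic functions or 1-forms, forcing $f,\gamma$ unique once their defining PDEs are solved on weighted spaces away from indicial roots) together with the definition of $\mathcal{W}^3_\nu$ (pinning down $\rho_0$ within its cohomology-and-coclosed class). The final claim, $f=0=\gamma$ when $\sigma\in\Omega^4_8$, is immediate: then the $\Lambda^4_{1\oplus 6}$-components of $\sigma$ vanish, so the PDEs for $f$ and $\gamma$ have zero right-hand side, and uniqueness gives $f=\gamma=0$.

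The estimate $\|(f,\gamma)\|_{C^{k+1,\alpha}_{\nu+1}} + \|\rho_0\|_{C^{k,\alpha}_\nu} \le C\|\rho'\|_{C^{k,\alpha}_\nu}$ is assembled from: the bounded inverse of $\triangle$ and of $dd^*+\tfrac23 d^*d$ on the weighted Hölder spaces (standard AC elliptic theory, rate away from indicial roots), Schauder estimates to upgrade to $C^{k+1,\alpha}$, and the trivial bound $\|\rho_0\|\le\|\sigma\| + \|d*d(f\omega_0+\gamma^\sharp\lrcorner\Re\Omega_0)\|$ once $f,\gamma$ are controlled. The main obstacle I anticipate is the coclosedness/uniqueness bookkeeping for $\rho_0$: one must be careful that the weight $\nu\in(-3-\delta,-1)$ sits between the right indicial roots so that the $\Omega^3_{12}$-valued operator $d$ (equivalently the restriction of $d+d^*$ to that sector) has closed range with the desired cokernel, and that the projection onto $\mathcal{W}^3_\nu$ interacts correctly with the Sobolev/weighted-$L^2$ pairings flagged in the remark preceding the proposition — this is where the hypothesis that $\Sigma$ has nonconstant sectional curvature (hence $B$ irreducible, hence the vanishing theorems apply) is genuinely used, and where I expect the delicate part of the argument to lie; everything else is either a direct appeal to Lemmas~\ref{thm:pi1_6}, \ref{thm:pi1_rho}, \ref{thm:pi6_d_rho}, \ref{thm:harmonic_1_6_forms} or routine weighted elliptic theory in the style of \cite{Foscolo2021}.
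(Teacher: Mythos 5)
The paper does not prove this proposition itself; it simply cites Proposition 5.18 of \cite{Foscolo2021}, so there is no in-paper argument to compare against line by line. Judged on its own terms, your overall strategy — match the $\Lambda^4_{1\oplus 6}$ components of $\sigma$ using the operator of Lemma \ref{thm:pi1_6} and treat the remaining $\Omega^4_8$ piece separately via the $\Omega^3_{12}$ sector — is the right one, and the observation that the problem diagonalizes (since $\pi_1(d\rho_0)=0$ by Lemma \ref{thm:pi1_rho} and $\pi_6(d\rho_0)=0$ for coclosed $\rho_0\in\Omega^3_{12}$ by Corollary \ref{thm:pi6_d_rho}) is correct and worth making explicit.

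However, there is a genuine structural error in the second half. In the statement, $\rho_0$ is a \emph{3-form potential}: the decomposition is $\sigma = d*d(\cdots) + d\rho_0$, so $d\rho_0$ equals the $\Omega^4_8$ remainder $\sigma - d*d(f\omega_0+\gamma^\sharp\lrcorner\Re\Omega_0)$, which is nonzero in general. You instead treat $\rho_0$ as the 4-form remainder itself ("$\rho_0\in\Omega^4_8$; dualizing back, $\star\rho_0\in\Omega^3_{12}$" — note $\star$ of a 4-form is a 2-form, and $\star\Lambda^4_8=\Lambda^2_8$, not $\Lambda^3_{12}$), and you then assert "$d\rho_0=0$ automatically", which would force the remainder to vanish and is false. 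As a consequence, the genuinely hard step is never addressed: one must show that an exact 4-form of type $\Omega^4_8$ whose potential lies in $C^{k,\alpha}_\nu$ admits a \emph{coclosed} potential $\rho_0\in\Omega^3_{12}\cap C^{k,\alpha}_\nu$ (note the gain of one derivative over $d\rho_0\in C^{k-1,\alpha}_{\nu-1}$), unique after projecting to $\mathcal{W}^3_\nu$, with the stated estimate. This is the analogue for the $(d+d^*)$-system on odd forms of Lemma \ref{thm:exact_2_forms_AC_CY}, and it is where the weight restriction, the indicial-root hypothesis, and the vanishing results of Lemma \ref{thm:harmonic_1_6_forms} do real work; Corollary \ref{thm:pi6_d_rho} and Lemma \ref{thm:pi1_rho} by themselves only tell you that a coclosed $\Omega^3_{12}$ potential \emph{would} have image in $\Omega^4_8$, not that one exists. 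Your treatment of $(f,\gamma)$ and the final "moreover" clause are fine, but the proof is incomplete without this existence-and-regularity argument for $\rho_0$.
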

\begin{proof}
	See proposition 5.18 in \cite{Foscolo2021}.
\end{proof}
By nontrivial AC CY manifold, we mean one which is not $\C^3$ with the standard CY structure.

\begin{corollary}\label{thm:exact_4_forms'}
	In the notation of proposition \ref{thm:exact_4_forms}, fix $\nu \in(-3-\delta,-1)$ away from a discrete set of indicial roots.
	For every exact 4-form $\sigma=d \rho^{\prime}$ with $\rho^{\prime} \in C_\nu^{k, \alpha}$ there exist unique $\rho_0 \in C_\nu^{k, \alpha} \cap \mathcal{W}_\nu^3 \cap \Omega_{12}^3$ with $d^* \rho_0=0$ and $f, \gamma \in C_{\nu+1}^{k+1, \alpha}$ such that
	$$
	\sigma=d * d\left(f \omega_0\right)+d\left(* d\left(\gamma^{\sharp}\lrcorner \operatorname{Re} \Omega_0\right)-d\left(\gamma^{\sharp}\lrcorner \operatorname{Im} \Omega_0\right)\right)+d \rho_0 .
	$$
	Moreover,
	$$
	* d\left(\gamma^{\sharp}\lrcorner \operatorname{Re} \Omega_0\right)-d\left(\gamma^{\sharp}\lrcorner \operatorname{Im} \Omega_0\right)+\rho_0 \in \Omega_{12}^3 .
	$$
\end{corollary}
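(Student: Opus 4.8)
The plan is to deduce Corollary~\ref{thm:exact_4_forms'} directly from Proposition~\ref{thm:exact_4_forms} by a bookkeeping argument, using Lemma~\ref{thm:Omega_3_12} to move between the two parametrizations of the $6$-dimensional data. Starting from an exact $4$-form $\sigma = d\rho'$ with $\rho' \in C^{k,\alpha}_\nu$, apply Proposition~\ref{thm:exact_4_forms} to write
$$
\sigma = d{\star}d\left(f\omega_0 + \gamma^\sharp\interior\operatorname{Re}\Omega_0\right) + d\rho_0,
$$
with $f,\gamma \in C^{k+1,\alpha}_{\nu+1}$ and $\rho_0 \in C^{k,\alpha}_\nu \cap \mathcal{W}^3_\nu \cap \Omega^3_{12}$ coclosed, all satisfying the stated estimate. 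The key identity is that ${\star}d(\gamma^\sharp\interior\operatorname{Re}\Omega_0) - d(\gamma^\sharp\interior\operatorname{Re}\Omega_0)$ differs from $2\,{\star}d(\gamma^\sharp\interior\operatorname{Re}\Omega_0)$ by $\bigl({\star}d(\gamma^\sharp\interior\operatorname{Re}\Omega_0) + d(\gamma^\sharp\interior\operatorname{Im}\Omega_0)\bigr)$ only after I insert a sign — more precisely I want to compare $d{\star}d(\gamma^\sharp\interior\operatorname{Re}\Omega_0)$ with $d\bigl({\star}d(\gamma^\sharp\interior\operatorname{Re}\Omega_0) - d(\gamma^\sharp\interior\operatorname{Im}\Omega_0)\bigr)$, and these agree because $d(d(\gamma^\sharp\interior\operatorname{Im}\Omega_0)) = 0$. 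Hence the $\gamma$-term in Proposition~\ref{thm:exact_4_forms} can be replaced verbatim by the $\gamma$-term of the corollary without changing $\sigma$, which is exactly the displayed formula.

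Next I would establish the membership claim ${\star}d(\gamma^\sharp\interior\operatorname{Re}\Omega_0) - d(\gamma^\sharp\interior\operatorname{Im}\Omega_0) + \rho_0 \in \Omega^3_{12}$. By Lemma~\ref{thm:Omega_3_12}, the first two terms already lie in $\Omega^3_{12}$ pointwise (this is the ``$-$'' statement of that lemma applied to $X = \gamma^\sharp$), and $\rho_0 \in \Omega^3_{12}$ by construction, so the sum lies in $\Omega^3_{12}$. This is the step where the asymmetric combination $\operatorname{Re} - \operatorname{Im}$ in the corollary is forced: it is precisely the combination singled out by Lemma~\ref{thm:Omega_3_12} as landing in the $12$-dimensional component, whereas the ``bare'' term $\gamma^\sharp\interior\operatorname{Re}\Omega_0$ used in Proposition~\ref{thm:exact_4_forms} has components in $\Lambda^2_6$.

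For uniqueness, suppose two triples $(f,\gamma,\rho_0)$ and $(f',\gamma',\rho_0')$ give the same $\sigma$. Subtracting and using that $d(d(\cdot\,\interior\operatorname{Im}\Omega_0)) = 0$, the difference satisfies $d{\star}d\bigl((f-f')\omega_0 + (\gamma-\gamma')^\sharp\interior\operatorname{Re}\Omega_0\bigr) + d(\rho_0 - \rho_0') = 0$, so the uniqueness in Proposition~\ref{thm:exact_4_forms} forces $f = f'$, $\gamma = \gamma'$, $\rho_0 = \rho_0'$. The estimate is inherited immediately from that of Proposition~\ref{thm:exact_4_forms}, since the two parametrizations differ only by the addition of the exact (hence $\sigma$-invariant) term $-d(\gamma^\sharp\interior\operatorname{Im}\Omega_0)$ and the norm bound is stated on $(f,\gamma)$ and $\rho_0$ themselves, which are unchanged. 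The only point requiring genuine care — and the most likely source of a hidden sign error — is verifying that $d$ of the replaced $\gamma$-term equals $d$ of the original one; concretely one must check $d\,d(\gamma^\sharp\interior\operatorname{Im}\Omega_0) = 0$, which is immediate, and then confirm via Proposition~\ref{thm:SU3_torsion} or Lemma~\ref{thm:exterior_differential_SU3_decomposition}.5 that no cross-terms have been silently dropped in passing between the two forms of the decomposition. This is routine but is where I would slow down.
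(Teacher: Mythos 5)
Your proposal is correct and is essentially the argument behind the paper's proof (which simply cites Corollary 5.19 of \cite{Foscolo2021}): since $d\,d(\gamma^{\sharp}\lrcorner\operatorname{Im}\Omega_0)=0$, the displayed decomposition is literally the one of Proposition \ref{thm:exact_4_forms}, so existence, uniqueness and the estimate carry over verbatim, and the membership in $\Omega^3_{12}$ is exactly Lemma \ref{thm:Omega_3_12} applied to $X=\gamma^{\sharp}$ together with $\rho_0\in\Omega^3_{12}$. The only blemish is the garbled ``key identity'' sentence in your second paragraph, which you then correctly replace by the $d^2=0$ observation.
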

\begin{proof}
	See corollary 5.19 in \cite{Foscolo2021}.
\end{proof}

In the rest of the paper, we will always assume that the AC CY 3-folds we are considering have a Sasaki-Einstein link $\Sigma$ with nonconstant sectional curvature, but we will see in lemma \ref{thm:additional_topological_constraint} that this is not a restrictive assumption for theorem \ref{thm:main_theorem}.

\subsection{Asymptotically $T^k$-fibred conical manifolds}

Intuitively, an asymptotically $T^k$-fibred conical manifold is meant to be a manifold $M$ which, at infinity, looks like the total space of a fiber bundle over a cone $C$ whose fibers are flat tori.
This means, more formally, that outside of a compact set $M$ looks like the total space of a Riemannian submersion whose fibers are flat tori and the base is a Riemannian cone.
Moreover, we ask that in the limit at infinity, these toric fibers converge to a fixed flat torus.


\begin{definition}\label{def:ATkC}
	Let $(L, g_L)$ be a Riemannian manifold and let $(C(L), g_C)$ be the Riemannian cone on $L$.
	We use the following notation: for $R \in \R^+$,
	$$
	C_R(L) := \{x \in C(L) \mid r(x) > R\}
	$$
	where $r$ is the radial coordinate of the cone.
	Let $T^k \iso (S^1)^k$ be the $k$-torus.
	
	A Riemannian manifold $(M, g_M)$ is said to be \textit{asymptotically $T^k$-fibred conical} ($AT^kC$) with rate $\nu < 0$ if for some $R \in \R^+$ and compact $K \subseteq M$ there exists a $T^k$ fiber bundle $\pi_P : P_{T^k} \to C_R(L)$ with a radially invariant metric on the fibers $g_{T^k} \in \operatorname{Sym}^2(VP)$ such that each fiber $(\pi_P^{-1}(b), g_{T_k, b})$ is a flat torus, a radially invariant connection $A_\infty$ on $P_{T^k}$ and a diffeomorphism $\Psi : P_{T^k} \to M \setminus K$ such that
	$$
	\Psi^* g_M - g_P \in C^\infty_{\nu}\left(P_{T^k}, g_P\right) \qquad g_P = \pi_{P}^* g_C \oplus \operatorname{E}_{A_\infty}(g_{T^k})
	$$
	where $\operatorname{E}_{A_\infty}(g_{T^k}) \in S^2(TM)$ is the horizontal extension of $g_{T^k}$ with respect to $A_\infty$, i.e.\ defined to be $g_{T^k}$ on the vertical space and 0 on the horizontal space of $A_\infty$.
	We will call the Riemannian manifold $(P_{T^k}, g_P)$ the \textit{asymptotic model} the $AT^kC$ manifold.
\end{definition}
Similarly, we could define \textit{asymptotically $T^k$-fibred} for other types of ends.

\begin{remark}
	In the case where $P_{T^k}$ is a principal bundle, the vertical bundle is trivial. 
	In this case, the condition that the metric on the fibers $g_{T^k}$ be radially invariant is automatically satisfied if $g_{T^k}$ is constant with respect to the canonical trivialization $V P_{T^k} \iso C \times \Lie\left(T^k\right)$, which is a very convoluted way of saying what follows.
	A family $g_{T^k}$ of metrics on the fibers of $P_{T^k}$ is by definition a metric on the vertical bundle $V P_{T^k}$.
	The vertical bundle is canonically trivialized by the principal action as $V P_{T^k} \iso C \times \Lie\left(T^k\right)$, and thus a family of metrics on the fibers is just a function $C \to S^2 \left( \Lie\left(T^k\right)\right)$.
	Hence, a constant metric is just the choice of a metric on $\Lie\left(T^k\right) \iso \R^k$.
	The word \textit{flat} is redundant as being constant implies being flat.
	Note moreover that being constant also implies being $T^k$-invariant.
	
	In the case of the manifolds built in this paper it is this latter stronger condition that is satisfied, although the weaker condition of being radially invariant required in \ref{def:ATkC} is the direct generalization of ALG metric from 4-dimensional hyperkähler geometry.
	
	We also want to stress that a metric on the fibers alone is not enough to complete the pullback of the cone metric to a metric on $P_{T^k}$, because with this information alone we have no way to figure out what should be the angle between a vertical and a non-vertical vector.
	That is why the connection $A_\infty$ is necessary in the above definition.
	In light of the above remark, if we regard a connection $A_\infty$ as a 1-form with values in $\Lie(T^k)$, and we choose a metric $g_{\Lie(T^k)} \in S^2 \left( \Lie\left(T^k\right)\right)$, then $\operatorname{E}_{A_\infty}\left(g_{\Lie\left(T^k\right)}\right) = g_{\Lie\left(T^k\right)} \comp (A_\infty \times A_\infty)$.
\end{remark}

\section{The torsion-free condition for \texorpdfstring{$\Spin(7)$}{Spin(7)} structures on \texorpdfstring{$T^2$}{T\^2} bundles}\label{sec:equations}

In this section we are going to write down the equations for the $\Spin(7)$-holonomy reduction of the total space of a $T^2$-bundle over a Calabi Yau manifold.

We begin by studying what are the possible $T^2$-invariant $\Spin(7)$ 4-forms $\Phi$.

\begin{lemma}\label{thm:T2_invariant_Spin7}
	Let $B$ be a 6-manifold and $\pi: M \to B$ be a principal $T^2$-bundle.
	Then any $T^2$-invariant $\Spin(7)$-structure on $M$ will be of the form
	\begin{equation}\label{eq:Philambdamu}
		\Phi = \lambda \wedge \mu \wedge \omega + \lambda \wedge \Re \Omega - \mu \wedge \Im \Omega + \frac{1}{2} \omega^2
	\end{equation}
	for some $T^2$-invariant 1-forms $\lambda, \mu$ such that $\lambda(W) = \mu(Z) = 1, \lambda(Z) = \mu(W) = 0$ on two linearly independent vertical $T^2$-invariant vector fields $W, Z$ and an $\SU(3)$-structure $(\omega, \Re \Omega)$ on $B$ (that we identify again with its pullback through $\pi$).
\end{lemma}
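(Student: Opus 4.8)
The plan is to use the fact that a $\Spin(7)$-structure on an $8$-manifold is, at each point, $\GL(8,\R)$-equivalent to the standard model $\Phi_0$ of \eqref{eq:standard_Cayley}, and then to exploit the $T^2$-invariance together with the structure of $M$ as a principal torus bundle to pin down the shape of the adapted coframe. First I would fix a point $x \in M$ lying over $b \in B$. Since $M \to B$ is a principal $T^2$-bundle, the vertical tangent space $V_x M$ is $2$-dimensional and is canonically spanned by the values at $x$ of two $T^2$-invariant vertical vector fields; call them $W, Z$ (one can take the fundamental vector fields of a basis of $\Lie(T^2)$, which are automatically $T^2$-invariant since $T^2$ is abelian, cf.\ remark \ref{rmk:XY_commute}). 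The key structural input is that any $T^2$-invariant $4$-form on $M$, contracted appropriately, descends to data on $B$.

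Next I would bring $\Phi$ into the standard form pointwise. Because $\Stab(\Phi_0) = \Spin(7)$ acts transitively on certain flags, and in particular on ordered pairs of orthonormal vectors (the relevant stabilizer statement: $\Spin(7)$ acts transitively on $S^7$ with stabilizer $G_2$, and $G_2$ acts transitively on $S^6$), I can choose the adapted coframe $\{e^j\}_{j=1}^8$ realizing $\Phi = \Phi_0$ at $x$ so that $e^7, e^8$ are dual to $W, Z$; more precisely, set $\lambda = e^7$ and $\mu = e^8$ to be the $T^2$-invariant $1$-forms characterized by $\lambda(W) = \mu(Z) = 1$, $\lambda(Z) = \mu(W) = 0$ and vanishing on a chosen horizontal complement. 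Then $e^1, \dots, e^6$ span a complement that can be identified with $\pi^* T^*B$, and plugging into \eqref{eq:standard_Cayley} gives, at $x$,
$$
\Phi = \lambda \wedge \mu \wedge \omega + \lambda \wedge \Re\Omega - \mu \wedge \Im\Omega + \tfrac12 \omega^2,
$$
where $\omega \in \Lambda^2(\R^6)^*$, $\Re\Omega, \Im\Omega \in \Lambda^3(\R^6)^*$ are the pullbacks of the standard $\SU(3)$ model forms under the identification $\Span(e^1, \dots, e^6) \cong (\R^6)^*$. The nontrivial point is that $\omega$ and $\Omega$ so obtained are genuinely horizontal, i.e.\ they are pullbacks of forms on $B$: this follows because $\lambda \interior \Phi$, $\mu \interior \Phi$, and $\Phi$ itself are $T^2$-invariant, hence the forms extracted (namely $\omega = W \interior Z \interior \Phi$ and the $3$-forms obtained from further contractions) are $T^2$-invariant and annihilated by $W$ and $Z$, so they descend to $B$.

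Then I would check that $(\omega, \Re\Omega)$ is an honest $\SU(3)$-structure on $B$: non-degeneracy of $\omega$, stability of $\Re\Omega$, the compatibility $\omega \wedge \Re\Omega = 0$ and the normalization $\tfrac16 \omega^3 = \tfrac14 \Re\Omega \wedge \Im\Omega$ all hold because they hold for the standard model $(\omega_0, \Re\Omega_0)$ and the identification with the model is by a linear isomorphism; alternatively, they are forced by the condition $\Stab(\Phi) = \Spin(7)$ together with the splitting $T_x M = V_x \oplus H_x$ respecting the form. Finally, since $x$ was arbitrary and all the objects produced ($\lambda, \mu$ and the descended $\omega, \Omega$) depend smoothly on $x$ and are $T^2$-invariant, this gives the claimed global expression \eqref{eq:Philambdamu}. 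I expect the main obstacle to be the bookkeeping needed to verify that the extracted horizontal forms descend to $B$ and that one may choose $\lambda, \mu$ globally so that $\lambda(W) = \mu(Z) = 1$, $\lambda(Z) = \mu(W) = 0$ simultaneously with realizing $\Phi_0$ — in other words, disentangling the freedom in the adapted coframe (the $\Spin(7)$-gauge freedom) from the constraint that $e^7, e^8$ be the prescribed vertical coframe; a clean way to organize this is to contract $\Phi$ with $W$ and $Z$ first to isolate $\omega$ and then argue that the remaining reduction takes place in the $\SU(3)$ stabilizer.
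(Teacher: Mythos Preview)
Your approach is correct and is essentially the same as the paper's, though the paper organizes it more directly: rather than invoking the transitivity of $\Spin(7)$ on orthonormal pairs to align an adapted coframe, the paper uses the induced metric $g_\Phi$ from the outset, applies Gram--Schmidt to any two linearly independent $T^2$-invariant vertical fields to obtain orthonormal $W, Z$, and then writes down explicit formulas $\lambda = W \interior g_\Phi$, $\mu = Z \interior g_\Phi$, $\omega = Z \interior W \interior \Phi$, $\Re\Omega = W \interior \Phi - \mu \wedge \omega$, leaving the verification that \eqref{eq:Philambdamu} and the $\SU(3)$ conditions hold as linear algebra. Your transitivity argument is just the frame-bundle rephrasing of the same step.

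One small point to tighten: your parenthetical suggestion to take $W, Z$ as the fundamental vector fields of a fixed basis of $\Lie(T^2)$ does not combine cleanly with the transitivity argument, since that argument only lets you align $e_7, e_8$ with a $g_\Phi$-\emph{orthonormal} vertical pair, which the fundamental fields need not be. You should either apply Gram--Schmidt first (as the paper does), or simply let $W, Z$ be the orthonormal pair dual to $e^7, e^8$ after alignment; the statement of the lemma allows $W, Z$ to be any such pair, so this is harmless. Relatedly, the ``chosen horizontal complement'' on which $\lambda, \mu$ vanish is not arbitrary but is forced to be the $g_\Phi$-orthogonal complement of the vertical space, which your adapted-coframe argument already gives you automatically.
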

\begin{proof}
	It is straightforward to check that $\varphi$ is a $G_2$-structure.
	To do this we need to find a basis of $T_xM$ for each $x \in M$ such that $p_x^*\Phi = \Phi_0$, where $p_x: \R^8 \to T_xM$ is the isomorphism induced by the given basis.
	Since $(\omega, \Re \Omega)$ is an $\SU(3)$-structure, there exists a basis of $T_b B$ for each $b \in B$ that reduces them to $(\omega_0, \Re \Omega_0)$.
	Choosing $W_x, Z_x$ and the horizontal lift of such basis at $T_{\pi(x)} B$ does the job, since on $\R^8$ expression \eqref{eq:standard_Cayley} holds. 
	
	Conversely, we can recover $(\lambda, \mu, \omega, \Re \Omega)$ from $\Phi$.
	To see this, note first that $\Phi$ induces a metric $g_\Phi$.
	Then we can choose two $T^2$-invariant orthonormal vertical vector fields $W$ and $Z$, for example by applying Gram-Schmidt to any two vertical $T^2$-invariant linearly independent vector fields.
	Then, we define $\lambda = W \interior g_\Phi$, $\mu = Z \interior g_\Phi$, $\omega = Z \interior W \interior \Phi$ and $\Re \Omega = W \interior \Phi - \mu \wedge \omega$.
	
	We need to verify that equation \eqref{eq:Philambdamu} holds and that $(\omega, \Re \Omega)$ is an $\SU(3)$-structure.
	Thus, the rest of the proof is just linear algebra and we omit it.
\end{proof}

\begin{remark}\label{rmk:rotation_Spin7}
	Note that the information contained in $\lambda, \mu, \omega$ and $\Re \Omega$ is redundant.
	Indeed, we can easily find a non-trivial action of the group of sections of the trivial principal bundle $B \times U(1)$ on the above data that leaves the $\Spin(7)$-structure $\Phi$ invariant.
	
	Let us consider a specific point $x$ in $M$.
	Then, if we consider the action of $U(1)$ on the above data evaluated at $x$ described by
	$$
		e^{i \theta} \cdot (\lambda, \mu, \omega, \Re \Omega) = (\cos \theta \lambda - \sin \theta \mu, \sin \theta \lambda + \cos \theta \mu, \omega, \Re e^{i\theta} \Omega)
		,
	$$
	it is immediate to check that this does indeed leave $\Phi$ invariant at $x$.
	
	It is easy to see how this is globalized: we just apply the above formula to each point (the only difficulty is that globally there might not exist a function $\theta$ which induces the section, but this is not a problem, since we only need $e^{i \theta} \in U(1)$).
\end{remark}

\begin{lemma}\label{thm:metric_induced_by_invariant_Spin7}
	The metric induced by $\Phi$ is $g_\Phi = \lambda^2 + \mu^2 + g_{\omega, \Omega}$.
\end{lemma}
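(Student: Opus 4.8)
The plan is to reduce the claim to a pointwise linear-algebra computation in the standard model $\mathbb{R}^8$, using the recovery of $(\lambda,\mu,\omega,\Re\Omega)$ from $\Phi$ established in the proof of Lemma \ref{thm:T2_invariant_Spin7}. Recall that there we chose $T^2$-invariant vertical vector fields $W,Z$ that are $g_\Phi$-orthonormal, and set $\lambda = W\interior g_\Phi$, $\mu = Z\interior g_\Phi$; by construction $\lambda,\mu$ are the metric duals of an orthonormal pair, so $\lambda^2+\mu^2$ is exactly the part of $g_\Phi$ coming from the 2-plane spanned by $W,Z$, and it annihilates the $g_\Phi$-orthogonal complement of that plane. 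What remains is to check that on that orthogonal complement $g_\Phi$ agrees with $g_{\omega,\Omega}$, the metric determined by the $\SU(3)$-structure $(\omega,\Re\Omega)$ on $B$ (pulled back via $\pi$), and that $g_{\omega,\Omega}$ in turn annihilates $W,Z$.

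First I would fix a point $x\in M$ and choose, as in the proof of Lemma \ref{thm:T2_invariant_Spin7}, a basis of $T_xM$ consisting of $W_x,Z_x$ together with the horizontal lifts of a basis of $T_{\pi(x)}B$ adapted to $(\omega,\Re\Omega)$, so that the induced isomorphism $p_x\colon\mathbb{R}^8\to T_xM$ pulls $\Phi$ back to the standard Cayley form $\Phi_0$ of \eqref{eq:standard_Cayley}, with $W_x,Z_x$ corresponding to $e_7,e_8$ and the horizontal lifts to $e_1,\dots,e_6$. The metric induced by $\Phi_0$ on $\mathbb{R}^8$ is the standard Euclidean metric $\sum_{j=1}^8 (e^j)^2$ — this is the well-known fact that the Cayley calibration induces the flat metric, which can be read off from \eqref{eq:standard_Cayley} by the usual formula expressing $g_{\Phi}$ in terms of $\Phi$ (e.g. via $(\iota_v\Phi)\wedge(\iota_w\Phi)\wedge\Phi$). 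Transporting back through $p_x$, this says $g_\Phi = \lambda^2+\mu^2 + h$ where $h$ is the metric on $T_xM$ that in the chosen basis is $\sum_{j=1}^6 (e^j)^2$ on the horizontal directions and $0$ on $W_x,Z_x$. But $\sum_{j=1}^6 (e^j)^2$ is precisely the metric determined by $(\omega_0,\Re\Omega_0)$ on $\mathbb{R}^6$, so $h$ is the pullback of $g_{\omega,\Omega}$; since $x$ was arbitrary this gives the identity globally.

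The only genuinely non-routine point is the claim that $g_\Phi$ restricted to the horizontal distribution coincides with $g_{\omega,\Omega}$ independently of the auxiliary choice of orthonormal pair $W,Z$: different $g_\Phi$-orthonormal choices of vertical frame differ by an element of $\mathrm{O}(2)$, and one must check this does not affect the decomposition. This is exactly the redundancy described in Remark \ref{rmk:rotation_Spin7}: the $U(1)$-action there rotates $(\lambda,\mu)$ orthogonally and simultaneously rotates $\Omega$ by a phase while fixing $\omega$, and since $\cos^2\theta\,\lambda^2 + \sin^2\theta\,\mu^2 + (\text{cross terms}) + \cdots$ collapses to $\lambda^2+\mu^2$ and a phase rotation of $\Omega$ does not change $g_{\omega,\Omega}$, both summands in the asserted formula are invariant under this action; the orientation-reversing components of $\mathrm{O}(2)$ are handled the same way. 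Hence the expression $g_\Phi = \lambda^2 + \mu^2 + g_{\omega,\Omega}$ is well-defined and the lemma follows. I expect essentially no obstacle beyond carefully bookkeeping this invariance and the standard (but slightly tedious) verification that $\Phi_0$ induces the flat metric on $\mathbb{R}^8$.
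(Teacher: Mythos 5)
Your proposal is correct and follows essentially the same route as the paper: both reduce to the pointwise model on $\R^8$ via an adapted frame in which $\Phi$ becomes $\Phi_0$, use that $\Phi_0$ induces the standard flat metric (equivalently, that $\Spin(7)=\operatorname{Stab}(\Phi_0)$ stabilizes it), and read off the decomposition $g_0=g_{\omega_0,\Omega_0}+(e^7)^2+(e^8)^2$. The final paragraph on $\mathrm{O}(2)$-invariance of the splitting is harmless but unnecessary, since the lemma is stated for the fixed data $(\lambda,\mu,\omega,\Re\Omega)$ already produced by Lemma \ref{thm:T2_invariant_Spin7}.
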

\begin{proof}
	This follows directly from the fact that the stabilizer of $\Phi_0$ on $\R^8$ (which is indeed isomorphic to $\Spin(7)$), stabilizes also the standard metric on $\R^8$, which is equivalently expressed as $g_{\omega_0, \Omega_0} + (e^7)^2 + (e^8)^2$.
\end{proof}

\begin{lemma}\label{thm:Spin7_structure}
	Let $B$ be a 6-manifold and $\pi: M \to B$ be a principal $T^2$-bundle.
	Then, any $T^2$-invariant $\Spin(7)$-structure on $M$ will be of the form
	\begin{equation}\label{eq:Phi_eta_theta}
	\Phi = \eta \wedge \theta \wedge \omega + \eta \wedge p \Re \Omega - \theta \wedge (r \Re \Omega + q \Im \Omega) + \frac{1}{2}pq \omega^2
	\end{equation}
	for some principal connection $(\eta, \theta)$ for $\pi$, the pullback through $\pi$ of an $\SU(3)$-structure $(\omega, \Re \Omega)$ on $B$ and some smooth functions $p, q, r$ on $B$ of which both $p$ and $q$ are strictly positive.
	
	Moreover, by fixing a basis of $\Lie(T^2)$ and considering the vector fields $X, Y$ corresponding to such basis through the $T^2$-action on $M$, the data $(\eta, \theta, \omega, \Omega, p, q, r)$ is uniquely determined by requiring $\eta(X) = \theta(Y) = 1$ and $\eta(Y) = \theta(X) = 0$.
\end{lemma}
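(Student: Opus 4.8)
The plan is to reduce Lemma~\ref{thm:Spin7_structure} to the previously-established Lemma~\ref{thm:T2_invariant_Spin7} by exhibiting a concrete change of variables between the two parametrizations. Lemma~\ref{thm:T2_invariant_Spin7} already tells us that an arbitrary $T^2$-invariant $\Spin(7)$-structure $\Phi$ is of the form $\lambda \wedge \mu \wedge \omega + \lambda \wedge \Re\Omega - \mu \wedge \Im\Omega + \tfrac12 \omega^2$ for some $T^2$-invariant $1$-forms $\lambda,\mu$ dual (in the sense $\lambda(W)=\mu(Z)=1$, $\lambda(Z)=\mu(W)=0$) to two linearly independent vertical invariant vector fields $W,Z$, together with an $\SU(3)$-structure $(\omega,\Re\Omega)$ on $B$. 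So it remains to show that, after fixing the basis of $\Lie(T^2)$ and the associated fundamental vector fields $X,Y$, one can rewrite this in the form \eqref{eq:Phi_eta_theta} with $(\eta,\theta)$ a principal connection, $p,q>0$, and with the normalization $\eta(X)=\theta(Y)=1$, $\eta(Y)=\theta(X)=0$.

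First I would produce the connection forms. Given $X,Y$, apply the construction in Lemma~\ref{thm:T2_invariant_Spin7} (Gram--Schmidt on $X,Y$ with respect to $g_\Phi$) to get invariant vertical $W,Z$ and the associated $\lambda,\mu,\omega,\Re\Omega$; then use the $U(1)$-rotation freedom of Remark~\ref{rmk:rotation_Spin7} to arrange that the resulting $1$-forms are aligned with the $X,Y$ frame in the sense that $\eta:=$ (the component of $\lambda,\mu$ dual to $X$) and $\theta:=$ (the component dual to $Y$) satisfy $\eta(X)=\theta(Y)=1$, $\eta(Y)=\theta(X)=0$. Concretely: since $W,Z$ span the same vertical plane as $X,Y$ at each point, write $X = aW+bZ$, $Y=cW+dZ$ with $ad-bc\neq 0$ smooth functions on $B$ (invariance forces them to descend); the $1$-forms $\eta,\theta$ with the required normalization are the appropriate linear combinations of $\lambda,\mu$ with coefficients built from $(a,b,c,d)$. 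Because $\eta(X)=\theta(Y)=1$ and they annihilate the horizontal distribution determined by $\Phi$, they are $T^2$-invariant and satisfy the normalization defining a principal connection via Proposition~\ref{thm:T2_connections}. Then substitute $\lambda = \alpha\eta+\beta\theta$, $\mu=\gamma\eta+\delta\theta$ back into \eqref{eq:Philambdamu} and collect terms: the $\eta\wedge\theta\wedge\omega$ coefficient, the $\eta\wedge\Re\Omega$, $\theta\wedge\Re\Omega$, $\theta\wedge\Im\Omega$ coefficients, and the $\omega^2$ coefficient, yield exactly the shape \eqref{eq:Phi_eta_theta} after one further use of the Hitchin-map/$U(1)$-rotation freedom to kill the $\eta\wedge\Im\Omega$ term (this is where the rotation of Remark~\ref{rmk:rotation_Spin7} is essential — it lets us rotate $\Omega \mapsto e^{i\vartheta}\Omega$ so that $\eta$ pairs only with $\Re\Omega$), reading off $p,q,r$ as the surviving coefficient functions.

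Next I would pin down positivity and the precise algebraic relations among $p,q,r$ and the wedge-coefficients. Positivity of $p$ and $q$ is forced by the requirement that $\Phi$ induce a positive-definite metric: by the same Gram--Schmidt/orientation argument used in Lemma~\ref{thm:T2_invariant_Spin7}, at each point one can complete $W,Z$ and a horizontal frame adapting $(\omega,\Re\Omega)$ to $(\omega_0,\Re\Omega_0)$ to a basis in which $\Phi$ equals the model $\Phi_0$ of \eqref{eq:standard_Cayley}; comparing the $\eta\wedge\Re\Omega$ and $\theta\wedge\Im\Omega$ coefficients with the model (which has coefficient $+1$) shows $p,q$ must be positive, and comparing the $\tfrac12\omega^2$ coefficient forces it to equal $\tfrac12 pq$, consistent with \eqref{eq:Phi_eta_theta}. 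For uniqueness: once the basis of $\Lie(T^2)$ (hence $X,Y$) is fixed, the normalization $\eta(X)=\theta(Y)=1$, $\eta(Y)=\theta(X)=0$ determines the vertical behaviour of $\eta,\theta$, and their horizontal parts are determined since the horizontal distribution $\ker\eta\cap\ker\theta$ is forced to be the orthogonal complement of the fibres for $g_\Phi$; then $\omega = $ (contraction of $\Phi$ by the $g_\Phi$-duals of $\eta,\theta$) and $\Re\Omega$, and hence $p,q,r$, are recovered by the same contraction formulas as in the proof of Lemma~\ref{thm:T2_invariant_Spin7}.

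The main obstacle is the bookkeeping at the point where the $U(1)$-rotation of Remark~\ref{rmk:rotation_Spin7} is invoked to normalize the structure into the asymmetric shape \eqref{eq:Phi_eta_theta}: one must check that the rotation needed to remove the $\eta\wedge\Im\Omega$ contribution (and the spurious $\Re\Omega$ mixing) can be made simultaneously consistent with the normalization $\eta(X)=\theta(Y)=1$, $\eta(Y)=\theta(X)=0$ coming from the fixed Lie-algebra basis — i.e.\ that these two requirements do not conflict. This works because the $U(1)$-rotation acts only on the pair $(\lambda,\mu)$ by a rotation and on $\Omega$ by a phase, leaving $\omega$ and the vertical vector fields untouched; the normalization constrains $\eta,\theta$ on the \emph{vertical} plane only, while the freedom we need is used to align with $\Re\Omega$ on the \emph{horizontal} side, so the two are independent. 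Everything else is linear algebra fibrewise (exactly as the proof of Lemma~\ref{thm:T2_invariant_Spin7} already indicates) plus the observation that all coefficient functions are $T^2$-invariant and hence descend to $B$.
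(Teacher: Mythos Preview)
Your approach is the same as the paper's---reduce to Lemma~\ref{thm:T2_invariant_Spin7}, express $(\lambda,\mu)$ as linear combinations of the normalized connection forms $(\eta,\theta)$, and use the $U(1)$ rotation of Remark~\ref{rmk:rotation_Spin7} to eliminate the $\eta\wedge\Im\Omega$ term---but there is a genuine missing step.

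After the rotation (so $\mu=f\theta$, $\lambda=g\eta-l\theta$, say), substituting into \eqref{eq:Philambdamu} gives
\[
\Phi = \eta\wedge\theta\wedge (fg\,\omega) + g\,\eta\wedge\Re\Omega - l\,\theta\wedge\Re\Omega - f\,\theta\wedge\Im\Omega + \tfrac{1}{2}\omega^2.
\]
This is \emph{not} of the shape \eqref{eq:Phi_eta_theta}: the coefficient in front of $\eta\wedge\theta\wedge\omega$ is $fg$, not $1$, and the $\omega^2$ coefficient is $\tfrac12$, not $\tfrac12 pq$. Matching forces $fg=1$ and $pq=1$ simultaneously, which is not generic. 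The paper closes this gap by a conformal rescaling of the $\SU(3)$-structure, $\omega\mapsto (fg)^{-1}\omega$ and $\Omega\mapsto (fg)^{-3/2}\Omega$, after which the coefficients can be read off as $p=g^{-1/2}f^{-3/2}$, $q=f^{-1/2}g^{-3/2}$, $r=l(fg)^{-3/2}$; in particular $pq=(fg)^{-2}$ and the $\omega^2$ term comes out correctly. You need this rescaling explicitly, since the $(\omega,\Omega)$ in \eqref{eq:Phi_eta_theta} is in general not the same $\SU(3)$-structure as the one handed to you by Lemma~\ref{thm:T2_invariant_Spin7}.

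A secondary point: your positivity argument is too vague. The $U(1)$ rotation lets you arrange $f>0$ (rotate by an extra $\pi$ if needed), but to force $g>0$ the paper uses a further discrete symmetry $(\lambda,\mu,\omega,\Re\Omega)\mapsto(\lambda,-\mu,-\omega,\Re\overline{\Omega})$ that reverses orientation on $B$ while preserving $\Phi$. Appealing to ``positive-definiteness of $g_\Phi$'' does not by itself pick out signs for the individual scalars $p,q$; you need to exhibit the transformation that flips them.
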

\begin{proof}
	$\lambda$ and $\mu$ appearing in equation \eqref{eq:Philambdamu} are almost the data of a connection on the $T^2$ bundle, in the sense that the intersection of their kernels is the horizontal bundle of some connection.
	What is missing, however, is that in general the two vector fields $W$ and $Z$ will be very far from being fundamental vector fields for the $T^2$ action, since this property is a much stronger request than just being $T^2$-invariant and vertical.
	
	Let $X$ and $Y$ be the two generators (we say \textit{the} because we have fixed an identification of $\operatorname{Lie}(T^2)$ with $\R^2$).
	Then we consider the unique connection $(\eta, \theta) \in \Lambda^1(M)^2$ whose horizontal space is the kernel of $\lambda$ and $\mu$ and that satisfies $\eta(X) = \theta(Y) = 1$ and $\eta(Y) = \theta(X) = 0$.
	In general, we can write $W$ and $Z$ as generic linear combinations of $X$ and $Y$, but here we can use some of the redundancy we have on the data $(\lambda, \mu, \omega, \Re \Omega)$ to simplify the situation.
	
	Indeed, we can use the rotation introduced in remark \ref{rmk:rotation_Spin7} to make the vector $W$ parallel to $X$ (which in terms of $\lambda$ and $\mu$ makes $\mu$ ``parallel'' to $\theta$).
	This is possible because $W$ and $X$ are both $T^2$-invariant.
	
	Hence, we write
	$$
		\begin{cases}
			\lambda = g \eta - l  \theta \\
			\mu = f \theta
		\end{cases}
		\qquad \begin{bmatrix}
			g(x) & -l(x) \\ 0 & f(x)
		\end{bmatrix}
		\in \GL_2(\R) \, \forall x \in M
	$$
	where $g, l, f$ are  a priori smooth scalar functions on $M$.
	Since $\lambda, \mu, \eta, \theta$ are $T^2$-invariant, we can actually think $g, l, f$ as elements of $C^\infty(B)$.
	Moreover, the condition that they make up a change of basis at every point is equivalent to $fg \neq 0$.
	By the above redundancy, we can assume $f$ to be positive (otherwise we can rotate an additional angle $\pi$ above).
	We can also assume $g$ to be positive, because if it were negative we can perform the following transformation
	$$
	(\lambda, \mu, \omega, \Re \Omega) = \left(\lambda, -\mu, -\omega, \Re \overline{\Omega}\right)
	$$
	which flips the orientation on $B$ but leaves $\Phi$ invariant.
	This transformation flips the sign of $Z$ and hence of both $f$ and $g$.
	
	By plugging in this expression into \eqref{eq:Philambdamu} we get
	$$
		\Phi = \eta \wedge \theta \wedge (gf \omega) + (g \eta - l \theta) \wedge \Re \Omega - f \theta \wedge \Im \Omega + \frac{1}{2} \omega^2
		.
	$$
	In general, requiring $d\Phi = 0$ will imply neither $d\omega = 0$ nor $d \Omega = 0$, but we can perform one last transformation on the $\SU(3)$-structure $(\omega, \Re \Omega)$ such that the new structure will have the property that $d\Phi = 0$ implies $d \omega = 0$.
	This transformation is a conformal rescaling by a factor of $\frac{1}{gf}$, which means that $\omega \mapsto \frac{1}{gf} \omega$ and $\Omega \mapsto \frac{1}{(\sqrt{gf})^{3}} \Omega$.
	In terms of the transformed $\SU(3)$-structure, the $\Spin(7)$-structure is expressed by
	$$
		\Phi = \eta \wedge \theta \wedge \omega + \eta \wedge p \Re \Omega - \theta \wedge (r \Re \Omega + q \Im \Omega) + \frac{1}{2}pq \omega^2
	$$
	where $p = g^{-\frac{1}{2}}f^{-\frac{3}{2}}$, $q = f^{-\frac{1}{2}}g^{-\frac{3}{2}}$ and $r=l g^{-\frac{3}{2}}f^{-\frac{3}{2}}$ (or equivalently $f = q^{\frac{1}{4}}p^{-\frac{3}{4}}$, $g = p^{\frac{1}{4}}q^{-\frac{3}{4}}$ and $l = r (pq)^{\frac{3}{4}}$).
	By the properties of $f$ and $g$, $p$ and $q$ are strictly positive.
	
	Finally, let us prove uniqueness.
	By lemma \ref{thm:metric_induced_by_invariant_Spin7} and the expressions we just calculated, we see that
	\begin{equation}\label{eq:metric_Spin7}
		g_\Phi = p^{\frac{1}{2}}q^{-\frac{3}{2}} \eta^2 + \left(r^2(pq)^\frac{3}{2}+ q^{\frac{1}{2}}p^{-\frac{3}{2}} \right) \theta^2 - 2 r p \eta \odot \theta + (pq)^{\frac{1}{2}} g_{\omega, \Omega}
		.
	\end{equation}
	In particular, $\Phi$ being in the form expressed in equation \eqref{eq:Phi_eta_theta} implies that the horizontal space for the connection $(\eta, \theta)$ is the $g_\Phi$-orthogonal complement of the vertical space.
	Indeed, from the expression of $g_\Phi$ just calculated, it is clear that if $v$ is a vertical vector and $w$ is such that $\eta(w) = \theta(w) = 0$, then $g_\Phi(v, w) = 0$, since $v \interior g_{\omega, \Omega} = 0$.
	Thus, $(\eta, \theta)$ are uniquely determined by the horizontal space and the requirement $\eta(X) = \theta(Y) = 1$ and $\eta(Y) = \theta(X) = 0$.
	
	Moreover, $\omega$ is uniquely defined as $Y \interior X \interior \Phi$.
	Suppose now that we can express $\Phi$ also as
	$$
		\Phi = \eta \wedge \theta \wedge \omega + \eta \wedge \tilde{p} \Re \tilde{\Omega} - \theta \wedge (\tilde{r} \Re \tilde{\Omega} + \tilde{q} \Im \tilde{\Omega}) + \frac{1}{2}\tilde{p}\tilde{q} \omega^2
		.
	$$
	By equating this to expression \eqref{eq:Phi_eta_theta} and taking the interior product with $X$ and $Y$ respectively, we get the following equations:
	$$
	\begin{cases}
		p \Re \Omega = \tilde{p} \Re \tilde{\Omega} \\
		r \Re \Omega + q \Im \Omega = \tilde{r} \Re \tilde{\Omega} + \tilde{q} \Im \tilde{\Omega} \\
		p q = \tilde{p} \tilde{q}
		.
	\end{cases}
	$$
	Then, by using the fact that the Hitchin map commutes with scalar multiplication and that $\Re \Omega$ and $\Im \Omega$ are linearly independent, we are able to conclude that $p = \tilde{p}$, $q = \tilde{q}$ and $r = \tilde{r}$.
	Thus, from the first equation, $\Re \Omega = \Re \tilde{\Omega}$, which concludes the proof.
\end{proof}

Note that $r = 0$ corresponds to the simpler case where $X$ and $Y$ are orthogonal, i.e.\ the case studied by Fowdar in \cite{Fowdar2023}.
In other words, if $r = 0$ is equivalent to assuming the toric fibers to be rectangular tori, which are tori obtained by identifying the edges of a rectangle.
The presence of a non-necessary null $r$, or equivalently considering $T^2$ bundles whose fibers have a more general flat metric instead of a rectangular metric, will reveal crucial in solving $d\Phi = 0$.

\begin{remark}
	As the reader might guess, the functions $p$, $q$ and $r$ have a concrete geometrical interpretation.	
	Indeed, the data of $p$, $q$ and $r$ can be equivalently expressed by the data of a smooth function from $B$ to $\operatorname{In}(\R^2)$ (where $\operatorname{In}$ denotes the space of positive definite bilinear forms), which is in turn equivalent to the data of a $T^2$-invariant metric on the fibers of the bundle $M \to B$.
	Indeed, in the proof of lemma \ref{thm:Spin7_structure}, we saw that the data of $p, q$ and $r$ is equivalent to the data of $f, g$ and $l$ which has the more immediate geometric meaning of describing the transformation law of $\lambda$ and $\mu$ into $\eta$ and $\theta$.
	$\lambda$ and $\mu$ are an orthonormal basis for vertical one forms, where the word \textit{vertical} has meaning in reference to the connection $(\eta, \theta)$ and the word \textit{orthonormal} is meant with respect to the dual metric $g_\Phi^\dual$ of the metric $g_\Phi$ restricted to the fibers.
	Thanks to lemma \ref{thm:metric_induced_by_invariant_Spin7} it is immediate to check that
	$$
		g_\Phi^\dual = \begin{bmatrix}
			g^{-2}(1+l^2f^{-2}) & l g^{-1}f^{-2} \\ l g^{-1}f^{-2} & f^{-2}
		\end{bmatrix}
		= \begin{bmatrix}
			p^{-\frac{1}{2}}q^{\frac{3}{2}}(1+p^3q) & rp^2q \\ rp^2q & q^{-\frac{1}{2}}p^{\frac{3}{2}}
		\end{bmatrix}
	$$
	where the matrix should be interpreted with respect to the basis of vertical one forms given by $\eta$ and $\theta$.
	It is also immediate to check that one can also express $f, g$ and $l$ in terms of the coefficients of $g_\Phi^\dual$.
	
	Finally note as a corollary that $p$ and $q$ can also be used to specify the information of the induced volume form on the toric fibers.
	Indeed, $\lambda \wedge \mu = (gf) \eta \wedge \theta = (pq)^{-\frac{1}{2}} \eta \wedge \theta$.
\end{remark}

In the rest of this chapter, we reinterpret the torsion-free condition $d\Phi = 0$ in terms of the data $(\omega, \Re \Omega, p, q, r)$ and we exploit the resulting equations to derive some necessary conditions for $\Phi$ to be torsion free.

\begin{lemma}\label{thm:Spin7_torsion}
	In the notation of lemma \ref{thm:Spin7_structure}, $d \Phi = 0$ is equivalent to
	\begin{subnumcases}{\label{eq:Spin7_torsion_v1}}
		d \omega = 0\label{eq:Spin7_torsion_v1_d_omega} \\
		d (p\Re \Omega) = -d \theta \wedge \omega\label{eq:Spin7_torsion_v1_dRe_Omega} \\
		d(r \Re \Omega) + d(q \Im \Omega) = -d\eta \wedge \omega\label{eq:Spin7_torsion_v1_dIm_Omega} \\
		d \eta \wedge p \Re \Omega -d \theta \wedge \left(r \Re \Omega +q \Im \Omega\right) + \frac{1}{2}d (pq) \wedge \omega^2 = 0\label{eq:Spin7_torsion_v1_third}
		.
	\end{subnumcases}
\end{lemma}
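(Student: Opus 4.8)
The plan is to expand the exterior derivative $d\Phi$ of the $4$-form in \eqref{eq:Phi_eta_theta} and group the resulting terms by their vertical degree (that is, by how many of the fiber-forms $\eta,\theta$ appear). Recall that $\Phi$ lives on the total space $M$, and since $(\eta,\theta)$ is a principal connection with curvature $2$-forms $d\eta = \pi^*F_\eta$ and $d\theta = \pi^*F_\theta$ pulled back from $B$, the exterior derivative of any term $\eta\wedge\theta\wedge\alpha$, $\eta\wedge\beta$, $\theta\wedge\beta$ or $\gamma$ (with $\alpha,\beta,\gamma$ pulled back from $B$) splits cleanly: $d$ of a pulled-back form stays pulled-back, while differentiating $\eta$ or $\theta$ lowers the vertical degree by one and inserts a curvature factor. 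Concretely, $d(\eta\wedge\theta\wedge\omega) = -\eta\wedge d\theta\wedge\omega + \theta\wedge d\eta\wedge\omega + \eta\wedge\theta\wedge d\omega$, and similarly for the other three summands of $\Phi$. Collecting everything, $d\Phi$ is a sum of a term of vertical degree $2$ (the coefficient of $\eta\wedge\theta$), two terms of vertical degree $1$ (coefficients of $\eta$ alone and of $\theta$ alone), and a term of vertical degree $0$ (purely horizontal).

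First I would observe that because $M\to B$ is a bundle and $\eta,\theta$ together with the horizontal forms give a local coframing, a form on $M$ that is a finite sum of terms of distinct vertical degrees vanishes if and only if each homogeneous piece vanishes separately. So $d\Phi=0$ is equivalent to four equations, one for each vertical degree. The vertical-degree-$2$ piece is $\eta\wedge\theta\wedge d\omega = 0$, hence $d\omega = 0$; this is \eqref{eq:Spin7_torsion_v1_d_omega}. The vertical-degree-$1$ piece splits further into the part containing $\eta$ and the part containing $\theta$ (these are independent since $\eta$ and $\theta$ are pointwise linearly independent modulo horizontal forms). The $\eta$-part comes from $d(\eta\wedge p\Re\Omega)$ contributing $\eta\wedge d(p\Re\Omega)$, from $d(\eta\wedge\theta\wedge\omega)$ contributing $-\eta\wedge d\theta\wedge\omega$ wait — more carefully, one tracks which summands produce an $\eta$ with no $\theta$: this gives $\eta\wedge\bigl(d(p\Re\Omega) + d\theta\wedge\omega\bigr) = 0$, i.e.\ \eqref{eq:Spin7_torsion_v1_dRe_Omega}. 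Symmetrically the $\theta$-part (keeping track of the sign in $-\theta\wedge(r\Re\Omega+q\Im\Omega)$ and the cross term from $\eta\wedge\theta\wedge\omega$) gives $d(r\Re\Omega + q\Im\Omega) + d\eta\wedge\omega = 0$, which is \eqref{eq:Spin7_torsion_v1_dIm_Omega}. Finally the purely horizontal piece collects $d\eta\wedge p\Re\Omega$ (from differentiating $\eta$ in the second summand), $-d\theta\wedge(r\Re\Omega+q\Im\Omega)$ (from differentiating $\theta$ in the third summand), and $d\bigl(\tfrac12 pq\,\omega^2\bigr) = \tfrac12 d(pq)\wedge\omega^2$ (using $d\omega=0$), yielding \eqref{eq:Spin7_torsion_v1_third}. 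The converse is immediate: if all four equations hold, reassembling the homogeneous pieces gives $d\Phi=0$.

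The computation is essentially bookkeeping, so the only place to be careful — and what I would treat as the main point rather than a genuine obstacle — is the sign and degree accounting when differentiating the wedge products, in particular the cancellation that lets the two vertical-degree-$1$ equations decouple into \eqref{eq:Spin7_torsion_v1_dRe_Omega} and \eqref{eq:Spin7_torsion_v1_dIm_Omega}, and making sure the cross terms $\pm\eta\wedge d\theta\wedge\omega$, $\pm\theta\wedge d\eta\wedge\omega$ arising from $d(\eta\wedge\theta\wedge\omega)$ land in the correct equation with the correct sign. One should also note explicitly that $d\eta$, $d\theta$, $d\omega$, $d(p\Re\Omega)$, etc.\ are all (pullbacks of) forms on $B$ since $\eta,\theta$ are $T^2$-invariant and $\omega,\Omega,p,q,r$ are pulled back from $B$, which is what justifies the vertical-degree decomposition being well-defined and orthogonal in the sense used above. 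No deeper input is needed; in particular we do not yet use that $(\omega,\Re\Omega)$ is torsion-free — indeed $d\omega=0$ is one of the \emph{conclusions}, not a hypothesis, at this stage.
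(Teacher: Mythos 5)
Your proof is correct and follows essentially the same route as the paper: expand $d\Phi$ from \eqref{eq:Phi_eta_theta} and split the result into the direct summands $\eta\wedge\theta\wedge(\text{horizontal})$, $\eta\wedge(\text{horizontal})$, $\theta\wedge(\text{horizontal})$ and purely horizontal forms. The only (harmless) difference is that the paper obtains $d\omega=0$ separately via Cartan's formula and the $T^2$-invariance of $\Phi$ (using $Y\lrcorner X\lrcorner\Phi=\omega$ and $[X,Y]=0$), whereas you read it off directly from the $\eta\wedge\theta\wedge d\omega$ component of the expansion, which is equally valid.
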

\begin{proof}
	Suppose $d\Phi = 0$.
	As usual, denote with $X$ and $Y$ the vertical vectors which are dual to $\eta$ and $\theta$.
	We saw previously that $Y \interior X \interior \Phi = \omega$.
	Then, by using the fact that $X$ and $Y$ commute (see remark \ref{rmk:XY_commute}), the fact that they preserve $\Phi$, and Cartan's magic formula, we get that
	$$
		d \omega=d(Y\lrcorner X\lrcorner \Phi)=\mathcal{L}_Y(X\lrcorner \Phi)-Y\lrcorner \mathcal{L}_X \Phi=X\lrcorner \mathcal{L}_Y \Phi=0
		.
	$$
	The rest of the proof consists in calculating explicitly $d\Phi$ from its expression in equation \eqref{eq:Phi_eta_theta} and noticing that its terms belong to three distinct subspaces of 5-forms: one is the image of horizontal 4-forms through the morphism $\Lambda^4(T^*M) \to \Lambda^5(T^*M)$ given by $\alpha \mapsto \eta \wedge \alpha$, the second is the image of horizontal 4-forms through $\alpha \mapsto \theta \wedge \alpha$ and the last is horizontal 5-forms.
	This decomposition yields the last three equations of system \eqref{eq:Spin7_torsion_v1}.
\end{proof}

\noindent System \eqref{eq:Spin7_torsion_v1} is clearly equivalent to the system
\begin{equation}\label{eq:Spin7_torsion}
	\begin{cases}
		d \omega = 0 \\
		d \Re \Omega = -\frac{1}{p} \left(dp \wedge \Re \Omega + d \theta \wedge \omega\right) \\
		d \Im \Omega = \frac{1}{pq} \left(\left(r dp - p dr\right) \wedge \Re \Omega + \left(r d \theta -p d \eta\right) \wedge \omega - pdq \wedge \Im \Omega \right) \\
		d \eta \wedge p \Re \Omega -d \theta \wedge \left(r \Re \Omega +q \Im \Omega\right) + \frac{1}{2}d (pq) \wedge \omega^2 = 0
		.
	\end{cases}
\end{equation}
which show us that equations \eqref{eq:Spin7_torsion} reinterpret the torsion of $\Phi$ in terms of the torsion of the $\SU(3)$-structure $(\omega, \Re \Omega)$ plus one equation that a priori has a less clear interpretation.
We can go further than this and reinterpret these equations in the context of lemma \ref{thm:SU3_torsion}.

\begin{lemma}\label{thm:SU3_torsion_Spin7_torsion}
	In the notation of lemma \ref{thm:Spin7_structure} and of lemma \ref{thm:SU3_torsion}, $d \Phi = 0$ is equivalent to the following constraints on the torsion of the $\SU(3)$-structure $(\omega, \Omega)$
	$$
	\begin{cases}
		w_1 = \hat{w}_1 = 0 \quad
		w_3 = 0 \quad
		w_4 = 0 \\
		w_2 = -\frac{1}{p}(d\theta)_8 \\
		\hat{w}_2 = - \frac{1}{pq}\left(r(d\theta)_8 + p (d\eta)_8\right) \\
		w_5 = \frac{1}{2pq}\left(J(pdr-rdp)- \frac{1}{2} d(pq)\right)
	\end{cases}
	$$
	together with the following constraint on the decomposition of $d\eta$ and $d\theta$ along $\Lambda^2 = \Lambda^2_1\oplus \Lambda^2_6 \oplus \Lambda^2_8$:
	\begin{equation}\label{eq:alphas_def}
		\begin{cases}
			d\eta = 0 + J\alpha_\eta^\sharp \interior \Re \Omega + (d \eta)_8 \\
			d\theta = 0 + J\alpha_\theta^\sharp \interior \Re \Omega + (d \theta)_8
		\end{cases}
	\end{equation}
	where $\alpha_\eta$ and $\alpha_\theta$ are given by
	$$
		\begin{cases}
			\alpha_\eta = \frac{1}{2pq}\left(qd(pq) + qJ\left(\frac{1}{2}qdp -\frac{3}{2}pdq\right)+rJ(rdp-pdr)+ p\left(\frac{1}{2}rdq-pdr\right)\right) \\
			\alpha_\theta = \frac{1}{2q}\left(J(rdp-pdr)+\frac{1}{2}pdq -\frac{3}{2}qdp\right)
			.
		\end{cases}
	$$
\end{lemma}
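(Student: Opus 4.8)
The plan is to start from the four equations \eqref{eq:Spin7_torsion_v1_d_omega}--\eqref{eq:Spin7_torsion_v1_third} of Lemma~\ref{thm:Spin7_torsion}, expand the exterior derivatives by the Leibniz rule, substitute the structural identities of Proposition~\ref{thm:SU3_torsion} for $d\omega$, $d\Re\Omega$, $d\Im\Omega$, and then project everything onto the irreducible $\SU(3)$-components, reading a $4$-form off from its Hodge dual $2$-form via $\Lambda^4 = \Lambda^4_1\oplus\Lambda^4_6\oplus\Lambda^4_8$. Since the decompositions of Lemma~\ref{thm:irreducible_decomposition_SU3} are direct sums, an identity between forms is equivalent to the collection of identities between the corresponding components, and every manipulation below is an equivalence, so it suffices to translate system \eqref{eq:Spin7_torsion_v1} into the stated list.

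First I would treat \eqref{eq:Spin7_torsion_v1_d_omega}, i.e.\ $d\omega=0$: in Proposition~\ref{thm:SU3_torsion} the term $3w_1\Re\Omega+3\hat w_1\Im\Omega$ lies in $\Lambda^3_{1\oplus1}$, the form $w_3$ in $\Lambda^3_{12}$, and $w_4\wedge\omega$ in $\Lambda^3_6$ (the map $\gamma\mapsto\gamma\wedge\omega$ being injective on $1$-forms), so $d\omega=0$ is equivalent to $w_1=\hat w_1=0$, $w_3=0$ and $w_4=0$. Feeding $w_1=\hat w_1=0$ back in reduces the $\SU(3)$-torsion to $d\Re\Omega=w_5\wedge\Re\Omega+w_2\wedge\omega$ and $d\Im\Omega=w_5\wedge\Im\Omega+\hat w_2\wedge\omega$ with $w_2,\hat w_2\in\Omega^2_8$.

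Next I would expand \eqref{eq:Spin7_torsion_v1_dRe_Omega} as $dp\wedge\Re\Omega+p\,d\Re\Omega+d\theta\wedge\omega=0$ and \eqref{eq:Spin7_torsion_v1_dIm_Omega} as $dr\wedge\Re\Omega+r\,d\Re\Omega+dq\wedge\Im\Omega+q\,d\Im\Omega+d\eta\wedge\omega=0$, substitute the reduced torsion, and split $d\eta$ and $d\theta$ into their $\Lambda^2_1\oplus\Lambda^2_6\oplus\Lambda^2_8$ parts. The $\Lambda^4_1$-components force $(d\eta)_1=(d\theta)_1=0$, which is what lets one write $d\eta$ and $d\theta$ in the form \eqref{eq:alphas_def}; the $\Lambda^4_8$-components give $w_2=-\tfrac1p(d\theta)_8$ and then $\hat w_2=-\tfrac1{pq}\bigl(r(d\theta)_8+p(d\eta)_8\bigr)$; and the $\Lambda^4_6$-components, after converting $\gamma\wedge\Im\Omega$ and $(\gamma^\sharp\interior\Re\Omega)\wedge\omega$ into multiples of $(\,\cdot\,)\wedge\Re\Omega$ via Lemma~\ref{thm:SU3_identities} and cancelling $\wedge\Re\Omega$ (again injective on $1$-forms), become two $1$-form identities among $w_5$, $\alpha_\eta$, $\alpha_\theta$ and $dp,dq,dr$. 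The remaining $5$-form equation \eqref{eq:Spin7_torsion_v1_third} collapses nicely: since $\omega\wedge\Re\Omega=\omega\wedge\Im\Omega=0$ and $\Omega^2_8$ annihilates both $\Re\Omega$ and $\Im\Omega$, only the $\Lambda^2_6$-parts of $d\eta,d\theta$ contribute, and using Lemma~\ref{thm:SU3_identities} together with the injectivity of $\gamma\mapsto\gamma\wedge\omega^2$ it reduces to a third $1$-form identity in $\alpha_\eta$, $\alpha_\theta$ and $d(pq)$.

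It then remains to solve this $3\times3$ linear system of $1$-form equations for $w_5$, $\alpha_\eta$, $\alpha_\theta$; since its coefficients involve $J$, one formally works over the ring generated by $J$ with $J^2=-1$. Eliminating $\alpha_\theta$ and $\alpha_\eta$ in favour of $w_5$ makes the $J$-proportional terms cancel and leaves $2pq\,w_5=J(p\,dr-r\,dp)-\tfrac12 d(pq)$, the asserted formula for $w_5$; back-substitution then yields the stated $\alpha_\eta$ and $\alpha_\theta$ (hence, through \eqref{eq:alphas_def}, the claimed constraints on $d\eta$ and $d\theta$) as well as $w_2$ and $\hat w_2$. I expect the genuinely delicate part to be the sign bookkeeping in the $\Lambda^4_6$- and $5$-form reductions and the elimination in this $J$-linear system; everything else is a routine projection onto irreducible $\SU(3)$-components.
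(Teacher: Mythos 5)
Your proposal is correct and follows essentially the same route as the paper: both prove $d\omega=0$ kills $w_1,\hat w_1,w_3,w_4$, both use the $\Lambda^4_1$-projections of the $\Re\Omega$- and $\Im\Omega$-equations to kill $(d\eta)_1,(d\theta)_1$, read off $w_2,\hat w_2$ from the $\Lambda^4_8$-parts, and then extract from the $\Lambda^4_6$-projections and the fifth-degree equation a linear system in $w_5,\alpha_\eta,\alpha_\theta$ over the ring generated by $J$ (the paper organizes this as equating the two expressions for $w_5$ to get one equation, plus a second from \eqref{eq:Spin7_torsion_v1_third}, which is your $3\times3$ elimination in a slightly different order). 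No gap.
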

\begin{proof}
	Suppose equations \eqref{eq:Spin7_torsion} hold.
	$d \omega = 0$ is equivalent to $w_1 = \hat{w}_1 = 0, w_3 = 0$ and $w_4 = 0$.
	Because $d \omega = 0$, the equations for $\Re \Omega$ and $\Im \Omega$ imply that $d\eta$ and $d \theta$ have no component in $\Lambda^2_1$, proving part of the constraint on their decomposition.
	
	The rest of the proof consists in decomposing equations \eqref{eq:Spin7_torsion} in types and rewriting $d\eta = - J\alpha_\eta^\sharp \interior \Re \Omega + (d \eta)_8$ and $d\theta = - J\alpha_\theta^\sharp \interior \Re \Omega + (d \theta)_8$.
	We have that
	$$
		d \eta \wedge \omega = \alpha_\eta \wedge \Re \Omega + (d\eta)_8 \wedge \omega
	$$
	and similarly for $\theta$.
	The equation involving $\Re \Omega$ directly implies the equation involving $w_2$ and the one involving $\Im \Omega$ implies the one with $\hat{w}_2$.
	Neither of the two alone implies the one involving $w_5$, which depends on calculating $\alpha_\eta$ and $\alpha_\theta$ explicitly, which is what we do in the rest of the proof.
	Imposing that the $w_5$ from $d \Re \Omega$ and the $w_5$ from $d \Im \Omega$ are equal amounts to requiring
	\begin{equation}\label{eq:same_w5}
		p \alpha_\eta = (r+qJ) \alpha_\theta + J(qdp-pdq) + rdp-pdr
		.
	\end{equation}
	The last equation in $\eqref{eq:Spin7_torsion}$ implies that
	\begin{equation}\label{eq:alphas_Spin7_torsion_third}
		p \alpha_\eta = (r-qJ) \alpha_\theta -\frac{1}{2} J d(pq)
		.
	\end{equation}
	These two equations together allow us to find the explicit expressions for $\alpha_\eta$ and $\alpha_\theta$ given in the statement.
	
	This finally allows to make the equation involving $w_5$ explicit.
\end{proof}

Thus, in light of lemma \ref{thm:SU3_torsion_Spin7_torsion}, we can remark that equation \eqref{eq:Spin7_torsion_v1_third} is half of the constraint on the torsion component $w_5$ of the $\SU(3)$-structure $(\omega, \Omega)$ in order for the 4-form $\Phi$ to be closed (the other half coming from the $\Lambda_6$ projection of equations \eqref{eq:Spin7_torsion_v1_dRe_Omega} and \eqref{eq:Spin7_torsion_v1_dIm_Omega}).

In the following corollary, we put in the spotlight two equations implied by lemma \ref{thm:SU3_torsion_Spin7_torsion} that will be handy in solving the equations analytically.

\begin{corollary}\label{thm:alphas}
	Suppose system \eqref{eq:Spin7_torsion_v1} holds.
	Then the following two equations hold:
	\begin{equation}\label{eq:simil_CY_monopole}
		q d \theta \wedge \Im \Omega + \frac{1}{2}\left(\frac{1}{2}\left(pdq-3qdp\right)+J(rdp - pdr) \right)\wedge \omega^2 = 0
		.
	\end{equation}
	and
	\begin{equation}\label{eq:same_w5_v1}
		(pd\eta-rd\theta) \wedge \Re \Omega + q d \theta \wedge \Im \Omega + \left(pdq-qdp+J(rdp - pdr) \right)\wedge \omega^2 = 0
	\end{equation}
\end{corollary}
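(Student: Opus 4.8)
The strategy is to read both identities off of lemma \ref{thm:SU3_torsion_Spin7_torsion}, whose conclusions apply because we are assuming that system \eqref{eq:Spin7_torsion_v1} holds.

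First I would dispose of \eqref{eq:same_w5_v1} by reducing it to \eqref{eq:simil_CY_monopole}: expanding $d\theta\wedge(r\Re\Omega+q\Im\Omega)$ and writing $\tfrac12 d(pq)=\tfrac12(p\,dq+q\,dp)$, one checks that the $5$-form obtained by adding equation \eqref{eq:Spin7_torsion_v1_third} to twice \eqref{eq:simil_CY_monopole} is exactly the left-hand side of \eqref{eq:same_w5_v1}. Indeed the $(p\,d\eta-r\,d\theta)\wedge\Re\Omega$ terms and the $q\,d\theta\wedge\Im\Omega$ terms combine trivially ($-q\,d\theta\wedge\Im\Omega+2q\,d\theta\wedge\Im\Omega=q\,d\theta\wedge\Im\Omega$), and $\tfrac12 d(pq)+\tfrac12(p\,dq-3q\,dp)=p\,dq-q\,dp$, which is precisely the coefficient of $\omega^2$ appearing in \eqref{eq:same_w5_v1}. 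So it suffices to prove \eqref{eq:simil_CY_monopole}.

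For \eqref{eq:simil_CY_monopole}, I would use the description of the $\SU(3)$-torsion supplied by lemma \ref{thm:SU3_torsion_Spin7_torsion}: under the hypothesis one has the decomposition $d\theta=-J\alpha_\theta^\sharp\lrcorner\Re\Omega+(d\theta)_8$ with $(d\theta)_8\in\Omega^2_8$ and $\alpha_\theta=\tfrac1{2q}\big(J(r\,dp-p\,dr)+\tfrac12 p\,dq-\tfrac32 q\,dp\big)$. Wedging with $\Im\Omega$ annihilates the $\Lambda^2_8$-summand, because by lemma \ref{thm:irreducible_decomposition_SU3} the elements of $\Lambda^2_8$ are primitive $(1,1)$-forms whereas $\Im\Omega$ is of type $(3,0)+(0,3)$, so their product vanishes in complex dimension $3$; and it also annihilates the $\Lambda^2_1=\R\omega$-summand for the same type reason. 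The $\Lambda^2_6$-summand gives, by lemma \ref{thm:SU3_identities}.5 applied to $X=J\alpha_\theta^\sharp$ (and $J^2=-1$), $-(J\alpha_\theta^\sharp\lrcorner\Re\Omega)\wedge\Im\Omega=-\alpha_\theta\wedge\omega^2$. Hence $d\theta\wedge\Im\Omega=-\alpha_\theta\wedge\omega^2$; multiplying by $q$ and inserting $2q\alpha_\theta=\tfrac12(p\,dq-3q\,dp)+J(r\,dp-p\,dr)$ yields exactly \eqref{eq:simil_CY_monopole}.

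The only delicate point is the bookkeeping of signs and of the $J$'s: one must apply lemma \ref{thm:SU3_identities}.5 with the correct argument $X=J\alpha_\theta^\sharp$ and keep the sign with which $\alpha_\theta$ enters the $\Lambda^2_6$-part of $d\theta$ consistent with the conventions fixed in the proof of lemma \ref{thm:SU3_torsion_Spin7_torsion}. I expect this to be the main—and essentially only—thing to be careful about; conceptually everything reduces to a single wedge computation together with the elementary algebraic identity of the second paragraph. If one prefers not to route through that algebraic reduction, \eqref{eq:same_w5_v1} can be proved directly in the same spirit, by also wedging $d\eta$ and $d\theta$ with $\Re\Omega$ (via lemma \ref{thm:SU3_identities}.4, $(X\lrcorner\Re\Omega)\wedge\Re\Omega=X^\flat\wedge\omega^2$, together with $\Omega^2_8\wedge\Re\Omega=0$) and substituting the explicit formula for $\alpha_\eta$ and the relation \eqref{eq:alphas_Spin7_torsion_third} from the proof of lemma \ref{thm:SU3_torsion_Spin7_torsion}.
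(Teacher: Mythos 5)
Your proposal is correct and matches the paper's proof in substance: both identities are read off lemma \ref{thm:SU3_torsion_Spin7_torsion}, with \eqref{eq:simil_CY_monopole} obtained by pairing the type decomposition of $d\theta$ against $\Omega$ and inserting the explicit formula for $\alpha_\theta$ (the paper wedges with $\Re\Omega$ and then applies $J$ via $J(d\theta\wedge\Re\Omega)=-d\theta\wedge\Im\Omega$, you wedge with $\Im\Omega$ directly — the same computation), and \eqref{eq:same_w5_v1} obtained as a rewriting of the relation \eqref{eq:same_w5} between $\alpha_\eta$ and $\alpha_\theta$, which your linear combination of \eqref{eq:Spin7_torsion_v1_third} with twice \eqref{eq:simil_CY_monopole} reproduces (the coefficient arithmetic checks out). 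Your choice of the minus sign in $d\theta=-J\alpha_\theta^\sharp\lrcorner\Re\Omega+(d\theta)_8$, i.e.\ the convention of the \emph{proof} of lemma \ref{thm:SU3_torsion_Spin7_torsion} rather than of its statement, is indeed the one under which the computation closes, so the delicate point you flag is handled correctly.
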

\begin{proof}
	Equation \eqref{eq:simil_CY_monopole} is just another way to write the explicit expression for $\alpha_\theta$ found in lemma \ref{thm:SU3_torsion_Spin7_torsion}.
	It follows directly from the fact that if $d\theta =  J\alpha_\theta^\sharp \interior \Re \Omega + (d \theta)_8$, then $d\theta \wedge \Re \Omega = \left(J\alpha_\theta^\sharp\right)^\flat \wedge \omega^2 = - J \alpha_\theta \wedge \omega^2$ and thus ${\star}\left(d \theta \wedge \Re \Omega \right) = -2 \alpha_\theta$.
	Moreover, we used that $J(d \theta \wedge \Re \Omega) = -d\theta \wedge \Im \Omega$, due to the fact that $\pi_1(d \theta) = 0$.
	
	Similarly, equation \eqref{eq:same_w5_v1} is just another way to write equation \eqref{eq:same_w5}.
\end{proof}

\begin{remark}
	Equation \eqref{eq:simil_CY_monopole} is equivalent to equation \eqref{eq:Spin7_torsion_v1_third} and can hence replace it.
	
	Indeed, as we saw, equation \eqref{eq:Spin7_torsion_v1_third} is just a condition on $\alpha_\eta$ and $\alpha_\theta$, but given that equations \eqref{eq:Spin7_torsion_v1_dRe_Omega} and \eqref{eq:Spin7_torsion_v1_dIm_Omega} already give one condition on them, the knowledge of any of the two $\alpha$s is sufficient to determine the other.
\end{remark}

\begin{remark}
	One difference between this case and the $G_2$ case explored in \cite{Foscolo2021} is that here none of the equations of system \eqref{eq:Spin7_torsion_v1} is redundant.
	This is because in this case we have two forms in $\Lambda^2_6$ to constrain ($\alpha_\eta$ and $\alpha_\theta$), whereas in the $G_2$ case there is only one.
	Hence, we need one more constraint which is precisely equation \eqref{eq:Spin7_torsion_v1_third}.
\end{remark}

\begin{remark}\label{rmk:free_parameter}
	Given that we want to make use of the implicit function theorem, we are interested in adding free parameters to the equations in order to make up for a possible non-surjectivity of the derivative of the function that defines the equations.
	One way to do this is to solve $d \Phi = \beta$ for some $\beta \in V$ where $V \leq \Lambda^5(M)$ is any subspace in direct sum with $d\Lambda^4(M)\leq \Lambda^5(M)$.
	In our case, we will consider the equation
	$$
	d \Phi = (pq)^{-\frac{1}{2}}\star_\Phi \eta \wedge \theta \wedge d s = p^*_M \star_\omega ds
	$$
	for some $s \in C^1(B)$, where $p_M : M \to B$ is the standard projection map.
	By applying $d$ on both sides, we get the equation $d {\star_\omega} ds$ on $B$, which tells us that $s$ is harmonic.
	So, e.g.\ in a complete non-compact case like the one we propose to study in this paper, if we require $s$ to be decaying, we get that $s = 0$ by the maximum principle.
\end{remark}

\subsection{Topological constraints}\label{sec:topological_constraints}

Equations \eqref{eq:Spin7_torsion_v1} impose some clear topological constraints on the bundle $M \to B$, which are summarized in the following proposition.

\begin{proposition}\label{thm:symplectic_topological_constraint}
	Let $B$ be a 6-manifold and let $M \to B$ be a $T^2$-bundle.
	
	Then, if $M$ admits a $T^2$-invariant torsion-free $\Spin(7)$-structure, condition \eqref{eq:topological_condition} holds.
	Equivalently, $M$ is the product of two circle bundles $P_1 \to B$ and $P_2 \to B$ such that
	$$
	c_1(P_1) \smile [\omega] = c_1(P_2) \smile [\omega] = 0 \in H^4(B)
	$$
	for some symplectic form $\omega$ on $B$.
\end{proposition}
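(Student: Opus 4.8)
The plan is to read the statement straight off the torsion-free equations of Lemma~\ref{thm:Spin7_torsion}. First I would put the given $T^2$-invariant torsion-free $\Spin(7)$-structure into the normal form \eqref{eq:Phi_eta_theta} using Lemma~\ref{thm:Spin7_structure}: this produces a principal connection $(\eta,\theta)$ for $\pi\colon M\to B$, an $\SU(3)$-structure $(\omega,\Re\Omega)$ on $B$, and functions $p,q,r\in C^\infty(B)$. Then, invoking Proposition~\ref{thm:product_bundle} and the discussion of Section~\ref{sec:T2_connections}, I would fix the splitting $M\cong P_1\times_B P_2$ determined by the chosen identification $\Lie(T^2)\cong\R^2$, so that $\eta$ and $\theta$ are the pullbacks of principal connections on the circle bundles $P_1$ and $P_2$ respectively, and $c_1(M)=(c_1(P_1),c_1(P_2))$ under $H^2(B;\Z^2)\cong H^2(B;\Z)^2$.

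Next I would push the curvatures down to $B$. Since $\eta,\theta$ are $T^2$-invariant with $\eta(X)=\theta(Y)=1$ and $\eta(Y)=\theta(X)=0$, the $2$-forms $d\eta$ and $d\theta$ are both invariant and horizontal (for instance $X\interior d\eta=\mathcal{L}_X\eta-d(\eta(X))=0$, and the remaining three contractions vanish the same way), hence basic; so they descend to closed $2$-forms $F_1,F_2$ on $B$ with $\pi^*F_1=d\eta$ and $\pi^*F_2=d\theta$, and $[F_i]\in H^2(B;\R)$ is a nonzero real multiple of $c_1(P_i)$ (the precise constant, fixed by the normalization of Section~\ref{sec:T2_connections}, is immaterial for a cup-product identity in real cohomology).

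Then I would invoke Lemma~\ref{thm:Spin7_torsion}, by which $d\Phi=0$ is equivalent to the system \eqref{eq:Spin7_torsion_v1}. Equation~\eqref{eq:Spin7_torsion_v1_d_omega} gives $d\omega=0$, which together with nondegeneracy of $\omega$ (an axiom of the $\SU(3)$-structure) exhibits $\omega$ as a symplectic form on $B$. Equation~\eqref{eq:Spin7_torsion_v1_dRe_Omega} reads $d\theta\wedge\omega=-d(p\Re\Omega)$; as $p\Re\Omega$ is the pullback of a $3$-form on $B$ and $\pi^*$ is injective on forms, this descends to $F_2\wedge\omega=-d(p\Re\Omega)$ on $B$, so $F_2\wedge\omega$ is exact and $c_1(P_2)\smile[\omega]=0$ in $H^4(B;\R)$. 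Likewise \eqref{eq:Spin7_torsion_v1_dIm_Omega} reads $d\eta\wedge\omega=-d(r\Re\Omega+q\Im\Omega)$, which descends to show $F_1\wedge\omega$ exact, hence $c_1(P_1)\smile[\omega]=0$. Combining these and using $H^4(B;\R^2)\cong H^4(B;\R)^2$ gives $c_1(M)\smile[\omega]=0\in H^4(B;\R^2)$, which is \eqref{eq:topological_condition}; conversely, via the splitting this single identity is plainly equivalent to the pair $c_1(P_1)\smile[\omega]=c_1(P_2)\smile[\omega]=0$, which is the second formulation.

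I expect the only genuinely delicate points to be the bookkeeping of identifications: verifying that with the standard choice of $\Lie(T^2)\cong\R^2$ the forms $\eta,\theta$ really are connection forms for $P_1,P_2$ (so that the $[F_i]$ are honest Chern classes, not merely some classes in $H^2$), and that the exactness of $F_i\wedge\omega$ upstairs genuinely transfers to $B$ (injectivity of $\pi^*$, together with the fact that $p\Re\Omega$ and $r\Re\Omega+q\Im\Omega$ are basic). Everything else is a direct projection of \eqref{eq:Spin7_torsion_v1}; in particular I would not need equation~\eqref{eq:Spin7_torsion_v1_third}, nor the Monge-Ampère equation, nor even that $B$ is Calabi-Yau, for this topological conclusion.
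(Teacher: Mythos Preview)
Your proposal is correct and follows essentially the same approach as the paper: both use the splitting from Proposition~\ref{thm:product_bundle}, read off $d\omega=0$ from \eqref{eq:Spin7_torsion_v1_d_omega}, and then use the exactness of the left-hand sides of \eqref{eq:Spin7_torsion_v1_dRe_Omega} and \eqref{eq:Spin7_torsion_v1_dIm_Omega} together with Chern--Weil theory to conclude. You have simply spelled out in detail (the descent of $d\eta,d\theta$ to $B$, the injectivity of $\pi^*$, the normalization issues) what the paper compresses into two sentences.
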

\begin{proof}
	We prove the equivalent formulation in the statement.
	The fact that $M$ splits as the product of two circle bundles was stated in proposition \ref{thm:product_bundle}.
	
	The constraint follows from the fact that the left-hand sides of equations \eqref{eq:Spin7_torsion_v1_dRe_Omega} and \eqref{eq:Spin7_torsion_v1_dIm_Omega} are exact and that by equation \eqref{eq:Spin7_torsion_v1_d_omega}, $\omega$ is closed.
	Indeed, recall that, by Chern-Weil theory, the curvature of the connection on a principal circle bundle represents its Chern class.
\end{proof}

Equation \eqref{eq:Spin7_torsion_v1_third} yields yet another constraint.
Indeed, a priori,
$$
	d \eta \wedge p \Re \Omega-d \theta \wedge \left(r \Re \Omega +q \Im \Omega\right)
$$
is a closed form by equations \eqref{eq:Spin7_torsion_v1_dRe_Omega} and \eqref{eq:Spin7_torsion_v1_dIm_Omega}.
However, equation \eqref{eq:Spin7_torsion_v1_third} implies that such form is exact.
One way to write down this constraint is via Massey triple products, as we state in the following.
\begin{proposition}
	Let $B$ be a 6-manifold and let $M \to B$ be a $T^2$-bundle.
	
	Then if $M$ admits a $T^2$-invariant torsion-free $\Spin(7)$-structure, it is the product of two circle bundles $P_1 \to B$ and $P_2 \to B$ such that, for some $[\omega] \in H^2(B)$,
	$$
	\left\langle c_1(P_1), [\omega], c_1(P_2) \right\rangle_{MT} = 0 \in \faktor{H^5(B)}{W}
	$$
	where $\langle ,, \rangle_{MT}$ denotes the Massey triple product and $W := c_1(P_1) \wedge H^3(B) + c_1(P_2) \wedge H^3(B)$.
\end{proposition}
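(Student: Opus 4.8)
The plan is to unwind the torsion-free equations \eqref{eq:Spin7_torsion_v1} far enough to compute the Massey triple product $\langle c_1(P_1),[\omega],c_1(P_2)\rangle_{MT}$, and then to recognise equation \eqref{eq:Spin7_torsion_v1_third} as the statement that a representative of this product is exact.

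First I would fix notation. By proposition \ref{thm:product_bundle} and lemma \ref{thm:Spin7_structure}, a $T^2$-invariant torsion-free $\Spin(7)$-structure on $M$ yields a splitting $M \iso P_1 \times_B P_2$ (in the gauge determined by the identification $\Lie(T^2)\iso\R^2$ fixed in section \ref{sec:T2_connections}) together with principal connections whose pullbacks to $M$ are the $1$-forms $\eta$ and $\theta$, an $\SU(3)$-structure $(\omega,\Omega)$ on $B$, and positive functions $p,q$ and a function $r$ on $B$. Since $\eta$ and $\theta$ are connection $1$-forms, $d\eta$ and $d\theta$ are basic, so they descend to closed $2$-forms $F_\eta,F_\theta$ on $B$ which, with the normalisation used in the proof of proposition \ref{thm:symplectic_topological_constraint}, represent $c_1(P_1)$ and $c_1(P_2)$. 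Since $\omega, \Re\Omega, \Im\Omega$ are pulled back from $B$ and $p,q,r$ are functions on $B$, every equation of \eqref{eq:Spin7_torsion_v1} descends to an identity on $B$; from now on I work on $B$.

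Next I would extract the data of the Massey product. Equation \eqref{eq:Spin7_torsion_v1_d_omega} gives $d\omega = 0$, so $[\omega]\in H^2(B)$ is defined. Equation \eqref{eq:Spin7_torsion_v1_dRe_Omega} reads $\omega\wedge F_\theta = -d(p\Re\Omega)$ and equation \eqref{eq:Spin7_torsion_v1_dIm_Omega} reads $F_\eta\wedge\omega = -d(r\Re\Omega + q\Im\Omega)$, which recovers $c_1(P_1)\smile[\omega] = c_1(P_2)\smile[\omega] = 0$ and moreover supplies explicit primitives: put $v := -p\Re\Omega$ and $u := -(r\Re\Omega + q\Im\Omega)$, so $dv = \omega\wedge F_\theta$ and $du = F_\eta\wedge\omega$. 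With the usual conventions, a representative of $\langle c_1(P_1),[\omega],c_1(P_2)\rangle_{MT}$ is then, up to overall sign, the $5$-form
$$
\xi \; := \; F_\eta\wedge v \; - \; u\wedge F_\theta \; = \; -\,F_\eta\wedge p\Re\Omega \; + \; F_\theta\wedge\big(r\Re\Omega + q\Im\Omega\big),
$$
and a one-line check using $dF_\eta = dF_\theta = 0$ and the two primitive identities confirms $d\xi = 0$. Replacing $u$ or $v$ by another primitive changes $\xi$ by an element of $F_\eta\wedge H^3(B) + F_\theta\wedge H^3(B) = W$, so $[\xi]\in H^5(B)/W$ is well-defined and equals the Massey product.

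Finally I would invoke equation \eqref{eq:Spin7_torsion_v1_third}, which (using $d\omega = 0$ to write $d(pq)\wedge\omega^2 = d(pq\,\omega^2)$) says
$$
F_\eta\wedge p\Re\Omega - F_\theta\wedge\big(r\Re\Omega + q\Im\Omega\big) \; = \; -\tfrac12\,d\big(pq\,\omega^2\big).
$$
Hence $\xi = \tfrac12\,d(pq\,\omega^2)$ is exact, so $[\xi] = 0$ in $H^5(B)/W$, which is the claim. The only delicate points are bookkeeping: matching the sign convention for the Massey product (harmless, since the conclusion is vanishing) and checking that $d\eta$ and $d\theta$ are basic and represent $c_1(P_i)$ — both immediate from the setup in section \ref{sec:T2_connections} and Chern--Weil theory. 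I do not expect any substantive obstacle beyond this.
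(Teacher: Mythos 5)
Your proof is correct and follows exactly the route the paper takes: the paper's own argument is the one-line observation that the claim "follows directly from equation \eqref{eq:Spin7_torsion_v1_third} and the definition of Massey triple product," and your write-up simply fills in the details of that computation (the primitives $-p\Re\Omega$ and $-(r\Re\Omega+q\Im\Omega)$ supplied by \eqref{eq:Spin7_torsion_v1_dRe_Omega}--\eqref{eq:Spin7_torsion_v1_dIm_Omega}, and exactness of the representative via $d(pq)\wedge\omega^2=d(pq\,\omega^2)$). No gaps.
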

\begin{proof}
	The Massey triple product makes sense because of the topological constraint proved in theorem \ref{thm:symplectic_topological_constraint}.
	The thesis follows directly from equation \eqref{eq:Spin7_torsion_v1_third} and the definition of Massey triple product.
\end{proof}
Clearly, when $H^5(B) = 0$, the above constraint is trivial.
This is going to be the case in this paper, since we solve the equations on simply connected $AC$ Calabi-Yau 3-folds, and for such manifolds $H^5$ vanishes.
This is because, by Poincaré duality, $H^5(B) \iso H^1_c(B)^*$ and $H^1_c(B)$ needs to vanish if $H^1(B)$ vanishes because of the long exact sequence for manifold with boundary.
Here we are treating $B$ as a manifold with boundary $\Sigma$ because $B$ clearly has the same homotopy type of a big enough compact subset of itself, which is a manifold with boundary $\Sigma$.

\subsection{A more abstract approach to the equations}\label{sec:abstract_equations_torsion}
To write the explicit expression for $\Phi$ we made a choice of basis for the Lie algebra $\Lie(T^2)$ which we discussed thoroughly in section \ref{sec:T2_connections}.
However, we can write down $\Phi$ in a more abstract way without making a choice of basis for $\Lie(T^2)$.

To do this we first note that given a manifold $M$ and a vector bundle $E \to M$, it is somehow natural to consider the wedge product of two $E$-valued differential forms as a differential form valued in $E \wedge E$.
We denote such product with $\wedg{\wedge}$.
Indeed, if $F$ is another vector bundle, the natural way of defining the $\wedge$ product between an $E$-valued form and an $F$-valued form as an $E \otimes F$-valued form gives a well-defined theory for which the usual formulas for $\wedge$ and $d$ are valid.
We denote this product with $\wedg{\otimes}$.
In the case where $E = F$, it is natural to feed the output of $E \otimes E$-valued differential forms to the canonical projection morphism of $E \otimes E$ onto $E \wedge E$.
By construction hence, the $\wedg{\wedge}$ product of $E$-valued forms is defined uniquely by the property that pointwise, given two pure tensors $\alpha_1 \otimes e_1$ and $\alpha_2 \otimes e_2$,
$$
	(\alpha_1 \otimes e_1) \wedg{\wedge} (\alpha_2 \otimes e_2) = (\alpha_1 \wedge \alpha_2) \otimes (e_1 \wedge e_2)
	.
$$
Note that $\wedg{\wedge}$ is symmetric.
The formulas for $\wedg{\wedge}$ and $d$ that one would imagine follow in a straightforward way from those in the $\otimes$ case.

Similarly, given a form valued in $\Lambda^k E$ and a form valued in $\Lambda^mE^*$ for $k\leq m$, it is natural feed the output of the product valued in $\Lambda^k E \otimes \Lambda^mE^*$ to the \textit{interior product} morphism $\iota \Lambda^k E \otimes \Lambda^mE^* \to \Lambda^{m-k} E^*$, defined uniquely by its behavior on pure elements:
$$
	(v_1 \wedge \ldots \wedge v_k) \otimes \alpha \mapsto \iota_{v_1} \ldots \iota_{v_k} \alpha
	.
$$
We denote such product by $\wedg{e}$.
As in the previous case, the same formulas as for $\wedge$ and $d$ apply.
The order of the interior products in the above morphism is important to preserve the associativity of $\wedg{\wedge}$ and $\wedg{e}$, i.e.\ with such convention a calculation shows that
$$
	(\alpha \wedg{\wedge} \beta) \wedg{e} \psi = \alpha \wedg{\wedge} (\beta \wedg{e} \psi)
	.
$$

Now, by staring at expression \eqref{eq:Phi_eta_theta}, one might realize that if $\eta, \theta$ represent the two components of a $T^2$-connection $A \in \Omega^1(P; \Lie(T_2))$ through a chosen basis, $\eta \wedg{\wedge} \theta$ is invariant under transformations in $\SL_\R(2)$ of the given basis, and represents in fact a well-defined element of $\Omega^2(P; \Lie(T_2) \wedge \Lie(T_2))$, namely $\frac{1}{2}A \wedg{\wedge} A$.
In order to get a scalar when wedging with $\omega$, we interpret $\omega$ as a section $\invomega$ of $\Omega^2(P; \Lie(T^2)^*\wedge \Lie(T^2)^*)$.
We do this by taking the $\otimes$ product of $- \omega$ and the volume form in $\Lie(T^2)^*\wedge \Lie(T^2)^*$ given by the chosen basis of $\Lie(T^2)$.
Hence, $\frac{1}{2}A \wedg{\wedge} A \wedg{e} \invomega \in \Omega^4(P)$ and thus we want to reinterpret the other summands too as elements of $\Omega^4(P)$.
This naturally leads us to define a section $\mho \in \Omega^3(P; \Lie(T^2)^*)$ which under the chosen basis is precisely given by
$$
	\begin{bmatrix}
		p \Re \Omega \\ -r \Re \Omega - q \Im \Omega
	\end{bmatrix}
	.
$$
Thus, we can write $\Phi \in \Omega^4(P)$ more elegantly as
\begin{equation}\label{eq:abstract_Phi}
	\Phi = \frac{1}{2} A \wedg{\wedge} A \wedg{e} \invomega + A \wedg{e} \mho + \frac{\mho \wedg{\wedge} \mho}{\invomega}
	.
\end{equation}
where the last term is defined to be the unique form $\alpha$ such that $\alpha \wedge \invomega = \mho \wedg{\wedge} \mho$ and $\alpha \propto \omega^2$ where $\omega\in \Omega^2(M)$ is determined by $\invomega$ by choosing a basis of $\Lie(T^2)^* \wedge \Lie(T^2)^*$.
Note that this last condition is independent of the chosen basis.


\noindent In these abstract terms, equations \eqref{eq:Spin7_torsion_v1} become more elegantly
\begin{equation}\label{eq:abstract_torsion}
	\begin{cases}
		d \mho + dA \wedg{e} \invomega = 0 \\
		dA \wedg{e} \mho + d \left( \frac{\mho \wedg{\wedge} \mho}{\invomega} \right) = 0
		.
	\end{cases}
\end{equation}
Notice that since in this system $\invomega$ and $\mho$ are horizontal and $T^2$ invariant, we might as well interpret, $\mho$ as an element of $\Omega^3(B; \ad P^*)$ and $\invomega$ in $\Omega^2(B; \ad P^* \wedge \ad P^*)$.
Since the exterior derivative vector bundle valued forms makes sense only in terms of a connection on the vector bundle, we should specify which connections we are considering here, but since all the considered bundles are trivial (both $P \times \mathfrak{g}$ and $\ad P$ are, since $T^2$ is abelian) it is the trivial connection in all cases.

System \eqref{eq:abstract_torsion} can be derived directly from expression \eqref{eq:abstract_Phi}.
Indeed,
$$
	d \Phi = A \wedg{e} dA \wedg{e} \invomega + dA \wedg{e} \mho -A \wedg{e} d \mho + d \left( \frac{\mho \wedg{\wedge} \mho}{\invomega} \right)
	.
$$
and it is straightforward to remark that the terms beginning with $A \wedg{e}$ and those that do not belong to spaces in direct sum.


We summarize the discussion of this section in the following result.

\begin{theorem}
	Let $B$ be a 6-manifold and $\pi: M \to B$ be a principal $T^2$-bundle.
	Then, any $T^2$-invariant $\Spin(7)$-structure on $M$ will be of the form \eqref{eq:abstract_Phi}, for some unique
	$$
		A \in \mathcal{A}(M \to B) \quad \mho \in \Omega^3(B; \ad(M)^*) \quad \invomega \in \Omega^2(B; \ad(M)^* \wedge (\ad M)^*)
	$$
	satisfying $\invomega \wedg{\otimes} \mho = 0$.
\end{theorem}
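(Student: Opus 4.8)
The statement is the coordinate-free repackaging of Lemma~\ref{thm:Spin7_structure}, so the plan is to reduce to that lemma. First I would establish \emph{existence} by assembling the triple from the concrete data $(\eta,\theta,\omega,\Omega,p,q,r)$ that Lemma~\ref{thm:Spin7_structure} attaches to $\Phi$ once we fix a basis $\{e_1,e_2\}$ of $\Lie(T^2)$, with dual basis $\{e^1,e^2\}$ and associated fundamental vector fields $X,Y$: set $A:=\eta\otimes e_1+\theta\otimes e_2$, $\mho:=p\,\Re\Omega\otimes e^1-(r\,\Re\Omega+q\,\Im\Omega)\otimes e^2$ and $\invomega:=-\,\omega\otimes(e^1\wedge e^2)$. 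Since $T^2$ is abelian $\ad(M)$ is canonically trivial, so these land in the asserted spaces, and $A$ is exactly the connection form of the principal connection singled out in Lemma~\ref{thm:Spin7_structure}. Unfolding $\wedg{\wedge}$, $\wedg{e}$ and $\cdot/\invomega$ in this basis — the computation underlying \eqref{eq:abstract_Phi} — turns the right-hand side of \eqref{eq:abstract_Phi} into \eqref{eq:Phi_eta_theta}, so \eqref{eq:abstract_Phi} holds; and the two $\ad(M)^*$-components of $\invomega\wedg{\otimes}\mho$ are $\omega\wedge\Re\Omega$ and $\omega\wedge\Im\Omega$, both of which vanish since $\omega\wedge\Omega$ is a $(4,1)$-form on a complex $3$-fold (equivalently, axiom (2) of an $\SU(3)$-structure together with Lemma~\ref{thm:SU3_identities}). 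Hence a compatible triple exists.

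Next I would prove \emph{uniqueness}, which will also yield independence of the construction from the auxiliary basis. Keeping the basis fixed, let $(A',\mho',\invomega')$ be any triple in the stated spaces with $\invomega'\wedg{\otimes}\mho'=0$ for which \eqref{eq:abstract_Phi} reproduces $\Phi$, and write $A'=\eta'\otimes e_1+\theta'\otimes e_2$ (so $\eta',\theta'$ are connection components, hence take the values $0,1$ on $X,Y$ just as $\eta,\theta$ do), $\mho'=\mho'_1\otimes e^1+\mho'_2\otimes e^2$ with $\mho'_i$ horizontal, and $\invomega'=-\,\omega'\otimes(e^1\wedge e^2)$. Expanding \eqref{eq:abstract_Phi} gives $\Phi=\eta'\wedge\theta'\wedge\omega'+\eta'\wedge\mho'_1+\theta'\wedge\mho'_2+(\text{horizontal }4\text{-form})$; the double contraction $Y\lrcorner X\lrcorner$ kills all but the first term and returns $\omega'$, while applied to \eqref{eq:Phi_eta_theta} it returns $\omega$, so $\omega'=\omega$. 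Contracting only with $X$ and comparing with \eqref{eq:Phi_eta_theta} yields $(\theta'-\theta)\wedge\omega=p\,\Re\Omega-\mho'_1$; wedging with $\omega$ and using $\omega\wedge\Re\Omega=0$ together with the compatibility $\omega\wedge\mho'_1=0$ shows $(\theta'-\theta)\wedge\omega^2=0$, so $\theta'=\theta$ by nondegeneracy of $\omega$, and therefore $\mho'_1=p\,\Re\Omega$. The contraction with $Y$ gives $\eta'=\eta$ and $\mho'_2=-(r\,\Re\Omega+q\,\Im\Omega)$ in the same way. Thus $(A',\mho',\invomega')=(A,\mho,\invomega)$; in particular this triple is independent of the chosen basis.

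Apart from the mechanical unfolding of $\wedg{\wedge}$, $\wedg{e}$, $\cdot/\invomega$ and the triviality of $\ad(M)$, the substantive point — the one I expect to be the real obstacle — is pinning down the role of the compatibility constraint in the uniqueness step: without $\omega\wedge\mho'_1=\omega\wedge\mho'_2=0$ one cannot separate the horizontal connection $1$-forms $\eta',\theta'$ from the horizontal $3$-forms $\mho'_1,\mho'_2$ packaged inside $\Phi$, and the reduction to the uniqueness clause of Lemma~\ref{thm:Spin7_structure} breaks down. Recognizing that $\invomega\wedg{\otimes}\mho=0$ is exactly the hypothesis that disentangles the $T^2$-connection from the $\SU(3)$-part is the crux of the argument.
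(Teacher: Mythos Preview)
Your proposal is correct and follows the same approach the paper intends: the theorem is presented there without proof, as a summary of the preceding discussion, so the implicit argument is precisely the reduction to Lemma~\ref{thm:Spin7_structure} that you carry out. Your uniqueness argument via contractions with $X,Y$ and the Lefschetz-type injectivity of $\alpha\mapsto\alpha\wedge\omega^2$ is in fact more explicit than anything the paper writes down, and your emphasis on the compatibility $\invomega\wedg{\otimes}\mho=0$ as the device that separates the connection $1$-forms from the horizontal $3$-forms is exactly the point the paper flags in the remark immediately following the theorem.
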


Note that we do not need to require any version of the second condition defining an $\SU(3)$-structure because that comes from defining $\Omega$ in terms of $\mho$.

\section{New closed \texorpdfstring{$\Spin(7)$}{Spin(7)} forms on \texorpdfstring{$T^2$}{T\^2} bundles over AC CY 3-folds}\label{sec:analytic_curve}

In this section we prove the existence of an analytic curve of solution to equations \eqref{eq:Spin7_torsion_v1} by making use of the implicit function theorem.
We are going to use a very standard version of the implicit function theorem for Banach manifolds, which we state to fix the notation.
\begin{theorem}[Implicit function theorem]\label{thm:implicit_function_theorem}
	Let $X, Y$ be Banach manifolds and $Z$ be a Banach space.
	Let the mapping $\Psi: X \times Y \rightarrow Z$ be an analytic map.
	
	If $\left(x_0, y_0\right) \in X \times Y$, $\Psi\left(x_0, y_0\right)=0$, and
	$$
	\begin{aligned}
		D\Psi_{(x_0, y_0)} : T_{(x_0, y_0)}(X \times Y) & \to T_0Z \iso Z\\
		y &\mapsto D \Psi_{\left(x_0, y_0\right)}(0, y)
	\end{aligned}
	$$
	is a Banach space isomorphism from $T_{y_0}Y$ onto $Z$, then there exist neighborhoods $U \subseteq X$ of $x_0$ and $V \subseteq Y$ of $y_0$ and an analytic function $\Xi: U \rightarrow V$ such that $\Psi(x, \Xi(x))=0$ and $\Psi(x, y)=0$ if and only if $y=\Xi(x)$, for all $(x, y) \in U \times V$.
\end{theorem}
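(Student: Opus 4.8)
This is the standard implicit function theorem for Banach manifolds, so the strategy is to reduce it to the classical Banach-\emph{space} statement by working in analytic charts, and then to deduce that from the Banach-space \emph{inverse} function theorem together with the contraction mapping principle, taking care that analyticity (not just $C^k$-regularity) is preserved at every step.

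\textbf{Reduction to Banach spaces.} Since $X$ and $Y$ are Banach manifolds, choose analytic charts centred at $x_0$ and $y_0$; after composing with $\Psi$ (and recalling that a composition of analytic maps is analytic, so the hypotheses are unaffected) we may assume $X\subseteq\mathcal X$ and $Y\subseteq\mathcal Y$ are open neighbourhoods of $0$ in Banach spaces, $x_0=0$, $y_0=0$, $\Psi(0,0)=0$, and $L:=D_2\Psi_{(0,0)}:\mathcal Y\to Z$ is a Banach space isomorphism. It then suffices to produce an analytic $\Xi$ on a neighbourhood of $0$ in $\mathcal X$ with $\Psi(x,\Xi(x))=0$ and $\Psi(x,y)=0\iff y=\Xi(x)$ near $(0,0)$; transporting back through the charts gives the statement on the manifolds.

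\textbf{From inverse to implicit function theorem.} Consider the auxiliary map $\Theta:X\times Y\to\mathcal X\times Z$, $\Theta(x,y)=(x,\Psi(x,y))$. It is analytic, $\Theta(0,0)=0$, and its differential at $(0,0)$ sends $(h,k)\mapsto(h,\,D_1\Psi_{(0,0)}h+Lk)$, which is a Banach space isomorphism with explicit inverse $(a,b)\mapsto(a,\,L^{-1}(b-D_1\Psi_{(0,0)}a))$. By the analytic inverse function theorem in Banach spaces, $\Theta$ has an analytic local inverse of the form $(a,b)\mapsto(a,\Xi_0(a,b))$ near $0$; setting $\Xi(x):=\Xi_0(x,0)$ gives $\Psi(x,\Xi(x))=0$, and injectivity of $\Theta$ on a small product $U\times V$ yields the equivalence $\Psi(x,y)=0\iff y=\Xi(x)$ there. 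The inverse function theorem itself is obtained by writing $\Theta(\xi)=D\Theta_0\,\xi+R(\xi)$ with $R$ analytic, $R(0)=0$, $DR_0=0$, and solving $\Theta(\xi)=\zeta$ via the fixed-point iteration $\xi\mapsto (D\Theta_0)^{-1}(\zeta-R(\xi))$, a contraction on a small ball uniformly in the parameter $\zeta$, giving a Lipschitz (hence, by implicit differentiation, $C^1$) inverse.

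\textbf{Main obstacle.} The one nontrivial point is upgrading the inverse from $C^1$ (or $C^\infty$) to analytic. The clean way is a complexification argument: a real-analytic $\Psi$ extends to a holomorphic map on a complex Banach neighbourhood, the power series of $R$ converges on a complex ball, and the solution of the contraction depends holomorphically on $\zeta$ by a Cauchy-estimate / majorant-series argument, so $\Xi$ is holomorphic and therefore real-analytic; alternatively one bootstraps via the analytic implicit-differentiation identity $D\Xi(x)=-L(x,\Xi(x))^{-1}D_1\Psi$. Since the paper only uses this result as a black box, the most economical course is to cite a standard reference for the analytic inverse/implicit function theorem on Banach spaces rather than reproduce this argument in full.
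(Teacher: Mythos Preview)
Your proposal is a correct sketch of the standard proof, but note that the paper does not actually prove this statement: it is introduced with the sentence ``We are going to use a very standard version of the implicit function theorem for Banach manifolds, which we state to fix the notation,'' and is used as a black box thereafter. So there is no proof in the paper to compare against; your suggestion at the end to simply cite a standard reference is exactly what the paper does in spirit.
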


As anticipated in the introduction, in order to apply the theorem, the idea is to introduce a parameter $\varepsilon$ as the independent variable of the theorem, and then to treat all the other variables as dependent.
This parameter is meant to implement the shrinking of the fibers to points.
We thus ideally choose $X = \R$, so that the variable $x$ is precisely this parameter $\varepsilon$, and we choose $Y$ to be the space were the data appearing in system \eqref{eq:Spin7_torsion_v1} lives.
Once introduced the parameter $\varepsilon$, the general strategy is to manipulate the equations in such a way that it is possible to prove that their linearization is invertible.

There are multiple modifications we need to bring to the equations.
The first is to add to system \eqref{eq:Spin7_torsion_v1} equation \eqref{eq:same_w5_v1} and to add free variables to the equations, which we do in section \ref{sec:free_parameters}.
This is not yet sufficient to prove invertibility because we never took care of gauge fixing, and thus, in section \ref{sec:gauge} we add additional equations to take care of the excessive freedom.
In section \ref{sec:order_0} we study the necessary conditions for the tuple $(\varepsilon = 0, \omega, \Omega, p, q, r)$ to be a candidate base point to solve the equations by applying the implicit function theorem and we find out $(\omega, \Omega)$ needs to be Calabi Yau (hence $d \omega = 0$ and $d \Omega = 0$), and $p, q, r$ need to be constant.
In section \ref{sec:first_order} we bring the last changes to prepare the equations for the application of the theorem, that consist in reducing the equations entirely on $B$ instead of $M$ and to add some terms that make sure that they can happen in Hölder spaces of sections that have enough decay to be able to do analysis.
The first modification is needed because a priori the equations happen on $M$, since $\eta$ and $\theta$ are genuine 1-forms on $M$ and are not the pull-back of forms on $B$, unlike all the rest of the data.
To take care of this, we solve the first order in $\varepsilon$ of part of the equations by hand and in the process we take care of the vertical part of $\eta$ and $\theta$.
Since any other connection can be obtained by shifting the first order solution $(\eta_1, \theta_1)$ by horizontal forms $(\delta_\eta, \delta_\theta)$, we are brought to look for a tuple of solutions of the form $(\varepsilon, \omega, \Omega, \delta_\eta, \delta_\theta, p, q, r)$ which is all data that genuinely lives on $B$.
However, the first order solutions that we find for $\eta$ and $\theta$ have curvature $d\eta, d\theta$ in $C^\infty_{-2}$, but we need it in $C^\infty_{\nu}$ for $\nu \in (-\infty, -2)$ in order to define a function $\Psi$ on spaces on which we can do analysis.
To do so we solve the equations also for the first order in $\varepsilon$ of $\Re \Omega$, so to have a full solution of the first order equations involving $\eta$, $\theta$, and we subtract these equations from the overall equations.
This allows us to take care separately of the sections that have a decay which is too slow and hence to define the function $\Psi$ of the implicit function theorem on spaces that have enough decay.
Having brought all these changes, in section \ref{sec:linearization} we are finally able to prove that the linearization is invertible.

\subsection{Free parameters}\label{sec:free_parameters}

The free parameters come from two sources.
First, two functions $u, v$ and one vector field $X$ take care of the redundancy of the equations coming from the constraints on the torsion of the $\SU(3)$-structure $(\omega, \Omega)$ and from the addition of equation \eqref{eq:same_w5_v1}.
Secondly, one additional function $s$ comes from remark \ref{rmk:free_parameter}.
The constraints on the torsion of the $\SU(3)$ are explained in proposition \ref{thm:SU3_torsion} and the functions $u$ and $v$ take care of the fact that the decomposition of equations \eqref{eq:Spin7_torsion_v1_dRe_Omega} and \eqref{eq:Spin7_torsion_v1_dIm_Omega} in $\SU(3)$ irreducible components is redundantly imposing $w_1=0$ and $\hat{w}_1 = 0$ a second time since $d \omega = 0$ already imposes such constraints.
Moreover, equation \eqref{eq:same_w5_v1} imposes that the $w_5$ coming from equation \eqref{eq:Spin7_torsion_v1_dRe_Omega} is the same as the $w_5$ coming from equation \eqref{eq:Spin7_torsion_v1_dIm_Omega} which is also implied by proposition \ref{thm:SU3_torsion}, and thus $X$ takes care of such redundancy.

We formally prove that the system with the free variables is equivalent to system \eqref{eq:Spin7_torsion_v1} in the following lemma.
\begin{proposition}\label{thm:free_parameters}
	Let $\left(\omega_0, \Omega_0\right)$ be an AC Calabi-Yau structure on a 6 -manifold $B$ and denote by $g_0$ and $\nabla_0$ the induced metric and Levi-Civita connection.
	Fix $k \geq \N^+, \alpha \in(0,1)$ and $\nu\in(-\infty,-1)$.
	Then there exists a constant $\varepsilon_0>0$ such that the following holds.
	Let $(\omega, \Omega)$ be a second $\mathrm{SU}(3)$-structure on $B$ whose distance from $\left(\omega_0, \Omega_0\right)$ in norm $C^1_{-1}(B)$ is less than $\varepsilon_0$, i.e.\ such that $\norm{\omega-\omega_0}_{C^1_{-1}} + \norm{\Omega-\Omega_0}_{C^1_{-1}} < \varepsilon_0$.
	Suppose that there exist two integral exact 2-forms $d \theta$ and $d \eta$ on $B$ such that
	\begin{equation}\label{eq:parameters_linear_conditions}
		d \omega = 0 \qquad d \eta \wedge \omega^2=0 \qquad d \theta \wedge \omega^2=0
		.
	\end{equation}
	Moreover, assume the existence of functions $p, q, r, s, u, v$ and a vector field $X$ in $C_{\nu+1}^{k+1, \alpha}$ such that
	\begin{subnumcases}{\label{eq:parameters}}
		(pd\eta-rd\theta) \wedge \Re \Omega + q d \theta \wedge \Im \Omega + \left(pdq-qdp+J(rdp - pdr) \right)\wedge \omega^2 = 0\label{eq:parameters_CY_monopole_theta} \\
		(pd\eta-rd\theta) \wedge \Re \Omega - q d \theta \wedge \Im \Omega + \frac{1}{2}\left(d(pq) + Jds\right) \wedge \omega^2 = 0\label{eq:parameters_CY_monopole_xi} \\
		d (p\Re \Omega) + d \theta \wedge \omega = d{\star} d(u \omega)\label{eq:parameters_dRe_Omega} \\
		d(r \Re \Omega) + d(q \Im \Omega) + d\eta \wedge \omega = d {\star} d (X \interior \Re \Omega + v \omega)\label{eq:parameters_dIm_Omega}
	\end{subnumcases}
	where the Hodge $\star$ is computed with respect to the metric induced by $\left(\omega, \Omega\right)$.
	Then $u = v = 0, s = 0$ and $X = 0$, i.e.\ $(\omega, \Re \Omega, p, q, r, \eta, \theta)$ is a solution system \eqref{eq:Spin7_torsion_v1}.
\end{proposition}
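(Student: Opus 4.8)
The plan is to strip off the four gauge parameters $u,v,s,X$ one group at a time, by projecting the equations \eqref{eq:parameters} onto $\SU(3)$-irreducible components and then appealing to the vanishing of decaying harmonic objects on the AC Calabi--Yau base $B$ (maximum principle for functions, lemma \ref{thm:harmonic_1_6_forms} for $1$-forms, which lie in $\Lambda^1_6$), together with the closeness of $(\omega,\Omega)$ to the reference CY structure $(\omega_0,\Omega_0)$.

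\textbf{Step 1: $u=0$.} Project \eqref{eq:parameters_dRe_Omega} onto $\Lambda^4_1=\R\,\omega^2$. By proposition \ref{thm:SU3_torsion}, the hypothesis $d\omega=0$ in \eqref{eq:parameters_linear_conditions} forces $w_1=\hat w_1=w_3=w_4=0$, so $d(p\Re\Omega)=dp\wedge\Re\Omega+p\,w_5\wedge\Re\Omega+p\,w_2\wedge\omega$ has vanishing $\Lambda^4_1$-part (a $1$-form wedge $\Re\Omega$ lies in $\Lambda^4_6$, while $w_2\wedge\omega\in\Lambda^4_8$), and $d\theta\wedge\omega^2=0$ makes the $\Lambda^4_1$-part of $d\theta\wedge\omega$ vanish too. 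On the right, using only $d\omega=0$, lemma \ref{thm:Hodge_star_SU3} and the formula for the $\Lambda^2_1$-part of $d$ of a $1$-form (equivalently, the Hodge dual of the first identity in lemma \ref{thm:pi1_6}, which requires only $d\omega=0$), one finds $\pi_1\!\left(d{\star}d(u\omega)\right)$ is a nonzero multiple of $(\triangle u)\,\omega^2$. Hence $\triangle u=0$; since $u\in C^{k+1,\alpha}_{\nu+1}$ with $\nu+1<0$ decays at infinity, the maximum principle gives $u=0$, so \eqref{eq:parameters_dRe_Omega} is now exactly \eqref{eq:Spin7_torsion_v1_dRe_Omega}.

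\textbf{Step 2: $v=0$ and $X=0$ (the main obstacle).} Project \eqref{eq:parameters_dIm_Omega} onto $\Lambda^4_{1\oplus6}$. As in Step 1, with $w_1=\hat w_1=0$ and $d\eta\wedge\omega^2=0$ the left-hand side $d(r\Re\Omega)+d(q\Im\Omega)+d\eta\wedge\omega$ has vanishing $\Lambda^4_1$-part; its $\Lambda^4_6$-part is precisely the obstruction to the $\Lambda^4_6$-content of \eqref{eq:Spin7_torsion_v1_dIm_Omega}, which fixes the intrinsic torsion $1$-form $w_5$. But \eqref{eq:Spin7_torsion_v1_dRe_Omega} (established in Step 1) already fixes $w_5$, and by lemma \ref{thm:SU3_torsion_Spin7_torsion} the extra equation \eqref{eq:parameters_CY_monopole_theta} (which is \eqref{eq:same_w5_v1}) says exactly that the two determinations of $w_5$ agree; hence the $\Lambda^4_6$-part of the left-hand side of \eqref{eq:parameters_dIm_Omega} vanishes too. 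Therefore $\pi_{1\oplus6}\!\left(d{\star}d(v\omega+X\interior\Re\Omega)\right)=0$, and since $d{\star}\alpha=-{\star}d^{*}\alpha$ on $3$-forms and $\pi_{1\oplus6}$ commutes with ${\star}$, this is equivalent to $\pi_{1\oplus6}\!\left(d^{*}d(v\omega+X\interior\Re\Omega)\right)=0$. By lemma \ref{thm:pi1_6}, for the reference structure the operator $(f,\gamma)\mapsto\pi_{1\oplus6}d^{*}d(f\omega_0+\gamma^{\sharp}\interior\Re\Omega_0)$ equals $(f,\gamma)\mapsto\big(\tfrac23\triangle f,\;dd^{*}\gamma+\tfrac23 d^{*}d\gamma\big)$; this is elliptic, and a decaying element of its kernel has $d^{*}\gamma$ a decaying harmonic function, hence $0$, so $\gamma$ is a decaying harmonic $1$-form and $f$ a decaying harmonic function, both of which vanish by lemma \ref{thm:harmonic_1_6_forms}. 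Thus the operator is Fredholm with trivial kernel on the relevant weighted Hölder space (away from indicial roots). Since $(\omega,\Omega)$ is $\varepsilon_0$-close to $(\omega_0,\Omega_0)$ in $C^1_{-1}$, the corresponding operator for $(\omega,\Omega)$ is a small perturbation, hence still has trivial kernel for $\varepsilon_0$ small; therefore $v=0$ and $X=0$, and \eqref{eq:parameters_dIm_Omega} becomes exactly \eqref{eq:Spin7_torsion_v1_dIm_Omega}.

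\textbf{Step 3: $s=0$, and conclusion.} Apply $d$ to \eqref{eq:parameters_CY_monopole_xi}. Writing $(p\,d\eta-r\,d\theta)\wedge\Re\Omega-q\,d\theta\wedge\Im\Omega=d\eta\wedge p\Re\Omega-d\theta\wedge(r\Re\Omega+q\Im\Omega)$ and using $d\,d\eta=d\,d\theta=0$, the now-established equations \eqref{eq:Spin7_torsion_v1_dRe_Omega}--\eqref{eq:Spin7_torsion_v1_dIm_Omega}, and the commutativity of $2$-forms, one checks that this $5$-form is closed; since also $d\big(d(pq)\wedge\omega^2\big)=0$, applying $d$ leaves only $d\!\left(J\,ds\wedge\omega^2\right)=0$. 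As $J\,ds\wedge\omega^2=-2{\star}\,ds$ (lemma \ref{thm:Hodge_star_SU3}.1), this reads $d{\star}\,ds=0$, i.e.\ $s$ is harmonic; being decaying, $s=0$. With $s=0$, equation \eqref{eq:parameters_CY_monopole_xi} is literally \eqref{eq:Spin7_torsion_v1_third}, while $d\omega=0$ is \eqref{eq:Spin7_torsion_v1_d_omega} and Steps 1--2 furnish \eqref{eq:Spin7_torsion_v1_dRe_Omega}--\eqref{eq:Spin7_torsion_v1_dIm_Omega}; hence $(\omega,\Re\Omega,p,q,r,\eta,\theta)$ solves \eqref{eq:Spin7_torsion_v1}. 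The crux is Step 2: recognizing, after feeding in \eqref{eq:parameters_CY_monopole_theta} and lemma \ref{thm:SU3_torsion_Spin7_torsion}, that the residual equation for $(v,X)$ is governed by an elliptic operator that is invertible (after the small perturbation from $(\omega_0,\Omega_0)$ to $(\omega,\Omega)$), so the decay of $v$ and $X$ forces them to vanish.
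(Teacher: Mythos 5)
Your proof is correct and follows essentially the same route as the paper's: kill $u$ via the $\Lambda^4_1$-projection of \eqref{eq:parameters_dRe_Omega}, kill $v$ and $X$ via the $\Lambda^4_{1\oplus 6}$-projection of \eqref{eq:parameters_dIm_Omega} (using \eqref{eq:parameters_CY_monopole_theta} to make the $\Lambda^4_6$-part of the left-hand side vanish) together with lemma \ref{thm:pi1_6} and a perturbation-of-an-injective-elliptic-operator argument, and kill $s$ by differentiating \eqref{eq:parameters_CY_monopole_xi}. The only (harmless) deviation is in Step 1, where you note that $\pi_1\!\left(d{\star}d(u\omega)\right)\propto(\triangle u)\,\omega^2$ already holds for the perturbed structure (it only uses pointwise identities and $d\omega=0$), so $u$ is genuinely harmonic and the paper's closeness argument is not needed at that stage.
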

\begin{proof}
	It is a simple calculation to show that given a 4-form $\chi$, $\pi_1(\chi) = {\star} (\chi \wedge \omega) \omega^2$. 
	Thus, by wedging equation \eqref{eq:parameters_dRe_Omega} with $\omega$ and using equations \eqref{eq:parameters_linear_conditions} we get that $\pi_1(d\star d(u \omega)) = 0$.
	By the fact that $\star$ commutes with $\pi_i$ for any $i$, we get that $\pi_1(d^* d(u \omega)) = 0$.
	Since $\left(\omega, \Omega\right)$ and $\left(\omega_0, \Omega_0\right)$ are $C^1_{-1}$-close, the operators $C_{\nu+1}^{k+1, \alpha} \rightarrow C_{\nu-1}^{k-1, \alpha}$ given by $u \mapsto \pi_1^{\omega, \Omega} d^* d(u \omega)$ and $u \mapsto \pi_1^{\omega_0, \Omega_0} d^* d\left(u \omega_0\right)$ differ by a bounded operator of norm controlled by $\varepsilon_0$.
	Lemma \ref{thm:pi1_6} shows that the kernel of $u \mapsto \pi_1^{\omega_0, \Omega_0} d^* d\left(u \omega_0\right)$ is given by harmonic functions.
	But since $\nu < - 1$, the kernel needs to be 0, since 0 is the only harmonic decaying function.
	Because injective operators on Banach spaces are an open set in the operator norm, by taking $\varepsilon_0$ small enough, $u \mapsto \pi_1^{\omega, \Omega} d^* d(u \omega)$ is also injective, an hence $u = 0$.
	
	We now take care of $X$, $v$ and $s$.
	Taking the wedge product of both sides of equation \eqref{eq:parameters_dIm_Omega} with $\omega$, exactly as above, shows that $\pi_1(d^*d(X \interior \Re \Omega + v \omega)) = 0$.
	Now, by using $\alpha_\eta$ and $\alpha_\theta$ defined in equation \eqref{eq:alphas_def}, equation \eqref{eq:parameters_CY_monopole_theta} is equivalent to
	$$
		p\alpha_\eta - r \alpha_\theta -q J \alpha_\theta + J(pdq - qdp) + pdr - r dp = 0
		.
	$$
	On the other hand, the projection of the left-hand side of equation \eqref{eq:parameters_dIm_Omega} on $\Omega^4_6$ is
	$$
		\frac{1}{p}\left(pdr -rdp - r \alpha_\theta + J(pdq-qdp) - qJ\alpha_\theta + p \alpha_\eta\right) \wedge \Re \Omega
	$$
	which is null by the equation just above.
	Hence, $\pi_{1\oplus 6}(d \star d (X \interior \Re \Omega + v \omega))$, and, again by using $C^1_{-1}$-closeness of $\left(\omega, \Omega\right)$ and $\left(\omega_0, \Omega_0\right)$ and lemma \ref{thm:pi1_6}, we get that $X = 0$ and $v = 0$.
	
	For $s$, we apply ${\star} d$ to both sides of equation \eqref{eq:parameters_CY_monopole_xi} and, by using that $d \omega = 0$ and equations \eqref{eq:parameters_dRe_Omega} and \eqref{eq:parameters_dIm_Omega}, we get that ${\star} d J ds \wedge \omega^2 =  2d^* ds = 0$, and thus $s$ is harmonic.
	Since it decays, this is not possible unless $s$ is constant and thus $s = 0$ again by the decaying condition.

	Finally, we prove that system \eqref{eq:parameters} with $u = v = s = 0$ and $X=0$ is equivalent to system \eqref{eq:Spin7_torsion_v1}.
	One implication is obvious since system \eqref{eq:Spin7_torsion_v1} is contained in system \eqref{eq:parameters}.
	The other follows from corollary \ref{thm:alphas}.

\end{proof}

\subsection{Gauge fixing}\label{sec:gauge}

In order to apply the implicit function theorem, we need to eliminate any excessive freedom in the equations, otherwise the linearization turns out not to be injective.
The main source of freedom comes from the fact that equations \eqref{eq:Spin7_torsion_v1} are invariant under gauge symmetry.

Indeed, suppose we have a solution $(\eta, \theta, \omega, \Re \Omega, p, q, r)$ of such equations.
Then we could get another solution in three ways: by moving on the underlying possibly non-trivial moduli space of Calabi Yau structures, by acting with a diffeomorphism on the base manifold $B$ that respects the AC structure on $(\omega, \Re \Omega, p, q, r)$ or by acting with a decaying gauge transformation of the principal bundle $M \to B$ on $(\eta, \theta)$.
By a diffeomorphism that respects the asymptotically conical structure, we mean more precisely a diffeomorphism through which the pullback of the metric $g_{\omega, \Omega}$ is still AC.
By decaying gauge transformation we mean the following: since $T^2$ is abelian, gauge transformations can be identified with smooth maps $B \to T^2$, and hence, by making use of the Lie algebra identification between $\Lie(T^2)$ and $\R^2$ we can identify the differential $d\Psi$ of a gauge transformation $\Psi$ as a pair of 1-forms $\gamma^1_\Psi$ and $\gamma^2_\Psi$.
We define the space of gauge transformations decaying with rate $\nu$ as those $\Psi$ such that $\gamma^1_\Psi, \gamma^2_\Psi \in C^\infty_{\nu - 1} (\Lambda^1(B))$.

The reason why we need to consider diffeomorphism that respect the AC structure and decaying gauge transformations is that, since we are in a non-compact situation, we aim to find some solutions by perturbing a collapsed solution with some decaying sections.
This is because the only hope to do analysis in a non-compact setting is to impose a decaying condition.
Hence, we only need to work transversally to decaying diffeomorphisms and gauge transformations, because these are the only ones that preserve the spaces of sections that we are concerned with.

We will split deformations of the Calabi-Yau structure into deformations of the Kähler form and deformations of the complex volume form, and we will deal with the former as the first thing ad with the latter as the last thing.
This is because it is convenient to get rid of the freedom on Kähler deformations before studying diffeomorphisms and it is convenient to get rid of diffeomorphism invariance before studying deformations of the complex volume form.
Let us now put this discussion in practice.

\paragraph{Kähler deformations} We use the freedom of moving on the moduli space of Kähler structures to keep fixed the cohomology class.

\begin{lemma}
	Let $k \in \N^+$, $\alpha \in (0,1)$ and $\nu \in (- \infty, 0)$.
	Let $(B, \omega_0, \Omega_0)$ be an asymptotically conical Calabi Yau manifold.
	Then any Kähler form $\omega$ on $B$ close enough to $\omega_0$ in the $C^{k, \alpha}_\nu$-topology can be deformed to have the same cohomology class as $\omega_0$, i.e.\ $[\omega] = [\omega_0]$.
\end{lemma}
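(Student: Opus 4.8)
The plan is to normalise $\omega$ by subtracting a small harmonic $2$-form representing the cohomology difference $[\omega]-[\omega_0]$. First I would set $\sigma:=\omega-\omega_0$, which by hypothesis is a closed $2$-form with $\|\sigma\|_{C^{k,\alpha}_\nu}<\varepsilon_0$. If $\nu\in(-2,0)$, Theorem~\ref{thm:harmonic_2_forms_isomorphism} produces a unique $\kappa\in\mathcal{H}^2_\nu(B)$ with $[\kappa]=[\sigma]$; if instead $\nu\le -2$, one first uses the continuous inclusion $C^{k,\alpha}_\nu(B)\hookrightarrow C^{k,\alpha}_{\nu_0}(B)$ for some fixed $\nu_0\in(-2,0)$ and applies the theorem there, at the harmless cost that the deformed form will be asymptotically conical only with rate $\nu_0$. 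In either case $\kappa$ is smooth and decaying, so $\omega':=\omega-\kappa$ is a well-defined closed $2$-form on $B$ with $[\omega']=[\omega]-[\kappa]=[\omega_0]$.

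The next step is to control $\kappa$ in terms of $\varepsilon_0$. The linear map sending a closed form in $C^{k,\alpha}_\nu(\Lambda^2 B)$ to its de Rham class is bounded: it is computed by integrating over a finite set of compact $2$-cycles spanning $H_2(B;\R)$, and each such period is controlled by the $C^0$-norm, hence by the $C^{k,\alpha}_\nu$-norm. Composing with the inverse of the isomorphism of Theorem~\ref{thm:harmonic_2_forms_isomorphism}, a bounded linear map of finite-dimensional spaces (or bounded by the open mapping theorem), gives $\|\kappa\|_{C^{k,\alpha}_\nu}\le C\,\|\sigma\|_{C^{k,\alpha}_\nu}<C\varepsilon_0$. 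Shrinking $\varepsilon_0$ if necessary, $\omega'$ is then $C^0$-close to the Kähler form $\omega$, hence non-degenerate and positive, so it is again a Kähler form; the straight-line path $t\mapsto\omega-t\kappa$, $t\in[0,1]$, exhibits the asserted deformation through Kähler forms, and $\omega\mapsto\omega'$ is affine-linear in $\omega$, hence analytic, which is the form in which the statement is used later in Section~\ref{sec:analytic_curve}.

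Since the lemma is genuinely elementary, the only points deserving any care are bookkeeping ones. The first is that one should take $\kappa$ to be of type $(1,1)$ for the ambient complex structure, so that $\omega'$ is genuinely Kähler and not merely symplectic: $\sigma$ is a closed $(1,1)$-form, and the K\"ahler identities for the metric $g_{\omega_0}$ show that its harmonic representative is again of type $(1,1)$. The second is the unavoidable loss of decay rate down to $(-2,0)$ when $\nu\le-2$; this is immaterial, since only the existence of \emph{some} negative rate is needed to preserve the asymptotically conical structure. I do not expect either to cause real difficulty; the substantive input is entirely contained in Theorem~\ref{thm:harmonic_2_forms_isomorphism}.
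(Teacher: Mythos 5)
Your argument is correct, but it takes a genuinely different route from the paper. The paper's proof is a moduli-space argument: it invokes the existence and uniqueness of an AC Calabi--Yau representative in every Kähler class near $[\omega_0]$ (section 7.1 of \cite{Foscolo2021}), identifies the moduli space of nearby Kähler forms with a neighbourhood of $[\omega_0]$ in $H^2(B)$, and then simply moves to the point $[\omega_0]$. Your proof is more concrete and uses lighter input: you subtract the decaying harmonic representative $\kappa$ of $[\omega]-[\omega_0]$ furnished by Theorem \ref{thm:harmonic_2_forms_isomorphism}, and your norm estimate $\|\kappa\|\le C\|\omega-\omega_0\|$ (boundedness of the period map composed with the inverse of a map out of a finite-dimensional space) correctly guarantees that $\omega-\kappa$ stays non-degenerate and positive, with the straight-line path giving the deformation. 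What the paper's approach buys is that the deformed form is again the distinguished Ricci-flat representative, at the cost of invoking the AC Calabi conjecture; what yours buys is an explicit affine-linear normalisation map depending only on linear Hodge theory, which is arguably better suited to the implicit-function-theorem framework of Section \ref{sec:analytic_curve}.

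The one step you should tighten is the claim that $\kappa$ is of type $(1,1)$. The Kähler identities give you that the bidegree components of the harmonic form $\kappa$ are individually harmonic, but on a non-compact manifold this alone does not force $\kappa^{2,0}+\kappa^{0,2}=0$: the usual compact Hodge-theoretic argument (harmonic plus exact implies zero) requires an integration by parts that is not available at these decay rates. The correct way to close this in the present setting is Lemma \ref{thm:harmonic_1_6_forms}: the real $(2,0)+(0,2)$ part of a $2$-form lies in $\Lambda^2_6$, and there are no nonzero decaying harmonic forms in $C^\infty_\nu(\Lambda^2_6)$ for $\nu<0$, so $\kappa$ is automatically a (primitive) $(1,1)$-form. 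With that citation in place your argument is complete.
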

\begin{proof}
	Every cohomology class near $[\omega_0]$ is a Kähler class and every Kähler class is represented by a unique Kähler class $\omega$ such that $(\omega, \Omega_0)$ is Calabi Yau (see section 7.1 in \cite{Foscolo2021} for a proof of this).
	Hence, the moduli space of Kähler forms close to $\omega_0$ is homeomorphic to a neighborhood of $[\omega_0]$ in $H^2(B)$.
	The thesis follows.
\end{proof}

\paragraph{Diffeomorphisms} The content of this paragraph comes from section 7.1 of \cite{Foscolo2021}.
The first thing to do is to understand how to use the diffeomorphism freedom on the Kähler form $\omega$ and then we consider what to do with the remaining freedom for $\Omega$.


By Moser's trick and the fact that we imposed the Kähler class to be fixed we can assume $\omega$ to be fixed, and under this hypothesis we can find a simple condition to choose a representative in the diffeomorphism class of $\Omega$.

\begin{lemma}
	Let $k \in \N^+$, $\alpha \in (0,1)$ and $\nu \in (- 5, -1)$.
	Let $(B, \omega_0, \Omega_0)$ be an asymptotically conical Calabi Yau manifold and $(\omega, \Omega)$ an asymptotically conical $\SU(3)$-structures that is symplectic (as in with closed Kähler form) with $[\omega] = [\omega_0]$, and suppose $(\omega_0, \Re \Omega_0)$ and $(\omega, \Omega)$ are $\varepsilon$-close in the $C^{k, \alpha}_\nu$-topology for some $\varepsilon \in \R^+$.
	
	Then, for $\varepsilon$ small enough, the following conditions
	\begin{subnumcases}{\label{eq:diff_fixing}}
		\omega = \omega_0 \\
		\Re \Omega \wedge \Re \Omega_0 = 0\label{eq:diffeo_fixing}
		.
	\end{subnumcases}
	fix uniquely a symplectic $\SU(3)$-structure in the diffeomorphism class of $(\omega, \Omega)$.
\end{lemma}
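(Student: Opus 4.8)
The plan is to follow section 7.1 of \cite{Foscolo2021} and split the argument into two stages: first normalise the symplectic form by Moser's trick, then normalise the complex volume form by exhausting the residual symplectomorphism freedom through the implicit function theorem.

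For the Moser stage, observe that since $[\omega]=[\omega_0]$ the $2$-form $\omega-\omega_0$ is exact, and since it lies in $C^{k,\alpha}_\nu$ with $\nu\in(-5,-1)$, lemma \ref{thm:exact_2_forms_AC_CY} produces a coclosed $\beta\in C^{k+1,\alpha}_{\nu+1}(\Lambda^1 B)$ with $d\beta=\omega-\omega_0$; this is precisely where the hypothesis $\nu\in(-5,-1)$ enters. I would then run Moser's trick on $\omega_t:=\omega_0+t\,d\beta$, which is non-degenerate for all $t\in[0,1]$ once $\varepsilon$ is small enough: defining $Y_t$ by $Y_t\lrcorner\omega_t=-\beta$ and integrating the resulting (decaying) time-dependent flow yields a diffeomorphism $\phi$ asymptotic to the identity with $\phi^*\omega=\omega_0$. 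Since $\phi$ is asymptotic to the identity, $(\omega_0,\phi^*\Omega)$ is still an AC symplectic $\SU(3)$-structure $C^{k,\alpha}_\nu$-close to $(\omega_0,\Omega_0)$, so we are reduced to the case $\omega=\omega_0$, i.e.\ the first equation of \eqref{eq:diff_fixing} holds.

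For the second stage, with $\omega=\omega_0$ fixed the remaining decaying gauge freedom is by symplectomorphisms of $(B,\omega_0)$ asymptotic to the identity; since decaying closed $1$-forms on an AC Calabi--Yau $3$-fold are exact (decaying harmonic $1$-forms vanish by lemma \ref{thm:harmonic_1_6_forms}), these are generated by Hamiltonian vector fields $X_f$, $X_f\lrcorner\omega_0=df$, with $f$ decaying. I would set up the analytic map $G(f,\Omega)={\star_0}(\psi_f^*\Re\Omega\wedge\Re\Omega_0)$, where $\psi_f$ is the time-$1$ flow of $X_f$ and $\star_0$ is taken with respect to $g_0$: since $\psi_f^*\omega_0=\omega_0$ this records exactly the failure of \eqref{eq:diffeo_fixing}, it takes values in a weighted Hölder space of functions on $B$, and it vanishes at $(0,\Omega_0)$ because $\Re\Omega_0\wedge\Re\Omega_0=0$ identically ($\Re\Omega_0$ has odd degree). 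Differentiating in $f$ at $(0,\Omega_0)$ and using $d\Re\Omega_0=0=d\omega_0$ together with the $\SU(3)$ identities of lemma \ref{thm:SU3_identities} — in particular $(X\lrcorner\Re\Omega_0)\wedge\Re\Omega_0=X^\flat\wedge\omega_0^2$ and $X_{\dot f}^\flat=J\,d\dot f$ — one computes
$$
D_fG|_{(0,\Omega_0)}(\dot f)={\star_0}\,d\!\left((X_{\dot f}\lrcorner\Re\Omega_0)\wedge\Re\Omega_0\right)={\star_0}\,d\!\left(J\,d\dot f\wedge\omega_0^2\right)=c\,\triangle\dot f
$$
for a nonzero constant $c$ (the last equality being the Kähler identity $\Lambda\,d\,J\,d\dot f=-\triangle\dot f$). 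Since the scalar Laplacian on an AC Calabi--Yau $3$-fold is an isomorphism between the relevant weighted Hölder spaces for weights away from its indicial roots — injective because decaying harmonic functions vanish (Ricci-flatness and the maximum principle, or lemma \ref{thm:harmonic_1_6_forms}) and surjective by the Fredholm theory of AC elliptic operators — the implicit function theorem applies and gives, for every $\Omega$ sufficiently $C^{k,\alpha}_\nu$-close to $\Omega_0$, a unique small $f$ with $G(f,\Omega)=0$; then $\psi_f^*\Omega$ is the required structure, and uniqueness within the decaying diffeomorphism class of $(\omega,\Omega)$ follows from the uniqueness clause of the implicit function theorem together with the smallness hypothesis.

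The step I expect to be the main obstacle is the bookkeeping of decay rates: one must arrange that the Moser step (which pins $\nu$ to $(-5,-1)$ via lemma \ref{thm:exact_2_forms_AC_CY}) and the mapping properties of $\triangle$ on weighted Hölder spaces are simultaneously compatible, and that the Hamiltonian flow $\psi_f$ produced by the implicit function theorem is genuinely asymptotic to the identity rather than merely bounded — this forces $f$ to decay and hence constrains the admissible weights, so that for part of the range $\nu\in(-5,-1)$ one may need to peel off a slowly-decaying (near-harmonic) piece of $\star_0(\Re\Omega\wedge\Re\Omega_0)$ and handle it separately. By contrast, the identification of $D_fG$ with a nonzero multiple of the Laplacian is a routine computation once the $\SU(3)$ identities of lemma \ref{thm:SU3_identities} are in hand.
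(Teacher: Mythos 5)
Your proposal is correct and follows essentially the same route the paper intends: the paper gives no proof of this lemma, deferring entirely to section 7.1 of \cite{Foscolo2021}, whose argument is exactly your two-stage scheme (Moser's trick via lemma \ref{thm:exact_2_forms_AC_CY}, then exhausting the residual Hamiltonian symplectomorphisms by identifying the linearized action on the $\Im\Omega_0$-component of $\Re\Omega$ with the scalar Laplacian, cf.\ lemma \ref{thm:exterior_differential_SU3_decomposition}.5 with $X_f^\flat=Jdf$). The only point to watch is the one you already flag: for $\nu+2>0$ constants lie in the kernel of $\triangle$ on the relevant weighted space, but they generate the trivial Hamiltonian flow, so one simply works modulo constants.
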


Thus, from now on we will take $\omega = \omega_0$ to be fixed and we add the linear condition $\Re \Omega \wedge \Re \Omega_0 = 0$.

\paragraph{Gauge transformations} As for gauge transformations, let $(\eta_1, \theta_1)$ be a connection on $M\to B$.
Then any other connection is given by $(\eta_1 + \delta_\eta, \theta_1 + \delta_\theta)$ for some $\delta_\eta, \delta_\theta \in \Omega^1(B)$.
The following condition fixes a representative in a given gauge class:
\begin{equation}\label{eq:gauge_fixing}
	d^* \delta_\eta = 0 \qquad d^* \delta_\theta = 0
	.
\end{equation}

Indeed, let $(\eta_1 + \delta_\eta + \upsilon_\eta, \theta_1 + \delta_\theta + \upsilon_\theta)$ (with $\upsilon_\eta, \upsilon_\theta \in \Omega^1(B)$) be another representative in the gauge class $[(\eta_1 + \delta_\eta, \theta_1 + \delta_\theta)]$ that satisfies \eqref{eq:gauge_fixing}.
Then $\upsilon_\eta$ and $\upsilon_\theta$ are closed because of how gauge transformations act on connections for $\U(1)$ principal bundles.
Since they are also coclosed by \eqref{eq:gauge_fixing} and they are decaying because we assumed the gauge transformations to be decaying, they must vanish.

\paragraph{Deformations of the complex volume form} Let us finally take care of the possible non-triviality of the moduli space of Calabi-Yau structures.
Let $\rho$ be an infinitesimal deformation of $\Re \Omega$.
Since we already took care of fixing the diffeomorphism class, we are only interested in the genuine Calabi Yau deformations that are not coming from diffeomorphisms.
Hence, by the discussion above, we can assume $\rho \wedge \Re \Omega_0 = 0$.
The following theorem clarifies the situation.
\begin{theorem}
	Let $\left(0, \rho\right) \in \Omega^2(B)\times \Omega^3(B)$, such that $\rho \wedge \Re \Omega_0 = 0$.
	Then, $\left(0, \rho\right)$ is an infinitesimal Calabi-Yau deformation of $\left(B, \omega_0, \Omega_0\right)$ if and only if $\rho^{\prime}$ is a closed and coclosed 3-form in $\Omega_{12}^3$.
\end{theorem}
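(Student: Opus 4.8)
The plan is to unwind what ``infinitesimal Calabi--Yau deformation'' means and reduce it to four linear conditions on $\rho$. Varying $\omega$ by $0$, $\Re\Omega$ by $\rho$, and $\Im\Omega$ by $\hat\rho$ (the image of $\rho$ under the linearized Hitchin duality map, Proposition \ref{thm:Hitchin_map_derivative}), being a torsion-free $\SU(3)$-structure to first order amounts to the linearizations of: the compatibility $\omega\wedge\Re\Omega=0$, the Monge--Amp\`ere equation \eqref{eq:Monge_Ampere}, and the three closedness conditions $d\omega=d\Re\Omega=d\Im\Omega=0$. Since $\delta\omega=0$, the linearization of $d\omega=0$ is vacuous, stability of $\Re\Omega_0+t\rho$ is an open condition hence irrelevant to first order, and what remains is
\begin{enumerate}
	\item $\omega_0\wedge\rho=0$;
	\item $\rho\wedge\Im\Omega_0+\Re\Omega_0\wedge\hat\rho=0$;
	\item $d\rho=0$;
	\item $d\hat\rho=0$.
\end{enumerate}
Throughout I will use that, by the definition of $\Lambda^3_{12}$ in Lemma \ref{thm:irreducible_decomposition_SU3}, every element of $\Omega^3_{12}$ wedges to zero against each of $\omega_0$, $\Re\Omega_0$, $\Im\Omega_0$; that $\star\Re\Omega_0=\Im\Omega_0$ and $\star\sigma_{12}=-J\sigma_{12}$ for $\sigma_{12}\in\Omega^3_{12}$ by Lemma \ref{thm:Hodge_star_SU3}; and that $J$ and $\star$ preserve $\Omega^3_{12}$ (being $\SU(3)$-equivariant), so also $J\sigma_{12}\in\Omega^3_{12}$.

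For the ``if'' direction, let $\rho\in\Omega^3_{12}$ be closed and coclosed. Then (1) holds since $\rho\wedge\omega_0=0$, and the gauge condition $\rho\wedge\Re\Omega_0=0$ holds automatically. By Proposition \ref{thm:Hitchin_map_derivative}, $\hat\rho=-\star\rho=J\rho\in\Omega^3_{12}$, so $\Re\Omega_0\wedge\hat\rho=0$ and $\rho\wedge\Im\Omega_0=0$, giving (2). Finally (3) is the hypothesis, and (4) holds because $d\hat\rho=-d\star\rho=0$ since $\rho$ is coclosed. Hence $(0,\rho)$ is an infinitesimal Calabi--Yau deformation.

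For the converse, let $(0,\rho)$ be an infinitesimal Calabi--Yau deformation with $\rho\wedge\Re\Omega_0=0$. Condition (1) forces the $\Lambda^3_6$-component of $\rho$ to vanish, so $\rho=a\,\Re\Omega_0+b\,\Im\Omega_0+\rho_{12}$ with $a,b\in C^\infty(B)$ and $\rho_{12}\in\Omega^3_{12}$. Wedging with $\Re\Omega_0$ and using $\Re\Omega_0\wedge\Re\Omega_0=0$, $\Im\Omega_0\wedge\Re\Omega_0=-\tfrac23\omega_0^3$ (from \eqref{eq:Monge_Ampere}) and $\rho_{12}\wedge\Re\Omega_0=0$, the gauge condition gives $b=0$. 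By Proposition \ref{thm:Hitchin_map_derivative} and Lemma \ref{thm:Hodge_star_SU3}, $\hat\rho=\star(a\,\Re\Omega_0)-\star\rho_{12}=a\,\Im\Omega_0+J\rho_{12}$; substituting into (2) and discarding the terms that vanish ($\rho_{12}\wedge\Im\Omega_0$ and $\Re\Omega_0\wedge J\rho_{12}$) yields $2a\,\Re\Omega_0\wedge\Im\Omega_0=\tfrac43 a\,\omega_0^3=0$, so $a=0$ and $\rho=\rho_{12}\in\Omega^3_{12}$. Now (3) says $\rho$ is closed, and (4), which reads $d\hat\rho=d(J\rho)=-d\star\rho=0$, says $\rho$ is coclosed.

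The argument is pointwise linear algebra together with the two closedness conditions, so there is no analytic difficulty; the only place demanding care is the bookkeeping in the linearized Monge--Amp\`ere equation (2) --- in particular the constant in $\Re\Omega_0\wedge\Im\Omega_0=\tfrac23\omega_0^3$ fixed by the normalization \eqref{eq:Monge_Ampere}, and keeping track of which $\SU(3)$-structure conditions become automatic once $\delta\omega=0$. As a sanity check for the coefficient I would verify the formula for $\hat\rho$ on the explicit directions $\rho=\Re\Omega_0$ (where $\hat\rho=\Im\Omega_0$) and $\rho=\Im\Omega_0$ (where $\hat\rho=-\Re\Omega_0$), coming from rescaling and rotating $\Omega_0$; these are exactly the non-$\Omega^3_{12}$ directions that get eliminated by (2) and the gauge condition. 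This is the same linearization computation underlying the deformation theory in \cite{Foscolo2021}.
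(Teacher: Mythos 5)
Your proof is correct and follows essentially the same route as the paper's: decompose $\rho$ into $\SU(3)$-types, use $\rho\wedge\omega_0=0$ to kill the $\Lambda^3_6$ part, use the gauge condition $\rho\wedge\Re\Omega_0=0$ together with the linearized Monge--Amp\`ere equation to kill the $\Lambda^3_{1\oplus1}$ part, and then translate $d\hat\rho=0$ into $d^*\rho=0$ via $\hat\rho=-\star\rho$ on $\Omega^3_{12}$. The only (harmless) differences are the order in which the two scalar components are eliminated and that you spell out the ``if'' direction, which the paper leaves implicit.
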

\begin{proof}
	Since $(0, \rho)$ is an infinitesimal Calabi-Yau deformation, $d \rho = d \hat{\rho} = 0$, $\rho \wedge \omega_0= 0$ and $\operatorname{Re} \Omega_0 \wedge \hat{\rho}+\rho \wedge \operatorname{Im} \Omega_0 = 0$, and thus $\rho = f \Im \Omega_0 + \rho_{12}$ with $f \in C^\infty(B)$ and $\rho_ {12} \in \Omega_{12}^3$.
	Since $\rho \wedge \operatorname{Re} \Omega_0=0$, $\rho \in \Omega_{12}^3$ and thus $d \hat{\rho}=0$ is equivalent to $d^* \rho=0$ by proposition \ref{thm:Hitchin_map_derivative}.
\end{proof}

Hence, by integrating the last proposition, we see that we can work transversally to deformations of $\Omega$ by requiring that
\begin{equation}\label{eq:CY_deformations_fixing}
	\pi_{12}(\Re \Omega - \Re \Omega_0 ) \in \mathcal{W}_\nu^3(B)
\end{equation}
where $\mathcal{W}_\nu^3(B)$ is defined in \ref{def:Wknu}.
This last condition is an arbitrary gauge fixing choice and is asking that, if we perturb our starting $\Re \Omega_0$ by a form $\rho$, the $\Lambda^3_{12}$ component of $\rho$ has to be orthogonal to closed and coclosed 3-forms.

\subsection{The equations in the adiabatic limit}\label{sec:order_0}

The equations obtained up to now, i.e.\ system \eqref{eq:parameters} together with the linear conditions \eqref{eq:parameters_linear_conditions} and the gauge fixing equations found in section \ref{sec:gauge} are more or less ready for the application of the implicit function theorem, since the modifications that we apply in section \ref{sec:first_order} are minor and technical.
These equations are the zero locus of a function $\Psi$ that takes as input the following data: $(x, y) = (\varepsilon, \eta, \theta, \Re \Omega, p, q, r, s, u, v, X)$, where $x = \varepsilon$ as we explained above.
In this subsection we intend to study what conditions do candidate base points $(x_0, y_0)$ need to satisfy in order to apply the theorem.

The reason why it is a good idea using the implicit function theorem to solve our equations is that it should be easy to solve them in the limit where the fibers shrink to points, called the \textit{adiabatic} limit, i.e.\ choosing $x_0 = 0$.
Hence, in this subsection we study the equation $\Psi(0, y_0) = 0$, where
$$
y_0 = (0, 0, \Re \Omega_0, p_0, q_0, r_0, 0, 0, 0, 0)
,
$$
where we are imposing $\eta_0 = \theta_0 = 0$ because of the intuitive idea that for $\varepsilon = 0$ the fibers shrink and there should be no connections.
We are also imposing that $X_0 = 0$ and $s_0 = u_0 = v_0 = 0$, because these are meant to be free parameters and are not meant to carry geometrical meaning in the limiting case.
Here and in the rest, we denote by $J_0$ the almost complex structure induced by $\Re \Omega_0$.

With this notation and choices, the equation $\Psi(0, y_0) = 0$ reads
\begin{subnumcases}{\label{eq:system0}}
	\frac{1}{3}\omega_0^3 = \frac{1}{2} \Re \Omega_0 \wedge \Im \Omega_0 \\
	d (p_0\Re \Omega_0) = 0\label{eq:system0_dReOmega} \\
	d(r_0 \Re \Omega_0) + d(q_0 \Im \Omega_0) = 0\label{eq:system0_dImOmega} \\
	p_0dq_0 - q_0dp_0 + J_0(r_0dp_0 - p_0dr_0) = 0\label{eq:system0_dr} \\
	d(p_0q_0) = 0\label{eq:system0_ds}
	.
\end{subnumcases}

Equation \eqref{eq:system0_ds} implies that $p_0q_0$ is constant even without any boundedness assumption.
To conclude that $p_0, q_0$ and $r_0$ are constant, the boundedness assumption is crucial.

\begin{lemma}\label{thm:limit_varepsilon_0}
	Suppose $B$ is complete and that $p_0, q_0$ and $r_0$ are bounded.
	Then they are constant and $(\omega_0, \Omega_0)$ is Calabi-Yau.
	Equivalently, the system \eqref{eq:system0} reduces to
	\begin{equation}
		\begin{cases}			
			d p_0 = dq_0 = dr_0 = 0 \\
			d \Re \Omega_0 = d \Im \Omega_0 = 0 \\
			\frac{1}{3}\omega_0^3 = \frac{1}{2} \Re \Omega_0 \wedge \Im \Omega_0
			.
		\end{cases}
	\end{equation}
\end{lemma}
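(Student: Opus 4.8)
The goal is to reduce the whole statement to the single claim that $p_0$ is constant. Indeed, \eqref{eq:system0_ds} already gives $p_0q_0\equiv c$ for a constant $c>0$ (both factors being positive), so boundedness of $q_0$ makes $p_0$ bounded below away from $0$ and boundedness of $p_0$ makes $q_0$ bounded. Once $p_0$ is known to be constant, $q_0=c/p_0$ is constant; \eqref{eq:system0_dr} becomes $p_0\,J_0\,dr_0=0$, so $r_0$ is constant; and then \eqref{eq:system0_dReOmega}, \eqref{eq:system0_dImOmega} collapse to $d\Re\Omega_0=d\Im\Omega_0=0$, which together with $d\omega_0=0$ (part of the linear conditions \eqref{eq:parameters_linear_conditions}, hence of $\Psi(0,y_0)=0$) and the Monge--Amp\`ere identity exhibits $(\omega_0,\Omega_0)$ as Calabi--Yau.

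To isolate the torsion I would use $d\omega_0=0$ and Proposition~\ref{thm:SU3_torsion}, which kills $w_1,\hat w_1,w_3,w_4$, so $d\Re\Omega_0=w_5\wedge\Re\Omega_0+w_2\wedge\omega_0$ and $d\Im\Omega_0=w_5\wedge\Im\Omega_0+\hat w_2\wedge\omega_0$. Dividing \eqref{eq:system0_dReOmega} by $p_0>0$ gives $d\Re\Omega_0=-d\log p_0\wedge\Re\Omega_0$; since $\Lambda^1\wedge\Re\Omega_0=\star\Lambda^2_6=\Lambda^4_6$ while $w_2\wedge\omega_0\in\star\Lambda^2_8=\Lambda^4_8$ (Lemma~\ref{thm:Hodge_star_SU3}) and these are complementary, we get $w_2=0$ and $w_5=-d\log p_0$. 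Substituting $d\Re\Omega_0,d\Im\Omega_0$ into \eqref{eq:system0_dImOmega} and projecting onto $\Lambda^4_8$ kills $\hat w_2$, and the $\Lambda^4_6$-part, rewritten using $\gamma\wedge\Im\Omega_0=(J_0\gamma)\wedge\Re\Omega_0$ (Lemma~\ref{thm:SU3_identities}) and $dq_0=-q_0\,d\log p_0$, yields $dr_0=r_0\,d\log p_0+2q_0\,J_0\,d\log p_0$ (the same relation follows from \eqref{eq:system0_dr}). Hence the only surviving torsion of $(\omega_0,\Omega_0)$ is $w_5=-d\log p_0$; in particular the vanishing of $w_1,\hat w_1,w_2,\hat w_2$ forces $J_0$ to be integrable, so $g_0$ is K\"ahler, $d\Omega_0=-d\log p_0\wedge\Omega_0$, so $p_0\Omega_0$ is a holomorphic volume form, and both $p_0\Re\Omega_0$ and $p_0\Im\Omega_0=\star_{g_0}(p_0\Re\Omega_0)$ are closed, hence harmonic.

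Next I would turn $d(r_0/p_0)=\tfrac{2q_0}{p_0}\,J_0\,d\log p_0$ into a scalar equation for $u:=\log p_0$: applying $d$ and using $d\!\left(\tfrac{2q_0}{p_0}\right)=-\tfrac{4q_0}{p_0}\,d\log p_0$ gives $d(J_0\,du)=2\,du\wedge J_0\,du$. Contracting with $\omega_0$, using the pointwise identity $\Lambda(du\wedge J_0\,du)=|du|^2$ together with the K\"ahler identities on $(B,\omega_0,J_0,g_0)$, yields $\Delta_{g_0}\log p_0=2|d\log p_0|^2_{g_0}$ — equivalently, $p_0^{-2}$ is $g_0$-harmonic. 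Thus $p_0^{-2}$ is a positive function on the complete manifold $(B,g_0)$, bounded above and below by the first paragraph, and annihilated by the Laplacian; a Liouville-type argument forces it, and with it $p_0$, to be constant, which closes the reduction.

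The only real obstacle is this final Liouville step: on a bare complete manifold a bounded harmonic function need not be constant, so the argument must genuinely use both bounds on $p_0$ — the lower one being precisely what boundedness of $q_0$ supplies, which is why the boundedness hypothesis is essential — as well as the asymptotic geometry. In the setting of the main theorem $g_0$ is $C^1_{-1}$-close to the asymptotically conical Calabi--Yau reference metric, hence itself asymptotically conical with connected link, and on such a manifold a bounded harmonic function differs from a constant by a decaying harmonic function, which vanishes (Lemma~\ref{thm:harmonic_1_6_forms}, or its analogue for $g_0$). Pinning down the sign in $\Delta_{g_0}\log p_0=2|d\log p_0|^2$ and making the Liouville step rigorous are the delicate points; the $\SU(3)$-torsion bookkeeping in between is routine.
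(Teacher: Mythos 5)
Your proposal is correct in substance, but it reaches the one nontrivial point --- the constancy of $p_0$ --- by a much longer route than the paper's, and it is worth seeing why the detour is unnecessary. The paper never touches the torsion decomposition: dividing \eqref{eq:system0_dr} by $p_0^2$ gives the \emph{linear} identity $d\left(q_0p_0^{-1}\right)=J_0\,d\left(r_0p_0^{-1}\right)$, and applying $d{\star_0}$ and $d{\star_0}J_0$ to both sides together with lemma \ref{thm:d_star_J_d} shows directly that $q_0p_0^{-1}$ and $r_0p_0^{-1}$ are harmonic; boundedness then makes them constant, hence (combined with $p_0q_0$ constant) $p_0,q_0,r_0$ are all constant, and \eqref{eq:system0_dReOmega}--\eqref{eq:system0_dImOmega} immediately give $d\Re\Omega_0=d\Im\Omega_0=0$. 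Your path through $w_2=\hat w_2=0$, $w_5=-d\log p_0$ and the nonlinear equation $d(J_0\,du)=2\,du\wedge J_0\,du$ ultimately yields harmonicity of $p_0^{-2}$ --- but $p_0^{-2}=(p_0q_0)^{-1}\,(q_0p_0^{-1})$ is, up to the constant $p_0q_0$, exactly the same harmonic function, so the $\SU(3)$-torsion bookkeeping buys nothing for this lemma (its by-products, such as the structure being K\"ahler with $w_5$ exact, are correct but unused). Note also that the paper needs $r_0p_0^{-1}$ harmonic as a second input, whereas in your ordering $dr_0=0$ drops out of \eqref{eq:system0_dr} for free once $p_0,q_0$ are constant; both are fine. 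On the final Liouville step you are actually more careful than the paper: "bounded harmonic on a complete manifold implies constant" is false in general, and one must invoke either the asymptotically conical structure (no indicial roots of the Laplacian in $(-4,0)$, so a bounded harmonic function is a constant plus a decaying harmonic function, which vanishes) or nonnegative Ricci curvature via Yau's Liouville theorem; your flagged sign ambiguity in $\Delta\log p_0=\pm2|d\log p_0|^2$ is harmless since the invariant statement is the harmonicity of $p_0^{-2}$.
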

\begin{proof}
	As we noted above, $p_0 q_0$ is constant.
	
	By applying in turn $d {\star_0} p_0^{-2}$ and $d {\star_0} J_0 p_0^{-2}$ on both sides of equation \eqref{eq:system0_dr}, and by using lemma \ref{thm:d_star_J_d}, we get that $q_0p_0^{-1}$ and $r_0p_0^{-1}$ are harmonic and, since they bounded by hypothesis, they have to be constant.
	
	Since we can write $p_0, q_0$ and $r_0$ as functions of $q_0p_0^{-1}$, $q_0p_0$ and $r_0 p_0^{-1}$, $p_0, q_0$ and $r_0$ are also constant.
	Using thus that $d p_0 = dq_0 = dr_0 = 0$, we can deduce from equations \eqref{eq:system0_dReOmega} and \eqref{eq:system0_dImOmega} that $d \Re \Omega_0 = d \Im \Omega_0 = 0$ and thus that $(\omega_0, \Omega_0)$ is Calabi-Yau.
\end{proof}

The hypotheses of lemma \ref{thm:limit_varepsilon_0} are satisfied, in particular, if $B$ is asymptotically conical and $p_0, q_0$ and $r_0$ are in $C^{k, \alpha}_0$.

\subsection{First order solutions}\label{sec:first_order}

In this section, as in the statement of theorem \ref{thm:implicit_function_theorem}, we denote by $\Psi$ the function whose zero locus is given by the equations we intend to solve.
In this subsection we solve manually the first $\varepsilon$-order equations so to take care of the vertical part of $\eta$ and $\theta$ and of the first $\varepsilon$-order part of $\Re \Omega$, which does not decay fast enough to allow us to define $\Psi$ on appropriate spaces, as we are about to see.

As in the previous sections we suppose $M \to B$ is a $T^2$ principal bundle over a manifold admitting an $\SU(3)$-structure.
As we saw in section \ref{sec:T2_connections} we have a splitting
\begin{equation}\label{eq:T2_commutative_diagram}
	\begin{tikzcd}
		& M \arrow[swap]{dl}{p_{M_1}} \arrow{dr}{p_{M_2}} & \\ P_1 \arrow[swap]{dr}{p_{P_1}} & & P_2 \arrow{dl}{p_{P_2}} \\ & B &
	\end{tikzcd}
\end{equation}
We fix a Kähler form $\omega_0$ on $B$ and we require that condition \eqref{eq:topological_condition} hold.
In the splitting of the bundle given in the diagram, this condition is equivalent to
\begin{equation}\label{eq:cohomological_condition}
	c_1(P_1) \smile [\omega_0] = c_1(P_2) \smile [\omega_0] = 0
\end{equation}
which we will see shortly to be a necessary condition for our construction.

To simplify the notation, let $\Re \Omega_1 = \rho, p_1 = P, q_1 = Q, r_1 = R$ and $s_1 = S$.
Then the linearization of system \eqref{eq:parameters}, together with the linearization of the Monge-Ampère equation \eqref{eq:Monge_Ampere}, reads:
\begin{subnumcases}{\label{eq:linearization}}
	d \eta_1 \wedge \omega_0^2 = d \theta_1 \wedge \omega_0^2 = 0 \quad 
	\rho \wedge \Im \Omega_0 + \Re \Omega_0 \wedge \hat{\rho} = 0\label{eq:linearization_SU3} \\
	dP \wedge \Re \Omega_0 + p_0d\rho + d \theta_1 \wedge \omega_0 + d {\star_0} d (u_1\omega) = 0\label{eq:linearization_dReOmega} \\
	d R \wedge \Re \Omega_0 + r_0 d \rho + dQ \wedge \Im \Omega_0 + q_0 d \hat{\rho} + d \eta_1 \wedge \omega_0 + d {\star_0} d (X_1 \interior \Re \Omega + v_1 \omega_0) = 0\label{eq:linearization_dImOmega} \\
	(p_0d\eta_1 - r_0d \theta_1) \wedge \Re \Omega_0 - q_0 d \theta_1 \wedge \Im \Omega_0 + \frac{1}{2}\left(d\left(p_0Q + q_0P\right) + J_0dS\right) \wedge \omega_0^2 = 0\label{eq:linearization_CYm_xi} \\
	(p_0d\eta_1 - r_0d \theta_1) \wedge \Re \Omega_0 + q_0 d \theta_1 \wedge \Im \Omega_0 + \left(d\left(p_0Q-q_0P\right)+J_0d(r_0P - p_0R) \right)\wedge \omega_0^2 = 0\label{eq:linearization_CYm_theta}
\end{subnumcases}

We say that a connection on a $T^k$ bundle is Hermitian Yang Mills if, for one (hence for any) splitting of the bundle into $k$ circle bundles, each component of the connection is Hermitian Yang Mills.

\begin{lemma}\label{thm:first_order_necessary}
	Let $(\eta_1, \theta_1) \in \mathcal{A}(M)$ and let $(d\eta_1, d\theta_1, \rho, P, Q, R, S, u_1, v_1, X_1) \in C^{k, \alpha}_\nu$ be a solution of system \eqref{eq:linearization}, for $k \in \N$, $\alpha \in [0,1]$, $\nu \in (-\infty, 0)$, and $p_0, q_0, r_0, \omega_0$ and $\Re \Omega_0$ are as in the thesis of lemma \ref{thm:limit_varepsilon_0}.

	Then $P, Q, R, S, u_1, v_1, X_1 = 0$, $(\eta_1, \theta_1)$ is a Hermitian Yang-Mills connection and $\rho \in \Omega^3_{12}(B)$.
	Moreover, this data satisfies
	\begin{subnumcases}{\label{eq:rho_linearization}}
		d\rho = -p_0^{-1}d \theta_1 \wedge \omega_0 \\
		d \star\rho = d \left(q_0^{-1}\eta_1-r_0(p_0q_0)^{-1}\theta_1\right) \wedge \omega_0
		.
	\end{subnumcases}
\end{lemma}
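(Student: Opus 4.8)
The plan is to treat \eqref{eq:linearization} as a linear elliptic system all of whose unknowns lie in the decaying Hölder space $C^{k,\alpha}_\nu(B)$, $\nu<0$, and to eliminate the auxiliary unknowns $P,Q,R,S,u_1,v_1,X_1$ together with the $\Lambda^3_6$–component of $\rho$ by extracting homogeneous Laplace–type equations for them and invoking the vanishing of decaying harmonic functions, $1$-forms, and of $\Lambda^k_1$– or $\Lambda^k_6$–valued harmonic forms on an irreducible AC Calabi--Yau $3$-fold (lemma \ref{thm:harmonic_1_6_forms}, together with the maximum principle). Once this is done, what survives of \eqref{eq:linearization} will be exactly \eqref{eq:rho_linearization} together with the vanishing of the $\Lambda^2_1\oplus\Lambda^2_6$–parts of $d\eta_1,d\theta_1$, i.e.\ the Hermitian--Yang--Mills condition. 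Throughout I would use that $p_0,q_0,r_0$ are constant and $(\omega_0,\Omega_0)$ is Calabi--Yau, as given by lemma \ref{thm:limit_varepsilon_0}.

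First I would normalize $\rho$. Writing $\rho=\rho_6+\rho_{1\oplus1}+\rho_{12}$, the diffeomorphism gauge-fixing condition \eqref{eq:diffeo_fixing} (linearizing to $\rho\wedge\Re\Omega_0=0$) kills the $\Re\Omega_0$-summand of $\rho_{1\oplus1}$, since $\Re\Omega_0\wedge\Im\Omega_0=\tfrac23\omega_0^3\neq0$ while $\rho_6,\rho_{12}$ are wedge-orthogonal to $\Re\Omega_0$; feeding $\rho$ into the linearized Monge--Amp\`ere equation of \eqref{eq:linearization_SU3} and using the explicit form of $\hat\rho$ from proposition \ref{thm:Hitchin_map_derivative} together with lemma \ref{thm:Hodge_star_SU3} then forces the $\Im\Omega_0$-summand to vanish as well, so $\rho=\rho_6+\rho_{12}$. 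Next I would kill the auxiliary unknowns. Taking the $\Lambda^4_1$-projection of \eqref{eq:linearization_dReOmega} (equivalently, wedging with $\omega_0$) and using $\Re\Omega_0\wedge\omega_0=0$, $d\theta_1\wedge\omega_0^2=0$, $\rho_{12}\wedge\omega_0=0$, lemma \ref{thm:pi1_rho}, and lemmas \ref{thm:pi1_6} and \ref{thm:exterior_differential_SU3_decomposition}.4 to evaluate the $\pi_1$-components of $d^*d(u_1\omega_0)$ and of $d\rho_6$ gives a first homogeneous relation between $\triangle u_1$ and $\rho_6$; the analogous manipulation of \eqref{eq:linearization_dImOmega}, now using the full $\pi_{1\oplus6}$ statement of lemma \ref{thm:pi1_6}, gives the corresponding relation for $v_1$, $X_1$ and $\rho_6$; applying $\star_0 d$ to \eqref{eq:linearization_CYm_xi} and using $d\omega_0=0$ together with \eqref{eq:linearization_dReOmega}, \eqref{eq:linearization_dImOmega} shows $S$ is harmonic; and applying $\star_0$, then $d^*$ and $d^*J_0$, to \eqref{eq:linearization_CYm_xi} and \eqref{eq:linearization_CYm_theta} — exploiting that $d\Re\Omega_0=0$ makes the $\operatorname{curl}$-type terms (hence the $\Lambda^2_6$-parts of $d\eta_1,d\theta_1$) coclosed, and that $d^*J_0d(\cdot)=0$ on functions (lemma \ref{thm:d_star_J_d}) — collapses the remaining content to homogeneous Laplace equations for suitable combinations of $P,Q,R,S$. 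Assembling these relations, the resulting homogeneous elliptic system has only the trivial decaying solution by lemma \ref{thm:harmonic_1_6_forms}: $P=Q=R=S=u_1=v_1=0$, $X_1=0$, and $\rho_6=0$, so $\rho\in\Omega^3_{12}$.

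Finally I would substitute these vanishings back. Then \eqref{eq:linearization_dReOmega} reduces to $p_0\,d\rho=-d\theta_1\wedge\omega_0$, the first equation of \eqref{eq:rho_linearization}; since $\rho\in\Omega^3_{12}$, proposition \ref{thm:Hitchin_map_derivative} gives $\hat\rho=-\star_0\rho$, so \eqref{eq:linearization_dImOmega} becomes $r_0\,d\rho-q_0\,d\star_0\rho+d\eta_1\wedge\omega_0=0$, which upon eliminating $d\rho$ via the first equation of \eqref{eq:rho_linearization} is precisely the second equation of \eqref{eq:rho_linearization}; and with $P=Q=R=S=0$ equations \eqref{eq:linearization_CYm_xi} and \eqref{eq:linearization_CYm_theta} become $(p_0d\eta_1-r_0d\theta_1)\wedge\Re\Omega_0\pm q_0\,d\theta_1\wedge\Im\Omega_0=0$, whose sum and difference force $d\theta_1\wedge\Im\Omega_0=0$ and then $d\eta_1\wedge\Re\Omega_0=0$, i.e.\ the $\Lambda^2_6$-parts of $d\eta_1,d\theta_1$ vanish; together with the vanishing of their $\Lambda^2_1$-parts (from $d\eta_1\wedge\omega_0^2=d\theta_1\wedge\omega_0^2=0$ in \eqref{eq:linearization_SU3}) this means $d\eta_1,d\theta_1\in\Omega^2_8$, so $(\eta_1,\theta_1)$ is Hermitian--Yang--Mills.

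The hard part is the decoupling in the second paragraph: in the raw equations the scalars $P,Q,R,S$, the curvature components $(d\eta_1)_6,(d\theta_1)_6$, and $\rho_6$ all enter entangled, and one must pick exactly the right combinations of $d^*$, $d^*J_0$ and the $\SU(3)$-type projections — and, to separate $\rho_6$ from $\rho_{12}$, corollary \ref{thm:pi6_d_rho} relating $\pi_6(d\star_0\rho)$ and $\pi_6(d\rho)$ — to isolate a genuine Laplacian for each unknown. The manipulations are routine once one commits to the identities of lemmas \ref{thm:SU3_identities}--\ref{thm:exterior_differential_SU3_decomposition}, but the bookkeeping is where the real work lies; everything else is a short verification modeled on the proof of proposition \ref{thm:free_parameters}.
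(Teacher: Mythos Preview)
Your overall strategy and the final paragraph (deriving HYM and \eqref{eq:rho_linearization}) are correct and match the paper's argument. The gap is in how you handle $\rho_6$. You overlook the linear constraint $\omega_0\wedge\Re\Omega=0$, which is part of the $\SU(3)$-structure condition (and is recorded as \eqref{eq:gauge_fixing_Omega_omega} in the setup of $\Psi$); its linearization $\rho\wedge\omega_0=0$ kills $\rho_6$ \emph{immediately}, and the paper uses exactly this to conclude $\rho\in\Omega^3_{12}$ at the outset. With that in hand the argument becomes sequential rather than coupled: equations \eqref{eq:linearization_CYm_xi}--\eqref{eq:linearization_CYm_theta} involve neither $\rho$ nor $u_1,v_1,X_1$, so applying $d$ and $dJ_0$ to them directly shows that $S$, $q_0P+p_0Q$, $p_0R-r_0P$, $p_0Q-q_0P$ are harmonic and hence zero; HYM then follows from the reduced \eqref{eq:linearization_CYm_xi}--\eqref{eq:linearization_CYm_theta}; and only afterward does the paper turn to \eqref{eq:linearization_dReOmega}--\eqref{eq:linearization_dImOmega}, where the $\pi_1$ and $\pi_{1\oplus6}$ projections are now clean (no $\rho_6$, and HYM kills the curvature contributions), yielding $u_1=v_1=0$, $X_1=0$ one at a time via lemma \ref{thm:pi1_6} and corollary \ref{thm:pi6_d_rho}.

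Your alternative plan keeps $\rho_6$ around and tries to eliminate it together with $u_1,v_1,X_1$. But then the $\pi_1$ projection of \eqref{eq:linearization_dReOmega} genuinely couples $\Delta u_1$ with $d^*(J_0\gamma)$ (writing $\rho_6=\gamma\wedge\omega_0$), and the $\pi_6$ projections mix $\rho_6$, $\rho_{12}$ and $X_1$; you do not obtain individual Laplace equations, so invoking lemma \ref{thm:harmonic_1_6_forms} on the resulting coupled system is unjustified as stated. Your appeal to corollary \ref{thm:pi6_d_rho} is also misplaced: that result applies only to forms already in $\Omega^3_{12}$ and so cannot be used to separate $\rho_6$ from $\rho_{12}$. (A minor aside: the gauge-fixing condition $\rho\wedge\Re\Omega_0=0$ kills the $\Im\Omega_0$-summand of $\rho_{1\oplus1}$, since $\Re\Omega_0\wedge\Re\Omega_0=0$, while the linearized Monge--Amp\`ere kills the $\Re\Omega_0$-summand --- you have these swapped, though the conclusion $\rho_{1\oplus1}=0$ survives.) The fix is simply to invoke $\rho\wedge\omega_0=0$ at the start, as the paper does.
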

\begin{proof}
	First of all, note that it needs to be $\rho \in \Omega^3_{12}(B)$.
	Indeed, $\pi_{1 \oplus 1}(\rho)$ needs to vanish because of equation \eqref{eq:linearization_SU3} and of the linearization of the gauge fixing equation \eqref{eq:diffeo_fixing}.
	Moreover, $\pi_6(\rho)$ needs to vanish because another linear condition we have is $\omega_0 \wedge \Re \Omega = 0$.
	Hence, $\rho \in \Omega^3_{12}(B)$ and thus, by proposition \ref{thm:Hitchin_map_derivative}, $\hat{\rho} = - {\star} \rho$.
	
	Now let us prove that $P, Q, R$ and $S$ vanish.
	By applying $d$ and $d J_0$ to both sides of equations \eqref{eq:linearization_CYm_xi} and \eqref{eq:linearization_CYm_theta} and using lemma \ref{thm:d_star_J_d}, we get that the smooth functions $S$, $q_0 P + p_0Q$, $p_0 R - r_0P$ and $p_0Q - q_0P$ are harmonic and hence, since they decay, they have to vanish.
	By simple linear algebra and the fact that $p_0$ and $q_0$ are strictly positive, $P, Q, R, S$ also have to vanish.
	
	Thus, equations \eqref{eq:linearization_CYm_xi} and \eqref{eq:linearization_CYm_theta} reduce to
	$$
		(p_0d\eta_1 - r_0d \theta_1) \wedge \Re \Omega_0 - q_0 d \theta \wedge \Im \Omega_0 = 0 \qquad
		(p_0d\eta_1 - r_0d \theta_1) \wedge \Re \Omega_0 + q_0 d \theta \wedge \Im \Omega_0 = 0
	$$
	which imply that $(\eta_1, \theta_1)$ is Hermitian Yang-Mills.
	Indeed, by defining $\alpha_\eta$ and $\alpha_\theta$ as in lemma \ref{thm:alphas}, we see that a linear combination of the above equations gives $\alpha_\eta = \alpha_\theta = 0$, thus proving that the $\Lambda^2_6$ part of $d{\eta_1}$ and $d{\theta_1}$ is zero.
	On the other hand, $\pi_1{d\eta_1} = \pi_1{d\theta_1} = 0$ because we imposed it manually in equation \eqref{eq:linearization_SU3}.
	
	Next, let us prove that $u_1, v_1, X_1 = 0$.
	Since $d{\star_0} d (u_1 \omega_0) = d {\star_0} (d u_1 \wedge \omega_0) = -d J_0 d u_1 \wedge \omega_0$, by wedging equation \eqref{eq:linearization_dReOmega} with $\omega_0$ and using that $\omega_0$ is closed, $P = 0$, $\theta_1$ is HYM and $\rho \in \Omega^3_{12}$, we get that $d J_0 d u_1 \wedge \omega_0^2 = 0$ and thus $d{\star_0d} u_1 = 0$ by lemma \ref{thm:Hodge_star_SU3}.1.
	Hence, $u_1$ is harmonic and since it decays it has to vanish.
	With $u_1 = 0$, equation \eqref{eq:linearization_dReOmega} projected onto $\Lambda^4_6$ implies that $\pi_6(d\rho) = 0$, since $\theta_1$ is HYM.
	Thus, by corollary \ref{thm:pi6_d_rho}, $\pi_6(d\star_0 \rho) =  0$.
	Because of this, together with lemma \ref{thm:pi1_6}, and the fact that $\eta_1$ is HYM, we see that $\pi_{1 \oplus 6}(d \star_0 d (v_1 \omega_0 + X_1 \interior \Re \Omega_0)) = 0$, and thus by lemma \ref{thm:pi1_6}, analogously to the proof of theorem \ref{thm:free_parameters}, $v_1$ and $X_1$ vanish.
	
	Finally, since $p_0$ and $q_0$ are both positive, we can manipulate with simple liner algebra equations \eqref{eq:linearization_dReOmega} and \eqref{eq:linearization_dImOmega} so to get system \eqref{eq:rho_linearization}.
\end{proof}

System \eqref{eq:rho_linearization} is elliptic and thus we expect it to have a unique solution on appropriately chosen spaces.

There are possibly many Hermitian Yang Mills connections on $M \to B$, but we can get uniqueness by fixing an asymptotic connection on $B$ and adding a gauge fixing condition analogous to the one discussed in section \ref{sec:gauge}, as shown in the following lemma:
\begin{lemma}\label{thm:HYM_connection}
	Let $M \to B$ be a principal $T^2$ bundle on an AC Calabi Yau manifold $B$, splitting as in diagram \eqref{eq:T2_commutative_diagram}.

	Then, for any principal connection $\left(\bar{\eta}_\infty, \bar{\theta}_\infty \right)$ on $M$ such that $d \bar{\eta}_\infty, d\bar{\theta}_\infty \in C^\infty_{-2}(\Lambda^2(B))$, there exists a unique HYM connection $\left(\eta_1, \theta_1\right)$ with $(\eta_1, \theta_1) - \left(\bar{\eta}_\infty, \bar{\theta}_\infty \right) \in C^\infty_{-1}\left(\Lambda^1(B)^2\right)$ such that $d^* \left(\eta_1-\bar{\eta}_\infty \right) = d^* \left(\theta_1-\bar{\theta}_\infty \right) = 0$.

	Moreover, suppose that $(B, \omega_0)$ has a connected symmetry group $G$.
	Then, the curvature forms $(d\eta_1, d\theta_1)$ are also $G$-invariant.
\end{lemma}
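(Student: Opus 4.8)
The plan is to reduce to a single circle bundle and then solve a linear elliptic problem using the weighted Hodge theory of Section~\ref{sec:asymptotical}. In the splitting \eqref{eq:T2_commutative_diagram} a connection on $M$ is a pair $(\eta_1,\theta_1)$, with $\eta_1$ a connection on $P_1$ and $\theta_1$ on $P_2$; being Hermitian Yang--Mills amounts to requiring the two curvatures $d\eta_1,d\theta_1$ to be primitive $(1,1)$-forms, i.e.\ to lie in $\Omega_8^2(B)$ (as used in the proof of Lemma~\ref{thm:first_order_necessary}), and the perturbation $(\eta_1,\theta_1)-(\bar\eta_\infty,\bar\theta_\infty)$ is a pair of horizontal $T^2$-invariant $1$-forms, i.e.\ genuinely lives on $B$. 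Everything decouples across the two factors, so it suffices to produce, for $P_1$, a unique $a\in C^\infty_{-1}(\Lambda^1(B))$ with $d^*a=0$ and $d(\bar\eta_\infty+a)\in\Omega_8^2(B)$, and then repeat the argument verbatim for $P_2$.

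For existence I would first take the harmonic representative: by Theorem~\ref{thm:harmonic_2_forms_isomorphism} there is a unique $\alpha\in\mathcal{H}^2_\nu(B)$, $\nu\in(-2,0)$, with $[\alpha]=2\pi\,c_1(P_1)$. Decomposing $\alpha=\pi_1\alpha+\pi_6\alpha+\pi_8\alpha$ and using that the projections onto the $\SU(3)$-components are parallel (hence commute with the Hodge Laplacian, via the Weitzenböck formula for the Calabi--Yau metric), each piece is separately harmonic and decaying, so $\pi_1\alpha$ and $\pi_6\alpha$ vanish by Lemma~\ref{thm:harmonic_1_6_forms} and $\alpha\in\Omega_8^2(B)$. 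By the weighted elliptic analysis on AC Calabi--Yau manifolds (as in \cite{Foscolo2021}) one may moreover take $\alpha\in C^\infty_{-2}(\Lambda^2(B))$. Then $\sigma:=d\bar\eta_\infty-\alpha$ is closed, exact (both forms represent $2\pi\,c_1(P_1)$), and lies in $C^\infty_{-2}$, so Lemma~\ref{thm:exact_2_forms_AC_CY}, applied at rate $-2\in(-5,-1)$ and iterated in $k$, gives a unique coclosed $\gamma\in C^\infty_{-1}(\Lambda^1(B))$ with $d\gamma=\sigma$. Setting $\eta_1:=\bar\eta_\infty-\gamma$ yields $d\eta_1=\alpha\in\Omega_8^2(B)$, $\eta_1-\bar\eta_\infty=-\gamma\in C^\infty_{-1}$ and $d^*(\eta_1-\bar\eta_\infty)=-d^*\gamma=0$; doing the same with $\bar\theta_\infty$ gives $\theta_1$.

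For uniqueness, note that the curvature of any HYM connection lies in $\Omega_8^2(B)$, which is automatically coclosed --- since $\star\tau_8=-\tau_8\wedge\omega_0$ for $\tau_8\in\Omega_8^2$ (Lemma~\ref{thm:Hodge_star_SU3}), a closed $\tau_8$ has $d\star\tau_8=-d(\tau_8\wedge\omega_0)=0$ --- hence harmonic; if the connection is $\bar\eta_\infty$ plus a $C^\infty_{-1}$ term then its curvature lies in $C^\infty_{-2}$ and represents $2\pi\,c_1(P_1)$, so by Theorem~\ref{thm:harmonic_2_forms_isomorphism} it must equal $\alpha$. Thus two solutions $\eta_1,\eta_1'$ differ by a closed $1$-form $a\in C^\infty_{-1}(\Lambda^1(B))$ with $d^*a=0$, i.e.\ a decaying harmonic $1$-form, which vanishes by Lemma~\ref{thm:harmonic_1_6_forms} (alternatively, $a\in\ker\Dirac$ by Lemma~\ref{thm:Dirac_operator_expression} and $\Dirac$ is injective on $C^\infty_{-1}$ by Theorem~\ref{thm:Dirac_iso}). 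Finally, for the $G$-invariance: $d\eta_1=\alpha$ is the unique harmonic representative of $2\pi\,c_1(P_1)$ in $C^\infty_\nu$, $\nu\in(-2,0)$; for $\phi\in G$ acting by automorphisms of the AC Calabi--Yau structure, $\phi^*\alpha$ is again harmonic, lies in $\Omega_8^2(B)\cap C^\infty_\nu$, and represents $\phi^*[\alpha]=[\alpha]$ because $G$ is connected, so $\phi^*\alpha=\alpha$ by uniqueness; the same holds for $d\theta_1$.

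The step I expect to be the main obstacle is the decay bookkeeping: the claim that the harmonic representative $\alpha$ --- equivalently $\sigma=d\bar\eta_\infty-\alpha$ --- decays like $r^{-2}$ rather than merely like $r^{\nu}$ for $\nu$ close to $0$, since this is exactly what makes Lemma~\ref{thm:exact_2_forms_AC_CY} output the primitive $\gamma$ at the rate $-1$ appearing in the statement; this relies on the finer structure of the weighted harmonic spaces for the Calabi--Yau cone (the location of the indicial roots of $d+d^*$ on $2$-forms). A minor point needing care is the verification that the $\SU(3)$-type decomposition of $\Lambda^2$ is preserved by the Hodge Laplacian, which is what allows one to conclude $\alpha\in\Omega_8^2(B)$ from Lemma~\ref{thm:harmonic_1_6_forms}.
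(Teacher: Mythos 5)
Your proof is correct and follows essentially the same route as the paper: decouple the two circle factors, take the unique decaying harmonic (hence, by Lemma \ref{thm:harmonic_1_6_forms}, primitive $(1,1)$) representative of the Chern class via Theorem \ref{thm:harmonic_2_forms_isomorphism}, upgrade its decay to $-2$, and apply Lemma \ref{thm:exact_2_forms_AC_CY} to produce the coclosed potential, with $G$-invariance following from connectedness and uniqueness of the harmonic representative. The decay step you flag as the main obstacle is handled in the paper exactly as you anticipate, by the refined harmonic expansion (Proposition B.12 of \cite{Foscolo2021}) excluding the borderline $r^{-2}\log^b r$ behaviour.
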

\begin{proof}
	We do this separately for each circle bundle.
	By theorem \ref{thm:harmonic_2_forms_isomorphism}, for $\nu \in (-2, 0)$, the natural map $\mathcal{H}_\nu^2(B) \rightarrow H^2(B)$ is an isomorphism.
	Thus, we can represent $c_1(P_1) = [\kappa]$ by a unique closed and coclosed 2-form $\kappa$ with decay rate $\nu \in (-2, 0)$.
	By uniqueness $\kappa \in \bigcap_{\nu > -2}C^\infty_\nu(\Lambda^2(B))$, which is not quite $C^\infty_{-2}(\Lambda^2(B))$ because it contains sections that decay like $r^{-2} + a\log^b(r)$, but we actually have $\kappa \in C^\infty_{-2}(\Lambda^2(B))$ because of proposition B.12 of \cite{Foscolo2021}.
	Hence, $\kappa - d\bar{\eta}_\infty \in C^\infty_{-2}\left(\Lambda^2(B)\right)$.
	By lemma \ref{thm:exact_2_forms_AC_CY}, there is a unique $\delta \in C^\infty_{-1}\left(\Lambda^1(B)\right)$ such that $d \delta = \kappa - d\bar{\eta}_\infty$  and $d^*\delta = 0$.
	Let $\eta_1 = \bar{\eta}_\infty + \delta$.
	It is clear that $d \eta_1 = \kappa$.
	Since $d \eta_1$ is harmonic and decays, by lemma \ref{thm:harmonic_1_6_forms}, it cannot have any component in $\Omega^2_{1\oplus6}$ and thus it is HYM.
	
	The proof for $\theta_1$ and $P_2$ is analogous.
	
	As for the symmetry statement, a connected group acts trivially on the cohomology classes, by homotopy invariance of cohomology.
	Moreover, the map $\mathcal{H}_\nu^2(B) \rightarrow H^2(B)$ is equivariant by definition of the action of a group on cohomology.
	Thus, also the $G$ action on $\mathcal{H}_\nu^2(B)$ is trivial, proving our statement.
\end{proof}

\begin{remark}
	Since $\eta_1$ and $\theta_1$ are HYM, they satisfy
	\begin{equation}\label{eq:HYM_connection_linear}
		d\eta_1 \wedge \omega_0^2 =d\theta_1 \wedge \omega_0^2 = 0
		.
	\end{equation}
\end{remark}

\noindent Now that we found $(\eta_1, \theta_1)$, we can solve system \eqref{eq:rho_linearization} for $\rho$.

\begin{theorem}\label{thm:rho_first_order}
	Let $M \to B$ be a principal $T^2$ bundle on an AC Calabi Yau manifold $B$, splitting as in diagram \eqref{eq:T2_commutative_diagram}, and suppose the cohomological condition \eqref{eq:cohomological_condition} holds.
	Fix $\nu \in (-1, 0)$.
	
	Then there exists a unique solution $\rho \in C_{-1}^{\infty}\left(\Lambda_{12}^3(B)\right) \cap \mathcal{W}_\nu^3(B)$ of \eqref{eq:rho_linearization}.
\end{theorem}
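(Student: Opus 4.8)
The statement is a linear existence-and-uniqueness result, and the plan is to reduce it to two decoupled applications of Proposition \ref{thm:exact_4_forms}. Write $\sigma_1 := -p_0^{-1}\,d\theta_1\wedge\omega_0$ and $\tau := \bigl(q_0^{-1}\,d\eta_1 - r_0(p_0q_0)^{-1}\,d\theta_1\bigr)\wedge\omega_0$ for the two right-hand sides of \eqref{eq:rho_linearization}; recall that $p_0,q_0,r_0$ are positive constants by Lemma \ref{thm:limit_varepsilon_0} and that $(\eta_1,\theta_1)$ is the HYM connection of Lemma \ref{thm:HYM_connection}, so that $d\eta_1,d\theta_1$ are harmonic $2$-forms in $\Omega^2_8(B)$ representing $c_1(P_1)$ and $c_1(P_2)$. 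Two observations. First, $\sigma_1,\tau\in\Omega^4_8(B)$: by Lemma \ref{thm:Hodge_star_SU3}.4 one has $\tau_8\wedge\omega_0 = -{\star}\,\tau_8\in{\star}\,\Omega^2_8 = \Lambda^4_8$ for every $\tau_8\in\Omega^2_8$. Second, $\sigma_1$ and $\tau$ are exact: they are closed, and their de Rham classes are constant multiples and combinations of $c_1(P_1)\smile[\omega_0]$ and $c_1(P_2)\smile[\omega_0]$, which vanish by the cohomological hypothesis \eqref{eq:cohomological_condition}. Hence Proposition \ref{thm:exact_4_forms} applies to each right-hand side, and since $\sigma_1,\tau\in\Omega^4_8$ its conclusion degenerates (the terms $f$ and $\gamma$ vanish): it produces coclosed forms $\rho_0^{(1)},\rho_0^{(2)}\in\mathcal{W}^3_\nu(B)\cap\Omega^3_{12}(B)$, unique with these properties, such that $d\rho_0^{(1)}=\sigma_1$ and $d\rho_0^{(2)}=\tau$.

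The key point is that the two equations in \eqref{eq:rho_linearization} decouple. In dimension $6$ one has $d^* = -{\star}\,d{\star}$ on $3$-forms, so $d^*\rho_0^{(i)}=0$ implies $d{\star}\rho_0^{(i)}=0$. Setting
$$
\rho := \rho_0^{(1)} - {\star}\,\rho_0^{(2)},
$$
one computes $d\rho = d\rho_0^{(1)} - d{\star}\rho_0^{(2)} = \sigma_1$ and, using ${\star}{\star}=-1$ on $3$-forms, $d{\star}\rho = d{\star}\rho_0^{(1)} - d{\star}{\star}\rho_0^{(2)} = d\rho_0^{(2)} = \tau$, so $\rho$ solves \eqref{eq:rho_linearization} (the various signs are pinned down by the Hodge-star conventions in dimension $6$ and are immaterial). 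Moreover ${\star}$ preserves $\Omega^3_{12}$ by Lemma \ref{thm:Hodge_star_SU3}.5 and, being a pointwise isometry commuting with the Hodge Laplacian, preserves decay rates and maps $\mathcal{H}^3_\nu(B)$ to itself, hence preserves $\mathcal{W}^3_\nu(B)$; so $\rho\in C^\infty_{-1}(\Lambda^3_{12}(B))\cap\mathcal{W}^3_\nu(B)$, as required. This settles existence.

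For uniqueness, the difference $\zeta$ of two solutions lies in $C^\infty_{-1}(\Lambda^3_{12}(B))\cap\mathcal{W}^3_\nu(B)$ and satisfies $d\zeta=0$ and $d{\star}\zeta=0$, hence also $d^*\zeta=0$; thus $\zeta$ is a harmonic $3$-form of rate $-1$, i.e.\ $\zeta\in\mathcal{H}^3_\nu(B)$, so that membership in $\mathcal{W}^3_\nu(B)$ forces $\langle\zeta,\zeta\rangle_{L^2_\nu}=0$ and hence $\zeta=0$. I expect the only real obstacle to lie in the weighted-space bookkeeping within the existence part: verifying that Proposition \ref{thm:exact_4_forms} can be fed the rate-$(-2)$ exact forms $\sigma_1,\tau$, that the resulting $\rho_0^{(i)}$ decay at rate exactly $-1$, and that the weight $\nu$ in their $\mathcal{W}^3$-condition may be taken in $(-1,0)$ as in the statement --- each of which reduces to an absence-of-indicial-roots statement for $d+d^*$ in a critical interval, available from the AC analysis of section \ref{sec:asymptotical}. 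The geometric content, by contrast, is simply the $\rho = \rho_0^{(1)} - {\star}\,\rho_0^{(2)}$ decomposition obtained via Hodge duality.
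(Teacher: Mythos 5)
Your uniqueness argument is fine and agrees with the paper's, and the decoupling trick $\rho=\rho_0^{(1)}-\star\rho_0^{(2)}$ is an elegant way to split the coupled system. But the existence part has a genuine gap precisely at the point you flag and then dismiss as "bookkeeping": Proposition \ref{thm:exact_4_forms} requires the input to be $\sigma=d\rho'$ with $\rho'\in C^{k,\alpha}_\nu$ for $\nu\in(-3-\delta,-1)$, i.e.\ a primitive decaying \emph{strictly faster} than $-1$, and the forms $\sigma_1=-p_0^{-1}d\theta_1\wedge\omega_0$ and $\tau$ do not admit such primitives. Indeed, on the conical end the leading term of $d\theta_1$ is $\pi^*\kappa_\Sigma$ for a harmonic 2-form $\kappa_\Sigma$ on $\Sigma$ representing the image of $c_1(P_2)$ in $H^2(\Sigma)$, which is nonzero (this is exactly the content of lemma \ref{thm:additional_topological_constraint}: if it vanished, lemma \ref{thm:exact_star_H4} would force $c_1(P_2)=0$). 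If $\sigma_1=d\rho'$ with $\rho'\in C^{k,\alpha}_\nu$, then Stokes on the annulus $\{R\le r\le 2R\}$ gives $\int \star\kappa\wedge\kappa=\int d(\rho'\wedge\kappa)$; the left side grows like $R^2\|\kappa_\Sigma\|^2_{L^2(\Sigma)}$ while the boundary terms are $O(R^{\nu+3})$, forcing $\nu\ge-1$. So the hypothesis of Proposition \ref{thm:exact_4_forms} fails for any admissible $\nu$, and your construction would moreover output a solution decaying strictly faster than $-1$, which the same pairing argument shows cannot exist. The rate $-1$ is not an avoidable indicial root here; it is the sharp rate of the solution, and it is the excluded endpoint of the proposition's weight range.

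The paper avoids this by not decoupling: it rewrites \eqref{eq:rho_linearization} as the single Dirac-type equation $(d+d^*)\rho=\tau$ for the admissible operator $d+d^*:\Omega^{\mathrm{odd}}\to\Omega^{\mathrm{even}}$ and invokes the Fredholm alternative on AC manifolds (corollary B.10 of \cite{Foscolo2021}) at a rate just above $-1$, where solvability is equivalent to $L^2$-orthogonality of the right-hand side to $\mathcal{H}^{\mathrm{even}}_{-5-\nu}$; that obstruction is killed by lemma \ref{thm:exact_star_H4}, which is where condition \eqref{eq:cohomological_condition} actually enters analytically. The properties $\rho\in\Omega^3_{12}$ and the improvement of the decay to exactly $-1$ are then obtained separately ($\rho$ is shown to be harmonic, so lemma \ref{thm:harmonic_1_6_forms} applies, and the rate is upgraded as in lemma 6.3 of \cite{Foscolo2021}). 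If you want to salvage your route, you would need a version of Proposition \ref{thm:exact_4_forms} valid at the critical rate, which is essentially the same Fredholm analysis the paper performs directly.
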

\begin{proof}
	System \eqref{eq:rho_linearization} is equivalent to the equation
	\begin{equation}\label{eq:Hodge_deRham}
		(d + d^*) \rho = p_0^{-1}\star_0 d \theta_1 +\left(-q_0^{-1}d\eta_1+r_0(p_0q_0)^{-1}d\theta_1\right) := \tau
		.
	\end{equation}
	Since $d + d^* : \Omega^{\text{odd}} \to \Omega^{\text{even}}$ is an admissible operator of order 1, by corollary B.10 in \cite{Foscolo2021}, equation \eqref{eq:Hodge_deRham} has a $C^\infty_{-1+\mu}$ solution $\rho$ if and only if
	$$
	\langle \tau, \sigma \rangle_{L^2} = 0
	$$
	for any $\sigma \in \mathcal{H}^{\text{even}}_{-5-\nu}$.
	Since elements of $\mathcal{H}^{\text{even}}_{-5-\nu}$ are square integrable, their pure degree components must be individually closed and coclosed.
	Indeed, one such element $\sigma$ satisfies $\langle d\sigma, d \sigma\rangle = 0$ which implies that the $L^2$-norm of the pure components of $d\sigma$ are null and similarly for $d^* \sigma$.
	Thus, for the above equality to hold, it is sufficient to prove that
	$$
	\forall \sigma \in \mathcal{H}^2_{-5-\nu} \quad \langle d \eta_1, \sigma \rangle_{L^2} = \langle d \theta_1, \sigma \rangle_{L^2} = 0
	$$
	where we used that ${\star_0}$ induces an isomorphism between harmonic forms of pure degree 2 and 4, and that $p_0, q_0$ and $r_0$ are all constant.
	This last equation is true because of lemma \ref{thm:exact_star_H4}.
	
	Clearly, any other solution to equation \eqref{eq:Hodge_deRham} differs from a fixed solution $\rho$ by an element in the kernel of $d + d^*$, i.e.\ a closed and coclosed 3-form in $\mathcal{H}^3_\nu(B)$.
	Hence, we can require that $\rho$ be unique by imposing $\rho \in \mathcal{W}^3_{\nu}(B)$.
	Moreover, $\rho$ is harmonic because
	$$
	\Delta \rho = (d + d^*)^2 \rho =(d+d^*) \tau = p^{-1}_0 d{\star_0} d \theta +\left(q_0^{-1}d^*d\eta_1+r_0(p_0q_0)^{-1}d^*d\theta_1\right) = 0
	$$
	where the last equality holds because both $d \eta_1$ and $d\theta_1$ are of type $\Omega^2_8$, and thus $d{\star_0} d \eta_1 = -d (d\eta_1 \wedge \omega_0) = 0$
	by closedness of $\omega_0$ and similarly for $\theta$.
	
	$\rho$ is in $\Omega^3_{12}$ because its components are individually harmonic and thus lemma \ref{thm:harmonic_1_6_forms} applies.
	
	The reason why $\rho$ is actually $C^\infty_{-1}$ is entirely analogous to what is explained in the proof of lemma 6.3 of \cite{Foscolo2021}.
\end{proof}

\subsection{The setting of the implicit function theorem}\label{sec:implicit_function_theorem}

In this subsection we set up the implicit function theorem to build a solution of equations \eqref{eq:parameters_linear_conditions} and \eqref{eq:parameters} together with the gauge fixing conditions \eqref{eq:diff_fixing}, \eqref{eq:gauge_fixing} and \eqref{eq:CY_deformations_fixing}.
Let us keep in mind the notation of theorem \ref{thm:implicit_function_theorem}.

In our case $X = \R$.
As for $Y$, fix $l \in \N$, $\alpha \in (0,1)$ and a connection $(\bar{\eta}_\infty, \bar{\theta}_\infty)$ and consider first the affine subspace $\tilde{Y}$ of $C^{l, \alpha}_0\left(\Lambda^1(M)^2 \times \Lambda^3(B) \times \Lambda^0(B) \times TB \right)$ cut out by the following affine constraints: for
$$
	(\eta, \theta, \Re \Omega, p, q, r, s, u, v, X) \in C^{l, \alpha}_0\left(\Lambda(B)^2 \times \Lambda^3(B) \times \Lambda^0(B)^6 \times TB\right)
$$
we require that
\begin{subnumcases}{\label{eq:linear_constraints}}
	d \eta \wedge \omega_0^2 = 0 \quad d \theta \wedge \omega_0^2 = 0\label{eq:alphas_1} \\
	d^*(\eta - \bar{\eta}_\infty) = 0 \quad d^* (\theta - \bar{\theta}_\infty) = 0\label{eq:gauge_fixing_alphas} \\
	\Re \Omega \wedge \omega_0 = 0\label{eq:gauge_fixing_Omega_omega} \\
	\Re \Omega \wedge \Re \Omega_0 = 0\label{eq:gauge_fixing_Omega_Omega} \\
	\pi_{12}(\Re \Omega - \Re \Omega_0) \in \mathcal{W}^3_\nu(B)\label{eq:CY_deformations_fixing_rho}
\end{subnumcases}
where equations \eqref{eq:gauge_fixing_alphas}, \eqref{eq:gauge_fixing_Omega_Omega} and \eqref{eq:CY_deformations_fixing_rho} are the gauge fixing equations \eqref{eq:gauge_fixing}, \eqref{eq:diffeo_fixing} and \eqref{eq:CY_deformations_fixing}, \eqref{eq:gauge_fixing_Omega_omega} is a part of the condition that $(\Re \Omega, \omega_0)$ be an $\SU(3)$-structure and \eqref{eq:alphas_1} together with \eqref{eq:HYM_connection_linear} takes care that the curvatures have no $\Lambda^2_1$ component, which is required in order to apply proposition \ref{thm:free_parameters}.

Moreover, consider the open set of $\tilde{Y}$
$$
V_0 = \tilde{Y} \cap C^{l, \alpha}_0\left(\Lambda^1(M)^2 \times \Lambda_{\neq 0}^3(B) \times \Lambda_+^0(B)^2 \times \Lambda^0(B)^4 \times TB\right)
$$
where $\Lambda_+^0(B) = B \times \R^+$.
Take $\nu \in (-2, -1)$, $y_0 \in V_0$ and take
$$
	Y = \tilde{Y} \cap \left(y_0 + C^{l, \alpha}_\nu\left(\Lambda^1(B)^2 \times \Lambda^3(B) \times \Lambda^0(B)^6\right) \times C^{l + 1, \alpha}_{\nu + 1} \left(TB\right) \right) \qquad V = V_0 \cap Y
	.
$$
which has a natural structure of a Banach manifold, being an open set of an affine space over a Banach space.
The reason why we build $Y$ in this way instead of taking $C^{l, \alpha}_\nu$ in the first place, is that, in order for our argument to work, the base point $(x_0, y_0)$ does not need to be given by decaying sections and it is clear that in practice $y_0$ will not be in $C^{l, \alpha}_\nu$.
However, we can only perturb it with decaying sections, since the decaying hypothesis is crucial to be able to do analysis.
Note that we already used this hypothesis in proposition \ref{thm:free_parameters}.

In our setting, $Z$ is the subspace of $C^{l, \alpha}_{\nu}\left(\Lambda^6(B)\right) \times C^{l-1, \alpha}_{\nu-1} \left( \Lambda^4(B)^2 \times \Lambda^5(M)^2\right)$ cut out by the linear constraints that $z_1$ and $z_2$ be exact.
Finally, $\Psi$ is defined to be the function that maps the vector
$$
	(\varepsilon, \delta_\eta, \delta_\theta, \Re \tilde{\Omega}, p, q, r, s, u, v, X)
$$
to
$$
	\begin{bmatrix}
		\frac{1}{4}\Re \Omega \wedge \Im \Omega - \frac{1}{6} \omega^3_0 \\[5pt]
		d\left(p \Re \tilde{\Omega} + \varepsilon(p-  p_0) \Re \Omega_1\right) + d\delta_\theta \wedge \omega_0 -  d {\star} d (u \omega_0) \\[5pt]
		d\left(r \Re \tilde{\Omega} + \varepsilon(r-  r_0) \Re \Omega_1\right) + d\left(q \Im \tilde{\Omega} + \varepsilon(q-  q_0) \Re \Omega_1\right) + d\delta_\eta \wedge \omega_0 -  d {\star} d ( X \interior \Re \Omega + v \omega_0) \\[5pt]
		d\left(\varepsilon \eta_1 + \delta_\eta\right) \wedge (p \Re \Omega) + d\left(\varepsilon \theta_1 + \delta_\theta\right)\wedge \left( q\Im \Omega- r \Re \Omega\right) + \left(pdq - qdp + J(rdp - pdr)\right) \wedge \omega_0^2 \\[5pt]
		d\left(\varepsilon \eta_1 + \delta_\eta\right) \wedge (p \Re \Omega) - d\left(\varepsilon \theta_1 + \delta_\theta\right)\wedge \left( q \Im \Omega + r \Re \Omega\right) + \frac{1}{2}\left(d(pq) + Jds\right)\wedge \omega_0^2
	\end{bmatrix}
$$

\noindent where $J$ is the complex structure induced by $\Re \Omega$.
We denote its components by $\Psi_0, \ldots, \Psi_4$.
Note that in the definition of the above function, $\Re \Omega = \Re \tilde{\Omega} + \varepsilon \Re \Omega_1$, i.e.\ we are isolating the first $\varepsilon$-order of $\Re \Omega$, since we took care of it separately.
$\Psi_0, \Psi_3$ and $\Psi_4$ need no explanation whereas $\Psi_1$ and $\Psi_2$ come from equations \eqref{eq:parameters_dRe_Omega} and \eqref{eq:parameters_dIm_Omega} by subtracting system \eqref{eq:rho_linearization} appropriately inverted.
Note that $\Psi_1$ and $\Psi_2$ are manifestly exact.
Since $d\eta_1$, $d\theta_1$ and $d\Omega_1$ have decay rate $-2$ but the last 4 components of $\Psi$ are required to decay with rate $< -2$ we need to check that they do decay fast enough.
This is the case because, thanks to the fact that we subtracted the first $\varepsilon$-order solutions from $\Psi_1$ and $\Psi_2$, $\Re \Omega_1$ and $\Im \Omega_1$ are multiplied by terms of the kind $p - p_0$, $q-q_0$ or $r-r_0$ which decay with rate $\nu$ by definition of $\Psi$, so their product decays with rate $\nu -1$ which is indeed fast enough.
Similarly, for $\Psi_3$ and $\Psi_4$, $\eta_1$, $\theta_1$, $\Re \Omega_1$ and $\Im \Omega_1$ only appeared in $\wedge$ products with other decaying sections, so $\Psi_3$ and $\Psi_4$ also belong to the claimed spaces of decaying sections.
Indeed, the only terms that do not seem to be decaying fast enough are $d \eta_1 \wedge \Re \Omega_0$, $d \theta_1 \wedge \Re \Omega_0$ and $d \theta_1 \wedge \Im \Omega_0$, but these vanish because $\eta_1$ and $\theta_1$ are Hermitian Yang Mills, as proved in lemma \ref{thm:HYM_connection}.



\subsection{The linearization}\label{sec:linearization}
In this subsection we deal with the main ingredient needed to make use of the implicit function theorem: the invertibility of the linearization.
To do so, we calculate the derivative of $\Psi$ in 
$$
	(0, 0, 0, \Re \Omega_0, p_0, q_0, r_0, 0, 0, 0, 0)
$$
where $(\omega_0, \Re \Omega_0)$ is Calabi-Yau and $p_0, q_0$ and $r_0$ are constant.
A simple calculation shows that $D \Psi_{(x_0, y_0)}$ maps
$$
 	(\varepsilon, \gamma_\eta, \gamma_\theta, \rho, P, Q, R, S, U, V, X_1)
$$
to
\begin{equation*}
	\begin{bmatrix}
		\operatorname{Re} \Omega_0 \wedge \hat{\rho}+\rho \wedge \operatorname{Im} \Omega_0 \\[4pt]
		d(P\Re \Omega_0 + p_0\rho) + d (\varepsilon \theta_1 + \gamma_\theta) \wedge \omega_0 -  d {\star} d (U \omega_0) \\[4pt]
		d R \wedge \Re \Omega_0 + r_0 d \rho + dQ \wedge \Im \Omega_0 + q_0 d \hat{\rho} + d (\varepsilon \eta_1 + \gamma_\eta) \wedge \omega_0 -  d {\star} d (X_1 \interior \Re \Omega_0 + V \omega_0) \\[4pt]
		(p_0d\eta - r_0d \theta) \wedge \Re \Omega_0 - q_0 d \theta \wedge \Im \Omega_0 + \frac{1}{2}\left(d\left(p_0Q + q_0P\right) + J_0dS\right) \wedge \omega_0^2 \\[4pt]
		(p_0d\eta - r_0d \theta) \wedge \Re \Omega_0 + q_0 d \theta \wedge \Im \Omega_0 + \left(d\left(p_0Q-q_0P\right)+J_0d(r_0P - p_0R) \right)\wedge \omega_0^2	
	\end{bmatrix}
\end{equation*}
where $\eta = \varepsilon \eta_1 + \gamma_\eta$ and $\theta = \varepsilon \theta_1 + \gamma_\theta$.

\begin{theorem}\label{thm:linearization}
	Let $\left(B, \omega_0, \Omega_0\right)$ be an AC Calabi-Yau 3-fold.
	Fix $k \in \N^+, \alpha \in(0,1), \delta \in (0, + \infty)$ and $\nu \in(-3-\delta,-1)$ away from a discrete set of indicial roots of $d+d^*$.
	
	Then the map $y \mapsto D\Psi_{x_0, y_0}(0, y)$ is an isomorphism of Banach spaces.
\end{theorem}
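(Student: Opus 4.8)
The plan is to decompose the linearized operator $y \mapsto D\Psi_{(x_0,y_0)}(0,y)$ (the restriction to $\varepsilon = 0$, so $\eta = \gamma_\eta$, $\theta = \gamma_\theta$) into blocks that can be inverted separately, exactly mirroring the structure of the proof of Lemma \ref{thm:first_order_necessary}. First I would observe that the linear constraints \eqref{eq:linear_constraints} cutting out $Y$ already pin down the types of the variables: \eqref{eq:gauge_fixing_Omega_omega} and \eqref{eq:gauge_fixing_Omega_Omega} together with the linearized Monge--Ampère equation $\Psi_0 = \Re\Omega_0 \wedge \hat\rho + \rho \wedge \Im\Omega_0 = 0$ force $\rho \in \Omega^3_{12}(B)$ (since $\pi_{1\oplus1}\rho = 0$ from $\Psi_0$ and the $\Re\Omega_0$-orthogonality, and $\pi_6\rho = 0$ from $\rho \wedge \omega_0 = 0$), hence $\hat\rho = -\star\rho$ by Proposition \ref{thm:Hitchin_map_derivative}. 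So the first component $\Psi_0$ of the linearization vanishes identically on $Y$ and carries no information; the target space $Z$ correspondingly has trivial first slot, and I should note this at the outset so the count of equations versus unknowns is honest.

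Next I would handle the scalar unknowns $P, Q, R, S$ and then $U, V, X_1$ by the same harmonicity argument used in Lemma \ref{thm:first_order_necessary}. Applying $d\star_0 d$ and $d\star_0 J_0 d$ to the fourth and fifth components $\Psi_3, \Psi_4$ of the linearization and using Lemma \ref{thm:d_star_J_d} ($d^* J_0 dh = 0$ when $d\omega_0 = 0$), one finds $S$, $q_0 P + p_0 Q$, $p_0 R - r_0 P$, $p_0 Q - q_0 P$ are all harmonic; since $\nu < -1$ and these lie in $C^{l,\alpha}_\nu$, they vanish, and because $p_0, q_0 > 0$ this forces $P = Q = R = S = 0$. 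With $P = 0$ and $\rho \in \Omega^3_{12}$, wedging $\Psi_1$ with $\omega_0$ gives $dJ_0 dU \wedge \omega_0^2 = 0$, so $U$ is harmonic and vanishes; then $\pi_6(\Psi_1) = 0$ reads $\pi_6(d\rho) = 0$ (using that $\gamma_\theta$ is forced HYM — see below), whence $\pi_6(d\star_0\rho) = 0$ by Corollary \ref{thm:pi6_d_rho}, and Lemma \ref{thm:pi1_6} applied to $\Psi_2$ kills $V$ and $X_1$. The HYM claim for $(\gamma_\eta,\gamma_\theta)$ comes from the reduced equations $\Psi_3, \Psi_4$: a linear combination gives $\alpha_{\gamma_\eta} = \alpha_{\gamma_\theta} = 0$ in the notation of Corollary \ref{thm:alphas}, so the $\Lambda^2_6$ part of $d\gamma_\eta, d\gamma_\theta$ vanishes, and $\Lambda^2_1$ vanishes by the constraint \eqref{eq:alphas_1}; thus $d\gamma_\eta, d\gamma_\theta \in \Omega^2_8$.

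After these reductions the linearized system collapses to: $\rho \in \Omega^3_{12}(B) \cap \mathcal{W}^3_\nu(B)$, $(\gamma_\eta, \gamma_\theta)$ coclosed one-forms with HYM (i.e.\ $\Omega^2_8$) curvature, and the two surviving equations $d(p_0\rho) + d\gamma_\theta \wedge \omega_0 = z_1$, $r_0 d\rho + q_0 d\star_0\rho + d\gamma_\eta\wedge\omega_0 = z_2$ (recalling $d\hat\rho = -d\star_0\rho$), where $(z_1, z_2)$ is the given exact target. I would recast this as the statement that $(d+d^*)$ acting on $\rho$, coupled to the connection variables through $d\gamma_\eta, d\gamma_\theta$, is an isomorphism — this is precisely the content that makes the Dirac operator appear. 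Concretely, the map $(\gamma_\eta, \gamma_\theta) \mapsto (d^*\gamma_\eta, d^*\gamma_\theta, d\gamma_\eta, d\gamma_\theta)$ on coclosed forms is, via Lemma \ref{thm:Dirac_operator_expression}, governed by $\Dirac: C^{l+1,\alpha}_{\nu+1} \to C^{l,\alpha}_\nu$, which is an isomorphism for $\nu \in (-6,-1)$ by Theorem \ref{thm:Dirac_iso}; and surjectivity onto the exact targets $z_1, z_2$, together with uniqueness modulo $\mathcal{H}^*_\nu$ absorbed by the constraint $\rho \in \mathcal{W}^3_\nu$, is delivered by Proposition \ref{thm:exact_4_forms} and Corollary \ref{thm:exact_4_forms'} — this is exactly where the space $\mathcal{W}^3_\nu$ was engineered to give uniqueness. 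So the plan is: (i) use Corollary \ref{thm:exact_4_forms'} to write the exact 4-forms $z_1, z_2$ (after pushing the HYM connection pieces to the base and noting $d\gamma_\eta\wedge\omega_0, d\gamma_\theta\wedge\omega_0 \in \Omega^4_8$ by Lemma \ref{thm:Hodge_star_SU3}.4) in the normal form provided there, (ii) match the $f\omega_0$ and $\gamma^\sharp\!\lrcorner\Re\Omega_0$ pieces with the connection contributions and the $\rho_0 \in \Omega^3_{12}$ piece with $\rho$, using Lemma \ref{thm:exact_2_forms_AC_CY} to solve for coclosed $\gamma_\eta, \gamma_\theta$ with the right decay once their curvatures are fixed, and (iii) invoke the uniqueness clauses to conclude injectivity. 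The main obstacle I expect is step (ii): disentangling the coupled system so that the two equations really do split into an "$\Omega^3_{12}$-part solved by Proposition \ref{thm:exact_4_forms}" and a "connection part solved by Theorem \ref{thm:harmonic_2_forms_isomorphism} plus Lemma \ref{thm:exact_2_forms_AC_CY}," while keeping track of the constants $p_0, q_0, r_0$ and the fact that $r_0 d\rho + q_0 d\star_0\rho$ mixes $\rho$ and $\star_0\rho$ — here one uses that on $\Omega^3_{12}$ the operator $d\star_0 + (r_0/q_0) d$ is, up to the isometry $\sigma$ of Corollary \ref{thm:pi6_d_rho} on the $\pi_6$-component and ellipticity of $d+d^*$ elsewhere, still Fredholm of index zero with the expected kernel/cokernel, so the decay ranges in Theorem \ref{thm:Dirac_iso} and Proposition \ref{thm:exact_4_forms} ($\nu \in (-3-\delta,-1)$) suffice.
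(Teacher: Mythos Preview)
Your proposal conflates injectivity with bijectivity. The harmonicity arguments you borrow from Lemma~\ref{thm:first_order_necessary} only apply to the \emph{homogeneous} system: when $z_3 = z_4 = 0$, applying $d$ and $dJ_0$ to $\Psi_3, \Psi_4$ does force the linear combinations of $P,Q,R,S$ to be harmonic and hence zero. But for general $z_3, z_4 \in C^{k-1,\alpha}_{\nu-1}(\Lambda^5)$ these functions are not harmonic, and setting $P=Q=R=S=0$ together with $d\gamma_\eta, d\gamma_\theta \in \Omega^2_8$ makes $\Psi_3, \Psi_4$ vanish identically --- so your reduced system can never hit a nonzero $(z_3, z_4)$. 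The same objection applies to $\Psi_0$: the linear constraints on $Y$ only give $\rho = f\,\Re\Omega_0 + \rho_{12}$, and one computes $\Psi_0(\rho) = 2f\,\Re\Omega_0 \wedge \Im\Omega_0$, which is not identically zero but rather surjects onto the $6$-form slot $z_0$ of $Z$. So three of the five target components are left unaddressed.

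The paper's argument runs in the opposite order. It first solves $\Psi_3 = z_3$, $\Psi_4 = z_4$ by rewriting them as two genuine Dirac equations $\Dirac(g,-h,\xi_2) = (0,0,w_3)$ and $\Dirac(f,-t,\xi_1) = (0,0,w_4)$, where $f,g,h,t$ are linear combinations of $P,Q,R,S$ and $\xi_1,\xi_2$ of $\gamma_\eta,\gamma_\theta$; Theorem~\ref{thm:Dirac_iso} then gives existence and uniqueness of $(P,Q,R,S,\gamma_\eta,\gamma_\theta)$ for \emph{arbitrary} $z_3,z_4$. Only after these are determined does one turn to $\Psi_0,\Psi_1,\Psi_2$: the equation $\Psi_0 = z_0$ fixes $f = \tfrac{1}{8}\star_0 z_0$, and then Corollary~\ref{thm:exact_4_forms'} and Proposition~\ref{thm:exact_4_forms} are applied --- with the already-known $P,Q,R,S,\gamma_\eta,\gamma_\theta$ moved to the right-hand side --- to produce $(\rho_{12}, U, V, X_1)$ uniquely. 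Your ingredients are correct, but the Dirac isomorphism must carry the scalar unknowns as well as the connections, and it must be invoked as a surjectivity statement for the $5$-form equations, not bypassed by a kernel computation.
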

\begin{proof}
	Let $z_0$ be a 6-form in $C_\nu^{k, \alpha}$, $z_1=d \beta_1$ and $z_2=d \beta_2$ exact 4-forms with $\beta_1, \beta_2 \in C_\nu^{k, \alpha}$, and $z_3, z_4$ 5-forms in $C_{\nu-1}^{k-1, \alpha}$.
	Then proving bijectivity amounts to proving that there exist unique functions $P, Q, R$, 1-forms $\eta, \theta$ all in $C_\nu^{k, \alpha}$, a $3$-form $\rho$ in $C_\nu^{k, \alpha}$, and functions $U, V$ and a vector field $X_1$ in $C_{\nu+1}^{k+1, \alpha}$ such that
	\begin{subnumcases}{\label{eq:final_linearization}}
		d^* \eta = d^* \theta = 0 \qquad d \eta \wedge \omega^2_0 = d \theta \wedge \omega_0^2 = 0\label{eq:step_preamble_} \\
		\rho \wedge \omega_0 = 0 \qquad \rho \wedge \Re \Omega_0 = 0 \qquad \pi_{12}(\rho) \in \mathcal{W}^3_\nu\label{eq:step_preamble_rho} \\
		\rho \wedge \Im \Omega_0 + \Re \Omega_0 \wedge \hat{\rho} = z_0\label{eq:step_SU3_} \\
		dP \wedge \Re \Omega_0 + p_0d\rho + d \theta \wedge \omega_0 + d {\star_0} d (u\omega) = z_1\label{eq:step_dReOmega_} \\
		d R \wedge \Re \Omega_0 + r_0 d \rho + dQ \wedge \Im \Omega_0 + q_0 d \hat{\rho} + d \eta \wedge \omega_0 + d {\star_0} d (X_1 \interior \Re \Omega + v \omega_0) = z_2\label{eq:step_dImOmega_} \\
		(p_0d\eta - r_0d \theta) \wedge \Re \Omega_0 - q_0 d \theta \wedge \Im \Omega_0 + \frac{1}{2}\left(d\left(p_0Q + q_0P\right) + J_0dS\right) \wedge \omega_0^2 = z_3\label{eq:step_CYm_xi_} \\
		(p_0d\eta - r_0d \theta) \wedge \Re \Omega_0 + q_0 d \theta \wedge \Im \Omega_0 + \left(d\left(p_0Q-q_0P\right)+J_0d(r_0P - p_0R) \right)\wedge \omega_0^2 = z_4\label{eq:step_CYm_theta_}
	\end{subnumcases}
	Equations \eqref{eq:step_CYm_xi_}  and \eqref{eq:step_CYm_theta_} can be rewritten equivalently as
	\begin{subnumcases}{\label{eq:linearization_subsystem}}
		\star_0 \left((p_0d\eta - r_0 d \theta) \wedge \Re \Omega_0\right) + d\left(r_0P - p_0 R + S \right) + \frac{1}{2}J_0d\left(q_0P - 3p_0Q\right) = {\star_0} \frac{1}{2} (z_3+z_4) \\
		\star_0 \left(q_0 d \theta \wedge \Re \Omega_0\right) + \frac{1}{2}d(p_0Q - 3q_0 P) + J_0 d(r_0 P - p_0 R - S) = {\star_0} \frac{1}{2} J_0 (z_4 - z_3)\label{eq:step_G2_monopole}
	\end{subnumcases}
	which we write more concisely as
	\begin{equation}\label{eq:pre_Dirac}
		\begin{cases}
			\star_0 \left(d\xi_2 \wedge \Re \Omega_0\right) + dg + J_0 dh = w_3 \\
			\star_0 \left(d\xi_1 \wedge \Re \Omega_0\right) + df + J_0 dt = w_4
		\end{cases}
	\end{equation}
	respectively, where $f, g, h, t, \xi_i$ and $w_i$ are implicitly defined.
	
	It is immediate to see that equations \eqref{eq:step_preamble_} are equivalent to $d\xi_1 \wedge \omega_0^2 = d \xi_2 \wedge \omega_0^2 = 0$ and $d^*\xi_1 = d^* \xi_2 = 0$.	
	With such conditions, the system \eqref{eq:pre_Dirac} is equivalent to
	\begin{equation}\label{eq:Dirac_equivalent}
		\begin{cases}
			\Dirac(g, -h, \xi_2) = (0, 0, w_3) \\
			\Dirac(f, -t, \xi_1) = (0, 0, w_4)
			.
		\end{cases}
	\end{equation}
	By lemma \ref{thm:Dirac_iso} both equations have unique solutions.
	
	It is clear that $\xi_1, \xi_2$ and $f, g, h, t$ determine $\eta, \theta$ and $P, Q, R, S$.
	Indeed, it is a simple calculation to show that
	\begin{equation}\label{eq:explicit_expressions}
		\begin{cases}
			P = -\frac{1}{4q_0}(h+3f) \qquad
			Q = -\frac{1}{4p_0}(3h+f) \qquad
			S = \frac{1}{2}(g-t) \qquad \\
			R = -\frac{1}{p_0} \left(\frac{r_0}{4q_0}(h+3f)+\frac{1}{2}(g+t)\right) \qquad
			\eta = \frac{1}{p_0} \left( \xi_2 +\frac{r_0}{q_0} \xi_1 \right) \qquad
			\theta = \frac{1}{q_0} \xi_1
			.
		\end{cases}
	\end{equation}
	and thus, we are given $\eta, \theta, P, Q, R, S$ that solve equations \eqref{eq:step_preamble_}, \eqref{eq:step_CYm_theta_} and \eqref{eq:step_CYm_xi_}.
	Hence, we just need to find $\rho, u, v, X$ that solve equations \eqref{eq:step_dReOmega_} and \eqref{eq:step_dImOmega_}.
	
	Equations \eqref{eq:step_preamble_rho} force $\rho = f \Re \Omega_0 + \rho_{12}$ for some function $f$ and $\rho_{12}$ of type $\Omega^3_{12}$ with $\pi_{12}(\rho) \in \mathcal{W}^3_\nu$.
	Equation \eqref{eq:step_SU3_} forces $f = \frac{1}{8}\star_0 z_0$.
	
	Now, by corollary \ref{thm:exact_4_forms'} we can write
	\begin{multline*}
		z_1-\frac{1}{8} p_0 d\left((\star_0z_0) \operatorname{Re} \Omega_0\right)-dP \wedge \operatorname{Re} \Omega_0-d \theta \wedge \omega_0 = \\ d * d\left(u_1 \omega_0\right)+d\left(* d\left(Y_1\interior \operatorname{Re} \Omega_0\right)-d\left(Y_1\interior \operatorname{Im} \Omega_0\right)\right)+d \rho_0
	\end{multline*}
	for unique $\left(u_1, Y_1, \rho_0\right)$ with $\rho_0 \in \Omega_{12}^3 \cap \mathcal{W}_\nu^3$ and $d^* \rho_0=0$.
	Moreover, the $C_\nu^{k, \alpha}$-norm of $\rho_0$ and the $C_{\nu+1}^{k+1, \alpha}$-norm of $\left(u_1, Y_1\right)$ are uniformly controlled by $\left\|\left(z_0, \beta_1\right)\right\|_{C_\nu^{k, \alpha}}$ and $\|(P, Q, R, \eta, \theta)\|_{C_\nu^{k, \alpha}}$.
	
	We set $U = u_1$.
	Then, equation \eqref{eq:step_dReOmega_} forces 
	$$
	\rho_{12} = * d\left(Y_1\interior \operatorname{Re} \Omega_0\right)-d\left(Y_1\interior \operatorname{Im} \Omega_0\right)+\rho_0+* \rho_0^{\prime}
	$$
	for some $\rho_0^{\prime} \in \Omega_{12}^3 \cap \mathcal{W}_\nu^3$ with $d^* \rho_0^{\prime}=0$.
	Indeed, proposition \ref{thm:Omega_3_12} implies that the sum of the first and second term lies in $\Omega_{12}^3$.
	Using lemma \ref{thm:Hitchin_map_derivative} we calculate
	$$
	\hat{\rho}=\frac{1}{8}(\star_0z_0) \operatorname{Im} \Omega_0+d\left(Y_1\interior \operatorname{Re} \Omega_0\right)+* d\left(Y_1\interior \operatorname{Im} \Omega_0\right)-* \rho_0+\rho_0^{\prime}
	.
	$$
	By proposition \ref{thm:exact_4_forms} we furthermore write in a unique way
	\begin{multline*}
		z_2- dR \wedge \Re \Omega_0 - d Q \wedge \operatorname{Im} \Omega_0-r_0d\left(\frac{1}{8}(\star_0z_0) \operatorname{Im} \Omega_0+* d\left(Y_1\interior \operatorname{Re} \Omega_0\right)+\rho_0 \right) + \\ - q_0d\left(\frac{1}{8}(\star_0z_0) \operatorname{Im} \Omega_0+* d\left(Y_1\interior \operatorname{Im} \Omega_0\right)\right)= d * d\left(Y_2\interior \operatorname{Re} \Omega_0+u_2 \omega_0\right)+d \rho_0^{\prime \prime}
	\end{multline*}
	with $d^* \rho_0^{\prime \prime}=0$ and $\rho_0^{\prime \prime} \in \Omega_{12}^3 \cap \mathcal{W}_\nu^3$.
	We then set $V=u_2, X_1=Y_2$ and $\rho_0^{\prime}=\rho_0^{\prime \prime}$.
	
	It remains to prove continuity (continuity of the inverse is guaranteed, as usual, by the open mapping theorem).
	We are going to check continuity separately for each component.
	
	Continuity of $\Psi_0$ is implied by continuity of the linearization of the Hitchin map and continuity of the wedge product.
	Continuity of $\Psi_1$ and $\Psi_2$ follows directly from proposition \ref{thm:Dirac_iso} and the fact that the explicit expressions in the system \eqref{eq:explicit_expressions} are obviously continuous.
	Finally, continuity of $\Psi_3$ follows from corollary \ref{thm:exact_4_forms'} and that of $\Psi_4$ from proposition \ref{thm:exact_4_forms}.
\end{proof}

	\begin{remark}
		Equation \eqref{eq:step_G2_monopole} has a deeper geometrical interpretation: it comes from the linearization of the equation
		\begin{equation}\label{eq:CY_monopole_2HF}
			\left(d\eta - \frac{r}{p} d\theta \right) \wedge \Re \Omega = - {\star} \left(\frac{3}{2}p^{\frac{1}{3}}Jd\left(qp^{-\frac{1}{3}}\right) + pd\left(\frac{r}{p}\right)\right)
		\end{equation}
		which is an equation that comes from the dimensional reduction of the $\U(1)$ $G_2$-monopole equation
		\begin{equation}\label{eq:G2_monopole_type}
			d\left(\eta - \frac{r}{p} \theta \right)\wedge \star_\varphi \varphi = -{\star_\varphi} d\left(\frac{3}{2}qp^{-\frac{1}{3}}\right)
			.
		\end{equation}
	
		Indeed, we could derive system \eqref{eq:Spin7_torsion_v1} in two steps: we could consider the $T^2$-bundle $P$ as the product of two circle bundles and we could find a $G_2$-structure $\varphi$ on one of the two bundles and express the $\Spin(7)$-structure $\Phi$ in terms of $\varphi$.
		This step would produce some equations prescribing the torsion of $\phi$ (in the same way we got equations prescribing the torsion of the $\SU(3)$-structure $(\omega, \Re \Omega)$ in section \ref{sec:equations}) which would imply equation \eqref{eq:G2_monopole_type} which is that of a $\U(1)$-invariant $\U(1)$ $G_2$-monopole.
		Then, as a second step, we could study the dimensional reduction of $\U(1)$-invariant $\U(1)$ $G_2$-monopoles on the total spaces of circle bundles over Calabi Yau 3-folds, and we would find that equation \eqref{eq:G2_monopole_type} is equivalent to a system of two equations expressing a Calabi Yau monopole type equation with two Higgs fields.
		This system would be a system in terms of the $\U(1)$-invariant connection $\xi = \eta - \frac{r}{p} \theta$, and it would be made by equation \eqref{eq:CY_monopole_2HF} and the equation $\xi \wedge \omega^2 = 0$.
	\end{remark}

\subsection{The final result}

We summarize the content of this section in the following result.
The existence part of theorem \ref{thm:main_theorem} follows.

\begin{theorem}\label{thm:final_result}
	Let $\left(B, \omega_0, \Omega_0\right)$ be a simply connected AC Calabi-Yau 3-fold, let $p_0, q_0, r_0 \in \R$ with $p_0, q_0 > 0$ and let $M \rightarrow B$ be a non-trivial principal $T^2$-bundle that satisfies condition \eqref{eq:topological_condition}.
	Then system \eqref{eq:Spin7_torsion_v1} has an analytic curve of solutions
	$$
		(0, \varepsilon_0) \to \mathcal{A}(P_1) \times \mathcal{A}(P_2) \times C^\infty_0(\Lambda^2(B) \times \Lambda^3(B) \times \Lambda^0_+(B)^2 \times \Lambda^0(B))
	$$
	of the kind:
	\begin{equation}\label{eq:epsilon_split}
	\varepsilon \mapsto
		\begin{bmatrix}
			\eta = \varepsilon{\eta}_1 + \varepsilon^2 \eta_\varepsilon \\
			\theta = \varepsilon{\theta}_1 + \varepsilon^2 \theta_\varepsilon \\
			\omega = \omega_0 \\
			\Re \Omega = \Re \Omega_0 + \varepsilon \Re \Omega_\varepsilon \\
			p = p_0 + \varepsilon p_\varepsilon \\
			q = q_0 + \varepsilon q_\varepsilon \\
			r = r_0 + \varepsilon r_\varepsilon
		\end{bmatrix}
	\end{equation}
	where $(\omega_0, \Re \Omega_0)$ is CY, $ {\eta}_1$ and $ {\theta}_1$ are Hermitian Yang Mills connections on the circle bundles, and $p_0, q_0$ and $r_0$ are constant.
	
	Here, $\Re \Omega_\varepsilon, \eta_\varepsilon, \theta_\varepsilon, p_\varepsilon, q_\varepsilon, r_\varepsilon$ are also analytic functions of $\varepsilon$ and are all in $C^\infty_{-1}$.
	
	For each choice of fixed connection $(\bar{\eta}_\infty, \bar{\theta}_\infty)$ with curvature decaying with rate $-2$, this curve can be chosen to be the unique curve of solution in a neighborhood of $y_0 = (0, 0, \Re \Omega_0, p_0, q_0, r_0)$ with the property that it tends to $y_0$ as $\varepsilon \to 0$, and that, for some $\nu \in (-1, 0)$,
	\begin{equation}
		d^* \left(\eta - \bar{\eta}_\infty\right) = d^* \left(\theta - \bar{\theta}_\infty\right) = 0 \qquad \Re \Omega_\varepsilon \wedge \Re \Omega_0 = 0 \qquad \pi_{12}\left(\Re \Omega_\varepsilon\right) \in \mathcal{W}^3_{\nu}
		.
	\end{equation}
	Equivalently, any other curve of solutions to system \eqref{eq:Spin7_torsion_v1} can be obtained by twisting the curve given by \eqref{eq:epsilon_split} by either a diffeomorphism on $B$ respecting the AC structure, a gauge transformation to $M$ that decays at infinity, or by deforming the Calabi Yau structure on $B$.
\end{theorem}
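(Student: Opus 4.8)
The statement is an assembly of the results of this section via the implicit function theorem (Theorem~\ref{thm:implicit_function_theorem}), applied in the functional-analytic setting fixed in Section~\ref{sec:implicit_function_theorem}: one takes $X=\R$, and $Y$, $Z$, $\Psi$ exactly as defined there, with base point $x_0=0$ and $y_0=(0,0,\Re\Omega_0,p_0,q_0,r_0,0,0,0,0)$. The plan is to verify the hypotheses of Theorem~\ref{thm:implicit_function_theorem} and read off the conclusion. The first point is that $\Psi$ is a well-defined analytic map $X\times V\to Z$: analyticity is clear since $\Psi$ is assembled from wedge products, the Hodge star of the $\SU(3)$-structure (depending analytically on $\Re\Omega$ through the induced metric) and the Hitchin map (analytic by Theorem~\ref{thm:Hitchin}); well-definedness is the content of the decay discussion of Section~\ref{sec:implicit_function_theorem}, namely that $\Psi_1,\Psi_2$ are exact and land in $C^{l-1,\alpha}_{\nu-1}$ because the first-order solutions $\eta_1,\theta_1$ (Lemma~\ref{thm:HYM_connection}) and $\Re\Omega_1$ (Theorem~\ref{thm:rho_first_order}) have been subtracted, so $\Re\Omega_1,\Im\Omega_1$ only survive multiplied by terms of the form $p-p_0$, $q-q_0$, $r-r_0$, which decay with rate $\nu$, while $\Psi_3,\Psi_4$ land in the right space because the only terms not obviously decaying fast enough, $d\eta_1\wedge\Re\Omega_0$, $d\theta_1\wedge\Re\Omega_0$, $d\theta_1\wedge\Im\Omega_0$, vanish since $\eta_1,\theta_1$ are Hermitian Yang--Mills. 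The topological condition \eqref{eq:topological_condition} enters, in the guise of \eqref{eq:cohomological_condition}, precisely to make $\eta_1,\theta_1,\Re\Omega_1$ exist.

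Next, at $(x_0,y_0)$ one has $\Psi(x_0,y_0)=0$: this is the adiabatic-limit computation of Section~\ref{sec:order_0}, since $(\omega_0,\Omega_0)$ Calabi--Yau gives $d\Re\Omega_0=d\Im\Omega_0=0$ and the Monge--Amp\`ere identity, and $p_0,q_0,r_0$ are constant. The invertibility of $y\mapsto D\Psi_{(x_0,y_0)}(0,y)$ is exactly Theorem~\ref{thm:linearization}. Theorem~\ref{thm:implicit_function_theorem} then yields $\varepsilon_0>0$, a neighbourhood $V$ of $y_0$ in $Y$, and an analytic $\Xi\colon(-\varepsilon_0,\varepsilon_0)\to V$ with $\Psi(\varepsilon,\Xi(\varepsilon))=0$, $\Xi(\varepsilon)$ being the unique zero of $\Psi(\varepsilon,\cdot)$ in $V$; restricting to $(0,\varepsilon_0)$ gives the curve. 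For each such $\varepsilon$ the tuple $\Xi(\varepsilon)$ solves \eqref{eq:parameters} together with the linear and gauge-fixing conditions, and since $\Xi(\varepsilon)\to y_0$ it stays $C^1_{-1}$-close to the Calabi--Yau data for $\varepsilon_0$ small, so Proposition~\ref{thm:free_parameters} applies and forces $u=v=s=0$, $X=0$; hence $(\omega_0,\Re\Omega,p,q,r,\eta,\theta)$ with $\eta=\varepsilon\eta_1+\delta_\eta$, $\theta=\varepsilon\theta_1+\delta_\theta$ is a genuine solution of \eqref{eq:Spin7_torsion_v1}.

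It remains to pin down the shape \eqref{eq:epsilon_split} and the uniqueness clause. Differentiating $\Psi(\varepsilon,\Xi(\varepsilon))=0$ at $\varepsilon=0$ gives $D\Psi_{(x_0,y_0)}(0,\Xi'(0))=-D\Psi_{(x_0,y_0)}(1,0)$, and one checks $D\Psi_{(x_0,y_0)}(1,0)=0$: the $\varepsilon$-linear contributions to $\Psi_1,\Psi_2$ cancel by construction (they reproduce precisely the first-order equations \eqref{eq:rho_linearization}), while those to $\Psi_0,\Psi_3,\Psi_4$ vanish because $\Re\Omega_1\in\Omega^3_{12}$, so $\Re\Omega_1\wedge\Im\Omega_0=\Re\Omega_0\wedge\widehat{\Re\Omega_1}=0$, and because $d\eta_1,d\theta_1$ annihilate both $\Re\Omega_0$ and $\Im\Omega_0$ under $\wedge$, being Hermitian Yang--Mills. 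Since the linearization is an isomorphism, $\Xi'(0)=0$, so by analyticity $\delta_\eta=\varepsilon^2\eta_\varepsilon$, $\delta_\theta=\varepsilon^2\theta_\varepsilon$, $p=p_0+\varepsilon p_\varepsilon$, $q=q_0+\varepsilon q_\varepsilon$, $r=r_0+\varepsilon r_\varepsilon$, $\Re\Omega=\Re\Omega_0+\varepsilon\Re\Omega_\varepsilon$ with $\Re\Omega_\varepsilon=\Re\Omega_1+O(\varepsilon)$, all $\varepsilon$-families analytic; elliptic regularity (running the argument for all $l$, or bootstrapping the elliptic system) upgrades them to $C^\infty$, and since $\nu<-1$ they lie in $C^\infty_{-1}$. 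Finally, the uniqueness and twisting statements hold because each gauge-fixing device of Section~\ref{sec:gauge} — fixing $[\omega]$ and then $\omega=\omega_0$ by Moser, $\Re\Omega_\varepsilon\wedge\Re\Omega_0=0$ transverse to AC-preserving diffeomorphisms, $d^*(\eta-\bar\eta_\infty)=d^*(\theta-\bar\theta_\infty)=0$ transverse to decaying gauge transformations, and $\pi_{12}(\Re\Omega_\varepsilon)\in\mathcal{W}^3_\nu$ transverse to infinitesimal Calabi--Yau deformations — cuts out a slice meeting each nearby orbit exactly once; so any decaying solution of \eqref{eq:Spin7_torsion_v1} near the collapsed family is carried into $Y$ by a unique such twist and is then identified with some $\Xi(\varepsilon)$ by the implicit function theorem.

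I expect the main obstacle to be essentially bookkeeping rather than conceptual, the two hard inputs being Theorem~\ref{thm:linearization} (invertibility of the linearization, resting on the Dirac isomorphism and the $4$-form decomposition results for AC Calabi--Yau $3$-folds) and Proposition~\ref{thm:free_parameters} (vanishing of the free parameters). What needs genuine care here is checking that, after subtracting the first-order solutions, every surviving term of $\Psi$ decays strictly faster than $r^{-2}$ so that $\Psi$ maps into the stated Banach space, and that the computation of $D\Psi_{(x_0,y_0)}(1,0)$ — on which the $\varepsilon^2$ form of \eqref{eq:epsilon_split} depends — is carried out consistently with the sign conventions in \eqref{eq:rho_linearization}.
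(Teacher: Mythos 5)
Your proposal is correct and follows essentially the same route as the paper: it assembles Proposition~\ref{thm:free_parameters}, Lemma~\ref{thm:HYM_connection}, Theorem~\ref{thm:rho_first_order} and Theorem~\ref{thm:linearization} into an application of the implicit function theorem to the map $\Psi$ of Section~\ref{sec:implicit_function_theorem}, then reads off regularity, the shape of the expansion, and uniqueness from the gauge-fixing slice. Your explicit check that $D\Psi_{(x_0,y_0)}(1,0)=0$ (using $\Re\Omega_1\in\Omega^3_{12}$ and the HYM property of $\eta_1,\theta_1$), hence $\Xi'(0)=0$, is in fact more careful than the paper's own proof, which obtains the $\varepsilon^2$ leading order of $\delta_\eta,\delta_\theta$ only by an appeal to Taylor expansion.
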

\begin{proof}
	Proposition \ref{thm:free_parameters} adds free parameter to system \eqref{eq:Spin7_torsion_v1} to get rid of redundancy in the equations, yielding system \eqref{eq:parameters}.
	Theorem \ref{thm:HYM_connection} allows us to fix a choice of HYM connection $(\eta_1, \theta_1)$ that makes the function $\Psi$ defined at the end of section \ref{sec:implicit_function_theorem} well-defined.
	Theorem \ref{thm:linearization} allows us to apply the implicit function theorem \ref{thm:implicit_function_theorem} to such function and get solutions in $C^{l, \alpha}_{-1}$.
	However, we have that $l$ is arbitrary and the solutions for each $l$ are unique on a small enough neighborhood.
	Moreover, $C^{l, \alpha}_{-1} \leq C^{m, \alpha}_{-1}$ for $l \geq m$, so, by restricting $\varepsilon_0$ if necessary, we get solutions in $C^{\infty}_{-1}$.

	Because the solution is analytic in $\varepsilon$, we can Taylor expand the solution $\Re \Omega$ as
	$$
		\Re \Omega_0 + \varepsilon \sum_{k = 1}^{+\infty} \varepsilon^{k-1} \Re \Omega_k = \Re \Omega_0 + \varepsilon \Re \Omega_\varepsilon
	$$
	where $\Re \Omega_\varepsilon$ is implicitly defined.
	We do the same for $p,q, r, \eta, \theta$.
	Note that $\eta = \eta_1 + \varepsilon \eta_\varepsilon$ is still a connection because $ \eta_\varepsilon$ is horizontal.
	The same holds for $\theta$.
	
	All the mentioned properties in the statement follow from sections \ref{sec:implicit_function_theorem} and \ref{sec:order_0}.
	
\end{proof}

In the above result, we do not consider negative values for $\varepsilon$ because for such values $\eta$ and $\theta$ are not connections, as they do not reproduce the generators of the $T^2$-action.

\begin{remark}
We note that in case the starting Calabi Yau manifold $(B, \omega_0, \Omega_0)$ has a connected symmetry group $G$, by running the implicit function theorem on $G$-invariant Banach spaces, we get that the tuple $(\omega, \Omega, p, q, r, d \eta, d \theta)$ produced by theorem \ref{thm:final_result} is also $G$-invariant.
\end{remark}

%

\section{Properties of the new \texorpdfstring{$\Spin(7)$}{Spin(7)} manifolds}\label{sec:properties}

In this section we prove the main properties that make interesting the manifolds built in section \ref{sec:analytic_curve}.
This section completes the proof of theorem \ref{thm:main_theorem}.

\subsection{Asymptotic behavior}

\begin{theorem}\label{thm:AT2C}
	Let $\left(B, \omega_0, \Omega_0\right)$ be a simply connected non-trivial AC Calabi-Yau 3-fold asymptotic with rate $\nu < 0$ to the cone $(C, g_C)$, let $p_0, q_0, r_0 \in \R$ with $p_0, q_0 > 0$ and let $M \rightarrow B$ be a principal $T^2$-bundle that satisfies \eqref{eq:topological_condition}.
	
	Then, the manifolds $(M, g_{\Phi_\varepsilon})$ constructed in theorem \ref{thm:final_result} out of $\left(B, \omega_0, \Omega_0, M, p_0, q_0, r_0\right)$ are asymptotically $T^2$-fibred conical with rate $-1$, as defined in \ref{def:ATkC}.
\end{theorem}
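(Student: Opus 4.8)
The plan is to read the asymptotic model off the explicit curve of solutions furnished by Theorem~\ref{thm:final_result} together with the metric formula \eqref{eq:metric_Spin7}. First I would fix a large compact $K_B\subseteq B$ so that the AC diffeomorphism identifies $B\setminus K_B$ with an end $C_R(\Sigma)$ of the Calabi--Yau cone; since $C_R(\Sigma)$ deformation retracts onto $\Sigma$, for $R$ large the restriction $M|_{B\setminus K_B}$ is isomorphic, as a principal $T^2$-bundle, to the pullback $P_{T^2}:=\pi^*\tilde P$ of $\tilde P:=M|_{\Sigma}$ along $\pi\colon C_R(\Sigma)\to\Sigma$ --- exactly the shape of the total spaces allowed by Definition~\ref{def:ATkC}. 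I would then choose the reference connection $(\bar\eta_\infty,\bar\theta_\infty)$ appearing in Theorem~\ref{thm:final_result} so that on the cone end it is the pullback of a fixed principal connection on $\tilde P$: such a connection is radially invariant and its curvature is the pullback of a $2$-form on $\Sigma$, hence lies in $C^\infty_{-2}(\Lambda^2(B))$, so it is admissible. On $P_{T^2}$ I set $A_\infty:=(\bar\eta_\infty,\bar\theta_\infty)$, I let $G_0$ be the constant positive-definite fibre metric on $\Lie(T^2)$ obtained by freezing the coefficients of the vertical part of \eqref{eq:metric_Spin7} at $(p_0,q_0,r_0)$ (positivity is precisely the computation carried out for \eqref{eq:metric_Spin7} in Lemma~\ref{thm:Spin7_structure}, using $p_0,q_0>0$), I put $g_{T^2}:=\varepsilon^2\,G_0$, and define
$$ g_P:=(p_0q_0)^{1/2}\,\pi_P^*g_C\;\oplus\;\operatorname{E}_{A_\infty}(g_{T^2}). $$
After the harmless rescaling $r\mapsto(p_0q_0)^{1/4}r$ of the radial coordinate, which leaves $C(\Sigma)$ a Riemannian cone, $g_P$ has exactly the form required in Definition~\ref{def:ATkC}: each fibre is a flat torus, and $g_{T^2}$ and $A_\infty$ are radially invariant (indeed $T^2$-invariant). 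Finally, $\Psi\colon P_{T^2}\to M\setminus K$ is the composition of the AC diffeomorphism of $B$ with a $T^2$-bundle isomorphism covering it, chosen so that $\Psi^*\bar\eta_\infty$ and $\Psi^*\bar\theta_\infty$ are the components of $A_\infty$.

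The heart of the argument is to substitute the curve \eqref{eq:epsilon_split} into \eqref{eq:metric_Spin7} and estimate the remainder $\Psi^*g_{\Phi_\varepsilon}-g_P$. Writing $\eta_1=\bar\eta_\infty+\delta_1$ with $\delta_1\in C^\infty_{-1}(\Lambda^1(B))$ by Theorem~\ref{thm:HYM_connection}, one has $\eta=\varepsilon\bar\eta_\infty+(\varepsilon\delta_1+\varepsilon^2\eta_\varepsilon)$ with the bracket in $C^\infty_{-1}$, and likewise for $\theta$; moreover $p-p_0=\varepsilon p_\varepsilon$, $q-q_0=\varepsilon q_\varepsilon$, $r-r_0=\varepsilon r_\varepsilon$ all lie in $C^\infty_{-1}$, while $g_{\omega,\Omega}=g_{\omega_0,\Omega_0}$ with $\Psi^*g_{\omega_0,\Omega_0}-g_C\in C^\infty_\nu$ by the AC hypothesis. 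The coefficients $p^{1/2}q^{-3/2}$, $r^2(pq)^{3/2}+q^{1/2}p^{-3/2}$, $rp$ and $(pq)^{1/2}$ are smooth functions of $(p,q,r)$ near $(p_0,q_0,r_0)$, so they differ from their values there by elements of $C^\infty_{-1}$. Expanding \eqref{eq:metric_Spin7} and pulling back by $\Psi$, every summand of $\Psi^*g_{\Phi_\varepsilon}-g_P$ is a product of a factor bounded with all derivatives (of order $O(1)$ or $O(\varepsilon)$, lying in $C^\infty_0$ after pullback) with one of the decaying factors listed above; using $C^\infty_0\cdot C^\infty_\mu\subseteq C^\infty_\mu$ and the invariance of the weighted spaces $C^\infty_\mu$ on $P_{T^2}$ under $\Psi$ (valid because $\Psi$ is asymptotic to the chosen identification), one concludes
$$ \Psi^*g_{\Phi_\varepsilon}-g_P\in C^\infty_{\max(-1,\nu)}\bigl(P_{T^2},g_P\bigr), $$
which is the $AT^2C$ condition of Definition~\ref{def:ATkC} with rate $\max(-1,\nu)$ (in particular rate $-1$ when $\nu\le -1$, and in accordance with Theorem~\ref{thm:main_theorem} in general), completing the proof.

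The main difficulty is not a single sharp estimate but the careful assembly of the model, and there are three points to watch. First, the reference connection must be chosen so that the Hermitian--Yang--Mills connection produced by Theorem~\ref{thm:HYM_connection} is genuinely asymptotic, at rate $-1$, to a radially invariant connection; this is the only place the fine analytic input of Section~\ref{sec:analytic_curve} is used. Second, one must bookkeep the powers of $\varepsilon$ so that the fibre metric $g_{T^2}$ absorbs the factor $\varepsilon^2$ introduced by the substitutions $\eta\mapsto\varepsilon\eta$, $\theta\mapsto\varepsilon\theta$, leaving a genuinely decaying residue rather than an $O(\varepsilon)$ term. Third, one must check that the weighted $C^\infty$ (and Hölder) spaces are preserved both under the AC identification of $B$ and under the bundle isomorphism, so that the $C^\infty_{-1}$-decay of the perturbations $\eta_\varepsilon,\theta_\varepsilon,\Re\Omega_\varepsilon,p_\varepsilon,q_\varepsilon,r_\varepsilon$ on $B$ transfers to decay of $\Psi^*g_{\Phi_\varepsilon}-g_P$ on the model $(P_{T^2},g_P)$.
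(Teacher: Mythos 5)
Your proposal is correct and follows essentially the same route as the paper: identify $M$ outside a compact set with the pullback of $\dot P = M|_\Sigma$ to the cone, choose the reference connection of Theorem~\ref{thm:HYM_connection} to be (asymptotically) the pullback of a radially invariant connection on $\dot P$, take the asymptotic metric \eqref{eq:asymptocic_metric} obtained by freezing $(p,q,r)$ at $(p_0,q_0,r_0)$ in \eqref{eq:metric_Spin7}, and bound the remainder using the $C^\infty_{-1}$ decay of $\eta-\varepsilon\bar\eta_\infty$, $\theta-\varepsilon\bar\theta_\infty$, $p-p_0$, $q-q_0$, $r-r_0$ together with the AC decay of $g_B-g_C$. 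The only cosmetic difference is that the paper fixes the model connection via harmonic curvature representatives on $\Sigma$ rather than an arbitrary connection on $\dot P$, which is immaterial for this statement.
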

\begin{proof}
	By hypothesis $\left(B, g_{\omega_0, \Omega_0}\right)$ is AC with model cone $(C = C_R(\Sigma), g_C)$ and isomorphism $\Psi$, where $\Sigma$ is Sasaki-Einstein (see definition \ref{def:AC}).
	It is clear that $\left(B, g_{\omega_\varepsilon, \Omega_\varepsilon}\right)$ is also AC, because adding a something $C^\infty_{-1}$ to an AC metric leaves it AC and in theorem \ref{thm:final_result} we saw that $\omega_\varepsilon, \Omega_\varepsilon$ are in $C^\infty_{-1}$.
	
	Call $\pi_C : C \to \Sigma$ the canonical radial projection.
	We define the asymptotic model for $(M, g_{\Phi_\varepsilon})$ to be $P_{T^2} = \pi_C^* \dot{P}$ where $\dot{P} = \iota_r^* \Psi^* M$ is a bundle on $\Sigma$ and $\iota_r :  \Sigma \to C$ is the inclusion of $\Sigma$ in $C$ at height $r$ with $r \in \R^+$ is such that $r > R$.
	Note that we also have a canonical radial projection $P_{T^2} \to \dot{P}$.
	Clearly $P_{T^2}$ and $M|_{B \setminus K}$, i.e.\ the restriction of $M$ to the complement of a big enough compact set $K$, are isomorphic, but not in a canonical way.
	Thus, we fix an isomorphism of principal fiber bundles between $P_{T^2}$ for appropriately big radius and $M|_{B \setminus K}$.
	By Hodge theory on compact manifolds, we choose harmonic $(\kappa_{\eta_\infty}, \kappa_{\theta_\infty}) \in \Omega^2(\Sigma)^2$ representing $c_1\left(\dot{P}\right)$ and we choose a connection $(\eta_\infty, \theta_\infty) \in \Omega^1(\dot{P})$ on the $T^2$ bundle $\dot{P} \to \Sigma$ with curvature $(\kappa_{\eta_\infty}, \kappa_{\theta_\infty})$, and we pull it back on $P$.
	Then we complete it to a connection $\left(\bar{\eta}_\infty, \bar{\theta}_\infty\right)$ on the whole $M$ in the following way: choose any connection $(\tilde{\eta}, \tilde{\theta})$ on $M \to B$ and choose a $[0,1]$-valued function $\chi$ that is identically 0 on $K$ and that is identically 1 outside a compact set bigger than $K$.
	Then, we call $\bar{\eta}_\infty = \chi \theta_\infty + (1- \chi) \tilde{\eta}$ and similarly for $\bar{\theta}_\infty$.
	We can now run theorem \ref{thm:final_result} on $M$ with $\left(\bar{\eta}_\infty, \bar{\theta}_\infty\right)$ as the chosen fixed connection on $M \to B$, yielding a solution $(\eta, \theta, \omega, \Re \Omega, p, q, r)$.
	
	In light of equation \eqref{eq:metric_Spin7}, we choose the following metric on the asymptotic model:
	\begin{equation}\label{eq:asymptocic_metric}
		g_{P_{T^2}} = \varepsilon^2\left(p_0^{\frac{1}{2}}q_0^{-\frac{3}{2}} \eta_\infty^2 + \left(r_0^2(p_0q_0)^\frac{3}{2}+ q_0^{\frac{1}{2}}p_0^{-\frac{3}{2}} \right) \theta_\infty^2 - 2 r_0 p_0 \eta_\infty \odot \theta_\infty\right) + (p_0q_0)^{\frac{1}{2}} g_C
		.
	\end{equation}
	This metric trivially satisfies all properties of definition \ref{def:ATkC} except for the decay at infinity.
	This follows from the fact that $p - p_0, q - q_0$ and $r - r_0$ are all in $C^\infty_{-1}$, that $g_B - g_C$ decays with rate $-1$ by hypothesis and that we can write $\varepsilon^2 \eta_\infty^2 - \eta^2$ as $(\varepsilon \eta_\infty + \eta) \odot (\varepsilon \eta_\infty - \eta)$, where $\varepsilon \eta_\infty + \eta$ is bounded and $\varepsilon \eta_\infty - \eta \in C^\infty_{-1}$.
	
	To see this last statement note that $\eta$ differs from $\varepsilon \eta_1$ only up to terms in $C^\infty_{-1}$ and thus it is sufficient to prove $\eta_\infty - \eta_1 \in C^\infty_{-1}$.
	Clearly this is equivalent to $\bar{\eta}_\infty - \eta_1 \in C^\infty_{-1}$.
	But this was a defining property of $\eta_1$ in lemma \ref{thm:HYM_connection}.
	
	The reasoning for $\theta^2$ and $\eta \odot \theta$ is analogous.
%
%
\end{proof}
%

\begin{remark}
	The connections $\eta_\infty$ and $\theta_\infty$ are Hermitian Yang Mills on $P$.
	The fact that they are Yang Mills, i.e.\ that their curvature is harmonic, follows from the fact that we chose $(\kappa_{\eta_\infty}, \kappa_{\theta_\infty})$ to be harmonic.
	That they are Hermitian Yang Mills, i.e.\ that their curvature lies in $\Lambda^2_8(B)$, if follows from remark 4.10 in \cite{Foscolo2021}.
\end{remark}

\subsection{Full holonomy}

Before the following theorem, we recall two standard lemmas.

\begin{lemma}\label{thm:fundamental_group_circle_bundles}
	Let $B$ be a manifold and $p : P \to B$ be a principal $T^k$ bundle.
	Then $\pi_1(P)$ is finite if and only if $\pi_1(B)$ is finite and $c_1\left(P\right)$ generates a space of dimension $k$ inside $H^2(B)$.
%
\end{lemma}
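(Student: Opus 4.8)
The plan is to run the long exact sequence of homotopy groups for the fiber bundle $T^k\hookrightarrow P\xrightarrow{\,p\,}B$. Since $T^k$ is connected with $\pi_1(T^k)\cong\Z^k$ and $\pi_i(T^k)=0$ for $i\ge2$, the relevant segment reads
$$\pi_2(P)\to\pi_2(B)\xrightarrow{\ \partial\ }\pi_1(T^k)\cong\Z^k\to\pi_1(P)\to\pi_1(B)\to 0,$$
so $\pi_1(P)$ sits in a short exact sequence $0\to\operatorname{coker}\partial\to\pi_1(P)\to\pi_1(B)\to 0$. A group extension is finite iff both ends are, and $\operatorname{coker}\partial=\Z^k/\operatorname{im}\partial$ is finite iff $\operatorname{im}\partial$ has finite index iff $\operatorname{rank}(\operatorname{im}\partial)=k$. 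Hence $\pi_1(P)$ is finite iff $\pi_1(B)$ is finite and $\operatorname{rank}(\operatorname{im}\partial)=k$, and it remains to translate the latter into a statement about $c_1(P)$.

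Next I would identify the connecting homomorphism $\partial$. Using Proposition~\ref{thm:product_bundle} (applied to $T^k=S^1\times\cdots\times S^1$, with a trivial induction on the number of factors), write $P\cong P_1\times_B\cdots\times_B P_k$ with each $P_i\to B$ a principal circle bundle, so that $\partial=(\partial_1,\dots,\partial_k)$ with $\partial_i\colon\pi_2(B)\to\pi_1(S^1)=\Z$ the connecting map of $P_i$. The standard fact is that $\partial_i$ agrees, up to sign, with the composite
$$\pi_2(B)\xrightarrow{\ h\ }H_2(B;\Z)\xrightarrow{\ \langle c_1(P_i),\,\cdot\,\rangle\ }\Z,$$
where $h$ is the Hurewicz homomorphism: pulling $P_i$ back along $f\colon S^2\to B$ yields a circle bundle over $S^2$ whose total space has fundamental group $\Z/|\langle c_1(P_i),f_*[S^2]\rangle|$, and one concludes by naturality of the connecting homomorphism. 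Therefore, after tensoring with $\Q$, $\operatorname{rank}(\operatorname{im}\partial)=k$ iff the composite $\pi_2(B)\otimes\Q\to H_2(B;\Q)\to\Q^k$ is surjective, where the second map is $\alpha\mapsto(\langle c_1(P_i),\alpha\rangle)_{i=1}^k$.

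To close the loop I would invoke that when $\pi_1(B)$ is finite the rational Hurewicz map $\pi_2(B)\otimes\Q\to H_2(B;\Q)$ is surjective: passing to the universal cover $\widetilde B$ (finite deck group $G=\pi_1(B)$), the Hurewicz theorem gives $\pi_2(B)=\pi_2(\widetilde B)\cong H_2(\widetilde B;\Z)$, while the transfer shows the projection $H_2(\widetilde B;\Q)\to H_2(B;\Q)$ is onto, and these fit into a commuting triangle with $h\otimes\Q$. Granting $\pi_1(B)$ finite, $\operatorname{rank}(\operatorname{im}\partial)=k$ is then equivalent to surjectivity of $H_2(B;\Q)\to\Q^k$, $\alpha\mapsto(\langle c_1(P_i),\alpha\rangle)_i$; and via the universal-coefficients isomorphism $H^2(B;\Q)\cong\operatorname{Hom}_\Q(H_2(B;\Q),\Q)$ this surjectivity says exactly that $c_1(P_1),\dots,c_1(P_k)$ are linearly independent in $H^2(B;\Q)$, hence in $H^2(B;\R)$, i.e.\ that $c_1(P)$ spans a $k$-dimensional subspace. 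Combining the two equivalences proves the lemma. (In the forward direction the Hurewicz surjectivity is not even needed: $\operatorname{im}\partial\otimes\Q=\Q^k$ already forces $H_2(B;\Q)\to\Q^k$ to be onto.)

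The main obstacle is pinning down the connecting homomorphism $\partial$ precisely as pairing with $c_1$ — this is folklore for circle bundles but deserves the careful naturality/lens-space argument indicated above — together with the (equally standard, but worth stating) rational Hurewicz surjectivity for spaces with finite fundamental group; the rest is bookkeeping with the long exact sequence and linear algebra over $\Q$.
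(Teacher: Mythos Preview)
The paper does not give a proof of this lemma at all: it is introduced with ``we recall two standard lemmas'' and stated without argument. Your proposal is a correct and complete proof of what the paper leaves implicit, running the homotopy long exact sequence of the fibration, identifying the connecting map with pairing against $c_1$ via Hurewicz, and using rational Hurewicz surjectivity for spaces with finite $\pi_1$; there is nothing to compare against on the paper's side.
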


\begin{lemma}\label{thm:universal_cover_circle_bundle}
	Let $B$ be a simply connected manifold and $p : P \to B$ be a principal circle bundle.
	Suppose $\pi_1(P)$ is finite.
	Let $\pi_{\tilde{P}} : \tilde{P} \to P$ be the finite universal cover.	
	
	Then, $p \comp \pi_{\tilde{P}} : \tilde{P} \to B$ is a non-trivial principal circle bundle and $c_1\left(\tilde{P}\right)$ is a primitive element in $H^2(B; \Z)$ such that $c_1(P)$ is a $\Z$-multiple of $c_1\left(\tilde{P}\right)$.
\end{lemma}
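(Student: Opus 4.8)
The heart of the matter is to exhibit a principal $S^1$-bundle structure on $\tilde P$ with base $B$; the Chern-class assertions are then essentially formal. The subtlety is that the $S^1$-action on $P$ need not itself lift to $\tilde P$, since its monodromy around a fibre loop is a deck transformation of $\tilde P\to P$ which is nontrivial whenever $\pi_1(P)\neq 1$. The remedy is to act by the universal cover $\mathbb{R}$ of $S^1$: pulling the $S^1$-action on $P$ back along $\mathbb{R}\to S^1$, $t\mapsto e^{2\pi i t}$, gives an $\mathbb{R}$-action on $P$, and since $\mathbb{R}$ is connected and simply connected this lifts (uniquely once a basepoint is fixed) to a smooth $\mathbb{R}$-action on $\tilde P$ covering it. Let $n=|\pi_1(P)|\in\mathbb{N}$, finite by hypothesis (and consistent with $c_1(P)\neq 0$ in $H^2(B;\mathbb{R})$, cf.\ Lemma~\ref{thm:fundamental_group_circle_bundles}). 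I would then compute the ineffective kernel: for $\tilde x\in\tilde P$ over $x\in P$, the path $t\mapsto t\cdot\tilde x$ ($t\in[0,1]$) projects to the orbit loop $e^{2\pi i t}\cdot x$ in $P$, whose class generates $\pi_1(P)$ because the fibre inclusion $S^1\hookrightarrow P$ is $\pi_1$-surjective (homotopy sequence of $S^1\to P\to B$ with $\pi_1(B)=0$); hence $1\in\mathbb{R}$ acts as a generator $\delta$ of $\mathrm{Deck}(\tilde P/P)\cong\pi_1(P)$, so $\delta$ has order $n$, $n\cdot(-)=\delta^n=\mathrm{id}$, and the $\mathbb{R}$-action descends to a smooth $S^1=\mathbb{R}/n\mathbb{Z}$-action on $\tilde P$.

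I would next check this $S^1$-action is free with quotient $B$. Freeness: if $t\cdot\tilde y=\tilde y$ then $e^{2\pi i t}$ fixes $\pi_{\tilde P}(\tilde y)\in P$, so $t\in\mathbb{Z}$ (the action on $P$ is free); then $\delta^t$ fixes $\tilde y$, and deck transformations of a connected cover act freely, so $\delta^t=\mathrm{id}$, i.e.\ $n\mid t$. Quotient: the composite $p\circ\pi_{\tilde P}:\tilde P\to B$ is $\mathbb{R}$-invariant, hence factors through $\tilde P/\mathbb{R}$, and a short chase shows the induced map $\tilde P/\mathbb{R}\to B$ is a bijection: if $\tilde y,\tilde y'$ have the same image then $\pi_{\tilde P}(\tilde y')=e^{2\pi i s}\cdot\pi_{\tilde P}(\tilde y)$ for some $s$, whence $s\cdot\tilde y$ and $\tilde y'$ lie over the same point of $P$, so $\tilde y'=\delta^k(s\cdot\tilde y)=(k+s)\cdot\tilde y$ for some $k\in\mathbb{Z}$, putting $\tilde y'$ in the $\mathbb{R}$-orbit of $\tilde y$. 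Since $\tilde P$ is a manifold with a free smooth $S^1$-action, the slice theorem yields that $p\circ\pi_{\tilde P}:\tilde P\to B$ is a principal circle bundle. Its fibres are the $\mathbb{R}/n\mathbb{Z}$-orbits, each connected and covering an $S^1$-fibre of $P$ $n$-to-one, and $\pi_{\tilde P}$ intertwines the two circle actions through $z\mapsto z^n$; equivalently $P\cong\tilde P^{\otimes n}$ as principal circle bundles, so $c_1(P)=n\,c_1(\tilde P)$ in $H^2(B;\mathbb{Z})$, a $\mathbb{Z}$-multiple as required.

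Finally, primitivity of $c_1(\tilde P)$: it is nonzero, since $c_1(\tilde P)=0$ would give $\tilde P\cong B\times S^1$ with $\pi_1=\mathbb{Z}$, contradicting simple connectivity of $\tilde P$; and if $c_1(\tilde P)=m\,b$ with $m\geq 2$ and $b\in H^2(B;\mathbb{Z})$, let $R\to B$ be the principal circle bundle with $c_1(R)=b$, so $R^{\otimes m}\cong\tilde P$ and the canonical fibrewise $m$-th-power map $R\to R^{\otimes m}\cong\tilde P$ is an $m$-fold covering; but $R$ is connected (a circle bundle over connected $B$) while $\tilde P$ is simply connected, forcing $m=1$, a contradiction. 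Hence $c_1(\tilde P)$ is primitive and nonzero, so $\tilde P\to B$ is nontrivial. (Alternatively, since $\tilde P$ is simply connected the evaluation $\langle c_1(\tilde P),-\rangle:H_2(B;\mathbb{Z})\to\mathbb{Z}$ is onto, and for simply connected $B$ the group $H^2(B;\mathbb{Z})\cong\mathrm{Hom}(H_2(B;\mathbb{Z}),\mathbb{Z})$ is torsion-free, so a surjective functional is a primitive vector.) I expect the genuine obstacle to be the middle paragraph — turning the lifted $\mathbb{R}$-action into an honest principal $S^1$-bundle $\tilde P\to B$, i.e.\ the ineffective-kernel computation and the identification $\tilde P/\mathbb{R}\cong B$; the Chern-class statements then follow from the standard classification of circle bundles by $c_1$.
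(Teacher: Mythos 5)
Your proof is correct and complete. The paper states this lemma without proof (it is labelled as one of "two standard lemmas"), so there is no argument to compare against; your route — lifting the circle action to an $\R$-action on $\tilde{P}$, computing that $1\in\R$ acts as a generator of the deck group so the action descends to $\R/n\Z\cong S^1$, identifying the quotient with $B$, and reading off $c_1(P)=n\,c_1(\tilde{P})$ with primitivity from simple connectedness of $\tilde{P}$ — is exactly the standard one, and you have correctly isolated the only genuinely non-formal step (the ineffective-kernel computation and the identification $\tilde{P}/\R\cong B$).
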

%

In order to prove that the newly built manifolds have full holonomy, it will be convenient to introduce a new connection with torsion on the asymptotic model $(P_{T_k}, g_P)$, which is analogous to the one introduced in \cite{Reinhart1983} and has been used in a setting similar to ours by \cite{Foscolo2019}.
The main motivation to introduce such a connection $\nabla^P$ is that the generators of the principal $T^k$-action are not parallel for the Levi Civita connection of $g_P$.
Indeed, if $X, Y$ are generators of the $T^k$-action and $V, W$ are $T^k$-invariant horizontal vector fields, a simple calculation using the Killing condition for $V$ and the fact that $[X, V] = 0$ shows that $\nabla_{X}^{{P}} Y=0$ and
\begin{equation}\label{eq:Levi-Civita_P}
	 \nabla_V^{{P}} X=\frac{1}{2}\left(V\lrcorner d X^\flat\right)^{\sharp} \quad \nabla_{X}^{{P}} V=\frac{1}{2}\left(V\lrcorner d X^\flat\right)^{\sharp} \quad \nabla_V^{{P}} W = -\frac{1}{2} \rho \comp dA_\infty(V, W) +\operatorname{HL}\left(\nabla_V^{C} W\right)
\end{equation}
where $HL$ denotes the horizontal lift, $\nabla^C$ denotes the Levi Civita connection on the cone $C$, $\rho : \Lie\left(T^k\right) \to \mathfrak{X}(P_{T_k})$ is the canonical morphism of Lie algebras and we are denoting by $W$ also the projection of $W$ on $C$, which exists because $W$ is $T^k$-invariant.
$\nabla_V^{{P}} X$ is never going to be zero unless the connection $A_\infty$ is flat, which is not the case for our construction.

\begin{definition}
	Let $(P_{T_k}, g_P)$, with $P_{T^k} \to C$ be an asymptotic model for an $AT^kC$ manifold.
	Then we define the \textit{adapted connection} on $(P_{T_k}, g_P)$ to be the unique metric connection whose covariant derivative $\nabla^A$ is uniquely defined by requiring that, if $X, Y$ are two generators of the principal $T^k$-action and $V, W$ are two horizontal $T^k$-invariant vector fields,
	\begin{equation}\label{eq:adapted_connection}
		\nabla^A_XY = 0 \quad \nabla^A_V X = 0 \quad \nabla^A_X(V) = 0 \quad \nabla^A_V W = \operatorname{HL} \left(\nabla^C_V W \right)
	\end{equation}
	where $HL$ denotes the horizontal lift, $\nabla^C$ denotes the Levi Civita connection on the cone $C$ and we are denoting by $W$ also the projection of $W$ on $C$, which exists because $W$ is $T^k$-invariant.
\end{definition}

Since the generators of the $T^k$ action and horizontal $T^k$-invariant fields generate the tangent space to $P_{T^k}$ at each point, the above condition uniquely defines a covariant derivative through the Leibniz condition.
$\nabla^A$ has torsion $\rho \comp dA_\infty$ where $\rho : \Lie\left(T^k\right) \to \mathfrak{X}(P_{T_k})$ is the canonical morphism of Lie algebras.

We also need the following technical result.

\begin{lemma}\label{thm:closed_coclosed_constant_forms}
	Let $M$ be asymptotically $T^2$-fibered conical to $P \to C(\Sigma)$ with rate $\mu < 0$.
	
	Then for any parallel $k$-form $\gamma$ on $M$ there exists a $k$-form $\gamma_\infty$ on $P$ such that $\gamma - \gamma_\infty \in C^\infty_{\mu}(P)$ and $\nabla^A \gamma_\infty = 0$.
\end{lemma}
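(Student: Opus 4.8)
The plan is to exploit the fact that $M$ is $AT^2C$, so outside a compact set it is diffeomorphic, via the map $\Psi$ of Definition \ref{def:ATkC}, to the asymptotic model $P = P_{T^2} \to C(\Sigma)$, with $\Psi^* g_M - g_P \in C^\infty_\mu(P, g_P)$. The starting point is therefore to transport the parallel form $\gamma$ to $P$ (on the complement of a compact set), where it satisfies $\nabla^{g_M}\gamma = 0$, and to compare $\nabla^{g_M}$ with both $\nabla^{g_P}$ (the Levi-Civita connection of the model metric) and $\nabla^A$ (the adapted connection). First I would record that $\Psi^*\nabla^{g_M} - \nabla^{g_P}$ is a tensor in $C^\infty_{\mu-1}(P)$, since the difference of two Levi-Civita connections is built algebraically from $\nabla^{g_P}$ of the metric difference, which decays at rate $\mu-1$ in the sense of the weighted norms on $P$. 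Next, from the explicit formulas \eqref{eq:Levi-Civita_P} for $\nabla^{g_P}$ and the defining formulas \eqref{eq:adapted_connection} for $\nabla^A$, the difference $\nabla^{g_P} - \nabla^A$ is the algebraic tensor built from $\tfrac12\rho\circ dA_\infty$ and the terms $\tfrac12(V\lrcorner dX^\flat)^\sharp$; crucially this tensor is radially invariant (it is pulled back from $\Sigma$ up to the scaling, so it is $O(1)$ in $C^0$ but its $\nabla^A$-covariant derivatives decay, and more to the point it is a \emph{bounded} tensor independent of $r$). Combining these two comparisons, on the model end $\gamma$ satisfies $\nabla^A\gamma = T(\gamma)$ where $T$ is a tensor that is bounded and, after peeling off the radially invariant part coming from $\nabla^{g_P}-\nabla^A$, the genuinely decaying remainder is $O(r^{\mu-1})$.

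The heart of the argument is an ODE-in-$r$ estimate along radial geodesics of the cone $C(\Sigma)$, lifted horizontally by $\nabla^A$. Since $\nabla^A$ restricted to the horizontal distribution is just the horizontal lift of $\nabla^C$, and $\nabla^C$ on a cone is the standard cone connection, parallel transport by $\nabla^A$ along a horizontal radial ray lift is easy to describe; moreover $\nabla^A_{\partial_r}$ of the vertical generators $X,Y$ vanishes by \eqref{eq:adapted_connection}. Writing the covariant radial derivative $\nabla^A_{\partial_r}\gamma$, which equals the corresponding piece of $T(\gamma)$, and noting this piece decays like $r^{\mu-1}$ (integrable near infinity because $\mu<0$ forces $\mu-1<-1$), I would integrate along each radial ray to conclude that $\gamma$, parallel-transported back by $\nabla^A$ to the link $\Sigma$ at radius $R$, converges as $r\to\infty$; the limit defines a form $\gamma_\infty$ on $P$ with $\nabla^A_{\partial_r}\gamma_\infty = 0$. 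Then I would upgrade $\nabla^A$-radial-parallelism to full $\nabla^A$-parallelism of $\gamma_\infty$: the link directions are handled because $\nabla^A$ agrees with $\Psi^*\nabla^{g_M}$ modulo a tensor decaying like $r^{\mu-1}$ together with the bounded radially-invariant tensor, and since both $\gamma$ and $\gamma_\infty$ are annihilated by $\nabla^A_{\partial_r}$, evaluating the link-direction equation at each radius and letting $r\to\infty$ kills the decaying contributions, while the radially-invariant contributions must vanish in the limit because $\gamma$ is genuinely $g_M$-parallel (so the ``obstruction'' to $\nabla^A\gamma=0$ cannot have a non-decaying, radially-constant component). This gives $\nabla^A\gamma_\infty = 0$ on $P$. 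Finally, the difference $\gamma - \gamma_\infty$ is seen to lie in $C^\infty_\mu(P)$: its $C^0$-decay at rate $\mu$ follows from the integrated ODE estimate (the tail of $\int_r^\infty s^{\mu-1}\,ds$ is $\sim r^\mu$), and decay of all $\nabla^A$-derivatives follows by differentiating the defining equation and bootstrapping, using that $\nabla^{g_M}$-derivatives of $\gamma$ vanish identically and the connection differences have derivatives in the appropriate weighted spaces.

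The main obstacle I anticipate is the bookkeeping around the tensor $\nabla^{g_P}-\nabla^A$, which is \emph{not} decaying — it is $O(1)$, coming from the curvature $dA_\infty$ of the asymptotic connection and from the $(V\lrcorner dX^\flat)$ terms in \eqref{eq:Levi-Civita_P}. One must argue carefully that this non-decaying piece is exactly ``absorbed'' by choosing $\gamma_\infty$ to be $\nabla^A$-parallel rather than $\nabla^{g_P}$-parallel, i.e.\ that $\gamma$ being $g_M$-parallel forces its $\nabla^A$-covariant derivative to be purely decaying with no radially-invariant $O(1)$ remainder. Concretely, this is where one uses that the $O(1)$ part of $T$ is \emph{algebraic} in $\gamma_\infty$ and radially constant, so if it did not vanish the equation $\nabla^A\gamma = (\text{decaying}) + (\text{nonzero radially constant algebraic term})$ would be incompatible with $\gamma$ approaching its parallel limit; making this ``incompatibility'' rigorous — essentially a uniqueness statement for the limiting ODE, or equivalently the observation that a $\nabla^{g_P}$-parallel form restricted from $\gamma_\infty$ exists precisely because $\gamma_\infty$ is forced to be $\nabla^A$-parallel and the two notions differ only by terms whose contraction against $\gamma_\infty$ must integrate to something finite — is the technical crux. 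A secondary but routine difficulty is upgrading $C^0_\mu$ decay of $\gamma-\gamma_\infty$ to $C^\infty_\mu$, which is a standard elliptic-free bootstrap once the first-order equation is in hand; one simply differentiates $\nabla^A(\gamma - \gamma_\infty) = \nabla^A\gamma - 0 = $ (the comparison tensor acting on $\gamma$) and uses that each additional $\nabla^A$-derivative of the comparison tensors lands in a weighted space with weight lowered by one, matched against the fact that $\nabla^{g_M}\gamma\equiv 0$ to all orders.
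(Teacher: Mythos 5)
Your overall strategy is the paper's: compare $\nabla^{g_M}$ with $\nabla^{g_P}$ (a tensor of rate $\mu-1$) and with $\nabla^A$, integrate the radial equation to produce $\gamma_\infty$ (using $\nabla^A_{\partial_r}=\nabla^P_{\partial_r}$, which holds because $A_\infty$ is radially invariant), and then argue that the tangential $\nabla^A$-derivative of $\gamma_\infty$ is forced to vanish. The gap is exactly the step you flag as the crux: ``a non-decaying radially-constant obstruction is incompatible with $\gamma$ approaching its parallel limit'' is not yet an argument, and treating $\gamma$ as a single undifferentiated block makes it impossible to close. The paper's mechanism is a decomposition by horizontal degree with explicit radial weights: write $\gamma=\sum_j dr\wedge r^j\alpha_j+\sum_j r^j\beta_j$ with $\alpha_j\in\Lambda^jHP^*\wedge\Lambda^{k-1-j}VP^*$, $\beta_j\in\Lambda^jHP^*\wedge\Lambda^{k-j}VP^*$ and $\partial_r\lrcorner\alpha_j=\partial_r\lrcorner\beta_j=0$. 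The point of the $r^j$ factors is that a radially invariant object of pure horizontal degree $j$ (such as $\nabla^A_X\alpha_j^\infty$ for $X$ pulled back from $\dot{P}$) sits \emph{exactly} at weight $-j$ in the weighted spaces --- it cannot decay faster without being identically zero --- whereas $\nabla^P_X\alpha_j$ and the two comparison terms in $\nabla^P_X\alpha_j=\nabla^A_X\alpha_j^\infty+(\nabla^P_X-\nabla^A_X)\alpha_j+\nabla^A_X(\alpha_j-\alpha_j^\infty)$ all sit at weight strictly below $-j$. Matching weights kills $\nabla^A_X\alpha_j^\infty$. This weight-counting, degree by degree, is the missing ingredient; without it your ``uniqueness for the limiting ODE'' remains a hope.

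Two secondary points. First, the limit components are \emph{not} individually $\nabla^A$-parallel: since $\nabla^P_Xdr=rX^\flat$ for horizontal $X$, the same weight-matching yields $\nabla^A_X\beta_j^\infty=X^\flat\wedge\alpha_{j-1}^\infty$ for $j\ge1$, and these couplings only cancel when you reassemble $\gamma_\infty=\sum dr\wedge r^j\alpha_j^\infty+\sum r^j\beta_j^\infty$; your whole-form radial transport produces the right $\gamma_\infty$, but the verification of $\nabla^A\gamma_\infty=0$ in link directions must still be done componentwise. Second, your description of $\nabla^{g_P}-\nabla^A$ as a bounded, non-decaying $O(1)$ tensor is too coarse: its building blocks $V\lrcorner dX^\flat$ and $\rho\circ dA_\infty(V,W)$ are pullbacks from $\Sigma$ and therefore decay in the cone metric, and the statement actually needed is that $(\nabla^P_X-\nabla^A_X)\alpha_j$ lands strictly below weight $-j$ --- again a degree-by-degree claim, not a uniform one about the connection difference.
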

\begin{proof}
	The result is obvious for $k = 0$, since a parallel function is constant and thus we can choose the function itself at infinity.
	
	Thus, let $\gamma$ be a parallel $k$-form for $k\geq 1$, and call $\gamma$ also its pullback on the cone outside a compact set.
	Call $\dot{P}$ the restriction of $P$ to $\Sigma$ which a priori has no natural metric (thus we consider it just as a differentiable manifold), and call $\iota_R : \dot{P} \to P$ the canonical embedding of $\dot{P}$ into $P$ at $r = R \in \R^+$.
	Moreover, call $\dot{P}_r$ the manifold $\dot{P}$ with the pullback metric through $\iota_r$.
	Then $\gamma = \sum_{j = 0}^{k-1} dr \wedge r^j \alpha_j + \sum_{j = 0}^{k}r^j \beta_j$ for some  $(k-1)$-forms $\alpha_j \in \Lambda^j HP^* \wedge \Lambda^{k - 1 - j} VP^*$ and $k$-form $\beta_j \in \Lambda^j HP^* \wedge \Lambda^{k - j} VP^*$ on $P$ such that $\partial_r \interior \alpha_j = \partial_r \interior \beta_j = 0$.
	Here, $HP$ denotes the horizontal space with respect to the connection $A_\infty = (\eta, \theta)$.
	Let $\nabla^M$ and $\nabla^P$ be the Levi-Civita connections on $M$ and $P$ respectively.
	Since $\nabla^M \gamma = 0$ and $\nabla^P_{\partial_r} dr = 0$,
	$$
		\nabla^P_{\partial_r} \gamma = \sum_{j = 0}^{k-1} dr \wedge \nabla^P_{\partial_r} (r^j \alpha_j) + \sum_{j = 0}^{k} \nabla^P_{\partial_r} (r^j \beta_j) \in C^\infty_{\mu - 1}(\Lambda^k(P))
		,
	$$
	since $\nabla^M - \nabla^P$ is a tensor with decay rate $\mu - 1$, given that $g_M - g_P$ decays with rate $\mu$ by hypothesis.
	Since all the addends in the above expansion of $\nabla^P_{\partial_r} \gamma$ are linearly independent, they individually have to decay with rate $\mu - 1$, and since $dr$ does not decay, it has to be $\nabla^P_{\partial_r} (r^j \alpha_j), \nabla^P_{\partial_r} (r^j \beta_j) \in C^\infty_{\mu - 1}$.
	Now call $\alpha_j^r = \iota_r^* \alpha_j$ and similarly for $\beta_j$.
	These are curves in $C^\infty_0$ and their covariant derivatives along radially invariant vector fields are also curves in $C^\infty_0$.
	Let us prove that these curves are Cauchy in norm $C^0$, and thus they have a limit.
	One can verify that, given the above metric on $P$, parallel transporting through $\nabla^P$ $\alpha^t$ to $r = s$ and then pulling it back through $\iota_s$ is the same as pulling it back through $\iota_t$ and multiplying it by $\left(\frac{t}{r}\right)^j$.
	Then, for some $C_{\alpha_j} \in \R$,
	\begin{multline}\label{eq:estimate_alphas}
		\norm{\alpha^{t+s}_j - \alpha^t_j}_{C^0\left(\dot{P}_1\right)} = \norm{\iota^*_1\left((t+s)^j\operatorname{Pt}_{t+s} \left(\alpha_j\right) - t^j \operatorname{Pt}_{t} \alpha_j \right)}_{C^0\left(\dot{P}_1\right)} \leq \\ \int_t^{t+s}\norm{\nabla^P_{\partial_r} (r^j\alpha_j)}_{C^0\left(\dot{P}_r\right)}dr \leq C \int_t^{t+s}r^{\mu - 1}dr \leq C t^{ \mu}
	\end{multline}
	and so for the iterated covariant derivatives of $\alpha_j$.
	Hence, the curve $\alpha_j^r$ and all the curves of its derivatives are Cauchy in $C^0 \left(\Lambda^{k-1}\left(\dot{P}_1\right)\right)$ and thus $\alpha_j^r$ has a limit $\alpha^\infty_j$ in $C^\infty\left(\Lambda^{k-1}\left(\dot{P}\right)\right)$.
	The same exact proof holds for each $\beta_j$, yielding $\beta^\infty_j$.
	By taking $s \to \infty$ in the inequality \eqref{eq:estimate_alphas} for $\alpha_j$ and its derivatives, and by remarking that $\norm{\cdot}_{C^0\left(\dot{P}_1\right)} = r^j\norm{\cdot}_{C^0\left(\dot{P}_r\right)}$ we get that $r^j(\alpha_j^\infty - \alpha_j) \in C^\infty_\mu(P)$.
	This is because $\partial_r \lrcorner \alpha_j = 0$ and thus the $C^0_\mu(P)$ norm is determined by the $C^0\left(\dot{P}_r\right)$ norms at various $r$.
	The same holds for $\beta_j$.
	
	In the proof of existence of the limit we used that $\nabla^M_{\partial_r} = 0$, but we still did not use that $\nabla^M_{X} = 0$ for any other vector field $X$ orthogonal to $\partial_r$.
	Without loss of generality suppose $X$ is the pullback of a vector field on $\dot{P}$.
	This reveals to be crucial to prove that $\nabla^A \gamma_\infty = 0$.
	Indeed, this implies that, for any vector field $X$ orthogonal to $\partial_r$,
	$$
	\nabla^P_{X} \gamma = \sum_{j = 0}^{k-1} \nabla^P_{X} dr \wedge r^j \alpha_j + \sum_{j = 0}^{k-1} dr \wedge \nabla^P_{X} (r^j \alpha_j) + \sum_{j = 0}^{k} \nabla^P_{X} (r^j \beta_j) \in C^\infty_{\mu}(\Lambda^k(P))
	,
	$$
	by the same reasoning in the case of $\nabla^P_{\partial_r}$ and the fact that $X$ grows with a factor $r$.
	Now let us distinguish the case where $X$ is horizontal and when it is vertical.
	If $X$ is horizontal then $\nabla^P_X dr = r X^\flat$.
	In this case, by collecting linearly independent term as above we find that $\nabla^P_X \beta_0 \in C^\infty_{\mu}(\Lambda^k(P))$ and moreover
	$$
		r^j \nabla^P_X(\beta_j) - X^\flat \wedge r^{j-1}\alpha_{j} \in C^\infty_{\mu}(\Lambda^k(P)) \text{ for } j \geq 1 \qquad r^j\nabla^P_X \alpha_j \in C^\infty_{\mu}(\Lambda^k(P))
		.
	$$
	Now let us look e.g.\ to last equation which is equivalent to $\nabla^P_X \alpha_j \in C^\infty_{\mu - j}(\Lambda^k(P))$.
	By the equations defining $\nabla^A$, $\nabla^A_X \alpha_j^\infty$ decays with rate $-j$.
	Moreover, $\nabla^P_X \alpha_j = \nabla^A_X \alpha_j^\infty + \left(\nabla^P_X-\nabla_X^A\right) \alpha_j+\nabla_X^A\left(\alpha_j-\alpha_j^{\infty}\right)$ with the last two terms decaying with rate $\mu - j$.
	Hence, the only possibility is having $\nabla^A_X \alpha_j^\infty = 0$.
	By reasoning analogously for the other two equations, we get that
	\begin{equation}\label{eq:nabla_A_alpha_beta}
		\nabla^A_X \beta_0^\infty = 0 \qquad \nabla^A_X(\beta_j^\infty) = X^\flat \wedge \alpha_{j - 1}^\infty \text{ for } j \geq 1 \qquad \nabla^A_X \alpha_j^\infty = 0
		.
	\end{equation}
	On the other hand, if $X$ is vertical, reasoning as above we get that $\nabla^A_X \alpha_j^\infty = 0$ and $\nabla^A_X \beta_j^\infty = 0$.
	
	Now, define $\gamma_\infty = \sum_{j = 0}^{k-1} dr \wedge r^j \alpha^\infty_j + \sum_{j = 0}^{k}r^j \beta^\infty_j$, and call $\gamma_\infty$ also its pullback on $P$.
	It is clear by the above estimates for $r^j\alpha^\infty_j - r^j \alpha_j$ and $r^j\beta^\infty_j - r^j \beta_j$ that $\gamma - \gamma_\infty \in C^\infty_{\mu}(P)$.
	We further claim that $\nabla^A\gamma_\infty = 0$.
	By equations \eqref{eq:nabla_A_alpha_beta}, we see that for any vector field $X$ that is the pullback of a vector field on $\dot{P}$, $\nabla^A_X \gamma_\infty = 0$.
	Moreover, since the connection $A_\infty$ is by hypothesis radially invariant, $\partial_r \lrcorner d A_\infty = 0$ and thus $\nabla^P_{\partial_r} = \nabla^A_{\partial_r}$ by equations \eqref{eq:Levi-Civita_P} and \eqref{eq:adapted_connection}.
	Since $\nabla^P_{\partial_r}(r^j \alpha_j^\infty) = \nabla^P_{\partial_r}(r^j \beta_j^\infty) = 0$, it follows that $\nabla^A_{\partial_r}\gamma_\infty = 0$, which concludes the proof.
\end{proof}

Before proving that the newly found manifolds have full holonomy, let us prove a useful lemma about the holonomy of Calabi Yau cones.

\begin{lemma}\label{thm:CY_cone_holonomy}
	Any nontrivial Calabi Yau cone $C$ of real dimension 6 has full holonomy $\SU(3)$.
	Here by trivial, we mean isomorphic to $\C^3$.
\end{lemma}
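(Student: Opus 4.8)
\emph{Plan of proof.} The plan is to use the standard dichotomy ``flat or irreducible'' for metric cones, then read off the answer from Berger's classification. Since $C$ is Calabi--Yau of complex dimension $3$ it is Ricci-flat and Kähler, so $\Hol(C)\subseteq\SU(3)$; in particular $C$ carries parallel complex-volume and Kähler forms. Write $C=\mathrm{C}(\Sigma)$ with $\Sigma$ a smooth Sasaki--Einstein $5$-manifold; recall $\Sigma$ is compact, hence complete, so that Gallot-type results apply.

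First I would establish that $\Hol^0(C)$ acts irreducibly on each tangent space $T_pC\cong\R^6$. By Gallot's theorem a metric cone over a complete Riemannian manifold is either flat or has holonomy acting irreducibly, the flat case occurring precisely when the link has constant sectional curvature $1$. In that flat case $\Sigma$ is a finite free quotient of the round $S^5$ and, the Calabi--Yau data on $\C^3$ descending, $C$ is a finite free quotient of $\C^3$ by a subgroup of $\SU(3)$; the hypothesis that $C$ is not $\C^3$ rules this out. Hence we may assume $C$ is non-flat, with trivial de Rham decomposition and irreducible holonomy representation.

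Next I would invoke Berger's list: $\Hol^0(C)$ is then a connected subgroup of $\SU(3)\subset\SO(6)$ acting irreducibly on $\R^6$, realised as the restricted holonomy of a non-flat Ricci-flat Kähler metric. Since $C$ is non-flat and Ricci-flat it is not locally symmetric (an irreducible Ricci-flat symmetric space is flat), so by the non-symmetric part of Berger's list the only options in real dimension $6$ are $\SO(6)$, $\U(3)$ and $\SU(3)$; as $\U(3),\SO(6)\not\subseteq\SU(3)$ whereas $\Hol^0(C)\subseteq\SU(3)$, we conclude $\Hol^0(C)=\SU(3)$. Combining this with $\Hol(C)\subseteq\SU(3)$ forces $\Hol(C)=\SU(3)$, which is the claim. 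The only step needing care is the first one: one must use completeness of $\Sigma$ to invoke Gallot's theorem and the nontriviality hypothesis to discard the flat case, after which everything reduces to the classical holonomy classification and the containment $\Hol(C)\subseteq\SU(3)$ supplied by the Calabi--Yau condition.
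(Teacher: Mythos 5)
Your route is genuinely different from the paper's. The paper argues via the de Rham decomposition: if $\Hol(C)$ were a proper subgroup of $\SU(3)$ it would lie in $\Sp(1)\times 1$, so the cone would split as $\C\times C_4$ for a $4$-dimensional cone $C_4$, and unless $C_4=\C^2$ the singular locus of $C$ would be positive-dimensional, contradicting smoothness of the link. You instead combine Gallot's flat-or-irreducible dichotomy for metric cones over complete manifolds with Berger's classification. In the irreducible case your argument is complete and clean: non-flat Ricci-flat irreducible implies non-locally-symmetric, Berger in real dimension $6$ leaves $\SO(6)$, $\U(3)$, $\SU(3)$, and the containment $\Hol^0(C)\subseteq\SU(3)$ forces equality; this is arguably more robust than the paper's splitting-plus-singular-locus inspection.

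However, your disposal of the flat case has a genuine gap. If $\Sigma$ has constant curvature $1$ then $\Sigma=S^5/\Gamma$ and $C=\C^3/\Gamma$ for a finite $\Gamma\subset\SU(3)$ acting freely on $S^5$. For nontrivial $\Gamma$ this cone is \emph{not} isomorphic to $\C^3$, so the hypothesis that $C$ be nontrivial does not rule it out; yet $\Hol(C)=\Gamma\neq\SU(3)$ (the restricted holonomy is even trivial). So the nontriviality of $C$ cannot be what excludes the flat case, and the statement as literally written fails for these quotients. What actually excludes them in this paper is the standing assumption, made at the end of the subsection on AC Calabi--Yau $3$-folds and justified in lemma \ref{thm:additional_topological_constraint}, that the Sasaki--Einstein link has nonconstant sectional curvature; you should invoke that assumption here instead. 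To be fair, the paper's own one-line proof quietly suffers from the same issue, since for $\C^3/\Gamma$ only the universal cover of $C\setminus\{o\}$ splits, not $C$ itself, so its singular-locus argument likewise only applies once the flat quotients have been excluded.
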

\begin{proof}
	To show that the holonomy of $C$ needs to be the whole $\SU(3)$, we remark that if it were not, it would have to be $\Sp(1) \times 1$ or a subgroup.
	In this case the cone should be $\C \times C_4$ where $C_4$ is a 4-cone.
	However, unless $C_4 = \C^2$ the singular locus of $C$ would have nonzero dimension, and this is impossible.
\end{proof}

\begin{theorem}\label{thm:full_holonomy}
	Let $M$ and $B$ be as in the statement of theorem \ref{thm:final_result}.
	
	Then, if $c_1(M)$ generates a 2-dimensional subspace in $H^2(B)$, then $\Hol(M) = \Spin(7)$.
\end{theorem}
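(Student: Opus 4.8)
The plan is to verify the hypotheses of Bryant's criterion, Lemma~\ref{thm:Bryant}. Since $B$ is simply connected and $c_1(M)$ spans a $2$-dimensional subspace of $H^2(B;\R)$, Lemma~\ref{thm:fundamental_group_circle_bundles} shows $\pi_1(M)$ is finite; I would first pass to the universal cover $p:\tilde M\to M$. Writing $M$ as a product of two circle bundles (Proposition~\ref{thm:product_bundle}) and applying Lemma~\ref{thm:universal_cover_circle_bundle} to each factor, $\tilde M$ is again a principal $T^2$-bundle over $B$ whose first Chern class is a rescaling of $c_1(M)$; in particular $c_1(\tilde M)$ spans the same $2$-plane and \eqref{eq:topological_condition} continues to hold. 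Hence Theorem~\ref{thm:final_result} applies to $\tilde M$ (equivalently, one pulls back $\Phi_\varepsilon$), producing a $T^2$-invariant torsion-free $\Spin(7)$-structure $\tilde\Phi$; by \cite{Fernandez1986} $\nabla\tilde\Phi=0$, so $\Hol(\tilde M,\tilde g)\le\Spin(7)$, and $(\tilde M,\tilde g)$ is $AT^2C$ by Theorem~\ref{thm:AT2C}. Because $\Spin(7)$ is connected, $\Hol^0(M)=\Hol^0(\tilde M)$, so it suffices to prove $\Hol(\tilde M)=\Spin(7)$; and since $\tilde M$ is simply connected this reduces, via Lemma~\ref{thm:Bryant}, to excluding nonzero parallel $1$-forms and parallel nondegenerate compatible $2$-forms on $\tilde M$.

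The key tool is the asymptotic model $P_{\tilde T^2}\to C(\Sigma)$ together with Lemma~\ref{thm:closed_coclosed_constant_forms}. On the model the adapted connection $\nabla^A$ is flat along the trivial rank-$2$ vertical bundle $\langle\eta_\infty,\theta_\infty\rangle$ and equals the pullback of the Levi-Civita connection of the cone on the horizontal bundle, so $\Hol(\nabla^A)=\Hol(C(\Sigma))$. Now $c_1(M)\neq 0$ forces $H^2(B)\neq 0$, hence $B\not\cong\C^3$, so the cone is nontrivial and $\Hol(C(\Sigma))=\SU(3)$ by Lemma~\ref{thm:CY_cone_holonomy}, acting in the standard way on the horizontal $\C^3$ and trivially on $\langle\eta_\infty,\theta_\infty\rangle$. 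Therefore the algebra of $\nabla^A$-parallel forms is generated by $\eta_\infty$, $\theta_\infty$, $\omega_{\mathrm C}$, $\Re\Omega_{\mathrm C}$, $\Im\Omega_{\mathrm C}$; in particular every $\nabla^A$-parallel $1$-form is $c_1\eta_\infty+c_2\theta_\infty$ and every $\nabla^A$-parallel $2$-form is $a\,\omega_{\mathrm C}+b\,\eta_\infty\wedge\theta_\infty$.

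Given a parallel form $\gamma$ on $\tilde M$, it is closed, and Lemma~\ref{thm:closed_coclosed_constant_forms} gives a $\nabla^A$-parallel $\gamma_\infty$ on the model with $\gamma-\gamma_\infty$ decaying, so $d\gamma_\infty$ decays as well. If $\gamma$ is a $1$-form then $d\gamma_\infty=c_1\kappa_{\eta_\infty}+c_2\kappa_{\theta_\infty}$, the pullback from $\Sigma$ of a harmonic representative of $c_1\,c_1(\tilde P_1)+c_2\,c_1(\tilde P_2)$; since the classes $c_1(\tilde P_i)$ remain linearly independent, Theorem~\ref{thm:harmonic_2_forms_isomorphism} forces this to vanish unless $(c_1,c_2)=0$, so $\gamma_\infty=0$, $\gamma$ decays, and $\gamma\equiv0$ because $|\gamma|$ is constant. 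If $\gamma=\sigma$ is a compatible nondegenerate $2$-form then, $\omega_{\mathrm C}$ being closed, $d\sigma_\infty=b\,(\kappa_{\eta_\infty}\wedge\theta_\infty-\eta_\infty\wedge\kappa_{\theta_\infty})$, which likewise must vanish, i.e.\ $b=0$; then $\sigma$ is asymptotic to $a\,\omega_{\mathrm C}$, so $\sigma^4$ is asymptotic to $a^4\omega_{\mathrm C}^4=0$, contradicting that $\sigma^4$ is a parallel (hence constant-norm) nowhere-zero volume form (and if $a=0$ then $\sigma$ decays, hence vanishes, contradicting nondegeneracy). This exhausts the cases, so $\Hol(\tilde M)=\Spin(7)$ and therefore $\Hol(M)=\Spin(7)$.

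The main obstacle will be the decay bookkeeping in the previous paragraph: one needs the error $\gamma-\gamma_\infty$ to decay strictly faster than the Hermitian Yang--Mills curvatures $\kappa_{\eta_\infty},\kappa_{\theta_\infty}$, which decay at rate exactly $-2$. When the base $B$ has asymptotically conical rate $\nu<-1$ this is immediate from Theorem~\ref{thm:AT2C} and Lemma~\ref{thm:closed_coclosed_constant_forms}; in the borderline range $\nu\in(-1,0)$ one has to sharpen the asymptotic expansion (comparing leading coefficients on the link $\Sigma$, and using that a $\nabla^A$-parallel form with nonzero ``curvature part'' cannot be the leading term of a closed form), which is precisely where the hypothesis on $c_1(M)$ and the nonflatness of the limiting connection really enter. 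The remaining ingredients --- the universal-cover reduction, the classification of $\nabla^A$-parallel forms, and the identity $\omega_{\mathrm C}^4=0$ --- are routine.
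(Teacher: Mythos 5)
Your overall architecture matches the paper's: reduction to the simply connected case via Lemmas \ref{thm:finite_fundamental_group}, \ref{thm:fundamental_group_circle_bundles} and \ref{thm:universal_cover_circle_bundle}, Bryant's criterion (Lemma \ref{thm:Bryant}), the asymptotic model with the adapted connection, Lemma \ref{thm:closed_coclosed_constant_forms}, and the classification of $\nabla^A$-parallel forms via $\Hol(C(\Sigma))=\SU(3)$ (Lemma \ref{thm:CY_cone_holonomy}). The endgame for the $2$-form ($\sigma^4$ asymptotic to $a^4\omega_C^4=0$) is a legitimate alternative to the paper's argument that $J\eta=\tau$ cannot hold with $\tau$ decaying and $\eta$ not.

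However, there is a genuine gap exactly where you flag "decay bookkeeping", and it is not a borderline technicality that can be patched by "sharpening the expansion": it is the crux of the proof. From $d\gamma=0$ you get $d\gamma_\infty=-d(\gamma-\gamma_\infty)$, and Lemma \ref{thm:closed_coclosed_constant_forms} only gives $\gamma-\gamma_\infty\in C^\infty_{-1}$ (the $AT^2C$ rate is $\max(-1,\nu)$, so generically $-1$), hence $d(\gamma-\gamma_\infty)\in C^\infty_{-2}$. The pullbacks $\kappa_{\eta_\infty},\kappa_{\theta_\infty}$ of harmonic forms on $\Sigma$ also decay at rate exactly $-2$. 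So the identity $c_1\kappa_{\eta_\infty}+c_2\kappa_{\theta_\infty}=-d(\gamma-\gamma_\infty)$ is perfectly consistent for nonzero $(c_1,c_2)$, and no contradiction with Theorem \ref{thm:harmonic_2_forms_isomorphism} (which anyway concerns forms on $B$, not on $\Sigma$) is obtained. The same problem kills your deduction $b=0$ for the $2$-form. The paper circumvents this with a two-step argument you are missing. First, it proves that every parallel form is $T^2$-\emph{invariant}: the maps $\mathcal{L}_1,\mathcal{L}_2$ sending a parallel form to its constant coefficients $(a_\infty,b_\infty)$, resp.\ $(c_\infty,e_\infty)$, at infinity are injective (a parallel form with vanishing leading coefficients decays, hence vanishes), and since the $T^2$-action fixes $\eta_\infty,\theta_\infty,\omega_\infty$ it fixes these coefficients, so injectivity forces $g^*\gamma=\gamma$. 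Second, a $T^2$-invariant $1$-form is globally $\alpha=a\eta+b\theta+\kappa$ with $a,b\in C^\infty(B)$; closedness gives $da=db=0$ and the \emph{exact} cohomological identity $a\,c_1(P_1)+b\,c_1(P_2)=0$ in $H^2(B;\R)$, so linear independence of the Chern classes in $H^2(B)$ — the actual hypothesis of the theorem — yields $a=b=0$, and then $\alpha$ decays and vanishes (similarly $c=0$ and then $c_\infty=0$ for the $2$-form). Your route never uses linear independence in $H^2(B)$ in a way that closes; to repair the proof you should adopt this $T^2$-invariance step rather than attempt a finer asymptotic analysis.
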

\begin{proof}
	Note that it is sufficient to prove that $\Hol^0(M) = \Spin(7)$.
	Indeed since $\Hol^0(M) \leq \Hol(M)$ and the existence of a torsion free $\Spin(7)$-structure implies that $\Hol(M) \leq \Spin(7)$ we get the thesis.
	
	We can assume $B$ to be simply connected. Indeed, by lemma \ref{thm:finite_fundamental_group}, $B$ has finite fundamental group and its universal cover $\tilde{B} \to B$ is AC Calabi-Yau. Let $\tilde{M} \to \tilde{B}$ be the pullback of $M \to B$ through the standard projection $\tilde{B} \to B$.
	$\tilde{B}$ has a CY-structure pulled back from $B$ and $\tilde{M}$ has $\Spin(7)$ structure $\tilde{\Phi}_\varepsilon$ built from $\tilde{B}$ through theorem \ref{thm:final_result}.
	Then, the canonical map $\pi_{\tilde{M}} : \tilde{M} \to M$ is a covering space and $ \pi_{\tilde{M}}^* \Phi_\varepsilon = \tilde{\Phi}_\varepsilon$, by the uniqueness statement in the theorem.
	Thus $\tilde{M}$ is a Riemannian covering space of $M$ for the metrics induced by the $\Spin(7)$ structures, and hence we have that $\Hol^0(M) = \Hol^0\left(\tilde{M}\right)$.
	Thus, we replace $B$ with $\tilde{B}$ and $M$ with $\tilde{M}$ and we drop the tildes.
	
	Now, by lemma \ref{thm:fundamental_group_circle_bundles}, the total space of $M$ has finite fundamental group.
	By lemma \ref{thm:universal_cover_circle_bundle} applied twice, its universal cover $\tilde{M}$ is also a $T^2$ bundle over $B$.
	Hence, again by invariance of restricted holonomy under covering spaces we can assume $M$ to be simply connected.
	
	
	The hypothesis of $c_1(M)$ generating a 2-dimensional space is equivalent to $c_1(P_1)$ and $c_1(P_2)$ being linearly independent.
	Denote by $\tilde{P}_1, \tilde{P}_2$ the universal covers of the original $P_1$ and $P_2$.
	Note that, since we passed to universal covers, by lemma \ref{thm:universal_cover_circle_bundle}, the $c_1\left(\tilde{P}_1\right)$ and $c_1\left(\tilde{P}_2\right)$ considered here are primitive elements of $H^2(B; \Z)$ whose $\Z$-multiples contain the original $c_1(P_1)$ and $c_1(P_2)$.
	However, it is clear that $\R$-linear independence of a set of elements of a lattice is equivalent to $\R$-linear independence of the relative primitive elements.
	
	Now, thanks to simple connectedness of $M$, by lemma \ref{thm:Bryant}, proving that the restricted holonomy of $M$ is the whole group $\Spin(7)$ amounts to proving that there are no parallel 1-forms and no parallel non-degenerate 2-forms on $M$, where parallel is meant with respect to the Levi-Civita connection.
	Now we prove that parallel 1-forms and 2-forms are $T^2$-invariant.
	By studying the possible parallel forms for $\nabla^A$ and by lemma \ref{thm:closed_coclosed_constant_forms}, a parallel 1-form $\alpha$ and a parallel 2-form $\beta$ on $M$ can be thus written, outside of a compact $K$, as $\alpha = a_\infty \eta_\infty + b_\infty \theta_\infty + \alpha_H + \delta_\alpha$ and $\beta = c_\infty \eta_\infty \wedge \theta_\infty + \eta_\infty \wedge \beta^1_H + \theta_\infty \wedge \beta_H^2 + \beta_H^3 + \gamma_\beta$ where $\delta \in \Omega^1_{-1}(M)$, $\gamma \in \Omega^2_{-1}(M)$ and $\alpha_H, \beta_H^k$ are forms on $C$ of appropriate degree that are parallel with respect to the Levi-Civita connection of $C$.
	Now, since the holonomy of $C$ has to be the whole $\SU(3)$ then $\alpha_H = \beta^1_H = \beta^2_H = 0$ and $\beta^3_H = e_ \infty \omega_\infty$, where $e_\infty \in \R$ and $\omega_\infty$ is the Kähler form on the cone.
	Indeed, the existence of other parallel forms would reduce the holonomy of $C$ to a subgroup of $\SU(3)$, which would mean that $C \iso \C^3$ by lemma \ref{thm:CY_cone_holonomy}.
	On the other hand, by Bishop-Gromov inequality, the only CY manifold asymptotically conical to $\C^3$ is $\C^3$ itself, but this is impossible since we assume $B$ has nontrivial $H^2$.
	
	Hence,
	\begin{equation}\label{eq:beta}
		\alpha = a_\infty \eta_\infty + b_\infty \theta_\infty + \delta_\alpha \qquad
		\beta = c_\infty \eta_\infty \wedge \theta_\infty + e_\infty \omega_\infty + \gamma_\beta
		.
	\end{equation}	
	The two maps
	$$
		\begin{aligned}
			\mathcal{L}_1 : \mathcal{H}^1_{\nabla_{g_M}} & \to \R^2 \\
			\alpha & \mapsto (a_\infty, b_\infty)
		\end{aligned}
		\qquad
		\begin{aligned}
			\mathcal{L}_2 : \mathcal{H}^2_{\nabla_{g_M}} & \to \R^2 \\
			\beta & \mapsto (c_\infty, e_\infty)
		\end{aligned}
	$$
	are clearly linear.
	They are both also injective because in both cases the preimage of $(0, 0)$ is a decaying parallel form which is therefore null.
	By definition of principal connection, the $T^2$ action preserves $\eta_\infty$ and $\theta_\infty$ and it also preserves $\omega_\infty$, it being a horizontal form.
	Thus, for any $\alpha \in \mathcal{H}^1_{\nabla_{g_M}}$ and for any $g \in T^2$, $\mathcal{L}_1 (g^* \alpha) = \mathcal{L}_1 (\alpha)$ and by injectivity of $\mathcal{L}_1$ we conclude that parallel 1-forms are $T^2$-invariant.
	The same argument applied to $\mathcal{L}_2$ proves that parallel 2-forms are $T^2$-invariant.
	Any $T^2$-invariant 1-form $\alpha$ and  $T^2$-invariant 2-form $\beta$ are of the form
	$$
		\alpha = a \eta + b \theta + \kappa \qquad \beta = c \eta \wedge \theta + \tau \wedge \eta + \chi \wedge \theta + \zeta
	$$
	where $a, b, c \in C^\infty(B)$, $\kappa, \tau, \chi \in \Omega^1(B)$ and $\zeta \in \Omega^2(B)$.
	
	From $d \alpha = 0$ we have that $d a = d b = 0$ and $-d \gamma = a d \eta + b d \theta$.
	Hence, the last equation in cohomology reads $a c_1(P_1) + bc_1(P_2) = 0$ where $a, b \in \R$ since we saw that they are constant.
	Thus, the hypothesis of linear independence implies that $a = b = 0$, and thus $\alpha \in \Omega^1(B)$, i.e.\ it is horizontal.
	Since $\eta_\infty, \theta_\infty$ and horizontal forms are linearly independent, we conclude that $a_\infty = b_\infty = 0$ and thus outside of $K$, $\kappa = \delta_\alpha$.
	Therefore, $\alpha = \kappa$ decays and it is parallel and therefore is 0.
	
	In the case of $\beta$, an argument based on closedness similar to before proves $dc = 0$ and $cd\eta - d\chi = 0$, which in cohomology reads $c \cdot c_1(P_1) = 0$, and by hypothesis of linear independence we get $c=0$.
	Thus, $d \tau = d \chi = 0$ and $d \zeta = \tau \wedge d\eta + \chi \wedge d \theta$.
	
	Since all other addends in the second equation \eqref{eq:beta} are $T^2$ invariant, $\gamma_\beta$ also is, and therefore we can write
	$$
	\gamma_\beta = c_\gamma \eta \wedge \theta + \eta \wedge \tau_\gamma + \theta \wedge \chi_\gamma + \zeta_\gamma
	$$
	where $c_\gamma \in C^\infty_\nu(B)$, $\tau_\gamma, \chi_\gamma \in \Omega^1_\nu(B)$ and $\zeta_\gamma \in \Omega^2_\nu(B)$, for $\nu < 0$.
	Moreover, $\eta_\infty = \eta + \xi_\eta$ and $\theta_\infty = \theta + \xi_\theta$, where $\xi_\eta, \xi_\theta \in \Omega^2_{-1}(M \setminus K)$.
	Therefore,
	$$
	\beta = (c_\infty + c_\gamma) \eta \wedge \theta + \eta \wedge ( c_\infty \xi_\eta + \tau_\gamma) + \theta \wedge (-c_\infty\xi_\eta + \chi_\gamma) + c_\infty \xi_\eta \wedge \xi_\theta + e_\infty \omega_\infty + \zeta_\gamma
	.
	$$	
	Now, comparing the two expressions of $\beta$ that we got, we get $c = 0 = c_\infty + c_\gamma$ which implies $c_\infty = c_\gamma = 0$ since one is constant and the other decays.
	Hence, $\beta = \tau \wedge \eta + \chi \wedge \theta + \zeta$ where $\tau$ and $\chi$ decay.
	
	Hence, $\beta$ is decaying in the vertical directions and this is not possible if we want it to be non-degenerate (hence symplectic) and compatible with the metric.
	One of the many possible ways to see it is the following.
	If $\beta$ were a non-degenerate form compatible with the metric, we would have $J\eta = -X \interior \beta = \tau$, where $J$ is the complex structure uniquely determined by $\beta$ and the metric.
	However, this is not possible as $\tau$ decays and $\eta$ does not, and $J$ is an isometry pointwise.
	This concludes the proof.
\end{proof}

\subsection{Additional topological constraints}

In this section we remark the additional topological constraints that we need to impose on the base $B$ in order to have the existence of a $\Spin(7)$-structure with full holonomy on $M$, and which need to be verified together with the topological constraints introduced in section \ref{sec:topological_constraints}.

\begin{lemma}\label{thm:additional_topological_constraint}
	Let $B$ and $M \iso P_1 \times P_2$ be in the statement of theorem \ref{thm:final_result}, and call $\Sigma$ the Sasaki-Einstein link of $B$, as usual.
	Then, the following topological conditions are necessary for the existence of a $\Spin(7)$-structure with full holonomy on $M$:
	\begin{enumerate}
		\item $\dim H^2(B) \geq 2$,
		\item $\dim H^2(\Sigma) \geq 1$.
	\end{enumerate}
\end{lemma}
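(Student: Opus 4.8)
The plan is to deduce both numerical bounds from the two structural constraints that a full-holonomy $\Spin(7)$-structure on $M$ places on the pair of circle bundles $P_1,P_2$ into which $M$ splits. On one hand, by Proposition \ref{thm:symplectic_topological_constraint} its existence forces \eqref{eq:topological_condition}, i.e.\ $c_1(P_1)\smile[\omega_0]=c_1(P_2)\smile[\omega_0]=0$ in $H^4(B;\R)$; on the other, by Theorem \ref{thm:full_holonomy} together with the necessity of its hypothesis (the discussion following Theorem \ref{thm:main_theorem}) it forces $c_1(P_1)$ and $c_1(P_2)$ to span a $2$-dimensional subspace of $H^2(B;\R)$. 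Condition 1 then follows at once, since such a subspace can only exist when $\dim H^2(B)\ge 2$.

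For condition 2 I would argue by contradiction: assuming $H^2(\Sigma;\R)=0$, I would show that the cup product map $L_{[\omega_0]}\colon H^2(B;\R)\to H^4(B;\R)$, $\alpha\mapsto\alpha\smile[\omega_0]$, is injective — which is incompatible with $\ker L_{[\omega_0]}$ containing the two linearly independent classes $c_1(P_1),c_1(P_2)$. The first step is to recall that the Sasaki–Einstein link $\Sigma$ of an irreducible AC Calabi–Yau $3$-fold is Einstein with positive Einstein constant, hence $H^1(\Sigma;\R)=0$ by Bochner. Plugging $H^1(\Sigma;\R)=H^2(\Sigma;\R)=0$ into the long exact cohomology sequence of the pair $(\bar B,\Sigma)$ — where $\bar B$ is a compact manifold with boundary $\Sigma$ onto which $B$ deformation retracts — then gives that $H^2_c(B;\R)\to H^2(B;\R)$ is an isomorphism.

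The next step is to upgrade this to an identification of harmonic spaces. On an AC manifold $L^2\mathcal{H}^2(B)$ injects into $H^2(B;\R)$ with image $\operatorname{Im}\bigl(H^2_c(B;\R)\to H^2(B;\R)\bigr)$, which we have just seen is all of $H^2(B;\R)$; since $L^2$-harmonic $2$-forms decay strictly faster than $r^{-2}$, we have $L^2\mathcal{H}^2(B)\subseteq\mathcal{H}^2_{-2}(B)$, and Theorem \ref{thm:harmonic_2_forms_isomorphism} (with the improved decay recorded in \cite{Foscolo2021}) makes $\mathcal{H}^2_{-2}(B)\to H^2(B;\R)$ an isomorphism as well; two subspaces mapping isomorphically onto $H^2(B;\R)$ under the same map must coincide, so $\mathcal{H}^2_{-2}(B)=L^2\mathcal{H}^2(B)$. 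To finish, given $\alpha\in\ker L_{[\omega_0]}$ I would take its harmonic representative $\kappa\in\mathcal{H}^2_{-2}(B)$; then $[\kappa]\smile[\omega_0]=0$ is equivalent to $[\star_B\kappa]_{H^4}=0$ by the remark in Lemma \ref{thm:exact_star_H4}, so that lemma yields that $\kappa$ is $L^2$-orthogonal to $L^2\mathcal{H}^2(B)$; but $\kappa\in\mathcal{H}^2_{-2}(B)=L^2\mathcal{H}^2(B)$, hence $\norm{\kappa}_{L^2}^2=0$, so $\kappa=0$ and $\alpha=0$. This proves $L_{[\omega_0]}$ injective, contradicting the existence of two independent classes in its kernel, and therefore $H^2(\Sigma;\R)\neq 0$.

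The main obstacle is the third step: one has to pin down, on an AC Calabi–Yau $3$-fold, the exact relationship between $L^2$-harmonic $2$-forms, the space $\mathcal{H}^2_{-2}(B)$ and $\operatorname{Im}(H^2_c(B)\to H^2(B))$, and in particular to be sure that $H^1(\Sigma;\R)=H^2(\Sigma;\R)=0$ forces the slow ($r^{-2}$) asymptotic term of a harmonic $2$-form — which is controlled by a harmonic $2$-form on $\Sigma$ — to vanish, so that every element of $\mathcal{H}^2_{-2}(B)$ is genuinely square-integrable. All of this follows from the analytic input of \cite{Foscolo2021} recalled in Section \ref{sec:asymptotical}, but it is the step requiring the most care.
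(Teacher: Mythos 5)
Your proposal is correct and uses essentially the same ingredients as the paper's proof: the long exact sequence of the pair $(B,\Sigma)$, the identification $H^2_c(B)\iso L^2\mathcal{H}^2(B)$, and Lemma \ref{thm:exact_star_H4} to show that classes killed by cup product with $[\omega_0]$ cannot lie in $\Im\left(H^2_c(B)\to H^2(B)\right)=\ker\left(H^2(B)\to H^2(\Sigma)\right)$ unless they vanish. The only difference is presentational: you run the argument as a contradiction (assuming $H^2(\Sigma)=0$ to make the Lefschetz map injective), whereas the paper argues directly that the nonzero classes $c_1(P_i)$ must restrict nontrivially to $\Sigma$ — the same argument in contrapositive form.
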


	In particular, $\Sigma$ cannot be a finite quotient of the round 5-sphere $S^5$, which can be equivalently stated as $\Sigma$ cannot have constant sectional curvature.
\begin{proof}
	The first condition is obvious from theorem \ref{thm:full_holonomy}.

	As for the second condition, consider the long exact sequence of manifolds with boundary
	$$
		\ldots \to H^1(\Sigma) \to H^2_c(B) \to H^2(B) \to H^2(\Sigma) \to \ldots
	$$
	Recall that $H^2_c(B) \iso L^2\mathcal{H}^2(B)$.

	Now, by lemma \ref{thm:exact_star_H4}, $c_1(P_1)$ and $c_1(P_2)$, which are nonzero, cannot be in
	$$
		\Im \left( H^1(\Sigma) \to L^2\mathcal{H}^2(B)\right) = \ker(H^2(B) \to H^2(\Sigma))
	$$
	and thus $H^2(\Sigma)$ cannot be 0.
\end{proof}

A posteriori, we can thus see that the hypothesis of $\Sigma$ not having constant sectional curvature used in section \ref{sec:asymptotical} is not restrictive.

\section{Generalization to orbifolds}\label{sec:orbifolds}
In this section we explain how theorem \ref{thm:main_theorem} can be generalized to the case of orbifolds, without getting into the details of the proofs, but rather just mentioning the differences with respect to the smooth case.

Let us start with a definition.
In the context of orbifolds, by Calabi–Yau cone we mean a Riemannian cone whose link is an orbifold and whose holonomy is contained in $\SU(3)$.
The term might be overloaded because in the context of orbifolds, the word cone could also refer to a Riemannian cone including its singularity as an orbifold singularity, but in our case we are not interested the conic singularity, but we are rather concerned with singularities of the link.
Similarly to the smooth case, the link will be a Sasaki-Einstein orbifold.

Given a metric vector orbibundle $(E, g_E)$ with a metric connection, we can define H\"older norms $C^{k, \alpha}_\nu$ of $E$ as in the smooth case.
Then we define
\begin{equation}\label{eq:orbifold_Holder}
	C^{k, \alpha}_\nu(E) = \left\{u \in C^0_\text{orb}(E) \mid \norm{u}_{C^{k, \alpha}_\nu} < +\infty \right\}
	.
\end{equation}
Given an orbifold $\Sigma$, the definition of \textit{asymptotically conical orbifold} is analogous to definition \ref{def:AC}.
Note that singularities are allowed to exist both in the compact region and all the way to infinity.

We will say that a principal $T^k$ orbibundle on an orbifold $B$ is \textit{Seifert} if its total space is smooth.

Then, the following is true.
\begin{theorem}\label{thm:main_theorem_orbifold}
	Let $\left(B, g_B, \omega_0, \Omega_0\right)$ be an asymptotically conical Calabi Yau 3-orbifold asymptotic with rate $\nu \in \R^-$ to the Calabi-Yau cone $\left(\mathrm{C}(\Sigma), g_{\mathrm{C}}, \omega_{\mathrm{C}}, \Omega_{\mathrm{C}}\right)$ over a Sasaki-Einstein 5-orbifold $\Sigma$ with nonconstant sectional curvature.
	Let $M \rightarrow B$ be a principal Seifert $T^2$-bundle on $B$ such that $c_1^{\text{orb}}(M) \in H^2_{\text{orb}}\left(B; \Z^2\right)$ spans a 2-dimensional subspace in $H^2_{\text{orb}}\left(B; \R\right)$ and
	\begin{equation}\label{eq:topological_condition_orbifold}
		c_1^{\text{orb}}(M) \smile \left[\omega_0\right] = 0 \in H^4_{\text{orb}}\left(B; \R^2\right)
		.
	\end{equation}
	
	Then the 8-manifold $M$ carries an analytic curve of $T^2$-invariant torsion-free $\Spin(7)$-structures $\Phi : (0, \varepsilon_0) \to \Omega^4(M)$ such that, by denoting with $g_\varepsilon$ the Riemannian metric on $M$ induced by $\Phi_\varepsilon$, conditions 1, 2 and 3 from theorem \ref{thm:main_theorem} hold.
\end{theorem}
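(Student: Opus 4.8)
The plan is to run the entire machinery of sections \ref{sec:equations}--\ref{sec:properties} verbatim in the orbifold category, checking at each step that nothing genuinely uses smoothness of the base rather than smoothness of the total space $M$. Since $M\to B$ is Seifert, its total space is a genuine smooth manifold, so all statements about $\Spin(7)$-structures, holonomy, the metric $g_\Phi$ and the diffeomorphism-class arguments on $M$ make sense unchanged; only the analysis on $B$ and the Hodge/cohomology theory need to be reinterpreted orbifold-theoretically. First I would note that lemmas \ref{thm:T2_invariant_Spin7}, \ref{thm:Spin7_structure}, \ref{thm:Spin7_torsion} and \ref{thm:SU3_torsion_Spin7_torsion} are purely pointwise/linear-algebraic together with Cartan calculus, hence carry over word for word once $(\omega,\Omega)$ is an $\SU(3)$-structure on the orbifold $B$ (equivalently a $T^2$-invariant one on $M$ pulled back). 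Likewise the abstract reformulation \eqref{eq:abstract_Phi}--\eqref{eq:abstract_torsion} is formal.

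The heart of the matter is the analytic input of section \ref{sec:asymptotical}. Here I would invoke the orbifold versions of the Foscolo--Haskins--Nordström results: the Hodge theory on AC CY orbifolds, the isomorphism $\mathcal{H}^2_\nu(B)\to H^2_{\text{orb}}(B)$ (theorem \ref{thm:harmonic_2_forms_isomorphism}), the vanishing of decaying harmonic forms in the $\Lambda^k_{1},\Lambda^k_6$ components (lemma \ref{thm:harmonic_1_6_forms}), the Dirac isomorphism (theorem \ref{thm:Dirac_iso}), and the technical propositions \ref{thm:exact_4_forms}, \ref{thm:exact_4_forms'} about exact $4$-forms. All of these rest on the Fredholm theory of asymptotically conical elliptic operators, which works on AC orbifolds with conical (link) singularities exactly as in the manifold case, provided one uses the orbifold de Rham cohomology $H^*_{\text{orb}}$ (equivalently, $L^2$-cohomology with the $\mathbb{Q}$-coefficients that the orbifold structure forces). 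The proofs in \cite{Foscolo2021} and in \cite{Conlon2024} are local-to-global and patch together over local uniformizing charts, so no essential change is needed; I would simply cite them in their orbifold form, noting that \cite{Foscolo2019} already performed exactly this kind of orbifold adaptation for the $G_2$-to-$\Spin(7)$ circle-bundle construction. The hypothesis that $\Sigma$ has nonconstant sectional curvature (now imposed directly, since lemma \ref{thm:additional_topological_constraint}'s converse argument is unchanged) guarantees the nonvanishing cohomology needed for $c_1^{\text{orb}}(P_1),c_1^{\text{orb}}(P_2)$ to be accommodated. With these in hand, the free-parameter proposition \ref{thm:free_parameters}, the gauge-fixing of section \ref{sec:gauge}, the adiabatic-limit analysis (lemma \ref{thm:limit_varepsilon_0}), the first-order solutions (lemma \ref{thm:HYM_connection}, theorem \ref{thm:rho_first_order}), the implicit-function-theorem setup of section \ref{sec:implicit_function_theorem}, and the linearization theorem \ref{thm:linearization} all go through with ``manifold'' replaced by ``orbifold'' and $H^*$ by $H^*_{\text{orb}}$, yielding the analytic curve $\Phi_\varepsilon$ and properties 1--3.

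For the properties, conclusion 2 ($AT^2C$ with rate $\max(-1,\nu)$) and conclusion 3 (collapse with bounded curvature) are proved exactly as in theorem \ref{thm:AT2C} and theorem \ref{thm:final_result}: the asymptotic model $P_{T^2}\to C(\Sigma)$ now fibers over a cone with orbifold link, but the adapted-connection argument of lemma \ref{thm:closed_coclosed_constant_forms} is insensitive to this. For full holonomy (conclusion 1), I would reduce as in theorem \ref{thm:full_holonomy} to the case $B$, $M$ simply connected via lemma \ref{thm:finite_fundamental_group} and lemmas \ref{thm:fundamental_group_circle_bundles}--\ref{thm:universal_cover_circle_bundle} (these hold for orbifolds and Seifert bundles), then apply lemma \ref{thm:Bryant} to $M$ (which is a bona fide smooth simply connected $8$-manifold), using lemma \ref{thm:CY_cone_holonomy} on the link cone and the linear-independence of $c_1^{\text{orb}}(P_1),c_1^{\text{orb}}(P_2)$ to rule out parallel $1$- and $2$-forms. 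The only place requiring genuine care — and the step I expect to be the main obstacle — is verifying that the cited analytic theorems (the Fredholm/index theory, the harmonic-form vanishing, and especially propositions \ref{thm:exact_4_forms} and \ref{thm:exact_4_forms'}) do hold on AC \emph{orbifolds} with singularities extending to infinity; one must check that the weighted Sobolev and Schauder theory, the removable-singularity arguments near orbifold points, and the identification $H^2_c(B)\iso L^2\mathcal{H}^2(B)$ survive, and that the Bishop--Gromov step (only $\C^3$ is AC to $\C^3$) has an orbifold analogue (only $\C^3/\Gamma$ is AC to $\C^3/\Gamma$, which suffices since $H^2_{\text{orb}}(B)\neq 0$). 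I would handle this by a careful appeal to the orbifold framework already set up in \cite{Foscolo2019} and \cite{Conlon2024}, indicating that all the ingredients are either stated there or follow by the same local-uniformizing-chart reductions, rather than reproving them.
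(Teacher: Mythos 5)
Your overall strategy coincides with the paper's: rerun the smooth construction in the orbifold category, observing that $M$ itself is smooth (Seifert hypothesis) so only the analysis on $B$ needs reinterpretation, and reduce everything to checking that the AC Fredholm theory and harmonic-form results of \cite{Foscolo2021} survive on orbifolds. You also correctly flag that the substantive work is in verifying the weighted elliptic theory when singularities extend to infinity; the paper resolves this by noting that singularities can only go to infinity as rays over the singular locus of $\Sigma$, so the isotropy groups are radially constant and the standard annulus-rescaling argument (finitely many uniformizing charts per annulus, estimates for invariant sections on $\R^n$) goes through.

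There is, however, one genuine gap: you do not address the point where the smooth proof actually \emph{breaks} for orbifolds, namely the Lichnerowicz--Obata step underlying lemma \ref{thm:harmonic_1_6_forms} (the vanishing of decaying harmonic forms of type $\Lambda^k_1,\Lambda^k_6$). The Lichnerowicz inequality (Bochner plus integration by parts) survives, but Obata's rigidity fails for orbifolds: there are Sasaki--Einstein $5$-orbifolds other than $S^5$ whose first Laplace eigenvalue equals $5$, so the strict eigenvalue bound needed for the indicial-root analysis is not automatic. The paper restores it by invoking \cite{Shioya1999}, which classifies the equality cases as finite quotients of the round sphere, and these are excluded precisely by the hypothesis that $\Sigma$ has nonconstant sectional curvature (plus a density argument comparing the Laplacian on $C^\infty_0(\Sigma\setminus\Sigma_{\mathrm{sing}})$ with that on $C^\infty(\Sigma)$). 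Your proposal instead attributes the nonconstant-curvature hypothesis to a cohomological role ("guarantees the nonvanishing cohomology needed for $c_1^{\text{orb}}(P_i)$"), which is its \emph{a posteriori} role in the smooth case via lemma \ref{thm:additional_topological_constraint}; in the orbifold theorem it must be imposed \emph{a priori} exactly to rescue the Obata step. Without identifying and closing this point, the claim that lemma \ref{thm:harmonic_1_6_forms} "carries over" is unjustified, and with it the Dirac isomorphism and the linearization theorem.
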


Now, by working with H\"older sections as defined in equation \eqref{eq:orbifold_Holder}, we claim that most of the analysis used in the proof of theorem \ref{thm:main_theorem} can be adapted to the orbifold case.
Indeed, for example, lemma \ref{thm:harmonic_1_6_forms} eventually relies on Stokes theorem, which is true for orbifolds and on Lichnerowicz–Obata Theorem.
The Lichnerowicz part of the theorem, i.e.\ the non-strict inequality, relies on Bochner's formula (and integration by parts), which is also true for orbifolds.
The Obata part on the other hand, does not hold for orbifolds, as there exist more orbifolds than the 5-sphere that have first eigenvalue of the Laplacian equal to 5.
However, by \cite{Shioya1999}, analogous to the smooth case, all orbifolds for which equality holds are quotients of the 5-sphere by finite subgroups of $\SO(6)$.
Thus, under the assumption that $\Sigma$ does not have constant sectional curvature, the strict bound holds for the orbifold case as well.
There is just one small technicality, because in \cite{Shioya1999}, the authors considers the Laplacian on $C^\infty_0(\Sigma \setminus \Sigma_{\text{sing}})$ functions whereas we are interested in $C^\infty(\Sigma)$ functions.
However, if the first eigenvalue of the Laplacian on $C^\infty_0(\Sigma \setminus \Sigma_{\text{sing}})$ is not 5, then the first eigenvalue of the Laplacian on $C^\infty(\Sigma)$ is also not 5.
We can see this by proving the contrapositive, i.e.\ if the first eigenvalue of the Laplacian on $C^\infty(\Sigma)$ is 5, then on $C^\infty_0(\Sigma \setminus \Sigma_{\text{sing}})$ is also 5.
Indeed, given that our orbifold $\Sigma$ is Sasaki-Einstein, it is in particular oriented, hence its singular locus has codimension at least 2.
Thus, $C^\infty_0(\Sigma \setminus \Sigma_{\text{sing}})$ is dense in $C^\infty(\Sigma)$ in norm $W^{1, 2}$ and thus given that the first eigenvalue of the Laplacian on $C^\infty(\Sigma)$ is 5, there is a sequence of functions in $f_k \in C^\infty_0\left(\Sigma \setminus \Sigma_{\text{sing}}\right)$ such that the Rayleigh quotient
$$
\frac{\displaystyle\int_\Sigma \norm{\nabla f_k}_{W^{1, 2}}^2}{\displaystyle\int_\Sigma \norm{f_k}_{W^{1, 2}}^2} \searrow 5 \text{ as } k \to \infty
.
$$
Since the spectrum of the Laplacian on $C^\infty_0\left(\Sigma \setminus \Sigma_{\text{sing}}\right)$ is discrete, by proposition 2.1 of \cite{Shioya1999}, 5 must also be an eigenvalue of the Laplacian on $C^\infty_0\left(\Sigma \setminus \Sigma_{\text{sing}}\right)$.

Any finite quotient of the round 5-shpere has constant sectional curvature, and thus, under our assumption of nonconstant sectional curvature for $\Sigma$, the strict inequality holds.

Given that appendix A and B from \cite{Foscolo2021} transfer to the case of orbifolds, the rest of the analysis, such as theorem \ref{thm:Dirac_iso}, holds true for orbifolds.
Appendix A holds in the orbifold case because it all about separations of variables arguments to exploit Hodge theory on compact manifolds to get results on Riemannian cones.
For Hodge theory on compact orbifolds see for example theorem 4.4 of \cite{Farsi2021}.

As for appendix B however, it is slightly less evident why it should transfer orbifolds, as there are underlying analysis arguments that need to be verified for orbifolds too.
The analytic argument underlying the results contained in appendix B is about transferring standard estimates on $\R^n$ on asymptotically conical manifolds, which is done by using a scaling argument for annuli $\left\{2^k R \leq r \leq 2^{k+1} R\right\}$, with $R \in \R^+$, and by using the fact that weighted Sobolev and H\"older norms on any annulus are equivalent to the ones of the $k=0$ annulus up to a factor of $\left(2^k R\right)^{-\nu}$.
Given that in our case singularities are allowed to extend all the way to infinity, we need to check that the estimates we are using transfer to annuli with orbifold singularities, but after having done so, the scaling argument works for orbifolds too.
The reason why the estimates of interest transfer to annuli over orbifolds is that the annuli are relatively compact, so we can cover them with a finite number of charts, and in each chart we can use the estimates for $\R^n$ restricted to invariant sections.

The central point in the above proof is that, even if singularities are allowed to extend to infinity, they can do so only as the product of the line and singularities on the link, so that the isotropy group of the singularities does not change going to infinity.
This is what allows us to claim that the geometries on different annuli are equivalent.
In other words, the orbifold case introduces the extra geometric information of the singularities, which to our purposes behaves similarly to the rest of the geometric information, such as the metric, that was there in the smooth case too.
The point is that all this geometric information, including the singularities in the orbifold case, has a scaling symmetry, which is a very strong constraint and which allows us to transfer estimates from relatively compact regions in $\R^n$ to the whole cone.

Thus, the proof of theorem \ref{thm:main_theorem_orbifold} is complete.

\section{New examples of \texorpdfstring{$\Spin(7)$}{Spin(7)} manifolds}\label{sec:examples}

In this section we use theorem \ref{thm:main_theorem} to build new examples of complete non-compact $\Spin(7)$ manifolds.
As anticipated in the introduction, this is possible thanks to the many new recent examples of AC Calabi-Yau manifolds built by multiple authors.
As it was done in \cite{Foscolo2021}, a particular class of AC Calabi-Yau manifolds that we are able to exploit to get some interesting examples of new $\Spin(7)$ manifolds is that of crepant resolutions of Calabi-Yau cones (as in \cite{Foscolo2021}, we do not consider affine smoothing because they have vanishing second cohomology).
Let $(C, \omega_C, \Omega_C)$ be a Calabi-Yau cone and denote with $o$ the singularity of the cone.
In this context, by (CY) \textit{crepant resolution} we mean a Calabi-Yau manifold $(B, \omega_B, \Omega_B)$ with a map $\pi : B \to C$, such that $\pi$ is a diffeomorphism outside the exceptional locus $\pi^{-1}(o)$ such that $\pi^* \Omega_C = \Omega_B$.
In general, the word crepant means that the canonical bundle of $B$ is required to be trivial but in our specific context this is already implied by it being Calabi-Yau.

We will make use of the following result on the existence and uniqueness of AC Calabi–Yau structures on crepant resolutions of Calabi–Yau cones stated in the special case of complex dimension 3.

\begin{theorem}\label{thm:crepant_resolution}
	Let $\left(C, \omega_{\mathrm{C}}, \Omega_{\mathrm{C}}\right)$ be a 3-dimensional Calabi-Yau cone.
	Let $\pi: B \rightarrow \mathrm{C}$ be a crepant resolution with complex volume form $\Omega_B$ extending $\pi^* \Omega_{\mathrm{C}}$.
	Then in every Kähler class there exists a unique AC Kähler Ricci-flat form $\omega_B$ on $B$ with $\frac{1}{6} \omega_B^3=\frac{1}{4} \operatorname{Re} \Omega_B \wedge \operatorname{Im} \Omega_B$.
	Moreover, $\left(B, \omega_B, \Omega_B\right)$ is asymptotic to the Calabi-Yau cone $\mathrm{C}$ with rate $-6$ if the Kähler class $\left[\omega_B\right]$ is compactly supported and with rate $-2$ otherwise.
\end{theorem}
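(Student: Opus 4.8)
The statement is, in essence, the specialization to complex dimension $3$ of the Conlon--Hein existence and uniqueness theorem for asymptotically conical Calabi--Yau metrics on crepant resolutions, so the plan is to deduce it from that circle of ideas (see \cite{Conlon2012}) rather than to redevelop the non-compact Monge--Amp\`ere theory. The first step is to fix a K\"ahler class $\mathfrak{k}\in H^2(B;\R)$ and construct a reference K\"ahler form $\omega_{\mathrm{ref}}\in\mathfrak{k}$ adapted to the cone end: on the locus where $\pi$ is a diffeomorphism one writes $\omega_{\mathrm{ref}}=\pi^*\omega_{\mathrm{C}}+i\partial\bar\partial(\chi\psi)$, with $\psi$ a local K\"ahler potential for the difference from $\pi^*\omega_{\mathrm{C}}$ and $\chi$ a cutoff, arranged so that $\omega_{\mathrm{ref}}=\pi^*\omega_{\mathrm{C}}$ outside a compact set whenever $\mathfrak{k}$ admits a compactly supported representative, and $\omega_{\mathrm{ref}}-\pi^*\omega_{\mathrm{C}}\in C^\infty_{-2}$ otherwise, the rate $-2$ being that of the slowest-decaying harmonic $2$-form on the cone (compare theorem \ref{thm:harmonic_2_forms_isomorphism} and proposition B.12 of \cite{Foscolo2021}).

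Next I would compare volume forms. Since $\pi$ is crepant, $\Omega_B$ is a nowhere-vanishing holomorphic volume form extending $\pi^*\Omega_{\mathrm{C}}$, and on the cone the Calabi--Yau condition gives $\omega_{\mathrm{C}}^3=\tfrac32\Re\Omega_{\mathrm{C}}\wedge\Im\Omega_{\mathrm{C}}$ in the normalization \eqref{eq:Monge_Ampere}; hence $\tfrac16\omega_{\mathrm{ref}}^3=e^{f}\,\tfrac14\Re\Omega_B\wedge\Im\Omega_B$ with $f\in C^\infty_\mu$, where $\mu=-6$ in the compactly supported case and $\mu=-2$ in general. One then invokes the non-compact complex Monge--Amp\`ere theorem: there is a unique decaying $u$ with $\omega_B:=\omega_{\mathrm{ref}}+i\partial\bar\partial u>0$ and $(\omega_{\mathrm{ref}}+i\partial\bar\partial u)^3=e^{-f}\omega_{\mathrm{ref}}^3$, i.e.\ $\tfrac16\omega_B^3=\tfrac14\Re\Omega_B\wedge\Im\Omega_B$, and then $\omega_B$ is K\"ahler Ricci-flat because $\Omega_B$ becomes parallel. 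The decay of $\omega_B$ towards $\omega_{\mathrm{C}}$ is controlled by $\omega_{\mathrm{ref}}-\pi^*\omega_{\mathrm{C}}$ together with $i\partial\bar\partial u$: in the compactly supported case $f$ has compact support, the indicial-root analysis of $\partial\bar\partial$ on the cone forces $u\in C^\infty_{-4}$, so $i\partial\bar\partial u\in C^\infty_{-6}$ and the total rate is $-6$; in the general case the $C^\infty_{-2}$ term in the reference metric dominates and the rate is $-2$.

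Finally, for uniqueness within $\mathfrak{k}$: two AC Ricci-flat K\"ahler forms $\omega_B,\omega_B'$ in the same class differ by $\omega_B'-\omega_B=i\partial\bar\partial\phi$ for a decaying function $\phi$, and subtracting the two Monge--Amp\`ere equations yields an elliptic equation for $\phi$ with no zeroth-order term; the maximum principle on the complete one-ended manifold $B$ then forces $\phi$ constant, hence $\omega_B=\omega_B'$. The main obstacle is the weighted analysis packaged into the cited existence theorem: one needs the mapping properties of the Laplacian and of the Monge--Amp\`ere operator between weighted H\"older spaces on an AC manifold, including the identification of the indicial roots responsible for the $-6$ versus $-2$ dichotomy, together with the construction of a reference K\"ahler form realizing a prescribed non-compactly-supported class with optimal decay. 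Both ingredients are precisely what the AC Calabi--Yau literature supplies, so in the paper I would cite it and merely record the $n=3$ normalization of \eqref{eq:Monge_Ampere}.
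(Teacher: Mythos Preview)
Your proposal is correct and matches the paper's approach: the paper does not give a self-contained proof of this theorem but simply records that the existence part was first proved in full generality in \cite[Theorem 3.1]{Goto2009} and the uniqueness in \cite[Theorem 5.1]{Conlon2012}. Your sketch of the weighted Monge--Amp\`ere argument is an accurate outline of what those references do, and your concluding remark that one should ``cite it and merely record the $n=3$ normalization'' is exactly what the paper does; the only difference is that you cite \cite{Conlon2012} for the whole package, whereas the paper splits the attribution between Goto (existence) and Conlon--Hein (uniqueness).
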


The existence part of the statement was first proved in full generality in \cite[Theorem 3.1]{Goto2009} and the uniqueness in \cite[Theorem 5.1]{Conlon2012}.

\subsection{New infinite diffeomorphism types}

As we see in the subsection \ref{sec:toric}, the condition $c_1(M) \smile [\omega_0] = 0$ is not easy to check.
A simple strategy to make such condition straightforward is to work with manifolds $B$ such that $H^4(B) = 0$.
In the case of resolutions of cones, by theorem 5.2 of \cite{Caibar2005} this is the case if and only if the resolution $\pi : B \to C$ is small.
By \textit{small} resolution we mean one whose exceptional locus $\pi^{-1}(o)$ has codimension at least 2.
Small resolutions are automatically crepant.

The small resolution of the compound $A_p$ Du Val singularity were already considered in \cite{Foscolo2021} to build infinite diffeomorphism types of complete non-compact $G_2$ manifolds.
Apart for the case of $A_1$, we are able to make use of these manifolds also in the $\Spin(7)$ construction.

\begin{theorem}
	Let $B$ be a small resolution of the compound $A_p$ Du Val singularity $X_p \subseteq \C^4$
	$$
		x^2+y^2+z^{p+1}-w^{p+1}=0
	$$
	for $p \in \N^+$.
	
	Then, for any $p \geq 2$, $B$ admits a $T^2$-bundle whose total space carries an asymptotically $T^2$-fibred conical complete $\Spin(7)$-metric.
	Moreover, for $p, p^{\prime} \geq 2$ with $p \neq p^{\prime}$ the $\Spin(7)$-manifolds $M$ and $M^{\prime}$ constructed in this way are not diffeomorphic.
	In particular, there exists infinitely many diffeomorphism types of simply connected complete non-compact $\Spin(7)$-manifolds.
\end{theorem}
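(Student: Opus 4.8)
The plan is to apply Theorem~\ref{thm:main_theorem} (equivalently Theorems~\ref{thm:final_result}, \ref{thm:full_holonomy} and \ref{thm:AT2C}) to suitably chosen $T^2$-bundles over the small resolutions $B_p$ of $X_p$, and then to separate the resulting eight-manifolds by their second Betti number. First I would assemble the geometry of $B_p$. The hypersurface $X_p$ is the affine cone under the weighted $\C^\times$-action scaling $(x,y,z,w)$ with weights $(p+1,p+1,2,2)$; it has an isolated singularity at the origin (so its link $\Sigma_p$ is a \emph{smooth} 5-manifold), and it carries a Ricci-flat Kähler cone metric making $\Sigma_p$ a smooth Sasaki-Einstein manifold which is not a space form when $p\geq 1$ --- for otherwise the cone metric would be the flat one on $\C^3$, contradicting that $X_p$ is a singular variety. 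This is precisely the family of Calabi-Yau cones used in~\cite{Foscolo2021}. A small resolution $\pi\colon B_p\to X_p$ is automatically crepant, so Theorem~\ref{thm:crepant_resolution} equips each non-compactly-supported Kähler class on $B_p$ with a unique AC Calabi-Yau structure $(\omega_0,\Omega_0)$ asymptotic to $X_p$ with rate $-2$. The induced $\C^\times$-action retracts $B_p$ onto the exceptional locus $\pi^{-1}(o)$, a connected chain of $p$ rational curves, so $B_p$ is simply connected with $H^2(B_p;\Z)\cong\Z^p$; moreover $H^4(B_p;\R)=0$ because the resolution is small (Theorem~5.2 of~\cite{Caibar2005}).

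Next I would construct $M_p$. Since $p\geq 2$, choose $c_1,c_2\in H^2(B_p;\Z)$ forming part of a $\Z$-basis, let $P_1,P_2\to B_p$ be the principal circle bundles with these Chern classes, and set $M_p=P_1\times_{B_p}P_2$. Then $c_1(M_p)=(c_1,c_2)$ spans a 2-dimensional subspace of $H^2(B_p;\R)$, and the topological condition~\eqref{eq:topological_condition}, $c_1(M_p)\smile[\omega_0]=0\in H^4(B_p;\R^2)$, holds vacuously since $H^4(B_p;\R)=0$. Theorem~\ref{thm:main_theorem} then yields a complete $AT^2C$ metric on $M_p$ with $\Hol(M_p)=\Spin(7)$. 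Simple connectivity of $M_p$ follows from the homotopy exact sequences of the two circle fibrations: because $c_1$ is primitive, $P_1$ is simply connected, and because $c_1,c_2$ extend to a basis the image of $c_2$ in $H^2(P_1;\Z)=H^2(B_p;\Z)/\langle c_1\rangle$ is still primitive, whence the circle bundle $M_p\to P_1$ has simply connected total space (Lemmas~\ref{thm:fundamental_group_circle_bundles} and~\ref{thm:universal_cover_circle_bundle} give finite $\pi_1$; the primitivity refinement upgrades this to triviality).

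Finally I would compute $b_2(M_p)$ from the Serre spectral sequence of $T^2\to M_p\to B_p$. As $B_p$ is simply connected, $E_2^{i,j}=H^i(B_p;\Q)\otimes H^j(T^2;\Q)$; the transgression $d_2\colon H^1(T^2)\to H^2(B_p)$ is $(c_1,c_2)$, which is injective, and $d_2$ on $H^2(T^2)=\Lambda^2 H^1(T^2)$ is also injective, since for a basis $\alpha,\beta$ of $H^1(T^2)$ the Leibniz rule gives $d_2(\alpha\wedge\beta)=c_1\otimes\beta-c_2\otimes\alpha\neq 0$. Hence $E_\infty^{0,2}=0$ and $E_\infty^{1,1}=H^1(B_p)\otimes H^1(T^2)=0$, while $E_\infty^{2,0}=H^2(B_p;\Q)/\langle c_1,c_2\rangle$ has dimension $p-2$, so $b_2(M_p)=p-2$. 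Therefore, for $p\neq p'$ with $p,p'\geq 2$, the manifolds $M_p$ and $M_{p'}$ have different second Betti numbers, hence are not even homotopy equivalent, a fortiori not diffeomorphic; letting $p$ range over $\{2,3,4,\dots\}$ produces infinitely many diffeomorphism types of simply connected complete non-compact $\Spin(7)$-manifolds.

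I expect the only genuinely delicate input to be the first step --- verifying that $X_p$ really is a Calabi-Yau cone over a smooth Sasaki-Einstein link and that $B_p$ has the stated topology ($b_2=p$, $H^4=0$, simple connectivity) --- but this is exactly what is recorded in~\cite{Foscolo2021} and~\cite{Caibar2005}. Everything else is either a direct appeal to Theorem~\ref{thm:main_theorem} or routine algebraic topology, the one point requiring care being the choice of $c_1,c_2$ as part of a basis so that $M_p$ is simply connected and the spectral-sequence count of $b_2(M_p)$ is clean.
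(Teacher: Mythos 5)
Your proposal is correct and follows essentially the same route as the paper: the topological condition \eqref{eq:topological_condition} holds vacuously because $H^4(B)=0$ for a small resolution, two independent Chern classes exist because $b_2(B)=p\geq 2$, and the resulting manifolds are distinguished by $b_2(M)=p-2$. The only cosmetic difference is that you compute $b_2(M)$ via the Serre spectral sequence of the $T^2$-fibration while the paper applies the Gysin sequence twice to $M\to P_1\to B$; your added verification of simple connectivity via primitivity of the chosen classes is a detail the paper leaves implicit.
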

\begin{proof}
	Since $b_4(B) = 0$, the cohomological condition \eqref{eq:cohomological_condition} is automatically satisfied.
	Moreover, since $b_2(B) = p \geq 2$ we are able to find two integral linearly independent Chern classes in $H^2(B)$.

	In order to prove that for $p, p^{\prime} \geq 2$ with $p \neq p^{\prime}$ the $\Spin(7)$-manifolds $M$ and $M^{\prime}$ are not diffeomorphic, it is sufficient to see that $M$ and $M^\prime$ have different Betti numbers.
	In order to calculate the Betti numbers of $M$, we consider $M \to B$ as a circle bundle over a circle bundle $M \to P_1 \to B$ as in diagram \eqref{eq:T2_commutative_diagram}.
	Then by using the Betti numbers of $B$ calculated in the proof of theorem 9.3 of \cite{Foscolo2021} and by using the Gysin sequence twice we get that $b_0 = 1$,
	$$
	b_2(M)=p-2, \quad b_3(M)=2p -1
	$$
	and that the other Betti numbers vanish.
\end{proof}

\subsection{Toric $\Spin(7)$ manifolds}\label{sec:toric}

In this and in the next section, we build the first examples of toric $\Spin(7)$ manifolds.
Here we make use of the smooth version of the theorem \ref{thm:main_theorem} to build examples that are $T^2$-bundles over smooth Calabi-Yau 3-folds, whereas in the next section, we will extend the construction outlined in this section to build an infinite class of toric $\Spin(7)$ manifolds as $T^2$-bundles over Calabi-Yau 3-orbifolds.

As anticipated in section \ref{sec:intro}, the idea is to look for Calabi-Yau 3-folds with a $T^2$ action and lift it to a $T^4$ action on the $\Spin(7)$-manifolds produced with theorem \ref{thm:main_theorem} by adding the extra $T^2$ action coming from the principal action on the fibers.
There are quite a few Calabi-Yau 3-folds with $T^2$ actions but the topological condition \eqref{eq:topological_condition} restricts greatly the set of candidates.
For example, the crepant resolutions of the $\Z_p$ quotients of the conifold were used in \cite{Acharya2021} to construct new examples of $G_2$ complete non-compact manifolds with a $T^3$ symmetry, but they cannot be used in the $\Spin(7)$ case, because the topological condition in this latter case is harder to realize than in the $G_2$ case.

For manifolds $B$ with associated Sasaki-Einstein $\Sigma$ such that $H^3_c(B) = 0$ and $\pi_1(\Sigma) = 1$, like in the case of crepant resolutions of cones, then we have to have that $\dim H^2(\Sigma) \geq 2$, which rules out most known example of AC CY toric manifolds, such as for example resolutions of most hyperconifolds, i.e.\ quotients of the conifold by finite groups, including the ones mentioned above.
To see this recall (see e.g.\ section 5 of \cite{Foscolo2021}) that for an AC manifold, by regarding it as a manifold with boundary, we have the long exact sequence
$$
\cdots \rightarrow H^{k-1}(\Sigma) \rightarrow H_c^k(B) \rightarrow H^k(B) \rightarrow H^k(\Sigma) \rightarrow \cdots
$$
which, if $H^3_c(B) = 0$ and if $\Sigma$ is simply connected, produces the short exact sequence
$$
0 \rightarrow H^2_c(B) \rightarrow H^2(B) \rightarrow H^2(\Sigma) \rightarrow 0
.
$$
Now, for an asymptotically conical CY 3-fold, it holds that $H^2_c(B) \iso L^2 \mathcal{H}^2(B)$, where $L^2 \mathcal{H}^2(B)$ is the subset of $\mathcal{H}^2(B)$ given by $L^2$-integrable forms.
By using the Hodge theory isomorphism $H^2(\Sigma) \iso \mathcal{H}^2(\Sigma)$, the above short exact sequence gives a splitting $H^2(B) \iso L^2 \mathcal{H}^2(B) \oplus \mathcal{H}^2(\Sigma)$ which is $L^2$-orthogonal.
By representing $c_1(M)$ as a couple $(\kappa_\eta, \kappa_\theta)$ of closed and coclosed 2-forms decaying with rate $-2$ we see that the topological condition \eqref{eq:topological_condition} can be rewritten as $[{\star_B} \kappa_\eta]_{H^4} = [{\star_B} \kappa_\theta]_{H^4} = 0$, which in turn is saying that $\kappa_\eta$ and $\kappa_\theta$ are $L^2$-orthogonal to $L^2 \mathcal{H}^2(B)$, and thus need to be in $\mathcal{H}^2(\Sigma)$.
If we want them to be linearly independent, we have to require $\dim H^2(\Sigma) \geq 2$.

We were able to think to two examples among canonical bundles of del Pezzo surfaces, although we expect more examples to exist.
Only two out of ten del Pezzo surfaces are suitable because only 4 of them are toric, but $\mathbb{CP}^2$ and its blowup in a point do not have a big enough second cohomology to satisfy the topological condition.

A handy fact about tori that we will use is the following:
\begin{lemma}\label{thm:central_extension_torus}
	A central extension of a torus by a torus is a torus.
\end{lemma}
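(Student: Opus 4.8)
The plan is to show that a central extension $G$ sitting in a short exact sequence of Lie groups $1 \to T^m \to G \to T^k \to 1$, with $T^m$ central, is a compact connected abelian Lie group, and then to invoke the classical fact that such a group is a torus. Compactness of $G$ is immediate: the projection $\pi\colon G \to T^k$ makes $G$ a locally trivial fibre bundle over the compact base $T^k$ with compact fibre $T^m$, so $G$ is covered by finitely many trivializing patches with relatively compact closures and is therefore compact. Connectedness follows from the homotopy exact sequence of the fibration: the segment $\pi_0(T^m) \to \pi_0(G) \to \pi_0(T^k)$ has vanishing outer terms, so $\pi_0(G)$ is trivial.

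The substantive step is to prove that $G$ is abelian. First I would observe that for $g,h \in G$ the commutator $ghg^{-1}h^{-1}$ projects to the identity of $T^k$ (since $T^k$ is abelian), hence lies in the central copy of $T^m$; because that copy is central, the commutator depends only on the cosets $gT^m$ and $hT^m$, so it descends to a continuous map $c\colon T^k \times T^k \to T^m$. The usual commutator identities, simplified by the centrality of the values, show that $c$ is bi-multiplicative. Then I would argue $c$ is trivial: fixing the first variable gives a continuous homomorphism $T^k \to \operatorname{Hom}(T^k,T^m)$, and the target, with the compact--open topology, is the discrete group $\operatorname{Hom}(T^k,T^m) \cong \operatorname{Mat}_{m\times k}(\mathbb{Z})$; a continuous map from the connected group $T^k$ into a discrete group is constant, and its value at the identity is the trivial homomorphism (as $c(e,\cdot) = e$). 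Hence $c \equiv e$, that is, $G$ is abelian.

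To finish I would recall why a compact connected abelian Lie group is a torus: since $G$ is abelian, $\operatorname{Ad}$ is trivial and the Lie algebra $\mathfrak{g}$ is abelian, so $\exp\colon \mathfrak{g} \to G$ is a surjective homomorphism (surjectivity by connectedness) whose kernel is a lattice $\Lambda$; compactness forces $\Lambda$ to have full rank, whence $G \cong \mathfrak{g}/\Lambda \cong T^{\dim \mathfrak{g}}$. Exactness of the Lie algebra sequence $0 \to \mathfrak{t}^m \to \mathfrak{g} \to \mathfrak{t}^k \to 0$ gives $\dim \mathfrak{g} = m+k$, so $G \cong T^{m+k}$.

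The only place requiring any care — the main obstacle, though it is not genuinely hard — is verifying that the commutator pairing $c$ is well defined and bi-multiplicative and that the discreteness of $\operatorname{Hom}(T^k,T^m)$ is exactly what kills it. An alternative I could present instead uses the structure theory of compact Lie algebras: $\mathfrak{g} = \mathfrak{z}(\mathfrak{g}) \oplus [\mathfrak{g},\mathfrak{g}]$ with $[\mathfrak{g},\mathfrak{g}]$ semisimple, while the Lie algebra sequence forces $[\mathfrak{g},\mathfrak{g}] \subseteq \mathfrak{t}^m$, which is abelian, so $[\mathfrak{g},\mathfrak{g}] = 0$ and $G$ is abelian. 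In either route I would stress that compactness of $G$ is essential: without it the Heisenberg group would be a central extension of $\mathbb{R}^2$ by $\mathbb{R}$ that is not abelian, so the argument must use compactness somewhere — it enters through the structure theory, or equivalently through the discreteness of the character lattice.
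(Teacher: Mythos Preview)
Your proof is correct, but it takes a different route from the paper's. The paper argues that $G$ is compact, connected and nilpotent (any central extension of abelian by abelian is nilpotent of class $\leq 2$), and then proves the standalone fact that a compact connected nilpotent Lie group is abelian: by Lie--Kolchin, $\mathrm{Ad}_G$ has image in the upper unipotent matrices, but the only compact subgroup of the unipotent group is trivial, so $G$ lies in $\ker\mathrm{Ad}$. Your main argument instead analyses the commutator pairing $c\colon T^k\times T^k\to T^m$ directly and kills it with the discreteness of $\operatorname{Hom}(T^k,T^m)$; this is more elementary, avoiding Lie--Kolchin entirely and using only that continuous maps from a connected space to a discrete one are constant. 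Your alternative via $\mathfrak g=\mathfrak z(\mathfrak g)\oplus[\mathfrak g,\mathfrak g]$ with $[\mathfrak g,\mathfrak g]$ semisimple is closest in spirit to the paper's, trading the adjoint-representation argument for the compact Lie algebra structure theorem. The paper's approach has the virtue of isolating a reusable lemma (compact connected nilpotent $\Rightarrow$ abelian), while yours is self-contained and arguably more transparent for this specific statement.
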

\begin{proof}
	Let $1 \to T^k \to G \to T^l \to 1$ be a central extension of Lie groups.
	It is clear from the definition of nilpotent group that $G$ is nilpotent.
	Moreover, $G$ being an extension of compact connected groups is itself compact and connected.
	Every compact connected nilpotent group is abelian.
	Indeed, consider the adjoint representation $\mathrm{Ad}_G$ of $G$ on $\mathfrak{g}$.
	Since $G$ is nilpotent, it can be made upper unipotent on a suitable basis of $\mathfrak{g}$, by Lie–Kolchin triangularization theorem.
	Thus, $\mathrm{Ad}_G(G)$ is a compact subgroup of the group $U_1$ of upper unipotent matrices.
	But in $U_1$, every non-identity element generates a closed non-compact subgroup, so the only compact subgroup of $U_1$ is $\{1\}$.
	So $G$ is contained in the kernel of the adjoint representation, so it is abelian.
	
	The thesis follows from the fact that every abelian compact connected Lie group is a torus.
\end{proof}

Whereas the Calabi ansatz gives us a Calabi-Yau metric on the blowup of $\mathbb{CP}^2$ in 3 points, it cannot be applied to the blowup in one or two points.
However, theorem \ref{thm:crepant_resolution} can be applied to the blowup of up to three points and it gives us the freedom to choose the Kähler class.
Thus, this is the approach that we take in the construction of toric $\Spin(7)$-manifolds.

\begin{theorem}
	Let $B$ be the total space of the canonical bundle on the blowup of $\mathbb{CP}^2$ in two or three generic points.
	
	Then $B$ admits a principal $T^2$ bundle such that its total space admits a one parameter family of toric $\Spin(7)$-structures.
	The resulting manifold is $AT^2C$, complete, non-compact and has full $\Spin(7)$ holonomy.
\end{theorem}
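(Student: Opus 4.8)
The plan is to realise $B$ as a crepant resolution of a toric Calabi--Yau cone, to verify the topological hypotheses of Theorem~\ref{thm:main_theorem} using the intersection form of the del Pezzo surface, and then to upgrade the fibrewise $T^2$-symmetry to a full $T^4$ multi-Hamiltonian symmetry.

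First I would fix notation: let $S$ be the blow-up of $\mathbb{CP}^2$ at $k\in\{2,3\}$ generic points, a smooth toric del Pezzo surface with $b_2(S)=k+1\geq 3$, $b_2^+(S)=1$, $\pi_1(S)=1$ and $-K_S$ ample, and let $B=K_S$ be the total space of its canonical bundle. Since $K_{\operatorname{tot}(L)}\iso p^*(K_S\otimes L^{-1})$ for a line bundle $p:\operatorname{tot}(L)\to S$, taking $L=K_S$ makes $K_B$ trivial, and blowing down the zero section exhibits $B$ as a crepant resolution $\pi:B\to C$ of the affine anticanonical cone $C=\operatorname{Spec}\bigoplus_{m\geq 0}H^0(S,-mK_S)$, a Gorenstein canonical toric $3$-fold singularity. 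Here I would invoke the Futaki--Ono--Wang theorem \cite{Futaki2009}: any such cone admits a Ricci-flat Kähler cone metric, so $C$ is a Calabi--Yau cone $(C,\omega_C,\Omega_C)$, with an irregular Reeb field in the case $k=2$ since $\operatorname{Bl}_2\mathbb{CP}^2$ carries no Kähler--Einstein metric (this is harmless: the link $\Sigma$ and its Sasaki--Einstein metric remain smooth). By Theorem~\ref{thm:crepant_resolution} every Kähler class of $B$ carries a unique AC Calabi--Yau form; I would pick a \emph{rational} Kähler class and let $(\omega_0,\Omega_0)$ be the associated AC Calabi--Yau structure, which is asymptotic to $C$ with some rate $\nu<0$ (in fact $\nu=-2$). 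Note that $B$ is simply connected, being homotopy equivalent to $S$.

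Next I would check the topological conditions. The zero section $s:S\hookrightarrow B$ is a deformation retract, so $s^*:H^*(B;\R)\to H^*(S;\R)$ is a ring isomorphism; in particular $\dim H^2(B;\R)=b_2(S)\geq 3$ and $H^4(B;\R)\iso\R$, and the cup product $H^2(B)\times H^2(B)\to H^4(B)$ corresponds to the intersection form of $S$. The class $\kappa:=s^*[\omega_0]$ is rational and satisfies $\kappa\cdot\kappa=\int_S(\omega_0|_S)^2>0$; since the intersection form of $S$ is unimodular of signature $(1,b_2(S)-1)$, its orthogonal complement $\kappa^\perp$ is a rational subspace on which the form is negative definite, of dimension $b_2(S)-1\geq 2$. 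I would then choose two $\Z$-linearly independent classes $a_1,a_2\in\kappa^\perp\cap H^2(S;\Z)$, regard them in $H^2(B;\Z)$, and take $M\to B$ to be the principal $T^2$-bundle with $c_1(M)=(a_1,a_2)$. Because $s^*$ is a ring isomorphism, $a_i\smile[\omega_0]=0$ in $H^4(B;\R)$, so $c_1(M)$ satisfies \eqref{eq:topological_condition}, and $a_1,a_2$ span a $2$-dimensional subspace of $H^2(B;\R)$. (The standing assumption that $\Sigma$ has nonconstant sectional curvature is then automatic, as in Lemma~\ref{thm:additional_topological_constraint}, since $H^2(\Sigma)$ has dimension $b_2(S)-1\geq 2$.) Applying Theorem~\ref{thm:final_result} (equivalently Theorem~\ref{thm:main_theorem}) to $(B,\omega_0,\Omega_0,M)$ and any constants $p_0,q_0>0,r_0$ then yields the analytic curve $\Phi_\varepsilon$ of $T^2$-invariant torsion-free $\Spin(7)$-structures with metrics $g_\varepsilon$; Theorem~\ref{thm:full_holonomy} gives $\Hol(M,g_\varepsilon)=\Spin(7)$, Theorem~\ref{thm:AT2C} gives that $(M,g_\varepsilon)$ is $AT^2C$ with rate $-1$, hence complete, and $M$ is non-compact, fibring with compact fibre over the non-compact $B$.

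It then remains to produce the toric structure, which I expect to be the main point of the proof. The $3$-torus $T^3$ acting on the toric $3$-fold $B$ preserves $\omega_0$ (by the uniqueness in Theorem~\ref{thm:crepant_resolution}, since the chosen Kähler class is automatically $T^3$-invariant) and scales $\Omega_0$ by a character $\chi:T^3\to U(1)$; let $T^2_B\leq T^3$ be a $2$-torus inside the identity component of $\ker\chi$, so that $T^2_B$ acts effectively on $B$ by isometries preserving both $\omega_0$ and $\Omega_0$. Running Theorem~\ref{thm:final_result} with a $T^2_B$-invariant choice of asymptotic connection (obtained by averaging), Lemma~\ref{thm:HYM_connection} together with the uniqueness statements in Lemma~\ref{thm:exact_2_forms_AC_CY} and Theorem~\ref{thm:final_result} makes the whole solution tuple $(\omega_0,\Omega_0,p,q,r,\eta,\theta)$, and hence $\Phi_\varepsilon$, invariant under a connection-preserving lift of $T^2_B$ to $M$; such lifts form a central extension $1\to T^2\to G\to T^2_B\to 1$ of the fibre torus, which by Lemma~\ref{thm:central_extension_torus} is a torus $G\iso T^4$. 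This $T^4$ acts effectively on $M$ (the fibre torus acts freely, and an element acting trivially on $M$ acts trivially on $B$, hence lies in the fibre torus) and preserves $\Phi_\varepsilon$. Finally, by Lemma~\ref{thm:fundamental_group_circle_bundles} $\pi_1(M)$ is finite, so $H^1(M;\R)=0$; since the obstruction to the existence of a multi-moment map for an abelian action preserving a closed $4$-form lies in $H^1(M;\R)$, the $T^4$-action is multi-Hamiltonian, with a $T^4$-invariant multi-moment map $\nu:M\to\Lambda^3(\Lie T^4)^*$ characterised by $d\langle\nu,\xi\rangle=\rho(\xi)\lrcorner\Phi_\varepsilon$ as in \cite{Madsen2011}. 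Hence $(M,\Phi_\varepsilon)$ is a toric $\Spin(7)$-manifold. The hard part is precisely this last paragraph: assembling the isometric $T^4$ as a genuine torus (where Lemma~\ref{thm:central_extension_torus} and the $G$-equivariance of the whole construction enter) and ruling out the cohomological obstruction to multi-Hamiltonicity; by comparison, the existence of the Calabi--Yau cone metric in the Kähler--Einstein-free case $k=2$ is only a secondary subtlety, dispatched by \cite{Futaki2009}.
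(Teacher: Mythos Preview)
Your argument follows the same strategy as the paper's: realise $B$ as a crepant resolution of a toric Calabi--Yau cone, use the intersection form of the del Pezzo surface to produce two linearly independent integral classes orthogonal to a rational K\"ahler class, apply Theorem~\ref{thm:main_theorem}, and then lift the base $T^2$-action to a $T^4$ on $M$ via Lemma~\ref{thm:central_extension_torus}. You are in fact more explicit than the paper on one point: you invoke \cite{Futaki2009} for the existence of the Calabi--Yau cone metric, which is genuinely needed for $k=2$ since $\operatorname{Bl}_2\mathbb{CP}^2$ carries no K\"ahler--Einstein metric, whereas the paper leaves this implicit.

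There is, however, a small circularity in your lifting step. Saying you choose ``a $T^2_B$-invariant asymptotic connection (obtained by averaging)'' presupposes that $T^2_B$ already acts on $M$, which is precisely what you are trying to construct; likewise, asserting that ``such lifts form a central extension'' hides the nontrivial claim that connection-preserving lifts exist for every $g\in T^2_B$. The paper handles this more carefully: it first observes only that the data on $B$---in particular the curvatures $(d\eta,d\theta)$---is $T^2_B$-invariant (a connected group acts trivially on cohomology, so uniqueness in Lemma~\ref{thm:HYM_connection} and Theorem~\ref{thm:final_result} applies), and then \emph{defines} $\tilde H$ to be the set of bundle automorphisms of $(M,\eta,\theta)$ covering elements of $T^2_B$. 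Surjectivity of $\tilde H\to T^2_B$ is then proved directly: for $g\in T^2_B$ the pulled-back bundle $g^*M$ is isomorphic to $M$ by Chern--Weil, the pulled-back connection differs from $(\eta,\theta)$ by a flat one, and flat connections on the simply connected $B$ are gauge-trivial. Only after this does Lemma~\ref{thm:central_extension_torus} apply. Your proof would be complete if you either reproduced this surjectivity argument or cited a standard lifting result for torus actions on principal torus bundles over simply connected bases.
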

\begin{proof}
Let us consider a del Pezzo surface $X$ $\mathrm{Bl}_{S}(\mathbb{CP}^2)$ where $S \subset \mathbb{CP}^2$ is a subset of two or three generic points.
The blow up of $\mathbb{CP}^2$ in up to 3 points is toric.
This action lifts to the total space $B$ of the canonical bundle thus yielding a $T^2 \subset (\mathbb{C}^*)^2$-action on the 3-fold.
Being the canonical bundle on $B$ trivial, we can fix a complex volume form $\Omega$, and by averaging we can make it $T^2$-invariant.

We can make $B$ into a toric AC Calabi-Yau 3-fold as follows.
By blowing down the 0-section of the bundle, we can realize $B$ as a crepant resolution of a Calabi-Yau cone.
Thus, by theorem \ref{thm:crepant_resolution} there is a unique $\omega$ for each Kähler class such that $(\omega, \Omega)$ is an AC Calabi-Yau structure.
Since $T^2$ acts trivially on cohomology, by the uniqueness statement of the theorem, we get that $\omega$ is $T^2$-invariant.
There is an additional $S^1 \subset \mathbb{C}^*$ acting on the fibers of the canonical bundle thus making the 3-fold toric as a symplectic manifold.
However, this last action does not leave $\Omega$ invariant and thus we ignore it.

Let us now suppose $S$ contains 3 points, as if it contained 2, the proof would be analogous.
In order to apply theorem \ref{thm:main_theorem} to $B$, we need to find two linearly independent $a, b \in H^2(B)$ that satisfy equation \eqref{eq:cohomological_condition}.
Since $B$ retracts on $\mathrm{Bl}_{S}(\mathbb{CP}^2)$ the two cohomology rings are isomorphic.
Topologically the blow up is just a connected sum, thus the cohomology of $\mathrm{Bl}_{S}(\mathbb{CP}^2)$ is that of $\mathbb{CP}^2 \# \overline{\mathbb{CP}}^2\# \overline{\mathbb{CP}}^2\# \overline{\mathbb{CP}}^2$.
This has $b_0 = b_4 = 1$ and $b_2 = 4$ and the intersection form for $H^2(B)$ is $\mathrm{diag}(1, -1, -1, -1)$.

Let $E, D_1, D_2, D_3$ be the generators of $H^2(B, \mathbb{Z})$.
We saw that every Kähler class contains a unique AC CY metric, which is preserved by the $T^2$ action.
Consider an integral Kähler class $[\omega]$ on $B$.
Then
$$
[\omega] = e^\omega E + d^\omega_1 D_1 + d^\omega_2 D_2 + d^\omega_3 D_3
$$
where $e^\omega, d^\omega_i \in \mathbb{Z}$.
Then finding $a$ and $b$ is equivalent to finding two linearly independent vectors in $\mathbb{Z}^4$ that are orthogonal to $(e^\omega, d^\omega_1, d^\omega_2, d^\omega_3)$ with respect to the inner product given by $\mathrm{diag}(1, -1, -1, -1)$.
By basic theory of Diophantine equations, we know that there are three linearly independent integral solutions.


It remains to prove that the action lifts to a $T^4$ action on the $T^2$-bundle $M \to B$.
By running the proof of theorem \ref{thm:final_result} by using the implicit function theorem on Banach spaces of data that is invariant under the $T^2$-action on $B$, we get that the tuple $(\omega, \Omega, p, q, r, d\eta, d\theta)$ is $T^2$-invariant on $B$ (it is also crucial that $d\eta_1$ and $d\theta_1$ can be chosen to be $T^2$ invariant for the $T^2$ action on the base, which was proved in theorem \ref{thm:HYM_connection}).

Let $\tilde{H}$ be the set of all the $\tilde{g} \in \Aut(M, (\eta, \theta))$ (where the last notation means that it preserves the connection $(\eta, \theta)$) such that $\tilde{g}$ covers $g$ for some $g \in T^2$.
Define $q : \tilde{H} \to T^2$ to be the map $q(\tilde{g}) = g$ where $\tilde{g}$ covers $g$.
This map is surjective.
Indeed, let $g \in T^2$ and consider the pullback bundle with pullback connection $\left(g^*M, g^*(\eta, \theta)\right)$, and call $\overline{g} : g^*M \to M$ the canonical pullback map.
Since, as explained above, $g$ preserves $(d\eta, d\theta)$ and $B$ is simply connected, the $T^2$ bundle $g^*M$ is isomorphic as a principal bundle to $M$ by Chern-Weil theory.
Now, connections on a torus bundle modulo automorphisms are a homogeneous space over the moduli space of flat connections.
However, it is well known that the moduli space of flat connections on a circle bundle over a simply connected manifold is trivial, and the result clearly extends to torus bundles.
Thus, up to bundle automorphism, there is only one connection on $B$ with prescribed curvature, which implies that there is an isomorphism of principal bundles with connections between $\left(g^*M, g^*(\eta, \theta)\right)$.
By composing $\overline{g} : g^*M \to M$, the isomorphism $M \to g^*M$ and the bundle automorphism $M \to M$ just found we get a bundle automorphism $\tilde{g} : M \to M$ covering $g$ that preserves $(\eta, \theta)$ as desired.

Let us now study the kernel of $q$, which by definition is given by automorphisms of $M$ covering the identity (i.e.\ gauge transformation) that preserve $(\eta, \theta)$.
Since gauge transformation on a $T^2$-principal bundle act on connections as $g^*(\eta, \theta) = (\eta, \theta) + \sigma_g^*\mu_{T^2}$ where $\sigma : \Aut(M) \to C^\infty(B; T^2)$ and $\mu_{T^2}$ is the Mauer-Cartan form on $T^2$, those that preserve $(\eta, \theta)$ are such that $\sigma_g^*\mu_{T^2} = 0$.
By making $\mu_{T^2}$ explicit, we see that the kernel of $q$ is given by the pullback of constant functions in $C^\infty(B; T^2)$.

Thus, we have a central extension of Lie groups
$$
	1 \to T^2 \to \tilde{H} \xrightarrow{q} T^2 \to 1
$$
and by lemma \ref{thm:central_extension_torus} we get $\tilde{H} = T^4$.

Finally, elements of $\tilde{H}$ preserve the pullback of any $T^2$-invariant form on $B$ (and thus $\omega, \Omega, p, q$ and $r$) because, e.g., $\tilde{g}^*p_{P_1}^* \omega = (p_{P_1} \comp \tilde{g})^* \omega = (g \comp p_{P_1})^* \omega$ $= p_{P_1}^* g^* \omega $ $=p_{P_1}^* \omega$.
Moreover, by definition of $\tilde{H}$ all of its elements preserve $\eta$ and $\theta$.
Because $\Phi$ is expressed in terms of this data, this action preserves $\Phi$.

It remains to prove that the action is multi-Hamiltonian, but this follows directly from theorem 3.10 of \cite{Madsen2011}.
\end{proof}

\subsection{Infinite class of toric $\Spin(7)$ manifolds using the orbifold construction}\label{sec:toric_orbifolds}

As anticipated, in this section, we explain how one could use theorem \ref{thm:main_theorem_orbifold} to extend the construction of the previous section \ref{sec:toric} to build an infinite class of toric $\Spin(7)$-manifolds, whose $T^2$ quotients exhibit different singularity behaviours.
This only holds under the nontrivial hypothesis that the results of \cite{Coevering2009} and \cite{Futaki2009}, together with theorem \ref{thm:crepant_resolution} hold in the orbifold case.
As anticipated in the introduction, we do believe this to be true, but we do not have a complete proof of this.
We explain below why we believe these examples to exist.

We restrict our attention to the canonical bundle on the blowup $X_k$ of the weighted projective space $\mathbb{CP}^2_{[1, 1, k]}$ for $k \in \N_{\geq 2}$ in the two fixed smooth points, because we are interested in describing an infinite class of toric examples with minimal effort, but we expect that many more examples can be built with more knowledge of toric geometry.
This space is an orbifold with a conic point singularity in $[0\colon\!0\colon\!1]$.

As in the case of the blowup on $\mathbb{CP}^2$, given that the canonical bundle retracts on $X_k$, we can do all the calculations on $X_k$ which is more convenient.
From toric geometry, we know that $b_2(X_k) = 3$ and that $H^2(X_k)$ is generated by the pullbacks of the generators of $H^2(\mathbb{CP}^2)_{[1, 1, k]}$ and the exceptional divisors $D_1$ and $D_2$.
The generator of $H^2(\mathbb{CP}^2)_{[1, 1, k]}$ is the line $y=z=0$ (which is equivalent to the line $x=z=0$ but not equivalent to $x=y=0$) and we denote it by $E$.
Given that we are dealing with a singular variety, the intersection form is rational, and, in the ordered basis $(E, D_1, D_2)$, it is given by the matrix $\mathrm{diag}\left(\tfrac{1}{k}, -1, -1\right)$.

Now, on $\mathbb{CP}^2_{[1, 1, k]}$, $H^2$ is generated by $E$, with intersection $E\cdot E$ equal to $1/k$, and the smooth circle bundles are those with Chern class $c_1 = eE$ for $e \in \mathbb{Z}$, with $e$ coprime with $k$.
To see this, we claim that the total space of the line bundle corresponding to $mE$ is given by the quotient
$$
	\faktor{\left(\left(\C^3 \setminus \{0\}\right) \times \C\right)}{\C^*} \qquad \lambda \cdot (x, y, z, w) = \left(\lambda x, \lambda y, \lambda^k z, \lambda^m w\right) \quad \lambda \in \C^*
	.
$$
The circle bundle associated to such linear bundle is smooth if and only if the line bundle is smooth away from the zero section.
The complement of the zero section in the total space of the line bundle is a dense open set $U$ of $\mathbb{CP}^3_{[1, 1, k, m]}$.
If $m$ is coprime with $k$, then the only singular points of $\mathbb{CP}^3_{[1, 1, k, m]}$ are $[0\colon\!0\colon\!1\colon\!0]$ and $[0\colon\!0\colon\!0\colon\!1]$, which are both not included in $U$.
On the other hand, if $m$ is not coprime with $k$, then the singular locus is one-dimensional and it intersects with $U$.
In order to learn what are the circle bundles on $X_k$, we note that the circle bundle with Chern class $eE + d_1 D_1 + d_2 D_2$ is obtained by pasting the circle bundle on $\mathbb{CP}^2_{[1, 1, k]}$ with Chern class $eE$ with the circle bundle on $\mathbb{CP}^2$ with Chern class $d_1$ and the circle bundle on $\mathbb{CP}^2$ with Chern class $d_2$, where the two copies of $\mathbb{CP}^2$ mentioned are the ones that we are pasting to realize the blowup.
Given that the circle bundles are trivial on small enough neighborhoods near the points on which we plan to connect them, we can paste them smoothly.
Hence, the smooth circle bundles on $X_k$ are those with Chern class $eE + d_1 D_1 + d_2 D_2$ for $e, d_1, d_2 \in \mathbb{Z}$ with $e$ coprime with $k$, given that the circle bundles on $\mathbb{CP}^2$ are always smooth.
Given a Kähler class $[\omega] = e^\omega E + d_1^\omega D_1 + d_2^\omega D_2$, we need to find two linearly independent Chern classes $a$ and $b$ that satisfy the topological condition \eqref{eq:topological_condition_orbifold}, i.e.\ such that the integers $e^a, d_1^a, d_2^a, e^b, d_1^b, d_2^b$ determining them satisfy
$$
	e^\omega e^a - k d_1^\omega d_1^a - kd_2^\omega d_2^a = 0, \quad e^\omega e^b - k d_1^\omega d_1^b - k d_2^\omega d_2^b = 0
	.
$$
The presence of $k$ does not alter the solvability of the system, and we can find two linearly independent solutions.

However, we need to make sure that the $T^2$-bundle be Seifert, i.e.\ that $e^a$ and $e^b$ are coprime with $k$.
To do this, we need to have freedom of choice on the Kähler class, because from the above equations, we see that any factor of $k$ that does not divide $e^\omega$ needs to divide $e^a$ and $e^b$.
We have this freedom if theorem \ref{thm:crepant_resolution} holds for orbifolds and the canonical bundle over $\mathbb{CP}^2_{[1,1,k]}$ satisfies the hypotheses of that theorem, i.e.\ it is a crepant resolution of a Calabi–Yau cone.
In \cite{Foscolo2019} it was already argued how the theorem holds if singularities are contained in a compact set, and we expect that the proof can be extended to the case where singularities extend all the way up to infinity, by using the H\"older spaces of invariant sections defined above.
There are two proofs of the theorem, one by Goto in \cite{Goto2009} and one by van Coevering in \cite{Coevering2009a}.
The proof by Goto eventually boils down to proving two things: one is an application of the implicit function theorem and the other is a $C^{k+2, \alpha}_\nu$ estimate for the Monge-Ampere equation.
The application of the implicit function theorem requires proving that the linearization of a certain differential operator is invertible, and this is proved by studying the Laplacian on decaying section on conical manifolds.
Both results should extend to orbifolds, for essentially the same reasons why our theorem \ref{thm:main_theorem_orbifold} extends to orbifolds outlined in section \ref{sec:orbifolds}.

On the other hand, similarly to the smooth case, $\mathbb{CP}^2_{[1,1,k]}$ is the crepant resolution of its blow-down in the zero section, which we need to prove to admit a Calabi–Yau cone structure.
This is equivalent to prove that its link have a Sasaki-Einstein 5-orbifold structure.
Clearly, the link of such cone is the circle bundle corresponding to the canonical class on $\mathbb{CP}^2_{[1,1,k]}$, which we believe to possess a Sasaki-Einstein structure because of theorem 1.2 of \cite{Futaki2009} in the version of theorem 6.6 of \cite{Coevering2009}, in a hypothetical orbifold version.
Theorem 1.2 of \cite{Futaki2009} should extend to orbifolds, because it is the Sasakian (or, in other words, odd-dimensional) version of a result of Wang and Zhu (in \cite{Wang2004}) in the Kählerian case.
This result was extended to the orbifold case by Shi and Zhu in \cite{Shi2011} and, while we did not go through their proof, we expect the odd-dimensional case to be analogous.
Theorem 6.6 of \cite{Coevering2009} is just a rewriting of theorem 1.2 of \cite{Futaki2009} in the combinatorics picture of toric geometry, and we expect it to hold in the orbifold case as well, given that the Delzant construction can be extended to orbifolds, which was done in \cite{Lerman1997}.
In the orbifold case we have labeled polytopes instead of the classic polytopes of the smooth case.
By labeled polytopes we mean polytopes with an integer attached to each facet and these integers are responsible for orbifold singularities.
We believe that a label for the $k$th facet being equal to $m$ should correspond to taking $m$ copies of $u_k$ in the combinatorial picture of proposition 6.5 of \cite{Coevering2009}, which would imply that $\mathbb{CP}^2_{[1,1,k]}$ has a Sasaki-Einstein structure on its canonical circle bundle.
However, many details need to be checked and we leave this as an open problem.

Going back to our construction, we can thus substitute $e^\omega$ with $k \tilde{e}^\omega$ in the above equations and simplify the $k$ away.
Moreover, if we found any solution of the above equations where any factor of $k$ divides $e^a$ or $e^b$, we could reabsorb it in $e^\omega$.

The rest of the construction is analogous to the one in the previous section \ref{sec:toric}.

Note that the canonical divisor is $K_{X_k} = -(k+2) E + D_1 + D_2$ which, by what we saw before, is smooth only if $k$ is odd.
Thus, we see that there exist both examples of toric $\Spin(7)$ manifolds whose $T^2$ quotient is an AC CY 3-orbifold with singularities contained in a compact set and examples where the same happens but the singularities extend all the way up to infinity.

\section{The \texorpdfstring{$G_2$}{G2} case}\label{sec:G2_case}
As mentioned in the introduction, the result proved in this paper is strongly influenced by an analogous result proved by Foscolo, Haskins and Nordström in \cite{Foscolo2021} which swaps the group $\Spin(7)$ with $G_2$ and $T^2$-bundles with $S^1$-bundles.
In particular, the proof outlined in section \ref{sec:analytic_curve} is in great part analogous to the one in \cite{Foscolo2021}.
However, the proof presented in this paper makes use of the implicit function theorem, whereas the proof in the $G_2$ case was done by writing down a candidate series expansion of the solution, solving the equations term by term and then proving the convergence of the series.
The advantage of using the implicit function theorem, besides being more natural and aesthetically pleasing, is that it allows to skip the last step of proving the convergence manually.
As explained in section \ref{sec:analytic_curve}, the main ingredient that allows us to use the implicit function theorem is to solve a slightly modified version of the equation $d\Phi = 0$ that allows us to introduce an extra free parameter, as explained in remark \ref{rmk:free_parameter}.
There is nothing specific about the $\Spin(7)$ case and an analogous remark holds in the $G_2$ case.
In this section we briefly revise the proof in the $G_2$ case by making use of this extra trick, so to provide a more concise proof.

In the $G_2$ case, given a circle bundle $M \to B$, an $S^1$-invariant $G_2$ structure $\varphi$ can be expressed as
$$
\varphi = \theta \wedge \omega + p^3 \Re \Omega
$$
for some principal connection $\theta$ on $\pi$, an $\SU(3)$-structure $(\omega, \Re \Omega)$ on $B$ (that we identify with its pullback through $\pi$) and some positive smooth function $p$ on $B$.
In this case, the equations $d \varphi = d {\star} \varphi = 0$ are equivalent to the system
\begin{subnumcases}{\label{eq:G2_torsion}}
	d \omega = 0 \\
	d\left(p^{3} \operatorname{Re} \Omega\right)=-d \theta \wedge \omega \\
	d\left(p \operatorname{Im} \Omega\right) =0 \\
	2 p^3 d p \wedge \omega^2 = d \theta \wedge p\operatorname{Im} \Omega
	.
\end{subnumcases}

\subsection{Free parameters and gauge fixing conditions}

In this subsection and onward we want to solve system \eqref{eq:G2_torsion} and thus we suppose that $B$ be asymptotically conical.

As we discussed above, in order to apply the implicit function theorem, we solve the modified equations $d \varphi = 0$ and $d {\star} \varphi = p^*_M \star_\omega ds$ for some positive smooth decaying function $s$, analogously to what we did in remark \ref{rmk:free_parameter}.
We add this and other free parameters in the following lemma.
\begin{proposition}\label{thm:G2_free_parameters}
	Let $\left(\omega_0, \Omega_0\right)$ be an AC Calabi-Yau structure on a 6 -manifold $B$ and denote by $g_0$ and $\nabla_0$ the induced metric and Levi-Civita connection.
	Fix $k \geq \N^+, \alpha \in(0,1)$ and $\nu\in(-\infty,-1)$.
	Then there exists a constant $\varepsilon_0>0$ such that the following holds.
	Let $(\omega, \Omega)$ be a second $\mathrm{SU}(3)$-structure on $B$ whose distance from $\left(\omega_0, \Omega_0\right)$ in norm $C^1_{-1}(B)$ is less than $\varepsilon_0$, i.e.\ such that $\norm{\omega-\omega_0}_{C^1_{-1}} + \norm{\Omega-\Omega_0}_{C^1_{-1}} < \varepsilon_0$.
	Suppose that there exists a scalar function $p$ and one integral exact 2 -forms $d \theta$ on $B$ such that
	\begin{equation}
		d \omega = 0 \qquad d \theta \wedge \omega^2=0\label{eq:G2_parameters_linear_conditions}
		.
	\end{equation}
	Moreover, assume the existence of functions $s, u, v$ and a vector field $X$ in $C_{\nu+1}^{k+1, \alpha}$ such that
	\begin{subnumcases}{\label{eq:G2_parameters}}
		d\left(p^{3} \operatorname{Re} \Omega\right)+d \theta \wedge \omega = d {\star} d(X\lrcorner \operatorname{Re} \Omega+v \omega)\label{eq:G2_parameters_dRe_Omega} \\
		d\left(p \operatorname{Im} \Omega\right) = d {\star} d(u \omega)\label{eq:G2_parameters_dIm_Omega} \\
		\left(2 p^3 d p + Jds\right) \wedge \omega^2 - d \theta \wedge p\operatorname{Im} \Omega = 0\label{eq:G2_parameters_dtheta}
	\end{subnumcases}
	where the Hodge $\star$ is computed with respect to the metric induced by $(\omega, \Omega)$.
	Then $u = v = 0, s = 0$ and $X = 0$, i.e.\ $(\omega, \Re \Omega, p, \theta)$ is a solution system \eqref{eq:G2_torsion}.
\end{proposition}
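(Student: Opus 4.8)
The plan is to transcribe the proof of proposition \ref{thm:free_parameters} almost verbatim, peeling off the auxiliary variables one at a time using the fact that $u$, $v$, $s$ and $X$ enter \eqref{eq:G2_parameters} only through the terms $d{\star} d(u\omega)$, $d{\star} d(X\lrcorner\Re\Omega+v\omega)$ and $Jds\wedge\omega^2$. Throughout I would use that $(\omega,\Omega)$ is a genuine $\SU(3)$-structure (so $\omega\wedge\Re\Omega=\omega\wedge\Im\Omega=0$ and $p>0$), that $d\omega=0$ and $d\theta\wedge\omega^2=0$ by \eqref{eq:G2_parameters_linear_conditions}, that $\star$ commutes with the $\SU(3)$-projections $\pi_i$, and that $\pi_1(\chi)$ is a nonzero multiple of $\star(\chi\wedge\omega)$ for a $4$-form $\chi$.

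First I would show $u=0$: wedging \eqref{eq:G2_parameters_dIm_Omega} with $\omega$ annihilates the left-hand side (by $d\omega=0$ and $\omega\wedge\Im\Omega=0$), so $\pi_1\bigl(d^*d(u\omega)\bigr)=0$; lemma \ref{thm:pi1_6} identifies the corresponding operator for $(\omega_0,\Omega_0)$ with $\tfrac23\triangle_0$, which has no decaying kernel since $\nu+1<0$, and a $C^1_{-1}$-small perturbation stays injective — exactly the argument in proposition \ref{thm:free_parameters} — hence $u=0$. Next, with $u=0$ equation \eqref{eq:G2_parameters_dIm_Omega} reads $d(p\Im\Omega)=0$; applying $d$ to \eqref{eq:G2_parameters_dtheta}, the $d\theta$-term drops out (it becomes $d\theta\wedge d(p\Im\Omega)$) and $d(2p^3\,dp\wedge\omega^2)=0$, leaving $d(Jds)\wedge\omega^2=0$. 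Using the $1$-form identity $\gamma\wedge\omega^2=2\star(J\gamma)$ underlying lemma \ref{thm:d_star_J_d} (so $Jds\wedge\omega^2=-2\star ds$), this gives $d{\star} ds=0$, i.e. $\triangle s=0$, so the decaying function $s$ vanishes.

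For $v$ and $X$ I would again wedge \eqref{eq:G2_parameters_dRe_Omega} with $\omega$ to get $\pi_1\bigl(d{\star} d(X\lrcorner\Re\Omega+v\omega)\bigr)=0$, and then show the $\Lambda^4_6$-component of $d(p^3\Re\Omega)+d\theta\wedge\omega$ vanishes. This is where the two already-processed equations feed in: $d(p\Im\Omega)=0$ together with proposition \ref{thm:SU3_torsion} forces $w_1=\hat w_2=0$ and $w_5=-p^{-1}dp$, so $\pi_6\bigl(d(p^3\Re\Omega)\bigr)=(3p^2-p^2)\,dp\wedge\Re\Omega=2p^2\,dp\wedge\Re\Omega$; writing $(d\theta)_6=Y\lrcorner\Re\Omega$ and using lemma \ref{thm:SU3_identities}, equation \eqref{eq:G2_parameters_dtheta} with $s=0$ reads $2p^3\,dp\wedge\omega^2=-pJY^\flat\wedge\omega^2$, hence $JY^\flat=-2p^2\,dp$ and $\pi_6(d\theta\wedge\omega)=JY^\flat\wedge\Re\Omega=-2p^2\,dp\wedge\Re\Omega$, so the two contributions cancel. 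Thus $\pi_{1\oplus6}\bigl(d^*d(X\lrcorner\Re\Omega+v\omega)\bigr)=0$, and by lemma \ref{thm:pi1_6} plus the perturbation argument (pairing the $\Lambda^6$-component with $X^\flat$ shows it is harmonic and decaying, hence zero, and likewise for $v$) we get $v=X=0$. Finally, with all auxiliary fields zero, \eqref{eq:G2_parameters} together with $d\omega=0$ is literally system \eqref{eq:G2_torsion}, and conversely \eqref{eq:G2_torsion} sits inside \eqref{eq:G2_parameters}; no analogue of corollary \ref{thm:alphas} is needed here since there is a single monopole-type equation.

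The main obstacle is the $\Lambda^4_6$-cancellation in the third step: it only works once both $u$ and $s$ have been eliminated, and it is not formal — it hinges on reading off $w_5=-p^{-1}dp$ from $d(p\Im\Omega)=0$ via proposition \ref{thm:SU3_torsion}, which is precisely what balances the extra term $3p^2\,dp\wedge\Re\Omega$ produced by differentiating $p^3$ against the term coming from \eqref{eq:G2_parameters_dtheta}. Everything else is a routine transcription of the $\Spin(7)$ argument, and I would also need to check carefully the order in which the variables are killed (first $u$, then $s$, then $(v,X)$), since each step uses the previous ones.
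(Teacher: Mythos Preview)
Your proposal is correct and is exactly the intended ``analogous'' argument: eliminate $u$ via the $\pi_1$-projection of \eqref{eq:G2_parameters_dIm_Omega}, then $s$ by differentiating \eqref{eq:G2_parameters_dtheta}, then $(v,X)$ via the $\pi_{1\oplus 6}$-projection of \eqref{eq:G2_parameters_dRe_Omega}. Your observation that the order must be $u\to s\to (v,X)$---rather than $u\to (v,X)\to s$ as in proposition~\ref{thm:free_parameters}---is the one genuine adaptation needed, since here there is only a single monopole-type equation and it contains $s$; your $\Lambda^4_6$-cancellation via $w_5=-p^{-1}dp$ is precisely how this plays out.
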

\begin{proof}
	Analogous to \ref{thm:free_parameters}.
\end{proof}
Moreover, we impose the same gauge fixing conditions introduced in section \ref{sec:gauge}, with the obvious adaptation that now we only have one connection 1-form instead of two.

\subsection{The 0\textsuperscript{th} and 1\textsuperscript{st} order case}

As in the $\Spin(7)$ case, we imagine expanding the data appearing in system \eqref{eq:G2_torsion} as a series in $\varepsilon$ and we study its possible solutions.
As above, we manually impose $u_0 = v_0 = s_0 = 0$ and $X_0 = 0$ and $\theta_0 = 0$ because, in the limit where the fibers shrink to points, the connection remains undetermined and the free parameters loose meaning.
Thus,
\begin{subnumcases}{}
	d \omega_0 = 0 \\
	d\left(p_0^{3} \operatorname{Re} \Omega_0\right)= 0 \\
	d\left(p_0 \operatorname{Im} \Omega_0\right) = 0 \\
	2 p_0^3 d p_0 \wedge \omega^2_0 = 0
	.
\end{subnumcases}
From the last equation we get that $p_0$ is constant and thus, from the ones involving $\Re \Omega_0$ and $\Im \Omega_0$ we get that $(\omega_0, \Omega_0)$ has to be Calabi Yau.

We thus study the linearization of the equations at a point satisfying the conditions we just found, which is given by the following system:
\begin{subnumcases}{\label{eq:G2_linearization}}
	3p_0^2dP \wedge \operatorname{Re} \Omega_0 + p_0^3 d \rho +d \theta_1 \wedge \omega_0 = d {\star} d(X_1\lrcorner \operatorname{Re} \Omega_0+v_1 \omega_0) \\
	dP \wedge \Im \Omega_0 + p_0 d \hat{\rho} = d {\star} d(u_1 \omega_0) \\
	\left(2p_0^3 dP  + J_0ds_1\right) \wedge \omega^2_0 - d \theta_1 \wedge p_0\operatorname{Im} \Omega_0 = 0
\end{subnumcases}
where we called $P = p_1$ and $\rho = \Re \Omega_1$.
Recall that we are not expanding $\omega$ because we fixed it to $\omega_0$ through gauge transformations.
We have the following
\begin{lemma}
	Let $\theta_1 \in \mathcal{A}(M)$ and let $(d\theta_1, \rho, P, s_1, u_1, v_1, X_1) \in C^{k, \alpha}_\nu$ be a solution of system \eqref{eq:G2_linearization}, for $k \in \N$, $\alpha \in [0,1]$, $\nu \in (-\infty, 0)$, $p_0$ is constant, and $(\omega_0, \Omega_0)$ Calabi-Yau.
	
	Then $P, u_1, v_1, X_1 = 0$, $ \theta_1$ is a Hermitian Yang-Mills connection and $\rho \in \Omega^3_{12}(B)$.
	Moreover, this data satisfies
	\begin{subnumcases}{\label{eq:G2_rho_linearization}}
		d\rho = -p_0^{-3}d \theta_1 \wedge \omega_0 \\
		d \star\rho = 0
		.
	\end{subnumcases}
\end{lemma}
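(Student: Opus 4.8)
The plan is to follow the proof of Lemma~\ref{thm:first_order_necessary} almost verbatim, exploiting that there is now a single connection form $\theta_1$ rather than a pair. Besides the three equations of \eqref{eq:G2_linearization} I would use the linear constraints carried along with them: $d\theta_1\wedge\omega_0^2=0$ (so $\pi_1(d\theta_1)=0$), the conditions $\rho\wedge\omega_0=0$, $\rho\wedge\Re\Omega_0=0$ and $\pi_{12}(\rho)\in\mathcal{W}^3_\nu$ coming from the normalisations of section~\ref{sec:gauge}, and the linearized Monge--Amp\`ere equation $\Re\Omega_0\wedge\hat\rho+\rho\wedge\Im\Omega_0=0$. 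Exactly as in Lemma~\ref{thm:first_order_necessary}, the first two constraints kill the $\Lambda^3_6$- and $\Im\Omega_0$-components of $\rho$, and since Proposition~\ref{thm:Hitchin_map_derivative} sends $c\,\Re\Omega_0+\rho_{12}$ to $c\,\Im\Omega_0-\star\rho_{12}$ under the linearized Hitchin map, the Monge--Amp\`ere equation forces the remaining $c\,\Re\Omega_0$-component to vanish; hence $\rho\in\Omega^3_{12}$ and $\hat\rho=-\star\rho$.

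The one genuinely new point is extracting $s_1=0$ and $P=0$ from the third equation of \eqref{eq:G2_linearization} (the linearization of \eqref{eq:G2_parameters_dtheta}): in the $\Spin(7)$ case there were two Calabi--Yau-monopole-type equations feeding four harmonic scalars, whereas here one equation must do both. I would apply $d$ directly, so that every term dies except $d(J_0\,ds_1)\wedge\omega_0^2$ ($dP\wedge\omega_0^2$ being closed and $d(d\theta_1\wedge\Im\Omega_0)=0$ on the Calabi--Yau $B$); by Lemma~\ref{thm:Hodge_star_SU3}.1 this is $d\star ds_1=0$, i.e.\ $\Delta s_1=0$, so $s_1=0$ since $\nu<0$. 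With $s_1=0$ the equation reads $2p_0^3\,dP\wedge\omega_0^2=p_0\,d\theta_1\wedge\Im\Omega_0$; applying $J_0$ (an algebra endomorphism of the forms on $B$ fixing $\omega_0$ and interchanging $\Re\Omega_0,\Im\Omega_0$ up to sign, so that $J_0(d\theta_1\wedge\Im\Omega_0)=\pm\,d\theta_1\wedge\Re\Omega_0$ using $\pi_1(d\theta_1)=0$) turns it into $2p_0^3\,J_0dP\wedge\omega_0^2=\pm p_0\,d\theta_1\wedge\Re\Omega_0$, and a further $d$ kills the right side ($\Re\Omega_0$ closed) and leaves $d\star dP=0$, so $\Delta P=0$ and $P=0$. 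Now with $P=s_1=0$ the equation becomes $d\theta_1\wedge\Im\Omega_0=0$; writing $\pi_6(d\theta_1)=\beta^\sharp\interior\Re\Omega_0$ and using $(\beta^\sharp\interior\Re\Omega_0)\wedge\Im\Omega_0=-J_0\beta\wedge\omega_0^2$ (Lemma~\ref{thm:SU3_identities}.5) forces $\beta=0$, so $d\theta_1\in\Omega^2_8$ and $\theta_1$ is Hermitian Yang--Mills.

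The rest is the bookkeeping of Lemma~\ref{thm:first_order_necessary}, with the caveat that in the $G_2$ normalisation $u_1$ sits on \eqref{eq:G2_parameters_dIm_Omega} and $(v_1,X_1)$ on \eqref{eq:G2_parameters_dRe_Omega}, opposite to the $\Spin(7)$ case. Wedging the second equation of \eqref{eq:G2_linearization} with $\omega_0$ leaves only $d\star d(u_1\omega_0)\wedge\omega_0$ (using $\Im\Omega_0\wedge\omega_0=0$, $P=0$ and $\hat\rho=-\star\rho\in\Omega^3_{12}$), so Lemma~\ref{thm:pi1_6} gives $\Delta u_1=0$ and $u_1=0$; that equation then collapses to $d\hat\rho=0$, i.e.\ $d\star\rho=0$ (the second line of \eqref{eq:G2_rho_linearization}), whence $\pi_6(d\star\rho)=0$ and so $\pi_6(d\rho)=0$ by Corollary~\ref{thm:pi6_d_rho}. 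In the first equation of \eqref{eq:G2_linearization} with $P=0$: wedging with $\omega_0$ and using $\Re\Omega_0\wedge\omega_0=0$, $\rho\in\Omega^3_{12}$, $d\theta_1\wedge\omega_0^2=0$ gives $\pi_1\bigl(d^*d(X_1\interior\Re\Omega_0+v_1\omega_0)\bigr)=0$, hence $v_1=0$ by Lemma~\ref{thm:pi1_6}; projecting onto $\Lambda^4_6$ and using $\pi_6(d\rho)=0$, $d\theta_1\wedge\omega_0\in\Lambda^4_8$ and $v_1=0$ reduces, again via Lemma~\ref{thm:pi1_6}, to $dd^*X_1^\flat+\tfrac23 d^*dX_1^\flat=0$; pairing with $X_1^\flat$ (legitimate since it decays) shows $X_1^\flat$ is a decaying harmonic $1$-form, so $X_1=0$ by Lemma~\ref{thm:harmonic_1_6_forms}. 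With $P=v_1=X_1=0$ the first equation of \eqref{eq:G2_linearization} collapses to $p_0^3\,d\rho+d\theta_1\wedge\omega_0=0$, which together with $d\star\rho=0$ is precisely \eqref{eq:G2_rho_linearization}. The main obstacle I anticipate is this $(P,s_1)$-extraction and keeping the swapped roles of \eqref{eq:G2_parameters_dRe_Omega} and \eqref{eq:G2_parameters_dIm_Omega} straight; everything else transcribes directly from Lemma~\ref{thm:first_order_necessary}.
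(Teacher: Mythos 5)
Your proof is correct and is exactly the transcription of Lemma \ref{thm:first_order_necessary} that the paper intends (its own proof reads simply ``analogous to Lemma \ref{thm:first_order_necessary}''); you have correctly handled the two genuinely new points, namely extracting both $s_1$ and $P$ from the single monopole-type equation \eqref{eq:G2_parameters_dtheta} and the swapped placement of the free parameters $u_1$ versus $(v_1,X_1)$. The only step worth flagging is your use of Corollary \ref{thm:pi6_d_rho} in the reverse direction ($\pi_6(d\star\rho)=0\Rightarrow\pi_6(d\rho)=0$), which is legitimate because the endomorphism $\sigma$ constructed in its proof is an isometry, hence injective.
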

\begin{proof}
	Analogous to \ref{thm:first_order_necessary}
\end{proof}

For the same reasons explained in section \ref{sec:first_order} for the $\Spin(7)$ case, we solve the first step manually in the following lemma:
\begin{lemma}\label{thm:G2_HYM_connection}
	Let $M \to B$ be a principal $S^1$ bundle on an AC Calabi Yau manifold $B$.
	
	Then, for any principal connection $\bar{\theta}_\infty$ on $M$ such that $d\bar{\theta}_\infty \in C^\infty_{-2}(\Lambda^2(B))$, there exists a unique HYM connection $\theta_1$ with $\theta_1 - \bar{\theta}_\infty \in C^\infty_{-1}\left(\Lambda^1(B)^2\right)$ such that $d^* \left(\theta_1-\bar{\theta}_\infty \right) = 0$.
	
	Moreover, given $\theta_1$ as above, there exists a unique solution $\rho \in C_{-1}^{\infty}\left(\Lambda_{12}^3(B)\right) \cap \mathcal{W}_\nu^3(B)$ of \eqref{eq:G2_rho_linearization}.
\end{lemma}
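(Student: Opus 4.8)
The plan is to obtain Lemma \ref{thm:G2_HYM_connection} as the single circle-bundle specialisation of Lemma \ref{thm:HYM_connection} and Theorem \ref{thm:rho_first_order}, with $p_0^{-1}$ replaced throughout by $p_0^{-3}$; no new analysis is required, only a careful repetition of those arguments with one connection $1$-form instead of two. For the first assertion I would use Theorem \ref{thm:harmonic_2_forms_isomorphism} to represent $c_1(M)=[\kappa]$ by a unique closed and coclosed $2$-form $\kappa$ decaying with some rate $\nu\in(-2,0)$, then invoke Proposition B.12 of \cite{Foscolo2021} to improve this to $\kappa\in C^\infty_{-2}(\Lambda^2(B))$, so that $\kappa-d\bar\theta_\infty\in C^\infty_{-2}(\Lambda^2(B))$. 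Lemma \ref{thm:exact_2_forms_AC_CY} then supplies a unique $\delta\in C^\infty_{-1}(\Lambda^1(B))$ with $d\delta=\kappa-d\bar\theta_\infty$ and $d^*\delta=0$, and $\theta_1:=\bar\theta_\infty+\delta$ has $d\theta_1=\kappa$ harmonic and decaying, hence with no $\Omega^2_{1\oplus6}$ component by Lemma \ref{thm:harmonic_1_6_forms}; that is, $\theta_1$ is Hermitian Yang--Mills. For uniqueness, the difference of two such connections is a horizontal $1$-form on $B$ whose curvature is harmonic, exact and decaying, hence zero by Theorem \ref{thm:harmonic_2_forms_isomorphism}; the $1$-form is then closed, coclosed and decaying, hence zero by Lemma \ref{thm:harmonic_1_6_forms}.

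For the second assertion I would first use that $\theta_1$ HYM forces $d\theta_1\in\Omega^2_8$, so that $d\theta_1\wedge\omega_0=-{\star_0}d\theta_1$ by Lemma \ref{thm:Hodge_star_SU3}; combined with $d{\star}\rho=0\iff d^*\rho=0$, this rewrites system \eqref{eq:G2_rho_linearization} as the single Hodge--de Rham equation $(d+d^*)\rho=p_0^{-3}{\star_0}d\theta_1=:\tau$ with a pure-degree-$4$ right-hand side. By Corollary B.10 of \cite{Foscolo2021} this has a $C^\infty_{-1}$ solution exactly when $\tau$ is $L^2$-orthogonal to $\mathcal{H}^{\mathrm{even}}_{-5-\nu}(B)$; since the pure-degree pieces of such forms are separately closed and coclosed, and using the ${\star_0}$-isomorphism $\mathcal{H}^2\cong\mathcal{H}^4$, this collapses to $\langle d\theta_1,\sigma\rangle_{L^2}=0$ for all $\sigma\in\mathcal{H}^2_{-5-\nu}(B)$, which is Lemma \ref{thm:exact_star_H4} once one uses the topological condition $c_1(M)\smile[\omega_0]=0$ that the second equation of \eqref{eq:G2_torsion} imposes on any solution. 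Two solutions differ by an element of $\mathcal{H}^3_\nu(B)$, so imposing $\rho\in\mathcal{W}^3_\nu(B)$ pins down a unique one. Finally $\Delta\rho=(d+d^*)\tau=0$ because $d\theta_1$ is closed and $\omega_0$ is parallel, so $\rho$ is harmonic and therefore lies in $\Omega^3_{12}$ by Lemma \ref{thm:harmonic_1_6_forms}; its decay improves to $C^\infty_{-1}$ exactly as in the proof of Lemma 6.3 of \cite{Foscolo2021}.

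The only delicate points are bookkeeping: making sure the ranges of the weight $\nu$ are compatible so that the harmonic representatives of Theorem \ref{thm:harmonic_2_forms_isomorphism}, the elliptic theory of Appendix B of \cite{Foscolo2021}, and the weighted Hölder spaces can be used together, and verifying the two vanishing conditions $\langle d\theta_1,\sigma\rangle_{L^2}=0$ and $c_1(M)\smile[\omega_0]=0$. If anything, the $G_2$ case is simpler than the $\Spin(7)$ one treated in Theorem \ref{thm:rho_first_order}, because the companion equation $d{\star}\rho=0$ carries no source term, so the whole right-hand side of the Hodge--de Rham equation is a multiple of $d\theta_1$ and the orthogonality check reduces to a single statement. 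I do not anticipate a genuine obstacle: every ingredient is already available in the form cited above, and the proof is a transcription of the two-bundle argument to the one-bundle setting.
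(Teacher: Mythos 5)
Your proposal is correct and follows exactly the route the paper takes: the paper's own proof is literally "Analogous to \ref{thm:HYM_connection} and \ref{thm:rho_first_order}," and your transcription of those two arguments to the single-bundle setting (harmonic representative of $c_1$ via Theorem \ref{thm:harmonic_2_forms_isomorphism} and Proposition B.12, coclosed primitive via Lemma \ref{thm:exact_2_forms_AC_CY}, then the Hodge--de Rham equation $(d+d^*)\rho=p_0^{-3}\star_0 d\theta_1$ solved via Corollary B.10 with the obstruction killed by Lemma \ref{thm:exact_star_H4}, and uniqueness fixed by $\mathcal{W}^3_\nu$) is precisely what is intended. No gaps.
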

\begin{proof}
	Analogous to \ref{thm:HYM_connection} and \ref{thm:rho_first_order}.
\end{proof}

\subsection{The implicit function theorem}

Now, fix some connection $\theta_\infty$ on $M$, $\nu \in (-2, -1)$, $\alpha \in (0,1)$ and $k \in \N$.
Let $Y = (0, \Re \Omega_0, p_0$, $ 0, 0, 0, 0) + C^{l, \alpha}_\nu\left(\Lambda^1(B) \times \Lambda^3(B) \times \Lambda^0(B)^4\right) \times C^{l + 1, \alpha}_{\nu + 1} \left(TB\right)$ be cut out by the following linear constraints:
$$
	d \theta \wedge \omega_0^2 = 0 \quad
	d^* (\theta - \theta_\infty) = 0 \quad
	\Re \Omega \wedge \omega_0 = 0 \quad
	\Re \Omega \wedge \Re \Omega_0 = 0 \quad
	\pi_{12}(\Re \Omega - \Re \Omega_0) \in \mathcal{W}^3_\nu(B)
	.
$$
We aim to apply the implicit function theorem to the function $\Psi: \R \times Y \to Z = \{z \in C^{k, \alpha}_\nu(\Lambda^6(B)) \times C^{k-1, \alpha}_{\nu - 1}(\Lambda^4(B)^2 \times \Lambda^5(B)) \mid z_1, z_2 \text{ exact}\}$ given by
$$
\Psi: \begin{bmatrix}
	\varepsilon \\
	\delta_\theta \\
	\Re \tilde{\Omega} \\
	p \\
	s \\
	u \\
	v \\
	X
\end{bmatrix}
\mapsto
\begin{bmatrix}
	\frac{1}{4}\Re \tilde{\Omega} \wedge \Im \tilde{\Omega} - \frac{1}{6} \omega^3_0 \\
	d\left(p^{3} \operatorname{Re} \tilde{\Omega} + \varepsilon(p^3 - p_0^3) \Re \Omega_1\right) + d \delta_\theta \wedge \omega_0 - d {\star} d(X\lrcorner \operatorname{Re} \Omega +v \omega_0) \\
	d\left(p \operatorname{Im} \tilde{\Omega} + \varepsilon (p - p_0) \Im \Omega_1 \right) - d {\star} d(u \omega_0) \\
	(2 p^3 d p + J ds) \wedge \omega_0^2 - d \left(\varepsilon\theta_1 + \delta_\theta \right) \wedge p \operatorname{Im} \Omega
\end{bmatrix}
$$
where, like in the $\Spin(7)$ case, $\Omega = \tilde{\Omega} + \varepsilon \Omega_1$.
$\Psi_1$ and $\Psi_2$ are manifestly exact and are obtained by subtracting the solutions to system \eqref{eq:G2_rho_linearization} from system \eqref{eq:G2_parameters}.
They take value in the spaces claimed because $\Re \Omega_1, \Im \Omega_1 \in C^{\infty}_{-1}$ are multiplied by decaying sections.
To apply the implicit function theorem, we calculate the linearization for $\varepsilon = 0$ and $y_0 =\left(0, \Re \Omega_0, p_0, 0, 0, 0, 0\right)$, which is
\begin{equation*}
	D \Psi_{(0, y_0)} : \begin{bmatrix}
		\gamma_\theta \\ \rho \\ P \\ S \\ U \\ V \\ X_1
	\end{bmatrix} \mapsto
	\begin{bmatrix}
		\operatorname{Re} \Omega_0 \wedge \hat{\rho}+\rho \wedge \operatorname{Im} \Omega_0 \\[4pt]
		p_0^{3}d\rho + dP \wedge \left(3p_0^{2} \operatorname{Re} \Omega_0\right) +d\gamma_\theta \wedge \omega_0 - d {\star_0} d(X_1\lrcorner \operatorname{Re} \Omega+V \omega_0) \\[4pt]
		p_0d\hat{\rho} + dP \wedge\operatorname{Im} \Omega_0 - d {\star_0} d(U \omega_0) \\[4pt]
		\left(2 p_0^3 d P + J dS\right) \wedge \omega_0^2 - d \gamma_\theta \wedge \operatorname{Im} p_0 \Omega_0
	\end{bmatrix}
	.
\end{equation*}
The following theorem allows to apply the implicit function theorem.
\begin{theorem}\label{thm:G2_linearization}
	Let $\left(B, \omega_0, \Omega_0\right)$ be an AC Calabi-Yau 3-fold.
	Fix $k \in \N^+, \alpha \in(0,1), \delta \in (0, + \infty)$ and $\nu \in(-3-\delta,-1)$ away from a discrete set of indicial roots.
	
	Then the map $y \mapsto D\Psi_{x_0, y_0}(0, y)$ is an isomorphism of Banach spaces.
\end{theorem}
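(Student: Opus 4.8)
The strategy mirrors the proof of Theorem \ref{thm:linearization} in the $\Spin(7)$ case, but is simpler because there is only one connection $1$-form. The plan is to show that, given a target $(z_0, z_1, z_2, z_3)$ with $z_0 \in C^{k,\alpha}_\nu(\Lambda^6(B))$, $z_1 = d\beta_1$, $z_2 = d\beta_2$ exact $4$-forms with $\beta_i \in C^{k,\alpha}_\nu$, and $z_3 \in C^{k-1,\alpha}_{\nu-1}(\Lambda^5(B))$, there exist unique $\gamma_\theta, \rho \in C^{k,\alpha}_\nu$, $P, S \in C^{k,\alpha}_\nu$ and $U, V \in C^{k+1,\alpha}_{\nu+1}$, $X_1 \in C^{k+1,\alpha}_{\nu+1}$ satisfying the linear gauge constraints together with the four linearized equations. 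First I would dispose of the trivial algebraic pieces: the first component forces $\rho = f\Re\Omega_0 + \rho_{12}$ with $\rho_{12}\in\Omega^3_{12}\cap\mathcal{W}^3_\nu$ and $f = \tfrac18 \star_0 z_0$, exactly as in the $\Spin(7)$ case, using the description of the linearized Hitchin map (Proposition \ref{thm:Hitchin_map_derivative}) and the gauge-fixing constraints $\rho\wedge\omega_0 = \rho\wedge\Re\Omega_0 = 0$.

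The main step is extracting a Dirac operator from the remaining equations. Here the relevant equation is the last component, $(2p_0^3 dP + J_0 dS)\wedge\omega_0^2 - d\gamma_\theta\wedge p_0\Im\Omega_0 = z_3$. Applying $\star_0$ and using $\star_0(d\gamma_\theta\wedge\Im\Omega_0) = -\star_0 J_0(d\gamma_\theta\wedge\Re\Omega_0)$ (valid since $\pi_1 d\gamma_\theta = 0$ by the constraint $d\gamma_\theta\wedge\omega_0^2 = 0$), together with the identity $\star_0(\alpha\wedge\omega_0^2)=\star_0\alpha$ on $1$-forms, this becomes
\[
\star_0\!\left(d\gamma_\theta\wedge\Re\Omega_0\right) + d\!\left(\tfrac{2p_0^3}{p_0}P\right) + J_0 d\!\left(\tfrac{1}{p_0}S\right) = \tfrac{1}{p_0}\star_0 z_3 .
\]
Writing $\xi = \gamma_\theta$, $g = 2p_0^2 P$, $h = p_0^{-1}S$, and invoking Lemma \ref{thm:Dirac_operator_expression}, the pair $(g,-h,\xi)$ (with the constraints $d^*\xi = 0$, $d\xi\wedge\omega_0^2 = 0$) satisfies $\Dirac(g,-h,\xi) = (0,0,w)$ for the appropriate $w\in C^{k-1,\alpha}_{\nu-1}(\Lambda^1(B))$, and Theorem \ref{thm:Dirac_iso} gives a unique solution in $C^{k,\alpha}_\nu$, hence unique $\gamma_\theta$, $P$, $S$. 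This determines $\gamma_\theta$ and $P$; it then remains to solve the two middle equations for $\rho_{12}$, $U$, $V$, $X_1$, which is done by the same bootstrapping via Corollary \ref{thm:exact_4_forms'} and Proposition \ref{thm:exact_4_forms} as in the $\Spin(7)$ proof: set $U = u_1$ from the decomposition of $z_1 - \tfrac18 p_0^3 d((\star_0 z_0)\Re\Omega_0) - dP\wedge(3p_0^2\Re\Omega_0) - d\gamma_\theta\wedge\omega_0$, read off $\rho_{12}$ in terms of the corresponding $Y_1$ and $\rho_0$, compute $\hat\rho$ via Proposition \ref{thm:Hitchin_map_derivative}, and then apply Proposition \ref{thm:exact_4_forms} to the second middle equation to solve for $V = u_2$, $X_1 = Y_2$ and the residual $\Omega^3_{12}$-piece.

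Continuity of each component follows exactly as before — from continuity of the linearized Hitchin map and wedge product for $\Psi_0$, from Theorem \ref{thm:Dirac_iso} for $\Psi_1$ and $\Psi_2$ (together with the explicit formulas for $P,S,\gamma_\theta$), and from Corollary \ref{thm:exact_4_forms'} and Proposition \ref{thm:exact_4_forms} for the estimates on the remaining data; continuity of the inverse is then automatic by the open mapping theorem. The main obstacle, as in the $\Spin(7)$ case, is simply verifying that the constraints $d^*\xi = 0$ and $d\xi\wedge\omega_0^2 = 0$ are precisely what is needed to rewrite the Dirac-type equation as an honest equation $\Dirac(\,\cdot\,) = (0,0,w)$ with the $(0,0)$ slots vanishing — i.e.\ checking that the gauge-fixing and Hermitian-Yang-Mills-type linear conditions built into the domain $Y$ exactly match the hypotheses of Lemma \ref{thm:Dirac_operator_expression} and Theorem \ref{thm:Dirac_iso}. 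Once that bookkeeping is in place, the argument is a direct transcription of the $\Spin(7)$ proof with the second connection and the $r$-variable removed.
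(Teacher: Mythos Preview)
Your plan is the right one and matches the paper's intent (the paper's own proof is just ``Analogous to \ref{thm:linearization}''), but there is a bookkeeping slip in the treatment of the two middle equations that would make the argument jam as written. In the $G_2$ linearization the free parameter $U$ sits in the $d\hat\rho$ equation (component $z_2$, coming from $d(p\operatorname{Im}\Omega)$), while the pair $(V,X_1)$ sits in the $d\rho$ equation (component $z_1$, coming from $d(p^3\operatorname{Re}\Omega)$) --- the \emph{opposite} of the $\Spin(7)$ arrangement. Your proposal applies Corollary~\ref{thm:exact_4_forms'} to $z_1$ and sets ``$U=u_1$'' from it, but $U$ does not appear in that equation; conversely you extract $V,X_1$ from $z_2$, where they do not appear either.

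The fix is simply to swap the order: process $z_2$ first via Corollary~\ref{thm:exact_4_forms'} (absorbing the scalar into $U$ and reading off $\hat\rho_{12}$ up to $\star\rho_0'$ with $\rho_0'$ coclosed in $\Omega^3_{12}\cap\mathcal{W}^3_\nu$), then process $z_1$ via Proposition~\ref{thm:exact_4_forms} (absorbing $u_2,Y_2$ into $V,X_1$ and matching $\rho_0'$ with the residual $\rho_0''$). If you run it in your order, after using Proposition~\ref{thm:exact_4_forms} on $z_1$ the remaining freedom in $\rho_{12}$ is a \emph{closed} $\Omega^3_{12}$-piece $\kappa$, and the $z_2$ equation then demands that $d\star\kappa$ equal an exact $4$-form whose $\Omega^4_6$-component need not vanish; there is no vector-field parameter left in $z_2$ to absorb it, so the system does not close up. There is also a minor slip in the Hodge star identity you quote: on $1$-forms one has $\star_0(\alpha\wedge\omega_0^2)=-2J_0\alpha$, not $\star_0\alpha$, which only shifts the constants in your $g,h$ and the right-hand side $w$ but does not affect the structure of the Dirac step.
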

\begin{proof}
	Analogous to \ref{thm:linearization}.
\end{proof}
\noindent Finally, we get the following result, which is theorem 1 in \cite{Foscolo2021}.
\begin{theorem}
	Let $\left(B, g_0, \omega_0, \Omega_0\right)$ be an AC Calabi-Yau 3-fold asymptotic with rate $\mu<0$ to the Calabi-Yau cone $\left(\mathrm{C}(\Sigma), g_{\mathrm{C}}, \omega_{\mathrm{C}}, \Omega_{\mathrm{C}}\right)$ over a smooth Sasaki-Einstein 5-manifold $\Sigma$.
	Let $M \rightarrow B$ be a principal $S^1$-bundle such that
	$$
		c_1(M) \neq 0 \qquad c_1(M) \smile \left[\omega_0\right]=0 \in H^4(B)
		.
	$$
	
	Then there exists $\varepsilon_0 \in \R^+$ and an analytic curve of $S^1$-invariant torsion-free $G_2$ structures $(0, \varepsilon_0) \to \Omega^3(M)$ such that, by denoting with $g_\varepsilon$ the Riemannian metric on $M$ induced by $\varphi_\varepsilon$,
	\begin{enumerate}
		\item $\operatorname{Hol}\left(g_\varepsilon\right)=G_2$
		\item $\left(M, g_\varepsilon\right)$ is an $AT^1C$ (usually called ALC) with rate $\max(\mu, -1)$.
		\item $\left(M, g_\varepsilon\right)$ collapses with bounded curvature to $\left(B, g_0\right)$ as $\varepsilon \rightarrow 0$.		
	\end{enumerate}
\end{theorem}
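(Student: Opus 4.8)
The plan is to assemble the ingredients of Section~\ref{sec:G2_case} into a single application of the implicit function theorem~\ref{thm:implicit_function_theorem}, exactly as was done for $\Spin(7)$ in Section~\ref{sec:analytic_curve}, and then to transfer the arguments of Section~\ref{sec:properties} to the circle-bundle setting. First I would replace the torsion-free system~\eqref{eq:G2_torsion} by the enlarged system~\eqref{eq:G2_parameters} with free parameters $s,u,v,X$, together with the linear constraints~\eqref{eq:G2_parameters_linear_conditions} and the gauge-fixing conditions of Section~\ref{sec:gauge} adapted to a single connection $1$-form ($[\omega]=[\omega_0]$, then $\omega=\omega_0$, $\Re\Omega\wedge\Re\Omega_0=0$, $d^*(\theta-\theta_\infty)=0$, $\pi_{12}(\Re\Omega-\Re\Omega_0)\in\mathcal{W}^3_\nu$). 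Proposition~\ref{thm:G2_free_parameters} guarantees that any solution of this enlarged system with decaying data automatically has $u=v=s=0$ and $X=0$, hence is a genuine solution of~\eqref{eq:G2_torsion}, so that no information is lost.

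Next I would take $X=\R$ with coordinate $\varepsilon$ (implementing the shrinking of the circle fibre by $\theta\mapsto\varepsilon\theta$), let $Y$ be the Banach manifold defined in Section~\ref{sec:G2_case} with $\nu\in(-2,-1)$, and let $\Psi\colon\R\times Y\to Z$ be the map recorded there, based at $\varepsilon=0$, $y_0=(0,\Re\Omega_0,p_0,0,0,0,0)$. Analysis of the adiabatic limit shows that $\Psi(0,y)=0$ forces $(\omega_0,\Omega_0)$ to be Calabi–Yau and $p_0$ to be constant, which is precisely why a Calabi–Yau base, rather than an arbitrary $\SU(3)$-structure, must be taken as input. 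Lemma~\ref{thm:G2_HYM_connection} then supplies, for the chosen asymptotic connection $\bar\theta_\infty$ with curvature in $C^\infty_{-2}$, a unique Hermitian Yang–Mills connection $\theta_1$ and a unique first-order solution $\rho=\Re\Omega_1\in C^\infty_{-1}\cap\Omega^3_{12}\cap\mathcal{W}^3_\nu$ of~\eqref{eq:G2_rho_linearization}; subtracting these first-order pieces off in the definition of $\Psi_1$ and $\Psi_2$ is exactly what makes $\Psi$ land in the decaying Hölder spaces $Z$, since $\Re\Omega_1$ and $\Im\Omega_1$ thereafter appear only multiplied by $p-p_0$ or wedged with other $C^\infty_\nu$ data, hence contribute at rate $\nu-1<-2$.

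The heart of the argument is Theorem~\ref{thm:G2_linearization}: that $y\mapsto D\Psi_{(0,y_0)}(0,y)$ is a Banach space isomorphism, and this is the step I expect to be the main obstacle. As in the proof of Theorem~\ref{thm:linearization}, applying $d$ and $dJ_0$ to the $\Psi_4$-component shows that $P$ and $S$ are harmonic, hence vanish by the decay hypothesis; the constraints $d^*\theta=0$ and $d\theta\wedge\omega_0^2=0$ then rewrite $\Psi_4$ (together with the gauge-fixing) as a single Dirac equation $\Dirac(\,\cdot\,,\,\cdot\,,\theta)=(0,0,w)$, uniquely solvable by Theorem~\ref{thm:Dirac_iso}; and the exact $4$-form equations $\Psi_1,\Psi_2$ are solved for $\rho_{12},u,v,X$ by first fixing $\rho=\tfrac18(\star_0 z_0)\Re\Omega_0+\rho_{12}$ from the linearised Monge–Ampère equation and then invoking Proposition~\ref{thm:exact_4_forms} and Corollary~\ref{thm:exact_4_forms'}, with uniqueness pinned down by $\mathcal{W}^3_\nu$. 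The implicit function theorem~\ref{thm:implicit_function_theorem} then produces an analytic curve of solutions in $C^{l,\alpha}_{-1}$ for every $l$; uniqueness on small neighbourhoods together with $C^{l,\alpha}_{-1}\subseteq C^{m,\alpha}_{-1}$ for $l\ge m$ upgrades this to a $C^\infty_{-1}$ curve, which I would Taylor-expand as $\theta=\varepsilon\theta_1+\varepsilon^2\theta_\varepsilon$, $p=p_0+\varepsilon p_\varepsilon$, $\Re\Omega=\Re\Omega_0+\varepsilon\Re\Omega_\varepsilon$ with all $\varepsilon$-coefficients in $C^\infty_{-1}$.

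Finally I would establish the three asymptotic properties. For the collapse with bounded curvature, the induced metric satisfies $g_\varepsilon-g_{\omega_0,\Omega_0}-\varepsilon^2 g_1\to0$ in $C^{k,\alpha}_{\mathrm{loc}}$ for a fixed flat metric $g_1$ on the circle fibre, which is immediate from the expansion above. For the $AT^1C$ (ALC) statement with rate $\max(\mu,-1)$, I would argue exactly as in Theorem~\ref{thm:AT2C} with $k=1$: the asymptotic model is $\pi_C^*\dot P\to C_R(\Sigma)$ equipped with the radially invariant HYM connection pulled back from $\Sigma$, and $p-p_0\in C^\infty_{-1}$ together with $\theta_1-\bar\theta_\infty\in C^\infty_{-1}$ from Lemma~\ref{thm:G2_HYM_connection} gives $\Psi^*g_\varepsilon-g_P\in C^\infty_{\max(\mu,-1)}$. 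For full holonomy, after reducing to simply connected $B$ and $M$ via Lemma~\ref{thm:finite_fundamental_group}, Lemma~\ref{thm:fundamental_group_circle_bundles} and Lemma~\ref{thm:universal_cover_circle_bundle} (here $c_1(M)\neq0$ forces $\pi_1(M)$ finite), a Bryant-type criterion analogous to Lemma~\ref{thm:Bryant} reduces $\Hol=G_2$ to the absence of nonzero parallel $1$-forms; using Lemma~\ref{thm:closed_coclosed_constant_forms} and Lemma~\ref{thm:CY_cone_holonomy} any such form is $S^1$-invariant of the shape $a\,\eta_\infty+\delta_\alpha$ with $a$ constant, whence $da=0$ and, in cohomology, $a\,c_1(M)=0$, forcing $a=0$; the remaining form is decaying and parallel, hence zero. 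A secondary subtlety worth care is precisely this last point: one must check that a parallel $1$-form on $M$ is genuinely $S^1$-invariant before the Chern-class argument applies, which is what the injectivity of the "value at infinity" map $\mathcal{L}_1$ (built from the parallel forms on the cone) provides.
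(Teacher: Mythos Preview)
Your proposal is correct and follows exactly the paper's approach: the paper's own proof is literally ``Analogous to \ref{thm:main_theorem}'', and you have spelled out that analogy faithfully, assembling Proposition~\ref{thm:G2_free_parameters}, Lemma~\ref{thm:G2_HYM_connection} and Theorem~\ref{thm:G2_linearization} into an application of Theorem~\ref{thm:implicit_function_theorem}, and then transferring the arguments of Section~\ref{sec:properties} to the $S^1$-bundle setting.

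One small slip worth correcting: in your sketch of the linearisation isomorphism you write that ``$P$ and $S$ are harmonic, hence vanish by the decay hypothesis''. That step belongs to the first-order necessary-conditions lemma (the analogue of Lemma~\ref{thm:first_order_necessary}, where the right-hand side is zero), not to the proof of Theorem~\ref{thm:G2_linearization}. In the bijectivity argument the target $z_3$ is arbitrary, so $P$ and $S$ do \emph{not} vanish; rather, the last component together with the constraints $d^*\gamma_\theta=0$, $d\gamma_\theta\wedge\omega_0^2=0$ is rewritten as a single Dirac equation in the triple $(P,S,\gamma_\theta)$ (up to constant factors), and Theorem~\ref{thm:Dirac_iso} determines all three at once, exactly as $f,g,h,t,\xi_1,\xi_2$ were determined in the proof of Theorem~\ref{thm:linearization}. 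Your subsequent invocation of Proposition~\ref{thm:exact_4_forms} and Corollary~\ref{thm:exact_4_forms'} for $\rho_{12},u,v,X$ is then correct as stated.
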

\begin{proof}
	Analogous to \ref{thm:main_theorem}.
\end{proof}

\bibliographystyle{alpha}
\bibliography{Biblio}

\begin{thebibliography}{CGLP01b}

\bibitem[AFNS21]{Acharya2021}
B.~S. Acharya, L.~Foscolo, M.~Najjar, and E.~E. Svanes.
\newblock {New G2-conifolds in M-theory and their field theory interpretation}.
\newblock {\em Journal of High Energy Physics}, 2021(5), May 2021.

\bibitem[Baz07]{Bazaikin2007}
Ya.~V. Bazaikin.
\newblock {On the new examples of complete noncompact Spin(7)-holonomy metrics}.
\newblock {\em Siberian Mathematical Journal}, 48(1):8--25, January 2007.

\bibitem[Baz08]{Bazaikin2008}
Ya.~V. Bazaikin.
\newblock {Noncompact Riemannian spaces with the holonomy group Spin(7) and 3-{S}asakian manifolds}.
\newblock {\em Proceedings of the Steklov Institute of Mathematics}, 263(1):2--12, December 2008.

\bibitem[Ber55]{Berger1955}
Marcel Berger.
\newblock {Sur les groupes d’holonomie homogènes de variétés à connexion affine et des variétés riemanniennes}.
\newblock {\em Bulletin de la Société mathématique de France}, 79:279--330, 1955.

\bibitem[BGP14]{Bonetti2014a}
Federico Bonetti, Thomas~W. Grimm, and Tom~G. Pugh.
\newblock {Non-supersymmetric F-theory compactifications on Spin(7) manifolds}.
\newblock {\em Journal of High Energy Physics}, 2014(1), January 2014.

\bibitem[BGPP14]{Bonetti2014}
Federico Bonetti, Thomas~W. Grimm, Eran Palti, and Tom~G. Pugh.
\newblock {F-theory on Spin(7) manifolds: weak-coupling limit}.
\newblock {\em Journal of High Energy Physics}, 2014(2), February 2014.

\bibitem[BMS19]{BruunMadsen2019}
Thomas Bruun~Madsen and Andrew Swann.
\newblock {Toric Geometry of Spin(7)-Manifolds}.
\newblock {\em International Mathematics Research Notices}, 2021(21):16511--16529, November 2019.

\bibitem[Bry87]{Bryant1987}
Robert~L. Bryant.
\newblock {Metrics with Exceptional Holonomy}.
\newblock {\em The Annals of Mathematics}, 126(3):525, November 1987.

\bibitem[BS89]{Bryant1989}
Robert~L. Bryant and Simon~M. Salamon.
\newblock {On the construction of some complete metrics with exceptional holonomy}.
\newblock {\em Duke Mathematical Journal}, 58(3), June 1989.

\bibitem[Cai05]{Caibar2005}
Mirel Caibăr.
\newblock {Minimal models of canonical 3-fold singularities and their Betti numbers}.
\newblock {\em International Mathematics Research Notices}, 2005(26):1563, 2005.

\bibitem[CGLP01a]{Cvetic2001a}
M.~Cvetic, G.~W. Gibbons, H.~Lu, and C.~N. Pope.
\newblock {New Cohomogeneity One Metrics With Spin(7) Holonomy}.
\newblock {\em J.Geom.Phys.49:350-365,2004}, May 2001.

\bibitem[CGLP01b]{Cvetic2001}
M.~Cvetic, G.~W. Gibbons, H.~Lu, and C.~N. Pope.
\newblock {New Complete Non-compact Spin(7) Manifolds}.
\newblock {\em Nucl.Phys.B620:29-54,2002}, March 2001.

\bibitem[CH12]{Conlon2012}
Ronan~J. Conlon and Hans-Joachim Hein.
\newblock {Asymptotically conical Calabi-Yau manifolds, I}.
\newblock {\em Duke Math. J. 162, no. 15 (2013), 2855-2902}, 162(15), December 2012.

\bibitem[CH24]{Conlon2024}
Ronan~J. Conlon and Hans-Joachim Hein.
\newblock {Classification of asymptotically conical Calabi–Yau manifolds}.
\newblock {\em Duke Mathematical Journal}, 173(5), April 2024.

\bibitem[Fer86]{Fernandez1986}
Marisa Fernández.
\newblock A classification of riemannian manifolds with structure group spin (7).
\newblock {\em Annali di Matematica Pura ed Applicata}, 143(1):101--122, December 1986.

\bibitem[FHN21]{Foscolo2021}
Lorenzo Foscolo, Mark Haskins, and Johannes Nordstr{\"o}m.
\newblock {Complete noncompact {{\(\mathrm{G}_2\)}}-manifolds from asymptotically conical {Calabi}-{Yau} 3-folds}.
\newblock {\em Duke Mathematical Journal}, 170(15):3323--3416, 2021.

\bibitem[Fos19]{Foscolo2019}
Lorenzo Foscolo.
\newblock {Complete non-compact Spin(7) manifolds from self-dual Einstein 4-orbifolds}.
\newblock {\em Geom. Topol. 25 (2021) 339-408}, January 2019.

\bibitem[FOW09]{Futaki2009}
Akito Futaki, Hajime Ono, and Guofang Wang.
\newblock Transverse kähler geometry of sasaki manifolds and toric sasaki-einstein manifolds.
\newblock {\em Journal of Differential Geometry}, 83(3), November 2009.

\bibitem[Fow23]{Fowdar2023}
Udhav Fowdar.
\newblock {Spin(7) metrics from Kähler Geometry}.
\newblock {\em Commun. Anal. Geom.}, 31(5):1217--1258, 2023.

\bibitem[FPS21]{Farsi2021}
Carla Farsi, Emily Proctor, and Christopher Seaton.
\newblock {Approximating Orbifold Spectra Using Collapsing Connected Sums}.
\newblock {\em The Journal of Geometric Analysis}, 31(10):9433--9468, March 2021.

\bibitem[Got09]{Goto2009}
Ryushi Goto.
\newblock {Calabi-Yau structures and Einstein-Sasakian structures on crepant resolutions of isolated singularities}.
\newblock {\em J. Math. Soc. Japan, vol. 64, No. 3 (2012) 1005-1052}, June 2009.

\bibitem[GS01]{Gukov2001}
Sergei Gukov and James Sparks.
\newblock {M-Theory on Spin(7) Manifolds}.
\newblock {\em Nucl.Phys. B625 (2002) 3-69}, September 2001.

\bibitem[Hit00]{Hitchin2000}
Nigel Hitchin.
\newblock {The Geometry of Three-Forms in Six Dimensions}.
\newblock {\em Journal of Differential Geometry}, 55(3), July 2000.

\bibitem[HLLZ19]{Heckman2019}
Jonathan~J. Heckman, Craig Lawrie, Ling Lin, and Gianluca Zoccarato.
\newblock {F‐theory and Dark Energy}.
\newblock {\em Fortschritte der Physik}, 67(10), July 2019.

\bibitem[HMV14]{Heckman2014}
Jonathan~J. Heckman, David~R. Morrison, and Cumrun Vafa.
\newblock {On the classification of 6D SCFTs and generalized ADE orbifolds}.
\newblock {\em Journal of High Energy Physics}, 2014(5), May 2014.

\bibitem[IMS97]{Intriligator1997}
Kenneth Intriligator, David~R. Morrison, and Nathan Seiberg.
\newblock {Five-dimensional supersymmetric gauge theories and degenerations of Calabi-Yau spaces}.
\newblock {\em Nuclear Physics B}, 497(1–2):56--100, July 1997.

\bibitem[Joy99]{Joyce1999}
Dominic Joyce.
\newblock {A new construction of compact 8-manifolds with holonomy Spin(7)}.
\newblock {\em J.Diff.Geom. 53 (1999) 89-130}, October 1999.

\bibitem[Kov13]{Kovalev2013}
Alexei Kovalev.
\newblock {Asymptotically cylindrical manifolds with holonomy Spin(7). I}.
\newblock September 2013.

\bibitem[Leh22]{Lehmann2022}
Fabian Lehmann.
\newblock {Geometric Transitions with Spin(7) Holonomy via a Dynamical System}.
\newblock {\em Communications in Mathematical Physics}, 394(1):309--353, May 2022.

\bibitem[LT97]{Lerman1997}
Eugene Lerman and Susan Tolman.
\newblock Hamiltonian torus actions on symplectic orbifolds and toric varieties.
\newblock {\em Transactions of the American Mathematical Society}, 349(10):4201--4230, 1997.

\bibitem[MNS08]{Moroianu2008}
Andrei Moroianu, Paul-Andi Nagy, and Uwe Semmelmann.
\newblock Deformations of nearly kähler structures.
\newblock {\em Pacific Journal of Mathematics}, 235(1):57--72, March 2008.

\bibitem[MS97]{Morrison1997}
David~R. Morrison and Nathan Seiberg.
\newblock {Extremal transitions and five-dimensional supersymmetric field theories}.
\newblock {\em Nuclear Physics B}, 483(1–2):229--247, January 1997.

\bibitem[MS11]{Madsen2011}
Thomas~Bruun Madsen and Andrew Swann.
\newblock {Closed forms and multi-moment maps}.
\newblock {\em Geom. Dedicata 165, Issue 1 (2013), 25-52}, October 2011.

\bibitem[Rei83]{Reinhart1983}
Bruce~L. Reinhart.
\newblock {\em {Differential Geometry of Foliations}}.
\newblock Springer Berlin Heidelberg, 1983.

\bibitem[Shi99]{Shioya1999}
Takashi Shioya.
\newblock Eigenvalues and suspension structure of compact riemannian orbifolds with positive ricci curvature.
\newblock {\em manuscripta mathematica}, 99(4):509--516, August 1999.

\bibitem[SZ11]{Shi2011}
Yalong Shi and Xiaohua Zhu.
\newblock Kähler-ricci solitons on toric fano orbifolds.
\newblock {\em Mathematische Zeitschrift}, 271(3–4):1241--1251, July 2011.

\bibitem[Vaf96]{Vafa1996}
Cumrun Vafa.
\newblock {Evidence for F-theory}.
\newblock {\em Nuclear Physics B}, 469(3):403--415, June 1996.

\bibitem[vC09a]{Coevering2009a}
Craig van Coevering.
\newblock Regularity of asymptotically conical {Ricci}-flat {K{\"a}hler} metrics.
\newblock Preprint, {arXiv}:0912.3946 [math.{DG}] (2009), 2009.

\bibitem[vC09b]{Coevering2009}
Craig van Coevering.
\newblock Ricci-flat kähler metrics on crepant resolutions of kähler cones.
\newblock {\em Mathematische Annalen}, 347(3):581--611, November 2009.

\bibitem[Wit95]{Witten1995a}
Edward Witten.
\newblock {Strong coupling and the cosmological constant}.
\newblock {\em Modern Physics Letters A}, 10(29):2153--2155, September 1995.

\bibitem[WZ04]{Wang2004}
Xu-Jia Wang and Xiaohua Zhu.
\newblock Kähler–ricci solitons on toric manifolds with positive first chern class.
\newblock {\em Advances in Mathematics}, 188(1):87--103, October 2004.

\end{thebibliography}

\end{document}